\theoremstyle{plain}
\newtheorem{thm}[subsection]{Theorem}
\newtheorem{ax}[subsection]{Axiom}
\newtheorem{prop}[subsection]{Proposition}
\newtheorem{coro}[subsection]{Corollary}
\newtheorem{lem}[subsection]{Lemma}
\newcommand{\comment}[1]{}
\theoremstyle{definition}
\newtheorem{defi}[subsection]{Definition}
\theoremstyle{remark}
\newtheorem{rem}[subsection]{Remark}
\theoremstyle{definition}
\newtheorem{exa}[subsection]{Example}
\newcommand{\Alg}{\mathrm{Alg}}
\newcommand{\Mod}{\mathrm{Mod}}
\renewcommand{\S}{\mathbb{S}}
\newcommand{\Z}{\mathbb{Z}}
\newcommand{\N}{\mathbb{N}}
\newcommand{\Hom}{\mathrm{Hom}}
\newcommand{\colim}{\operatornamewithlimits{colim}}
\newcommand{\Zo}{{\mathbb{Z}_{\geq 0}}}
\newcommand{\sd}{\mathrm{sd}}
\newcommand{\Ex}{\mathit{Ex}}
\newcommand{\ind}{\mathrm{ind}}
\newcommand{\op}{\mathrm{op}}
\newcommand{\id}{\mathrm{id}}
\newcommand{\fS}{\mathfrak{S}}
\newcommand{\ev}{\mathrm{ev}}
\newcommand{\cE}{\mathcal{E}}
\newcommand{\GrAb}{\mathrm{GrAb}}
\newcommand{\pr}{\mathrm{pr}}
\newcommand{\gra}{\mathscr{A}}
\newcommand{\scrZ}{\mathscr{Z}}
\newcommand{\fk}{\mathfrak{K}}
\newcommand{\inc}{\mathrm{incl}}
\newcommand{\tilT}{\widetilde{T}}
\newcommand{\SigmaHo}{{\Sigma\mathrm{Ho}}}
\newcommand{\fC}{{\mathfrak{C}}}
\newcommand{\scrU}{\mathscr{U}}
\newcommand{\scrE}{\mathscr{E}}
\renewcommand{\bul}{\bullet}
\newcommand{\Minf}{{M_\infty}}
\newcommand{\lar}[1]{{#1}^{\prec}}
\newcommand{\tria}{\mathscr{T}}
\newcommand{\scrP}{\mathscr{P}}
\newcommand{\Algl}{{\mathrm{Alg}_\ell}}
\newcommand{\Set}{\mathrm{Set}}
\newcommand{\kk}{{kk}}
\newcommand{\fR}{\mathfrak{R}}
\newcommand{\fF}{\mathfrak{F}}
\newcommand{\Fspl}{\mathfrak{F}_{\mathrm{spl}}}
\newcommand{\bbK}{\mathbb{K}}
\begin{document}

\title{The loop-stable homotopy category of algebras}
\author{Emanuel Rodr\'\i guez Cirone}
\email{ercirone@dm.uba.ar}
\address{Dep. Matem\'atica-IMAS, FCEyN-UBA\\ Ciudad Universitaria Pab 1\\
	1428 Buenos Aires\\ Argentina}

\begin{abstract}
Let $\ell$ be a commutative ring with unit. Garkusha constructed a functor from the category of $\ell$-algebras into a triangulated category $D$, that is a universal excisive and homotopy invariant homology theory. Later on, he provided different descriptions of $D$, as an application of his motivic homotopy theory of algebras. Using these, it  can be shown that $D$ is triangulated equivalent to a category, denote it by $\fk$, whose objects are pairs $(A,m)$ with $A$ an $\ell$-algebra and $m$ an integer, and whose Hom-sets can be described in terms of homotopy classes of morphisms. All these computations, however, require a heavy machinery of homotopy theory. In this paper, we give a more explicit construction of the triangulated category $\fk$ and prove its universal property, avoiding the homotopy-theoretic methods and using instead the ones developed by Corti\~nas-Thom for defining $\kk$-theory. Moreover, we give a new description of the composition law in $\fk$, mimicking the one in the suspension-stable homotopy category of bornological algebras defined by Cuntz-Meyer-Rosenberg. We also prove that the triangulated structure in $\fk$ can be defined using either extension or mapping path triangles.
\end{abstract}

\subjclass[2010]{16D99, 18E30, 19K35.}
\keywords{Homotopy theory of algebras, triangulated categories, bivariant algebraic $K$-theory.}

\maketitle

\section{Introduction}
Throughout this text, $\ell$ is a commutative ring with unit and $\Algl$ stands for the category of associative and not necessarily unital $\ell$-algebras and $\ell$-algebra homomorphisms.

Algebraic $kk$-theory is a bivariant homology theory on $\Algl$, defined by Corti\~nas-Thom in \cite{cortho} as a completely algebraic analogue of Kasparov's $KK$-theory. It consists of a triangulated category $\kk$ endowed with a functor $j:\Algl\to\kk$ that is excisive (E), polinomial homotopy invariant (H) and $\Minf$-stable (M). The excision property (E) means that every short exact sequence in $\Algl$ that splits as a sequence of $\ell$-modules gives rise to a distinguished triangle upon applying $j$. The property (M) is a kind of matrix-stability. The functor $j$ is moreover universal with the above properties: any other functor from $\Algl$ into a triangulated category satisfying (E), (H) and (M) factors uniquely through $j$. A remarkable property of $kk$-theory is that it recovers Weibel's homotopy $K$-theory \cite{cortho}*{Theorem 8.2.1}. The work of Corti\~nas-Thom was motivated by the work of Cuntz on the bivariant $K$-theory of locally convex algebras \cite{cuntz}. In particular, the Hom-sets and the composition law in $\kk$ were defined explicitely in terms of polynomial homotopy classes of $\ell$-algebra homomorphisms.

With completely different methods, Garkusha constructed in \cite{garkuuni} various universal bivariant homology theories of algebras. Starting with an admissible category of $\ell$-algebras $\fR$ and a class $\fF$ of fibrations on it, he inverted certain weak equivalences to obtain a triangulated category $D(\fR,\fF)$. The latter is endowed with a functor $\fR\to D(\fR,\fF)$ that is $\fF$-excisive, polynomial homotopy invariant and universal with these properties \cite{garkuuni}*{Theorem 2.6}. At this stage, Garkusha did not consider any matrix-stability, motivated by the fact that some interesting admissible categories ---such as the one of commutative algebras--- are not closed under matrices. He showed, moreover, that different matrix-stabilities can be added later on. In particular, he gave a new construction of $\kk$-theory, taking $\fF$ to be the class $\Fspl$ of $\ell$-split surjections and adding $\Minf$-stability.

This paper is concerned with the universal excisive and homotopy invariant homology theory of $\ell$-algebras ---$D(\Algl,\Fspl)$ in the notation of \cite{garkuuni}. Garkusha provided different descriptions of $D(\Algl,\Fspl)$ as an application of his motivic homotopy theory of algebras; we proceed to briefly recall some details of this. Garkusha showed in \cite{garku} that the universal homology theories defined in \cite{garkuuni} are represented by spectra. In particular, for every pair of algebras $(A,B)$, he explicitely constructed an $\Omega$-spectrum $\bbK(A,B)$ such that
\[\Hom_{D(\Algl,\Fspl)}(A,\Omega^nB)\cong \pi_n\bbK(A,B)\]
for all $n\in\Z$ \cite{garku}*{Comparison Theorem B}; see also \cite{garku2}*{Theorem 4.4 and Corollary 4.3}. The homotopy groups of $\bbK(A,B)$ can be computed as follows; see \cite{garku}*{Corollary 7.1} and \cite{htpysimp}*{Theorem 4.11}:
\[\pi_n\bbK(A,B)\cong \colim_v[J^{v}A, B^{\fS_{n+v}}]\]
Here, the square brackets stand for homotopy classes, $JA$ is the kernel of the universal extension of $A$ (see Section \ref{subsec:extensions}), and $B^{\fS_N}$ is the ind-algebra of polynomial functions on the $N$-dimensional cube that vanish at the boundary of the cube (see Example \ref{exa:fSn} and Section \ref{sec:vanish}). It is easily deduced from the above that $D(\Algl,\Fspl)$ is equivalent to a category, denote it by $\fk$, whose objects are pairs $\{(A,m)\}_{A\in\Algl,\,m\in\Z}$ and whose morphisms are defined by:
\begin{equation}\label{eq:introhom}\Hom_{\fk}((A,m),(B,n))=\colim_v[J^{m+v}A, B^{\fS_{n+v}}]\end{equation}
We call $\fk$ the \emph{loop-stable homotopy category} of algebras. We emphasize that the cited computations in \cite{garku} and \cite{garku2} all require a heavy machinery of homotopy theory. In this paper, we avoid this kind of methods and use instead the ones developed by Corti\~nas-Thom in \cite{cortho} to give a more explicit construction of $\fk$ and prove its universal property. In particular, we provide a new explicit formula for the composition law in $\fk$ in terms of \eqref{eq:introhom}. The main results of the paper can be summarized as follows.

\begin{thm}[Theorem \ref{thm:compowd}, Proposition \ref{prop:trianglesinkk} and Theorem \ref{thm:unitria}] Let $\fk$ be the category whose objects are pairs $(A,m)$ with $A$ an $\ell$-algebra and $m$ an integer, whose morphisms are defined by \eqref{eq:introhom} and whose composition law is induced by the operation $\star$ of Definition \ref{defi:kkc}. Then $\fk$ admits a triangulated structure such that the obvious functor $j:\fk\to\Algl$, $j(A)=(A,0)$, is a universal excisive and homotopy invariant homology theory. Moreover, this triangulated structure can be defined using either extension or mapping path triangles.
\end{thm}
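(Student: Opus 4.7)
The overall strategy is to follow the Corti\~nas-Thom template of \cite{cortho}, replacing the role of $\Minf$-stability with loop-stability along $\fS_\bullet$. The theorem packages three assertions that I would address in turn: construction of a triangulated structure on $\fk$, the universal property of $j$ among excisive and homotopy invariant functors into triangulated categories, and the equivalence of the extension-triangle and mapping-path-triangle descriptions of this structure.

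For the triangulated structure, the shift functor is given by translation of the integer coordinate, and a direct reindexing of the colimit in \eqref{eq:introhom} shows that this is an autoequivalence of $\fk$. The key identification is that $(JA,0)$ is canonically isomorphic to the appropriate shift of $(A,0)$, which converts classifying maps of extensions into genuine connecting morphisms. To an $\Fspl$-extension $0 \to A \to B \to C \to 0$ one associates the classifying map $\xi: JC \to A$ coming from the universal extension of $C$, and the resulting triangle with connecting morphism induced by $\xi$ is declared distinguished, together with all triangles isomorphic in $\fk$ to one of this form. The verification of TR1-TR4 proceeds as in \cite{cortho}: since every morphism in $\fk$ is, after stabilization in $v$, represented by an honest $\ell$-algebra homomorphism, cones, rotations and iterated extensions are constructed via concrete manipulations on path algebras.

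For the universal property, let $X:\Algl \to \tria$ be any functor satisfying (E) and (H). Excision applied to the universal extension $0 \to JA \to TA \to A \to 0$ together with the contractibility of $TA$ produces natural isomorphisms $X(JA) \cong X(A)[-1]$ in $\tria$, and dually one obtains identifications of the form $X(B^{\fS_1}) \cong X(B)[-1]$ using the loop extension on the target side. One then defines $\overline X:\fk \to \tria$ on objects by $\overline X(A,m) = X(A)[-m]$, and sends an algebra homomorphism $J^{m+v}A \to B^{\fS_{n+v}}$ to the morphism in $\tria$ obtained by iterating these loop identifications; homotopy invariance of $X$ makes this pass to the colimit, and the definition of $\star$ forces compatibility with composition. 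Uniqueness is automatic since these generators exhaust $\Hom_\fk$.

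For the equivalence of the two triangulations, a morphism $f: A \to B$ induces an $\Fspl$-extension $0 \to B^{\fS_1} \to P_f \to A \to 0$, where $P_f$ is the mapping path algebra, and the classifying map $JA \to B^{\fS_1}$ of this extension represents, via the canonical identifications in $\fk$, the same morphism as $f$. This produces an isomorphism of triangles between the mapping path triangle of $f$ and the extension triangle of the associated path sequence, and conversely every extension triangle is recovered from such a mapping path sequence. The principal obstacle throughout will be the verification of the octahedral axiom TR4 for extension triangles: one must compare the three classifying maps associated to a composition of $\Fspl$-extensions and produce a commuting octahedron. Following \cite{cortho}, this is handled by a pullback construction on path algebras combined with homotopy invariance, but the detailed bookkeeping of shifts, together with the careful comparison with the $\star$-composition of Definition \ref{defi:kkc}, constitutes the technical heart of the argument.
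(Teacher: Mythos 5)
Your general strategy agrees with the paper's: follow the Corti\~nas--Thom template, treat $J$ and $(\cdot)^{\fS_1}$ as suspension on source and target respectively, and compare extension and mapping path triangles. But three concrete gaps stand out.

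First, the well-definedness of the composition law---which the statement explicitly includes via Theorem~\ref{thm:compowd}---is not addressed at all. This is not a formality: showing that $\langle g\star f\rangle$ is independent of the representatives $f\in[J^{m+v}A,B^{\fS_{n+v}}_\bul]$ and $g\in[J^{n+w}B,C^{\fS_{k+w}}_\bul]$ is the bulk of Section~\ref{sec:well} and hinges on Lemma~\ref{lem:clascon} (the commutativity up to sign of $\kappa^{1,1}\circ J(\lambda_B)$ and $\lambda_{JB}^{-1}$), Lemma~\ref{lem:compo2}, and Lemma~\ref{lem:compwelldef} showing $\Lambda^{N_4}(g)\star f=\Lambda^{N_2+N_4}(g\star f)=g\star\Lambda^{N_2}(f)$. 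Your proposal assumes $\fk$ is a category from the start, but proving it is the single most technical part of the argument, and in particular the sign $(-1)^{N_2N_3}$ in the definition of $\star$ is forced precisely by making this computation close up.

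Second, you declare extension triangles as the primary distinguished triangles and propose to verify TR1--TR4 for them, whereas the paper takes mapping path triangles $\triangle_{f,n}$ as primary (Definition~\ref{defi:trianglesinkk}) and only afterwards proves the equivalence with extension triangles (Proposition~\ref{prop:trianglesinkk}). Your ordering creates a small circularity: to establish TR1 for extension triangles you must embed an arbitrary $\fk$-morphism into an $\Fspl$-extension, which requires the mapping cylinder $Z_g$ (or mapping path $P_g$) construction anyway---so you would end up proving the equivalence en route. This is fixable but makes the exposition more tangled than the paper's, and you should make the replacement-by-mapping-cylinder step explicit rather than leaving it implicit in ``concrete manipulations on path algebras.''

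Third, in the universal property you say homotopy invariance of $X$ makes the construction of $\bar X$ pass to the colimit over $v$ and that ``the definition of $\star$ forces compatibility with composition.'' That understates what is required: compatibility of $\bar X^v$ with $\Lambda^{n+v}$, and functoriality of $\bar X$ for $\star$, both depend on the interplay between the isomorphisms $i^{J,n}$ and $i^{\fS,n}$ (Section~\ref{sec:uniprop}) and on a nontrivial sign compatibility (Lemma~\ref{lem:signT}, which is the image in $\gra$ of Lemma~\ref{lem:clascon}). A clean universal property argument must isolate this sign lemma rather than absorb it into a remark. Finally, note that your shift convention $\bar X(A,m)=X(A)[-m]$ is opposite to the paper's $L^mX(A)$; this is harmless so long as it is tracked consistently, but it interacts with the signs above, which is a second reason not to leave the sign bookkeeping implicit.
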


If we regard $\kk$-theory as the algebraic counterpart of Kasparov's $K$-theory, then we should think of $\fk$ as the algebraic counterpart of the category $\SigmaHo$ defined by Cuntz-Meyer-Rosenberg in the context of bornological algebras \cite{ralf}*{Section 6.3}. Notice that the Hom-sets in $\fk$ \eqref{eq:introhom} and the Hom-sets in $\SigmaHo$ are defined in a similar fashion. However, two technical issues arise when we want to mimic in $\fk$ the definition of the composition law in $\SigmaHo$. The first one is how to make sense of a sign that appears when permuting coordinates. This is easily handled, since $B^{\fS_N}$ is a group object in the category of ind-algebras if $N\geq 1$ and this group is abelian if $N\geq 2$ \cite{htpysimp}*{Theorem 3.10}. The second difficulty is that $(B^{\fS_M})^{\fS_N}\not\cong B^{\fS_{M+N}}$ in $\Algl$ due to the failure of the exponential law \cite{cortho}*{Remark 3.1.4}. To deal with this, we use certain morphisms $\mu^{M,N}:(B^{\fS_M})^{\fS_N}\to B^{\fS_{M+N}}$ that induce isomorphisms in any excisive and homotopy invariant homology theory \cite{htpysimp}*{Section 3.1}. Modulo these technicalities, we can define a composition law in $\fk$ essentially as in $\SigmaHo$; this is done in Section \ref{sec:defik}. Section \ref{sec:well} is devoted to the proof of Theorem \ref{thm:compowd}, which asserts that the composition law in $\fk$ is well-defined.

The rest of this paper is organized as follows. In Section \ref{sec:add} we prove that $\fk$ is an additive category. In section \ref{sec:exci} we begin to establish the excision properties of $\fk$. In Theorem \ref{thm:excision} we show that there is an exact sequence of abelian groups as follows, for every $D\in\Algl$ and every short exact sequence $A\to B\to C$ that splits as a sequence of $\ell$-modules.
\begin{equation}\label{intro:excision}
\xymatrix@C=1.7em{\fk(D,B^{\fS_1}_0)\ar[r] & \fk(D,C^{\fS_1}_0)\ar[r]^-{\partial} & \fk(D,A)\ar[r] & \fk(D,B)\ar[r] & \fk(D,C)}
\end{equation}
In Section \ref{sec:trans} we define the translation endofunctor of $\fk$ and in Section \ref{sec:long} we extend  \eqref{intro:excision} to a long exact sequence (Lemma \ref{lem:longexact}). In section \ref{sec:triangu} we prove that $\fk$ is a triangulated category. We show that the triangulated structure can be defined by either extension or mapping path triangles (Proposition \ref{prop:trianglesinkk}). In section \ref{sec:uniprop} we prove that the functor $j:\Algl\to\fk$ is a universal excisive and homotopy invariant homology theory (Theorem \ref{thm:unitria}). The Appendix \ref{sec:app} contains the proof of Proposition \ref{prop:bundle}; this is a technical result that is closely related to the proof of \cite{htpysimp}*{Theorem 3.10}.

\section{Preliminaries}\label{sec:conv}We regard simplicial $\ell$-algebras as simplicial sets via the forgetful functor $\Algl\to \Set$. The symbol $\otimes$ indicates tensor product over $\Z$. If $\fC$ is a category and $X$, $Y$ are two of its objects, we may write $\fC(X,Y)$ instead of $\Hom_\fC(X,Y)$.

\subsection{Directed diagrams}\label{sec:directed}

Let $\fC$ be a category. A \emph{directed diagram} in $\fC$ is a functor $X:I\to\fC$, where $I$ is a filtering poset; we often write $(X,I)$ or $X_\bul$ for such a functor. We shall consider different notions of morphisms between directed diagrams:

\subsubsection{Fixing the filtering poset} Let $I$ be a filtering poset. We write $\fC^I$ for the category of functors $X:I\to\fC$ with natural transformations as morphisms.

\subsubsection{Varying the filtering poset} We write $\lar{\fC}$ for the category whose objects are the directed diagrams in $\fC$ and whose morphisms are defined as follows. Let $(X,I)$ and $(Y,J)$ be two directed diagrams. A morphism from $(X,I)$ to $(Y,J)$ consists of a pair $(f,\theta)$ where $\theta:I\to J$ is a functor and $f:X\to Y\circ\theta$ is a natural transformation.

For fixed $I$, there is a functor $a_I:\fC^I\to \lar{\fC}$ that is the identity on objects and sends a natural transformation $f$ to the morphism $(f,\id_I)$.

\subsubsection{The ind-category} We write $\fC^\ind$ for the category whose objects are the directed diagrams in $\fC$ 	and whose hom-sets are defined by:
\[\Hom_{\fC^\ind}\left( (X,I),(Y,J)\right):=\lim_{i\in I}\colim_{j\in J}\Hom_\fC(X_i,Y_j)\]

There is a functor $\lar{\fC}\to\fC^\ind$ that acts as the identity on objects and that sends a morphism $(f,\theta):(X,I)\to(Y,J)$ to the morphism:
\[\left\{f_i:X_i\to Y_{\theta(i)}\right\}_{i\in I}\in \lim_{i\in I}\colim_{j\in J}\Hom_\fC(X_i,Y_j)\]

\begin{lem}\label{lem:indI}
	Let $I$ be a filtering poset and let $\fC$ be a category. Then there exist functors $b_I:\lar{(\fC^I)}\to \lar{\fC}$ and $c_I:(\fC^I)^\ind\to \fC^\ind$ such that, for every filtering poset $J$, the following diagram commutes:
	\begin{equation}\label{eq:indI}\begin{gathered}\xymatrix{(\fC^I)^J\ar[r]^-{a_J}\ar[d]_-{\cong} & \lar{(\fC^I)}\ar[d]^-{b_I}\ar[r] & (\fC^I)^\ind\ar[d]^-{c_I} \\
		\fC^{I\times J}\ar[r]^-{a_{I\times J}} & \lar{\fC}\ar[r] & \fC^\ind \\}\end{gathered}\end{equation}
\end{lem}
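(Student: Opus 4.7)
The plan is to construct both functors essentially by currying, the operation already suggested by the equivalence $\fC^{I\times J}\cong(\fC^I)^J$ on the left of \eqref{eq:indI}. The challenge is not conceptual but organizational: unravelling the nested limits, colimits and naturality conditions so that each construction is visibly well-defined and functorial.

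To define $b_I$, first observe that an object $(X,J)$ of $\lar{(\fC^I)}$ is a functor $X\colon J\to\fC^I$, which by currying corresponds to a functor $\tilde{X}\colon I\times J\to\fC$; since the product of filtering posets is filtering, $(\tilde{X},I\times J)$ is an object of $\lar{\fC}$. A morphism $(f,\theta)\colon(X,J)\to(Y,K)$ in $\lar{(\fC^I)}$ consists of a functor $\theta\colon J\to K$ and a natural transformation $f\colon X\Rightarrow Y\circ\theta$; I set $b_I(f,\theta)=(\tilde{f},\id_I\times\theta)$, where $\tilde{f}$ denotes the curried natural transformation whose component at $(i,j)$ is $f_j(i)$. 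Functoriality of $b_I$ is immediate from that of currying. Moreover, for any morphism $f\colon X\to Y$ in $(\fC^I)^J$,
\[
b_I(a_J(f))=b_I(f,\id_J)=(\tilde{f},\id_{I\times J})=a_{I\times J}(\tilde{f}),
\]
so the left square in \eqref{eq:indI} commutes.

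For $c_I$, the assignment on objects is the same as for $b_I$; on morphisms, one needs a map
\[
\lim_{j\in J}\colim_{k\in K}\Hom_{\fC^I}(X_j,Y_k)\longrightarrow\lim_{(i,j)\in I\times J}\colim_{(i',k)\in I\times K}\Hom_\fC\bigl(X_j(i),Y_k(i')\bigr).
\]
For each $i\in I$, evaluation at $i$ gives a natural map $\ev_i\colon\Hom_{\fC^I}(X_j,Y_k)\to\Hom_\fC(X_j(i),Y_k(i))$; composing with the colimit structure map into $\colim_{(i',k)}\Hom_\fC(X_j(i),Y_k(i'))$ at the index $(i',k)=(i,k)$ and then passing to the colimit over $k$ yields, for each pair $(i,j)$, a map $\colim_k\Hom_{\fC^I}(X_j,Y_k)\to\colim_{(i',k)}\Hom_\fC(X_j(i),Y_k(i'))$. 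A routine check of compatibility in $(i,j)$ assembles these into the required map on limits. Functoriality with respect to composition is then inherited from the functoriality of each $\ev_i$, and the needed independence from choices of representatives is automatic.

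Finally, commutativity of the right square in \eqref{eq:indI} is a direct unwinding: both routes send a morphism $(f,\theta)$ in $\lar{(\fC^I)}$ to the ind-morphism represented by the family $\{f_j(i)\colon X_j(i)\to Y_{\theta(j)}(i)\}_{(i,j)\in I\times J}$, using that $(\id_I\times\theta)(i,j)=(i,\theta(j))$ in the lower route. The only real care point in the whole argument is the careful tracking of representatives of classes in the relevant colimits, which is where I expect the bookkeeping to get most tedious; no deeper obstacle arises.
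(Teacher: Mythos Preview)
Your proposal is correct and follows essentially the same approach as the paper: both define $b_I$ and $c_I$ on objects by currying, define $b_I$ on morphisms as $(\tilde f,\id_I\times\theta)$, and construct $c_I$ on morphisms by evaluating at each $i\in I$ and passing to the appropriate colimit, leaving the compatibility and functoriality checks as routine bookkeeping. The paper organizes the definition of $c_I$ on morphisms slightly differently (first applying the evaluation-at-$i$ functor to all of $\Hom_{(\fC^I)^\ind}(X,Y)$, then projecting to the $j$-component, then mapping $Y_\bullet(i)$ into $c_I(Y)$), but this is just a reordering of the same operations you perform.
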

\begin{proof}
If $X:J\to \fC^I$ is a directed diagram in $\fC^I$, define both $b_I(X)$ and $c_I(X)$ to be the functor $I\times J\to \fC$ obtained by the exponential law. If $(f,\theta):(X,J)\to (Y,K)$ is a morphism in $\lar{(\fC^I)}$, then $f:b_I(X)\to b_I(Y)\circ(\id_I\times\theta)$ is a natural transformation of functors $I\times J\to \fC$; this defines $b_I$ on morphisms. Let us define $c_I$ on morphisms. If $X:J\to\fC^I$ and $Y:K\to \fC^I$ are directed diagrams in $\fC^I$, we should define a function:
\[\xymatrix{\Hom_{(\fC^I)^\ind}(X,Y)\ar[r] & \Hom_{\fC^\ind}(c_I(X),c_I(Y))=\displaystyle\lim_{(i,j)\in I\times J}\Hom_{\fC^\ind}(X_j(i),c_I(Y))}\]
This is equivalent to defining compatible functions:
\[\xymatrix{\pi_{(i,j)}:\Hom_{(\fC^I)^\ind}(X,Y)\ar[r] & \Hom_{\fC^\ind}(X_j(i),c_I(Y))}\]
Fix $(i,j)\in I\times J$. We let $\pi_{(i,j)}$ be the following composite, that we proceed to explain:
\[\xymatrix@C=1em{\Hom_{(\fC^I)^\ind}(X,Y)\ar[r] & \Hom_{\fC^\ind}(X_\bul(i), Y_\bul(i))\ar[r] & \Hom_{\fC^\ind}(X_j(i), Y_\bul(i))\ar[r] & \Hom_{\fC^\ind}(X_j(i), c_I(Y))}\]
The function on the left is induced by the functor $\fC^I\to \fC$ that evaluates a diagram at the object $i\in I$. The function in the middle is precomposition with the obvious morphism $X_j(i)\to X_\bul(i)$ in $\fC^\ind$. The function on the right is composition with the obvious morphism $Y_\bul(i)\to c_I(Y)$ in $\fC^\ind$. It is straightforward but tedious to verify that the definitions above determine functors $b_I$  and $c_I$ making the diagram \eqref{eq:indI} commute.
\end{proof}

\subsection{Simplicial enrichment of algebras}\label{subsec:simpenrich} Let $\S$ be the category of simplicial sets. We briefly recall the simplicial enrichment of $\Algl$ introduced by Corti\~nas-Thom in \cite{cortho}*{Section 3}. Let $p\geq 0$ and let $\Z^{\Delta^p}:=\Z[t_0,\dots ,t_p]/\langle 1-\textstyle\sum t_i\rangle$. We shall think of $\Z^{\Delta^p}$ as the ring of polynomial functions on the $p$-dimensional simplex. A non-decreasing function $\varphi:[p]\to[q]$ induces a ring homomorphism $\Z^{\Delta^q}\to\Z^{\Delta^p}$ by the formula:
\[t_i\mapsto \sum_{\varphi(j)=i}t_j\]
This defines a simplicial ring $\Z^\Delta$. For $B\in\Algl$, put $B^\Delta:=\Z^\Delta\otimes B$; this is a simplicial $\ell$-algebra. For $X\in \S$, put $B^X:=\Hom_\S(X,B^\Delta)$. It is easily verified that $B^X$ is an $\ell$-algebra with the pointwise operations; we shall think of it as the $\ell$-algebra of polynomial functions on $X$ with coefficients in $B$.

\begin{rem}\label{rem:simpliberreta}In general $(B^X)^Y\not\cong B^{X\times Y}$ ---this already fails when $X$ and $Y$ are standard simplices; see \cite{cortho}*{Remark 3.1.4}.
\end{rem}

Let $A$ and $B$ be two $\ell$-algebras. The \emph{simplicial mapping space} from $A$ to $B$ is the simplicial set $\Hom_\Algl(A,B^\Delta)$. For $A,B\in \Algl$ and $X\in \S$ we have an adjunction isomorphism:
\[\Hom_\S(X,\Hom_\Algl(A,B^\Delta))\cong\Hom_\Algl(A,B^X)\]

\subsection{Simplicial pairs} A \emph{simplicial pair} is a pair $(K,L)$ where $K$ is a simplicial set and $L\subseteq K$ is a simplicial subset. A \emph{morphism of pairs} from $(K',L')$ to $(K,L)$ is a morphism of simplicial sets $f:K'\to K$ such that $f(L')\subseteq L$. A pair $(K,L)$ is \emph{finite} if $K$ is a finite simplicial set.

\begin{exa}\label{exa:fSn} Let $I:=\Delta^1$ and $\partial I:=\{0,1\}\subset I$. For $n\geq 1$, let $I^n:=I\times\cdots\times I$ be the $n$--fold direct product and let $\partial I^n$ be the boundary of $I^n$:
	\[\partial I^n:=\left[(\partial I)\times I\times\cdots\times I\right]\cup\left[I\times (\partial I)\times\cdots\times I\right]\cup\cdots\cup\left[I\times\cdots\times I\times (\partial I)\right]\]
Put $I^0:=\Delta^0$ and $\partial I^0:=\emptyset$. We will write $\fS_n$ for the simplicial pair $(I^n,\partial I^n)$.
\end{exa}

\begin{exa}We shall think of a simplicial set $K$ as the pair $(K,\emptyset)$. This gives an embedding of the category of (finite) simplicial sets as a full subcategory of the category of (finite) simplicial pairs. We usually make this identification without further mention.
\end{exa}

\begin{defi}
	Let $(K, L)$ and $(K', L')$ be simplicial pairs. The \emph{box product} of $(K, L)$ with $(K', L')$ is defined as the simplicial pair:
	\[(K,L)\square(K', L'):=(K\times K', (K\times L')\cup(L\times K'))\]
\end{defi}

It is easily verified that $\square$ is a symmetric monoidal operation with unit $\Delta^0$ on (finite) simplicial pairs. It equals the cartesian product when restricted to simplicial sets.

\begin{exa}
For $m.n\geq 0$, we have $\fS_m\square\fS_n=\fS_{m+n}$.
\end{exa}

In the rest of the paper we will only consider \emph{finite} simplicial pairs, omitting the word `finite' from now on.

\subsection{Functions vanishing on a subset}\label{sec:vanish} Let $\sd:\S\to\S$ be the subdivision functor; see \cite{goja}*{Section III.4}. We have a natural transformation $\gamma:\sd\to\id_\S$ called the \emph{last vertex map}. For a simplicial pair $(K,L)$, an $\ell$-algebra $B$ and $r\geq 0$, put:
\[B_r^{(K,L)}:=\ker\left(\xymatrix@C=6em{B^{\sd^rK}\ar[r]^-{\mbox{\small restriction}} & B^{\sd^rL}}\right)\]
The last vertex map induces morphisms $B^{(K,L)}_r\to B^{(K,L)}_{r+1}$ and we usually regard $B^{(K,L)}_\bul$ as a  directed diagram in $\Algl$:
\[\xymatrix{B^{(K,L)}_\bul:B^{(K,L)}_0\ar[r] & B^{(K,L)}_1\ar[r] & B^{(K,L)}_2\ar[r] & \cdots}\]


\begin{exa}
$B^{\fS_0}_\bul=B^{\Delta^0}_\bul$ is the constant $\Zo$-diagram $B$.
\end{exa}

\begin{lem}[\cite{htpysimp}*{Lemma 2.10}; cf. \cite{cortho}*{Proposition 3.1.3}]\label{lem:simppair}
	Let $(K,L)$ be a (finite) simplicial pair, let $B$ be an $\ell$-algebra and let $r\geq 0$. Then $\Z^{(K,L)}_r$ is a free abelian group and there is a natural $\ell$-algebra isomorphism $B\otimes \Z^{(K,L)}_r\cong B^{(K,L)}_r$.
\end{lem}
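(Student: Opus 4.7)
The strategy is to reduce to the case $r=0$, since $(\sd^r K,\sd^r L)$ is again a finite simplicial pair, and then to argue by induction on the number $N$ of non-degenerate simplices of $K$ lying outside $L$.

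The heart of the argument is the base case $(K,L)=(\Delta^n,\emptyset)$. Here $\Z^{\Delta^n}$ is, by construction, the polynomial ring $\Z[t_0,\ldots,t_n]/(1-\sum t_i)$; this is a free abelian group on the monomials in $t_1,\ldots,t_n$, and the identification $B^{\Delta^n}\cong B\otimes\Z^{\Delta^n}$ is immediate from the definition $B^\Delta:=\Z^\Delta\otimes B$. To pass from this to an arbitrary finite simplicial set $K$ (still with $L=\emptyset$), I would attach the non-degenerate simplices of $K$ one at a time in order of increasing dimension, so that at each stage $L_i$ is a pushout $L_i=L_{i-1}\cup_{\partial\Delta^{n_i}}\Delta^{n_i}$ in $\S$. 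Applying the contravariant functor $\Hom_\S(-,B^\Delta)$ converts each such pushout into a pullback of $\ell$-algebras. Using the computational input that $\Z^{\Delta^n}\to\Z^{\partial\Delta^n}$ is surjective (discussed below), this pullback fits in a split short exact sequence, and iterating yields both that $\Z^K$ is free abelian and that the natural map $B\otimes\Z^K\to B^K$ is an isomorphism.

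For the pair case $(K,L)$, I would run the same induction starting from $L$: form a chain $L=L_0\subset L_1\subset\cdots\subset L_N=K$ by attaching the non-degenerate simplices of $K\setminus L$ in order of increasing dimension, and at each stage take kernels of the restrictions to $B^L$. The pullback from the previous paragraph produces short exact sequences
\[
0\to B^{(\Delta^{n_i},\partial\Delta^{n_i})}_0\to B^{(L_i,L)}_0\to B^{(L_{i-1},L)}_0\to 0,
\]
which split when $B=\Z$ because $\Z^{(L_{i-1},L)}_0$ is free by the inductive hypothesis. Freeness of $\Z^{(K,L)}_0$ and the natural isomorphism $B\otimes\Z^{(K,L)}_0\cong B^{(K,L)}_0$ then follow by induction: tensoring a split short exact sequence of free abelian groups with $B$ preserves exactness, and comparing termwise with the already established isos for $\Z^{\Delta^{n_i}}$ and $\Z^{L_{i-1}}$ identifies the middle term with $B^{(L_i,L)}_0$. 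Naturality in $B$ is automatic since every step is functorial.

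The step I expect to be the main obstacle is the base computation: verifying that $\Z^{\Delta^n}\to\Z^{\partial\Delta^n}$ is surjective and identifying its kernel as a free abelian group. Surjectivity can be proved by an inner induction on $n$, extending polynomial data face by face using canonical lifts built from the barycentric coordinates. The kernel turns out to be the principal ideal generated by $t_0 t_1\cdots t_n$, which is free abelian because multiplication by this element is injective on the free abelian group $\Z[t_0,\ldots,t_n]/(1-\sum t_i)$. With this geometric input in hand, the remaining pushout/pullback manipulation is formal.
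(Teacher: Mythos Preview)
The paper does not give its own proof of this lemma; it simply cites \cite{htpysimp}*{Lemma 2.10} and \cite{cortho}*{Proposition 3.1.3}. Your cell-attachment argument is the standard one and is correct in outline, but one point in the induction needs to be organized more carefully than you indicate.

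In the inductive step you attach a cell along $\partial\Delta^{n_i}$ and want to compare the sequence obtained by tensoring
\[
0\to \Z^{(\Delta^{n_i},\partial\Delta^{n_i})}_0\to \Z^{(L_i,L)}_0\to \Z^{(L_{i-1},L)}_0\to 0
\]
with $B$ against the analogous sequence for $B^{(-)}$. For that comparison (equivalently, for $B\otimes-$ to commute with the pullback $\Z^{L_{i-1}}\times_{\Z^{\partial\Delta^{n_i}}}\Z^{\Delta^{n_i}}$) you need both that $\Z^{\partial\Delta^{n_i}}$ is free \emph{and} that the natural map $B\otimes\Z^{\partial\Delta^{n_i}}\to B^{\partial\Delta^{n_i}}$ is an isomorphism---that is, the full conclusion of the lemma for the pair $(\partial\Delta^{n_i},\emptyset)$, not merely the surjectivity of $\Z^{\Delta^{n_i}}\to\Z^{\partial\Delta^{n_i}}$. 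Your stated inductive hypothesis (on the number $N$ of non-degenerate simplices of $K$ outside $L$) does not cover this, because $\partial\Delta^{n_i}$ can have more non-degenerate simplices than $K$ itself: for instance, a $2$-simplex with all three edges identified and all three vertices identified is a simplicial set with three non-degenerate simplices, yet attaching its top cell requires $\partial\Delta^2$, which has six.

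The fix is minor: either run the outer induction on $\dim K$ first (so that $\partial\Delta^{n_i}$, of strictly smaller dimension, is already covered) and then on the number of top-dimensional cells, or enlarge your ``base computation'' to include, by a separate induction on $n$, the freeness of $\Z^{\partial\Delta^n}$ and the isomorphism $B\otimes\Z^{\partial\Delta^n}\cong B^{\partial\Delta^n}$. With either adjustment your argument goes through; the identification of $\ker(\Z^{\Delta^n}\to\Z^{\partial\Delta^n})$ with the principal ideal $(t_0\cdots t_n)$ is correct and is indeed the key algebraic input.
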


\subsection{Multiplication morphisms}\label{sec:multi}

Let $(K,L)$ and $(K',L')$ be simplicial pairs, let $B\in\Algl$ and let $r,s\geq 0$. Recall from \cite{htpysimp}*{Section 3.1} that the multiplication in the simplicial commutative ring $\Z^\Delta$ induces an $\ell$-algebra homomorphism
\begin{equation}\label{eq:mu}\xymatrix{\mu^{(K,L),(K',L')}_B:\left(B^{(K,L)}_r\right)^{(K',L')}_s\ar[r] & B^{(K,L)\square(K',L')}_{r+s}}\end{equation}
such that the following statements hold:
\begin{enumerate}
	\item The morphisms \eqref{eq:mu} are covariant in $B$ and contravariant in $(K,L)$ and in $(K',L')$.
	\item Define $\theta:\Zo\times\Zo\to\Zo$ by $\theta(r,s)=r+s$. Then, for varying $r$ and $s$, the morphisms \eqref{eq:mu} assemble into a morphism in $\lar{\Algl}$:
	\[\xymatrix{\left(\mu^{(K,L),(K',L')}_B,\theta\right):\left(B^{(K,L)}_\bul\right)^{(K',L')}_\bul\ar[r] & B^{(K,L)\square(K',L')}_\bul}\]
	\item The morphisms \eqref{eq:mu} are associative in the obvious way.
	\item The composite
	\[\xymatrix@C=4em{B^{(K,L)}_r\cong\left(B^{(K,L)}_r\right)^{\Delta^0}_s\ar[r]^-{\mu^{(K,L),\Delta^0}_B} &  B^{(K,L)\square \Delta^0}_{r+s}\cong B^{(K,L)}_{r+s}}\]
	is the transition morphism of $B^{(K,L)}_\bul$. An analogous statement holds for $\mu^{\Delta^0,(K,L)}_B$.
\end{enumerate}
To alleviate notation when there is no risk of confusion, we often write $\mu^{(K,L),(K',L')}$ or even just $\mu$ instead of $\mu^{(K,L),(K',L')}_B$. We also write $\mu^{n,m}$ instead of $\mu^{\fS_n,\fS_m}$.

\subsection{Polynomial homotopy}\label{sec:alghtp} Two morphisms $f_0,f_1:A\to B$ in $\Algl$ are \emph{elementary homotopic} if there exists an $\ell$-algebra homomorphism $f:A\to B[t]$ such that $\ev_0\circ f=f_1$ and $\ev_1\circ f=f_0$. Here, $\ev_i$ stands for the evaluation at $i$.  Elementary homotopy $\sim_e$ is a reflexive and symmetric relation, but it is not transitive. We let $\sim$ be its transitive closure and we call $f_0$ and $f_1$ \emph{homotopic} if $f_0\sim f_1$. It is easily shown that $f_0\sim f_1$ iff there exist $r\in\N$ and $f:A\to B^{\Delta^1}_r$ such that the following diagrams commute:
\[\xymatrix@C=3em@R=2em{A\ar[d]_-{f_i}\ar[r]^-{f} & B^{\Delta^1}_r\ar[d]^-{d_i} \\
	B\ar[r]^-{\cong} & B^{\Delta^0}_r \\}\]
Homotopy is compatible with composition, and we have a category $[\Algl]$ whose objects are $\ell$-algebras and whose morphisms are polynomial homotopy classes of $\ell$-algebra homomorphisms \cite{gersten}*{Lemma 1.1}. There is an obvious functor $\Algl\to [\Algl]$.

Now let $A_\bul$ and $B_\bul$ be two directed diagrams in $\Algl$ and let $f,g:A_\bul\to B_\bul$ be two morphisms in $\Algl^\ind$. Following \cite{cortho}*{Definition 3.1.1}, we say that $f$ and $g$ are  \emph{homotopic} if they become equal upon applying the functor $\Algl^\ind\to [\Algl]^\ind$. We will write:
	\[[A_\bul,B_\bul]:=\Hom_{[\Algl]^\ind}\left((A,I),(B,J)\right)\]

It turns out that the homotopy groups of the simplicial mapping space between algebras can be described in terms of polynomial homotopy classes of morphisms:

\begin{thm}[\cite{htpysimp}*{Theorem 3.10}]\label{thm:bij}
	For any pair of $\ell$-algebras $A$ and $B$ and any $n\geq 0$, there is a natural bijection:
	\begin{equation}\label{eq:bij}\pi_n\Hom_\Algl(A,B^\Delta)\cong [A,B^{\fS_n}_\bul]\end{equation}
\end{thm}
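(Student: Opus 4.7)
My plan is to interpret both sides as pointed homotopy classes of maps out of the cube-with-boundary pair $(I^n, \partial I^n)$, differing only in whether one works in the simplicial or the algebraic category, and then to bridge the two via the $\sd \dashv \Ex$ adjunction combined with the simplicial-algebra adjunction defining $B^X$. Since the simplicial set $Y := \Hom_\Algl(A, B^\Delta)$ need not be a Kan complex, I would compute $\pi_n Y$ as $\pi_n \Ex^\infty Y$ and present the latter cubically, as the set of based homotopy classes of pair maps $(I^n, \partial I^n) \to (\Ex^\infty Y, 0)$ with base vertex the zero morphism.

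Every such map has finite domain, hence factors through some $\Ex^r Y$. By the $\sd^r \dashv \Ex^r$ adjunction, a representative is a simplicial map $\sd^r I^n \to Y$ carrying $\sd^r \partial I^n$ to the base vertex, which by the enrichment of Section \ref{subsec:simpenrich} is precisely an $\ell$-algebra map $A \to B^{\sd^r I^n}$ whose composition with the restriction $B^{\sd^r I^n} \to B^{\sd^r \partial I^n}$ vanishes, i.e.~an element of $\Hom_\Algl(A, B^{\fS_n}_r)$. The last-vertex maps underlying $\Ex^\infty$ correspond to the transition morphisms of $B^{\fS_n}_\bul$, so passing to the colimit in $r$ identifies the set of representatives with $\Hom_{\Algl^\ind}(A, B^{\fS_n}_\bul)$. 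A parallel unwinding shows that a simplicial homotopy, namely a pair map $(I^1 \times I^n, I^1 \times \partial I^n) \to (\Ex^r Y, 0)$, corresponds to an algebra map into $B^{\fS_n \square \Delta^1}_r$ with prescribed faces, while an algebraic homotopy in the sense of Section \ref{sec:alghtp} corresponds to an algebra map into $(B^{\fS_n}_{r})^{\Delta^1}_s$. The multiplication morphism $\mu^{\fS_n, \Delta^1}$ of Section \ref{sec:multi} converts the latter into the former, so algebraic homotopy implies simplicial homotopy.

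The main obstacle is the reverse implication: given a simplicial homotopy, produce an algebraic one. A naive inversion of $\mu$ is unavailable because the exponential law fails (Remark \ref{rem:simpliberreta}), so $\mu^{\fS_n, \Delta^1}$ is not an isomorphism of algebras. My plan to circumvent this is to construct, after further subdivision, a homotopy inverse to $\mu^{\fS_n, \Delta^1}$ at the level of the ind-system: the direct-sum decomposition $B^{(K,L)}_r \cong B \otimes \Z^{(K,L)}_r$ of Lemma \ref{lem:simppair} reduces the problem to producing explicit polynomial sections at the level of $\Z$-coefficients, which in turn amounts to a combinatorial statement about how prisms sit inside subdivided cubes. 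Naturality in $A$ and $B$ would then follow automatically from the naturality of every construction used.
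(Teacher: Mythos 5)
Your reductions are sound as far as they go: computing $\pi_n$ via $\Ex^\infty$ with a cubical model, factoring any representative through some $\Ex^r Y$ and applying the $\sd^r\dashv\Ex^r$ adjunction together with the enrichment adjunction to identify the set of representatives with $\Hom_{\Algl^\ind}(A,B^{\fS_n}_\bul)$, and invoking $\mu^{\fS_n,\Delta^1}$ to get that algebraic homotopy implies simplicial homotopy. But the converse --- which you correctly flag as ``the main obstacle'' --- is the actual content of the theorem, and what you offer there is a plan, not a proof.

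Two concrete problems with the plan. First, even if $\mu^{\fS_n,\Delta^1}$ admitted a homotopy inverse $\nu$ in the ind-category, that alone would not finish the argument: $\nu$ need not commute with the face maps $d_0,d_1$. Given a simplicial homotopy $H\colon A\to B^{\fS_n\square\Delta^1}_r$ between $f_0$ and $f_1$, the composite $\nu H$ would be an algebraic homotopy between $d_0\nu H$ and $d_1\nu H$, but these endpoints need not agree (even up to a controllable error) with the original $f_i$. You would need to build $\nu$ strictly compatibly with the inclusions $\fS_n\square d^i$, or else run a relative argument that repairs the boundary --- neither is routine. Second, the phrase ``explicit polynomial sections at the level of $\Z$-coefficients \ldots\ a combinatorial statement about how prisms sit inside subdivided cubes'' is a placeholder, not a lemma. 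The whole difficulty of the theorem lives exactly there: after applying $\sd^r$, the cube $I\times I^n$ does not become a product $\sd^a I\times\sd^b I^n$, so there is no naive section of $\mu$; the required assertion about retractions of $\Z^{\sd^r(I\times I^n)}$ onto tensor factors, compatibly with boundary restrictions and last-vertex maps, has to be stated precisely and proved. Until that lemma is supplied, this is an outline of a proof that the hard part exists, not a proof.
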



\begin{rem}\label{rem:groupstructure}
	Let $A$ and $B$ be two $\ell$-algebras and let $n\geq 1$. Endow the set $[A,B^{\fS_n}_\bul]$ with the group structure for which \eqref{eq:bij} is a group isomorphism. This group structure is abelian if $n\geq 2$. Moreover, if $f:A\to A'$ and $g:B\to B'$ are morphisms in $[\Algl]$, then the following functions are group homomorphisms:
	\[f^*:[A', B^{\fS_n}_\bul]\to [A, B^{\fS_n}_\bul]\]
	\[g_*:[A, B^{\fS_n}_\bul]\to [A, (B')^{\fS_n}_\bul]\]
\end{rem}

\begin{exa}\label{exa:groupinverse}
	Let $\omega$ be the automorphism of $B^{\Delta^1}$ defined by $\omega(t_0)=t_1$ and $\omega(t_1)=t_0$; then $\omega$ induces an automorphism of $B^{\fS_1}_0$. It is easily shown that if $g:A\to B^{\fS_1}_0$ is an $\ell$-algebra homomorphism and $[g]$ represents its class in $[A,B^{\fS_1}_\bul]$, then $[\omega\circ g]=[g]^{-1}$ in $[A,B^{\fS_1}_\bul]$; see \cite{htpysimp}*{Example 3.12} for details.
\end{exa}

\begin{exa}\label{exa:concat}
Fix $r\geq 1$. Note that there is an automorphism of $\sd^r I$ exchanging both endpoints of $I$. We have a pushout square
\[\xymatrix@C=2em@R=2em{\Delta^0\ar[d]_-{d^0}\ar[r]^-{d^1} & \sd^rI\ar[d] \\ \sd^rI\ar[r] & \sd^{r+1}I }\]
that shows that $\sd^{r+1}I$ can be obtained by concatenating two copies of $\sd^rI$; the endpoint $1$ in the first copy gets identified with the $0$ in the second one. Since $B^?:\S^\op\to\Algl$ preserves products, it follows that $B^{\sd^{r+1}I}\cong B^{\sd^rI}\times_BB^{\sd^rI}$. Thus, two morphisms $f,g:A\to B^{\sd^rI}$ such that $d_0f=d_1g$ determine a morphism $f\bullet g:A \to B^{\sd^{r+1}I}$, that we call the \emph{concatenation} of $f$ and $g$. In particular, we can always concatenate $f,g:A\to B^{\fS_1}_r$ to obtain $f\bullet g:A\to B^{\fS_1}_{r+1}$. It is not difficult to show that the group structure in $[A,B^{\fS_1}_\bul]$ is induced by concatenation; we will not use this fact, however.
\end{exa}

\begin{exa}\label{exa:cmn}
	Let $m,n\geq 1$ and let $c:I^m\times I^n\overset{\cong}\to I^n\times I^m$ be the commutativity isomorphism. Then $c$ induces an isomorphism $c^*:B^{\fS_{n+m}}_\bul\to B^{\fS_{m+n}}_\bul$, that in turn induces a bijection $c^*:[A,B^{\fS_{n+m}}_\bul]\to [A,B^{\fS_{m+n}}_\bul]$. We claim that the latter is multiplication by $(-1)^{mn}$. Indeed, this follows from Theorem \ref{thm:bij} and the well known fact that permuting two coordinates in $\Omega^k$ induces multiplication by $(-1)$ upon taking $\pi_0$.
\end{exa}

\subsection{Extensions and classifying maps}\label{subsec:extensions}

An \emph{extension} is a short exact sequence
\begin{equation}\label{eq:extension}\scrE:\xymatrix{A\ar[r] & B\ar[r] & C\\}\end{equation}
of $\ell$-algebras that splits as a sequence of $\ell$-modules. A \emph{morphism of extensions} is a morphism of diagrams in $\Algl$. We often write $(\scrE,s)$ to indicate that we consider $\scrE$ together with a particular $\ell$-linear splitting $s$. A \emph{strong morphism of extensions} $(\scrE',s')\to (\scrE,s)$ is a morphism of extensions $(a,b,c):\scrE'\to \scrE$ that is compatible with the specified splittings; i.e. such that the following square of modules commutes:
\[\xymatrix@R=2em{B'\ar[d]_{b} & C'\ar[l]_-{s'}\ar[d]^-{c} \\
	B & C\ar[l]_-{s} \\}\]

\begin{rem}
If $\scrE$ is an extension and $G$ is a ring, then $\scrE\otimes G$ is an extension. In particular, if $(K,L)$ is a simplicial pair, then $\scrE^{(K,L)}_r$ is an extension by Lemma \ref{lem:simppair}.
\end{rem}

Let $\Mod_\ell$ be the category of $\ell$-modules. The forgetful functor $F:\Algl\to\Mod_\ell$ has a right adjoint $\tilT$; put $T:=\tilT\circ F$. For an $\ell$-algebra $A$, let $\eta_A:TA\to A$ be the counit of this adjunction and put $JA:=\ker \eta_A$. The \emph{universal extension} of $A$ is the sequence
\[\scrU_A:\xymatrix{JA\ar[r] & TA\ar[r]^-{\eta_A} & A\\},\]
split by the unit map $\sigma_A:FA\to F\tilT(FA)$. We will always use $\sigma_A$ as a splitting for $\scrU_A$.

\begin{prop}[cf. \cite{cortho}*{Proposition 4.4.1}]\label{lem:classexists}
	Let \eqref{eq:extension} be an extension with splitting $s$ and let $f:D\to C$ be an $\ell$-algebra homomorphism. Then there exists a unique strong morphism of extensions $\scrU_{D}\to(\scrE,s)$ extending $f$:
	\[\xymatrix{\scrU_D\ar@{..>}[d]_-{\exists !} & JD\ar[r]\ar@{..>}[d]_-{\xi} & TD\ar@{..>}[d]\ar[r]^-{\eta_D} & D\ar[d]^-{f} \\
		(\scrE,s) & A\ar[r] & B\ar[r] & C \\}\]
\end{prop}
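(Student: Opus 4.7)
The plan is to exploit the universal property of $TD=\tilT FD$: by the adjunction between $F$ and $\tilT$, an algebra homomorphism $h:TD\to B$ is determined uniquely by the $\ell$-linear map $F(h)\circ\sigma_D:FD\to FB$, and every $\ell$-linear map $FD\to FB$ arises this way. In particular, there is a unique algebra homomorphism $g:TD\to B$ satisfying
\[F(g)\circ\sigma_D=s\circ F(f).\]
By construction this equation is precisely the strong-morphism compatibility with the splittings $\sigma_D$ and $s$, so that condition will be built into $g$ automatically.

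Next I would check that the right-hand square of the proposition commutes, i.e.\ that $p\circ g=f\circ\eta_D$ for the surjection $p:B\to C$ of $\scrE$. Both sides are algebra maps $TD\to C$, so by the adjunction it suffices to compare them after applying $F$ and precomposing with $\sigma_D$. The left side gives $F(p)\circ F(g)\circ\sigma_D=F(p)\circ s\circ F(f)=F(f)$ since $s$ splits $F(p)$, while the right side gives $F(f)\circ F(\eta_D)\circ\sigma_D=F(f)$ by the triangle identity $F(\eta_D)\circ\sigma_D=\id_{FD}$. Hence the two algebra maps agree.

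With the right square commuting, $p\circ g$ vanishes on $JD=\ker\eta_D$, so $g$ restricts to a unique map $\xi:JD\to A=\ker p$, which makes the left square commute by construction. This yields the desired strong morphism $(\xi,g,f):\scrU_D\to(\scrE,s)$ extending $f$. Uniqueness is immediate: any strong morphism extending $f$ has middle arrow $g'$ forced by the strong condition to satisfy $F(g')\circ\sigma_D=s\circ F(f)$, and the adjunction bijection then gives $g'=g$; the left component is in turn determined as the restriction of $g$ to $JD$.

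There is no real obstacle here; the entire argument is a direct application of the universal property of $\tilT FD$ and the triangle identity for the unit $\sigma$ and counit $\eta$. The only mild care required is in correctly unwinding those conventions so that the strong-morphism condition and the commutativity of both squares really do reduce to the single equation $F(g)\circ\sigma_D=s\circ F(f)$.
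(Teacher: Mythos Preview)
Your argument is correct and is exactly the standard one; the paper itself gives no proof here, merely citing \cite{cortho}*{Proposition 4.4.1}, so there is nothing to compare against beyond noting that your approach is the expected one.

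One small remark on conventions: the paper states that $\tilT$ is a \emph{right} adjoint to $F$, but the descriptions of the unit $\sigma_A:FA\to F\tilT(FA)$ and the counit $\eta_A:TA\to A$ make clear that the intended adjunction is $\tilT\dashv F$ (i.e.\ $\tilT$ is the tensor algebra, the \emph{left} adjoint). You are implicitly using the bijection $\Hom_{\Algl}(TD,B)\cong\Hom_{\Mod_\ell}(FD,FB)$, which is the correct one, so your proof is unaffected; just be aware that the word ``right'' in the paper appears to be a slip.
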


The morphism $\xi$ is called the \emph{classifying map of $f$ with respect to $(\scrE,s)$}. When $D=C$ and $f=\id_C$ we call $\xi$ the \emph{classifying map of $(\scrE,s)$}.

\begin{prop}[cf. \cite{cortho}*{Proposition 4.4.1}]\label{lem:classhmtp}
	In the hypothesis of Proposition \ref{lem:classexists}, the homotopy class of the classifying map $\xi$ does not depend upon the splitting $s$.
\end{prop}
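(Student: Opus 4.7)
The plan is to construct an explicit elementary polynomial homotopy between the two classifying maps $\xi_0,\xi_1:JD\to A$ arising from $\ell$-linear splittings $s_0,s_1$ of $\scrE$. The idea is to apply Proposition~\ref{lem:classexists} a second time, to the extension $\scrE[t]:A[t]\to B[t]\to C[t]$ (which is an extension by the remark preceding Proposition~\ref{lem:classexists}) equipped with a splitting interpolating between $s_0$ and $s_1$.

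Concretely, I will define a map $\tilde{s}:C[t]\to B[t]$ on elements of $C$ by $\tilde{s}(c):=(1-t)s_0(c)+t\,s_1(c)$ and extend it $\ell[t]$-linearly to $C[t]$. The identity $\pi[t]\circ\tilde{s}=\id_{C[t]}$ then follows from $\pi\circ s_i=\id_C$ by comparing coefficients in $t$, so $\tilde{s}$ is an $\ell$-linear splitting of $\scrE[t]$. Applying Proposition~\ref{lem:classexists} to the composite $f':D\xrightarrow{f}C\hookrightarrow C[t]$ and the pair $(\scrE[t],\tilde{s})$ produces a unique classifying morphism $\scrU_D\to(\scrE[t],\tilde{s})$ extending $f'$, whose restriction to kernels I denote $H:JD\to A[t]$.

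To finish, I identify $\ev_i\circ H$ with $\xi_i$ for $i\in\{0,1\}$. The evaluation at $i$ fits into an obvious morphism of extensions $\ev_i:\scrE[t]\to\scrE$, and by construction $\ev_i\circ\tilde{s}=s_i\circ\ev_i$, so this morphism is \emph{strong} when viewed as $(\scrE[t],\tilde{s})\to(\scrE,s_i)$. Composing the classifying morphism for $f'$ with this strong morphism yields a strong morphism $\scrU_D\to(\scrE,s_i)$ extending $\ev_i\circ f'=f$; the uniqueness clause of Proposition~\ref{lem:classexists} then forces $\ev_i\circ H=\xi_i$. Thus $H$ witnesses $\xi_1\sim_e\xi_0$, proving the claim. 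The only subtle point is verifying the \emph{strong} compatibility $\ev_i\circ\tilde{s}=s_i\circ\ev_i$, which is a direct computation; I expect no real obstacle beyond this bookkeeping.
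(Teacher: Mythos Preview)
Your argument is correct and is precisely the standard interpolation proof: the paper does not spell out its own argument here but simply cites \cite{cortho}*{Proposition 4.4.1}, whose proof is exactly the construction you give---form the extension $\scrE[t]$, split it by the affine combination $(1-t)s_0+ts_1$, and read off the elementary homotopy from the uniqueness clause of Proposition~\ref{lem:classexists}. There is nothing to add.
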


Thus, we may refer to the classifying map of \eqref{eq:extension} as a homotopy class $JC\to A$ without specifying a splitting for \eqref{eq:extension}.

\begin{prop}[\cite{cortho}*{Proposition 4.4.2}]\label{lem:classcommute}Let $\scrE_i:A_i\to B_i\to C_i$ be an extension with classifying map $\xi_i$ ($i=1,2$). Let $(a, b,c):\scrE_1\to\scrE_2$ be a morphism of extensions. Then the following square af algebras commutes up to homotopy:
	\[\xymatrix{JC_1\ar[d]_-{\xi_1}\ar[r]^-{J(c)} & JC_2\ar[d]^-{\xi_2} \\
		A_1\ar[r]^-{a} & A_2 \\}\]
	Moreover, if we consider specific splittings for the $\scrE_i$ and $(a,b,c)$ is a strong morphism of extensions then the square above commutes on the nose.
\end{prop}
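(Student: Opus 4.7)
The plan is to dispose of the on-the-nose statement first by invoking the uniqueness clause of Proposition~\ref{lem:classexists}, and then to reduce the homotopical statement to an explicit polynomial homotopy obtained by linearly interpolating between two natural $\ell$-linear splittings.

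For the strong case, I will first observe that functoriality of $T=\tilT F$ produces an algebra map $T(c):TC_1\to TC_2$ restricting to $J(c)$ on kernels, and that naturality of the unit $\sigma_A:FA\to FTA$ makes $(J(c),T(c),c):\scrU_{C_1}\to\scrU_{C_2}$ a strong morphism of extensions with respect to the canonical splittings $\sigma_{C_i}$. Composing it with the classifying data $\scrU_{C_2}\to(\scrE_2,s_2)$ yields a strong morphism $\scrU_{C_1}\to(\scrE_2,s_2)$ extending $c$ whose $J$-component is $\xi_2\circ J(c)$, while composing the classifying morphism of $\scrE_1$ with $(a,b,c)$ yields another strong morphism with the same source, target and base map $c$ but $J$-component $a\circ\xi_1$. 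The uniqueness clause of Proposition~\ref{lem:classexists} then forces $a\circ\xi_1=\xi_2\circ J(c)$.

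For the general case I would fix arbitrary splittings $s_1,s_2$ and let $\tau_i:TC_i\to B_i$ be the associated algebra extensions, so that $\xi_i=\tau_i|_{JC_i}$. Both $bs_1$ and $s_2c$ are $\ell$-linear lifts of $c$ through $B_2\twoheadrightarrow C_2$, and their linear interpolation
\[\tilde s(x):=(1-t)\,bs_1(x)+t\,s_2c(x)\in FB_2[t]\]
is an $\ell$-linear lift of $c$ viewed inside $C_2[t]$. Via the adjunction between $F$ and $\tilT$, the module map $\tilde s$ corresponds to a unique algebra map $\tilde\tau:TC_1\to B_2[t]$, and uniqueness forces $\ev_0\circ\tilde\tau=b\tau_1$ and $\ev_1\circ\tilde\tau=\tau_2\circ T(c)$.

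The hardest step I anticipate is verifying that $\tilde\tau|_{JC_1}$ actually factors through $A_2[t]\subset B_2[t]$; once granted, $\tilde\tau|_{JC_1}$ will supply an elementary polynomial homotopy $JC_1\to A_2[t]$ between $a\circ\xi_1$ and $\xi_2\circ J(c)$. I plan to argue this by postcomposing $\tilde\tau$ with the projection $B_2[t]\twoheadrightarrow C_2[t]$, which produces an algebra map $TC_1\to C_2[t]$ whose associated $\ell$-linear map out of $FC_1$ is the constant $c$; the uniqueness clause of the adjunction then identifies this composite with $c\circ\eta_{C_1}:TC_1\to C_2\hookrightarrow C_2[t]$, which manifestly vanishes on $JC_1$.
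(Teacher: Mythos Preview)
The paper does not actually prove this proposition; it is stated with a citation to \cite{cortho}*{Proposition 4.4.2} and no argument is given. Your proof is correct and is essentially the standard one from that reference: the strong case is pure uniqueness from Proposition~\ref{lem:classexists}, and the general case is the linear interpolation $\tilde s=(1-t)\,bs_1+t\,s_2c$ pushed through the $(F,\tilT)$-adjunction. Your verification that $\tilde\tau|_{JC_1}$ lands in $A_2[t]$---by composing with $B_2[t]\twoheadrightarrow C_2[t]$, recognizing the resulting module map as the constant $c$ (because $\pi_2\circ b\circ s_1=c\circ\pi_1\circ s_1=c$ and $\pi_2\circ s_2\circ c=c$), and invoking adjunction uniqueness to identify the composite with $c\circ\eta_{C_1}$---is exactly the point that needs checking and is handled cleanly.
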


\begin{lem}[cf. \cite{cortho}*{Corollary 4.4.4}]\label{lem:Jhtp}
	The functor $J:\Algl\to \Algl$ preserves homotopy, and thus defines a functor $J:[\Algl]\to[\Algl]$.
\end{lem}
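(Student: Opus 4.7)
The plan is to show that $J$ preserves the elementary homotopy relation up to $\sim$; since $\sim$ is the transitive closure of $\sim_e$ and composition is compatible with $\sim$, this will immediately descend $J$ to a functor on $[\Algl]$. So suppose $f_0\sim_e f_1:A\to B$, witnessed by $f:A\to B[t]$ with $\ev_i\circ f = f_{1-i}$.

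The key construction is a classifying-map comparison. Since $\ell[t]$ is a free $\ell$-module, tensoring the universal extension $\scrU_B:JB\to TB\to B$ by $\ell[t]$ yields a new extension
\[\scrE:\xymatrix{(JB)[t]\ar[r] & (TB)[t]\ar[r]^-{\eta_B\otimes\id} & B[t]}\]
with $\ell$-linear splitting $\sigma_B\otimes\id$. Let $\xi:J(B[t])\to (JB)[t]$ be the classifying map of $\scrE$ (Proposition \ref{lem:classexists}). For each $i\in\{0,1\}$, the evaluation gives a strong morphism of extensions $(\ev_i,\ev_i,\ev_i):\scrE\to\scrU_B$. Since the classifying map of $\scrU_B$ (with respect to $\id_B$) is $\id_{JB}$, Proposition \ref{lem:classcommute} yields
\[\ev_i\circ\xi\sim J(\ev_i):J(B[t])\to JB.\]

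Now set $h:=\xi\circ J(f):JA\to (JB)[t]$. Using the above and functoriality of $J$, for each $i$ we have
\[\ev_i\circ h = \ev_i\circ\xi\circ J(f)\sim J(\ev_i)\circ J(f)=J(\ev_i\circ f)=J(f_{1-i}).\]
Thus $h$ exhibits an elementary homotopy between morphisms that are each $\sim$-equivalent to $J(f_0)$ and $J(f_1)$ respectively, so $J(f_0)\sim J(f_1)$ by transitivity. This proves $J$ descends to $[\Algl]\to[\Algl]$.

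The only substantive point is the construction of $\scrE$ and the verification that its classifying map intertwines the two evaluations with the functorial maps $J(\ev_i)$; once this is set up, Propositions \ref{lem:classexists} and \ref{lem:classcommute} do all the work. There is no real obstacle beyond bookkeeping with the classifying map machinery, so a detailed proof will amount to spelling out the diagram chase above.
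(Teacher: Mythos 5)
Your proof is correct, and since the paper states Lemma \ref{lem:Jhtp} without a proof (it simply cites \cite{cortho}*{Corollary 4.4.4}), the relevant comparison is with the Corti\~nas--Thom argument, which your construction reproduces: tensor the universal extension $\scrU_B$ with the (free) polynomial ring to obtain an extension $(JB)[t]\to(TB)[t]\to B[t]$ with the split structure inherited from $\sigma_B$, take the classifying map $\xi:J(B[t])\to(JB)[t]$, and precompose with $J(f)$ to get the homotopy. The extension $\scrE$ you construct is precisely $(\scrU_B)^{\Delta^1}$ in the paper's notation, so $\xi$ is the map $J(B^{\Delta^1})\to(JB)^{\Delta^1}$ alluded to in Remark \ref{rem:analogtokappapq}.

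One small remark: you only need the weak form of Proposition \ref{lem:classcommute}, but since you have checked that each $(\ev_i,\ev_i,\ev_i):\scrE\to\scrU_B$ is a \emph{strong} morphism of extensions (which holds because $\sigma_B$ is $\ell$-linear, so $\ev_i\circ(\sigma_B\otimes\id)=\sigma_B\circ\ev_i$), the ``moreover'' clause of that proposition gives $\ev_i\circ\xi = J(\ev_i)$ on the nose, not merely up to homotopy. Consequently $h:=\xi\circ J(f)$ satisfies $\ev_i\circ h = J(f_{1-i})$ exactly, and $h$ is already an elementary homotopy between $J(f_0)$ and $J(f_1)$; the appeal to transitivity at the end is unnecessary. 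This does not affect correctness, but tightening it avoids the impression that only $J(f_0)\sim J(f_1)$ via an intermediate chain is obtained when in fact a single elementary homotopy does the job.
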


\subsection{Path extensions}\label{sec:pathext} For $B\in\Algl$ and $n\geq 0$, put:
\[P(n,B)_\bul:=B^{\fS_n\square(I,\{1\})}_\bul\]
We will write $(PB)_\bul$ instead of $P(0,B)_\bul$. The inclusions
\[\fS_{n+1}=\fS_n\square\fS_1\supseteq \fS_n\square(I,\{1\})\supseteq \fS_n\square \{0\}\cong \fS_n\]
induce a sequence of $\Zo$-diagrams:
\begin{equation}\label{eq:pathsequencenB}\xymatrix@C=3em{\scrP_{n,B}: B^{\fS_{n+1}}_r\ar[r] & P(n,B)_r\ar[r] & B^{\fS_n}_r}\end{equation}
We claim that \eqref{eq:pathsequencenB} is an extension. Exactness at $P(n,B)_r$ holds because the functor $B^{\sd^r(-)}:\S\to \Algl^\op$ preserves pushouts and we have:
\[\partial I^{n+1}=\left[ (\partial I^n\times I)\cup (I^n\times \{1\})\right]\cup (I^n\times\{0\})\]
Exactness at $B^{\fS_{n+1}}_r$ follows from the fact that both $B^{\fS_{n+1}}_r$ and $P(n,B)_r$ are subalgebras of $B^{\sd^rI^{n+1}}$. Consider the element $t_0\in\Z^{\Delta^1}$; $t_0$ is actually in $\Z^{(I,\{1\})}_0$ since $d_0(t_0)=0$. It is easily verified that the composite
\[\xymatrix{B^{\fS_n}_r\ar[r]^-{?\otimes t_0} & B^{\fS_n}_r\otimes\Z^{(I,\{1\})}_0\cong\left(B^{\fS_n}_r\right)^{(I,\{1\})}_0\ar[r]^-{\mu} & B^{\fS_n\square(I,\{1\})}_r \\}\]
is a splitting for \eqref{eq:pathsequencenB}; we will always use this as a splitting for \eqref{eq:pathsequencenB}.

\begin{exa}\label{exa:muSm}
	By naturality of $\mu$, there is a strong morphism of extensions:
	\[\xymatrix@C=3em{\scrP_{m,B^{\fS_n}_s}\ar[d] & \left(B^{\fS_n}_s\right)^{\fS_{m+1}}_r\ar[d]_-{\mu}\ar[r] & P(m,B^{\fS_n}_s)_r\ar[r]\ar[d] & \left(B^{\fS_n}_s\right)^{\fS_m}_r\ar[d]^-{\mu} \\
		\scrP_{n+m,B} & B^{\fS_{n+m+1}}_{s+r}\ar[r] & P(n+m,B)_{s+r}\ar[r] & B^{\fS_{n+m}}_{s+r}}\]
\end{exa}

\begin{exa}\label{exa:pathb1}
	For $n=0$, the extension \eqref{eq:pathsequencenB} takes the form:
	\begin{equation}\label{eq:pathb1}\xymatrix{\scrP_{0,B}: B^{\fS_1}_r\ar[r] & (PB)_r\ar[r] & B^{\fS_0}_r\cong B}\end{equation}
	This is the \emph{loop extension} of \cite{cortho}*{Section 4.5} and we will write $\lambda_B$ for its classifying map.
\end{exa}

\begin{exa}\label{exa:pathmonho} Define $\widetilde{P}(n,B)_\bul:=B^{(I,\{1\})\square\fS_n}_\bul$. The inclusions
	\[\fS_{1+n}=\fS_1\square\fS_n\supseteq (I,\{1\})\square \fS_n\supseteq \{0\}\square\fS_n\]
	induce a sequence that turns out to be an extension:
	\begin{equation}\label{eq:pathmonho}\xymatrix@C=3em{\widetilde{\scrP}_{n,B}: B^{\fS_{1+n}}_r \ar[r] & \widetilde{P}(n,B)_r\ar[r] & B^{\fS_n}_r \\}\end{equation}
	The commutativity isomorphism $c:I^n\times I\to I\times I^n$ induces an isomorphism of extensions $c^*:\widetilde{\scrP}_{n,B}\to\scrP_{n,B}$.
\end{exa}

\begin{exa}\label{exa:subdi1}
	It will be useful to have a more explicit description of \eqref{eq:pathb1}. We have:
	\begin{equation}\label{eq:identi}\left\{\begin{array}{l}B^{\Delta^0}=B[t_0]/\langle 1-t_0\rangle\cong B; \\
	B^{\Delta^1}=B[t_0,t_1]/\langle 1-t_0-t_1\rangle\cong B[t],\hspace{1em} t_1\leftrightarrow t.\end{array}\right.\end{equation}
	Under these identifications, the face morphisms $B^{\Delta^1}\to B^{\Delta^0}$ coincide with the evaluations $\ev_i:B[t]\to B$ for $i=0,1$. We have:
	\[(PB)_0=\ker\left(B^{\Delta^1}\overset{d_0}\longrightarrow B^{\Delta^0}\right)\cong \ker\left(B[t]\overset{\ev_1}\longrightarrow B\right)=(t-1)B[t]\]
	\[B^{\fS_1}_0=\ker\left((PB)_0\overset{d_1}\longrightarrow B^{\Delta^0}\right)\cong \ker\left((t-1)B[t]\overset{\ev_0}\longrightarrow B\right)=(t^2-t)B[t]\]
	Hence, the extension \eqref{eq:pathb1} is isomorphic to:
	\begin{equation}\label{eq:pathcop}\xymatrix@C=3em{(t^2-t)B[t]\ar[r]^-{\inc} & (t-1)B[t]\ar[r]^-{\ev_0} & B }\end{equation}
	The section in \eqref{eq:pathb1} identifies with the morphism $B\to (t-1)B[t]$, $b\mapsto b(1-t)$.

	We now want a description of \eqref{eq:pathb1} once a subdivision has been made. Recall that $\sd\Delta^1$ fits into the following pushout:
	\[\xymatrix{\Delta^0\ar[r]^-{d^0}\ar[d]_-{d^0} & \Delta^1\ar[d] \\
		\Delta^1\ar[r] & \sd\Delta^1 \\}\]
	Since the functor $B^{?}:\S^\op\to \Algl$ preserves limits, we have:
	\[B^{\sd\Delta^1}\cong B[t]\tensor[_{\ev_1}]{\times}{_{\ev_1}} B[t]= \{(p,q)\in B[t]\times B[t]: p(1)=q(1)\}\]
	The endpoints of $\sd\Delta^1$ are the images of the coface maps $d^1:\Delta^0\to\Delta^1$ whose codomains are each of the two copies of $\Delta^1$ contained in $\sd\Delta^1$. We get:
	\[(PB)_1=\ker\left(B^{\sd\Delta^1}\longrightarrow B^{\sd\{1\}}\right)\cong B[t]\tensor[_{\ev_1}]{\times}{_{\ev_1}} tB[t]\]
	\[B^{\fS_1}_1=\ker\left(B^{\sd\Delta^1}\longrightarrow B^{\sd(\partial\Delta^1)}\right) \cong tB[t]\tensor[_{\ev_1}]{\times}{_{\ev_1}} tB[t]\]
	In this description of $(PB)_1$ a choice has been made, since the two endpoints of $\sd\Delta^1$ are indistinguishable. The extension \eqref{eq:pathb1} is isomorphic to:
	\begin{equation}\label{eq:path1cop}\xymatrix@C=3em{tB[t]\tensor[_{\ev_1}]{\times}{_{\ev_1}} tB[t]\ar[r]^-{\inc} & B[t]\tensor[_{\ev_1}]{\times}{_{\ev_1}}tB[t]\ar[r]^-{\ev_0\circ\pr_1} & B}\end{equation}
	Here $\pr_1:B[t]\tensor[_{\ev_1}]{\times}{_{\ev_1}} tB[t]\longrightarrow B[t]$ is the projection into the first factor. The section in \eqref{eq:pathb1} identifies with the morphism:
	\[B\to B[t]\tensor[_{\ev_1}]{\times}{_{\ev_1}}tB[t],\hspace{1em}b\mapsto (b(1-t),0).\]
	The last vertex map induces a strong morphism of extensions from \eqref{eq:pathcop} to \eqref{eq:path1cop}; this morphism has the following components:
	\[(t^2-t)B[t]\to tB[t]\tensor[_{\ev_1}]{\times}{_{\ev_1}} tB[t], \hspace{1em} p\mapsto (p,0);\]
	\[(t-1)B[t]\to B[t]\tensor[_{\ev_1}]{\times}{_{\ev_1}} tB[t], \hspace{1em} p\mapsto (p,0).\]
\end{exa}

\begin{lem}\label{lem:pathcontr}
	Let $B\in\Algl$, $n\geq 1$ and $r\geq 0$. Then $P(n,B)_r$ is contractible.
\end{lem}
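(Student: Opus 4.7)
The plan is to produce an explicit $\ell$-algebra homomorphism $\Phi \colon P(n, B)_r \to (P(n, B)_r)^{\Delta^1}$ realizing an elementary polynomial homotopy from $\id$ to $0$, i.e., satisfying $\ev_0 \circ \Phi = \id$ and $\ev_1 \circ \Phi = 0$. The guiding geometric idea is to contract the pair $(I, \{1\})$ onto its basepoint $\{1\}$, applied in the final $I$-coordinate of $\fS_n \square (I, \{1\}) = (I^{n+1}, \partial I^n \times I \cup I^n \times \{1\})$; this leaves the remaining coordinates intact, so the prescribed vanishing locus is automatically preserved throughout the homotopy.

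Concretely, in the base case $n = 0$, $r = 0$ (i.e.\ $(PB)_0 \cong (t-1)B[t]$ by Example \ref{exa:subdi1}) the formula $\Phi(p)(t, s) := p(t(1-s) + s)$ is immediately seen to be an $\ell$-algebra homomorphism with $\Phi(p)(t, 0) = p(t)$ and $\Phi(p)(t, 1) = p(1) = 0$; moreover $\Phi(p)(1, s) = p(1) = 0$, so $\Phi(p)$ lies in $(t-1)B[t][s] \cong (PB)_0[s]$ as required. For general $n$ but still $r = 0$, I would realize the same contraction simplicially via the map $h \colon I \times \Delta^1 \to I$ defined on vertices by $h(0, 0) = 0$ and $h(0, 1) = h(1, 0) = h(1, 1) = 1$: both non-degenerate $2$-simplices of $I \times \Delta^1$ are sent to the degenerate $2$-simplex $s_1([0,1])$ of $I$, and since $h(\{1\} \times \Delta^1) \subseteq \{1\}$ and $h(I \times \{1\}) \subseteq \{1\}$, the map $h$ defines a morphism of pairs $(I, \{1\}) \square \Delta^1 \to (I, \{1\})$ that is the identity at $s = 0$ and factors through $\{1\}$ at $s = 1$. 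Box-producting with $\id_{\fS_n}$ and applying $B^{(-)}_0$, combined with the multiplication morphism $\mu^{\fS_n \square (I, \{1\}), \Delta^1}_B$ from Section \ref{sec:multi}, yields the desired $\Phi$, whose value at $s = 1$ factors through $B^{\fS_n \square (\{1\},\{1\})}_0 = 0$.

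For $r \geq 1$ the principal technical obstacle is that $\sd^r$ does not commute with the box product (cf.\ Remark \ref{rem:simpliberreta}), so the simplicial homotopy constructed above cannot be pulled back through $\sd^r$ by a formal argument. This is handled by appealing to Lemma \ref{lem:simppair} --- which identifies $P(n, B)_r$ with $B \otimes \Z^{\fS_n \square (I, \{1\})}_r$ --- together with the naturality of the multiplication morphisms $\mu$ and the last vertex map $\gamma \colon \sd \to \id_\S$. These mediate between $(P(n, B)_r)^{\Delta^1}$ and $B^{\fS_n \square (I, \{1\}) \square \Delta^1}_r$, allowing the unsubdivided contraction to be transported to the subdivided setting and assembled into the required homotopy $\Phi$ on $P(n, B)_r$.
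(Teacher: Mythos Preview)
Your geometric idea --- contracting the final $(I,\{1\})$-coordinate onto $\{1\}$ via the simplicial map $h\colon I\times\Delta^1\to I$ --- is exactly the one the paper uses (its $\vartheta$ is your $h$). The gap is in turning that simplicial homotopy into a \emph{polynomial} homotopy. From $\id_{\fS_n}\square h$ you obtain
\[
f\colon P(n,B)_r \longrightarrow B^{\fS_n\square(I,\{1\})\square\Delta^1}_r,
\]
but what you need is a map $\Phi\colon P(n,B)_r \to (P(n,B)_r)^{\Delta^1}_s$. The multiplication morphism $\mu^{\fS_n\square(I,\{1\}),\Delta^1}_B$ goes the \emph{wrong} way for this: by Section~\ref{sec:multi} it is a map
\[
\mu\colon \left(P(n,B)_r\right)^{\Delta^1}_s \longrightarrow B^{\fS_n\square(I,\{1\})\square\Delta^1}_{r+s},
\]
so it cannot be composed with $f$ to produce $\Phi$. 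This already bites at $r=0$, not just $r\geq 1$: the failure of the exponential law (Remark~\ref{rem:simpliberreta}) means $B^{\fS_n\square(I,\{1\})\square\Delta^1}_0\not\cong (P(n,B)_0)^{\Delta^1}$, and $\mu$ does not provide an inverse. Your final paragraph about $\gamma$ and $\mu$ ``mediating'' does not close the gap either, for the same reason.

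The paper resolves exactly this point by invoking \cite{garku}*{Hauptlemma (2)}: that result supplies, for a map of the form $f$ above, explicit polynomial homotopies in $(B^{\sd^r I^{n+1}})[t]$ between the restrictions $\delta_0\circ f$ and $\delta_1\circ f$, and one then checks those homotopies respect the boundary conditions defining $P(n,B)_r\subset B^{\sd^r I^{n+1}}$. So the missing ingredient in your argument is precisely a citation of (or a reproof of) the Hauptlemma; without it, the passage from a map into $B^{K\square\Delta^1}_r$ to a genuine polynomial homotopy between its two faces is unjustified.
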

\begin{proof}
	Let $\vartheta:I\times I\to I$ be the unique morphism of simplicial sets that satisfies:
	\[\xymatrix@R=0em{(0,0)\ar@{|->}[r]^-{\vartheta} & 0\\
		(0,1),(1,0),(1,1)\ar@{|->}[r]^-{\vartheta} & 1}\]
	It is easily verified that $\vartheta$ induces a morphism of pairs $\vartheta:(I,\{1\})\square I\to (I,\{1\})$. The latter, in turn, induces an $\ell$-algebra homomorphism:
	\[\xymatrix{f:=(\fS_n\square\vartheta)^*:P(n,B)_r=B^{\fS_n\square(I,\{1\})}_r\ar[r] & B^{\fS_n\square (I,\{1\})\square I}_r}\]
	The coface maps $d^i:\Delta^0\to I$ induce $\ell$-algebra homomorphisms:
	\[\xymatrix{\delta_i:=(\fS_n\square (I,\{1\})\square d^i)^*:B^{\fS_n\square(I,\{1\})\square I}_r\ar[r] & B^{\fS_n\square(I,\{1\})\square \Delta^0}_r\cong P(n,B)_r}\]
	Note that $\delta_0\circ f=0$ and $\delta_1\circ f=\id_{P(n,B)_r}$. By \cite{garku}*{Hauptlemma (2)}, $\delta_0\circ f$ and $\delta_1\circ f$ represent the same class in $[P(n,B)_r, P(n,B)_r]$. Indeed, both morphisms represent the same class in $[P(n,B)_r, B^{\sd^rI^{n+1}}]$ but the polynomial homotopies constructed in op. cit. preserve our boundary conditions.
\end{proof}

\subsection{Exchanging loop functors}\label{sec:kappa} Let $B\in \Algl$ and let $m,n\geq 0$. We proceed to define, by induction on $n$, a natural transformation:
\[\xymatrix{\kappa^{n,m}_B:J^n(B^{\fS_m}_r)\ar[r] & (J^nB)^{\fS_m}_r}\]
Let $\kappa^{1,m}_B:J(B^{\fS_m}_r)\to (JB)^{\fS_m}_r$ be the classifying map of the extension $\left(\scrU_B\right)^{\fS_m}_r$.
For $n\geq 1$, define inductively $\kappa^{n+1,m}_B$ as the composite:
\[\xymatrix@C=3em{J^{n+1}(B^{\fS_m}_r)\ar[r]^-{J(\kappa^{n,m}_B)} & J\left((J^nB)^{\fS_m}_r\right)\ar[r]^-{\kappa^{1,m}_{J^nB}} & (J^{n+1}B)^{\fS_m}_r\\}\]
Define $J^0$ as the identity functor of $\Algl$ and let $\kappa^{0,m}_B$ be the identity of $B_\bul^{\fS_m}$. When there is no risk of confusion, we may write $\kappa^{n,m}$ instead of $\kappa^{n,m}_B$ to alleviate notation. The next result follows from an easy induction on $n=p+q$.

\begin{lem}\label{lem:kappapq}
Let $p,q,r\geq 0$ and let $B$ be an $\ell$-algebra. Then $\kappa^{p+q,m}_B=\kappa^{q,m}_{J^pB}\circ J^q\left(\kappa^{p,m}_B\right)$. That is, the following diagram in $\Algl$ commutes:
\[\xymatrix{J^{p+q}(B^{\fS_m}_r)\ar[rr]^-{\kappa^{p+q,m}}\ar@/_/[dr]_-{J^q(\kappa^{p,m})} & & (J^{p+q}B)^{\fS_m}_r \\
	& J^q\left((J^pB)_r^{\fS_m}\right)\ar@/_/[ur]_-{\kappa^{q,m}} & }\]
\end{lem}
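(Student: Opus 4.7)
The plan is to fix $p, m, B$ and induct on $q$, reducing the statement to a direct unfolding of the inductive definition of $\kappa^{n+1,m}$.

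The base case $q = 0$ is immediate: by definition $\kappa^{0,m}_{J^pB}$ is the identity of $(J^pB)^{\fS_m}_r$ and $J^0(\kappa^{p,m}_B) = \kappa^{p,m}_B$, so the claimed identity $\kappa^{p,m}_B = \kappa^{0,m}_{J^pB} \circ J^0(\kappa^{p,m}_B)$ holds trivially. For the inductive step, suppose the statement holds for some $q \geq 0$, so that
\[
\kappa^{p+q,m}_B = \kappa^{q,m}_{J^pB} \circ J^q(\kappa^{p,m}_B).
\]
Applying the functor $J$ to both sides (legitimate by the functoriality established implicitly in the construction of $\kappa^{\bullet,m}$) gives
\[
J(\kappa^{p+q,m}_B) = J(\kappa^{q,m}_{J^pB}) \circ J^{q+1}(\kappa^{p,m}_B).
\]

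Now I would invoke the recursive definition twice. On the one hand,
\[
\kappa^{p+q+1,m}_B = \kappa^{1,m}_{J^{p+q}B} \circ J(\kappa^{p+q,m}_B) = \kappa^{1,m}_{J^{p+q}B} \circ J(\kappa^{q,m}_{J^pB}) \circ J^{q+1}(\kappa^{p,m}_B).
\]
On the other hand, applying the recursive definition to $\kappa^{q+1,m}$ based at the algebra $J^pB$ yields
\[
\kappa^{q+1,m}_{J^pB} = \kappa^{1,m}_{J^q(J^pB)} \circ J(\kappa^{q,m}_{J^pB}) = \kappa^{1,m}_{J^{p+q}B} \circ J(\kappa^{q,m}_{J^pB}),
\]
using $J^q(J^pB) = J^{p+q}B$. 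Composing with $J^{q+1}(\kappa^{p,m}_B)$ on the right and comparing gives exactly the desired equality $\kappa^{p+q+1,m}_B = \kappa^{q+1,m}_{J^pB} \circ J^{q+1}(\kappa^{p,m}_B)$, completing the induction.

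There is no real obstacle here; this is a mechanical unwinding of the recursive definition of $\kappa^{n,m}$. The only point requiring care is bookkeeping of the base algebra of $\kappa^{1,m}$ at each stage, so that the identification $J^q(J^pB) = J^{p+q}B$ lines up the two expressions for $\kappa^{p+q+1,m}_B$. Everything else is formal.
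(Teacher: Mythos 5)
Your proof is correct and matches the paper's approach: the paper remarks that the lemma "follows from an easy induction on $n=p+q$," and your induction on $q$ with $p$ fixed is the natural way to carry that out, unwinding the recursive definition of $\kappa^{n+1,m}$ exactly as you do.
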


This lemma is explained as follows. We have $J^{p+q}=J^q\circ J^p$. Thus, in order to exchange $J^{p+q}$ and $(?)^{\fS_m}$, we may first exchange $J^p$ and $(?)^{\fS_m}$ and then $J^q$ and $(?)^{\fS_m}$.

\begin{rem}\label{rem:analogtokappapq}
Replacing $\fS_n$ with any simplicial pair $(K,L)$, we can define morphisms $J^n(B^{(K,L)})\to (J^nB)^{(K,L)}$ and prove Lemma \ref{lem:kappapq} in this setting.
\end{rem}

The following result is an analog of Lemma \ref{lem:kappapq}. Its statement is, however, more complicated since $(B^{\fS_p})^{\fS_q}\not\cong B^{\fS_{p+q}}$.

\begin{lem}\label{lem:penta}
Let $p,q,r,s\geq 0$ and let $B$ be an $\ell$-algebra. Then the following diagram in $\Algl$ commutes:
\[\xymatrix@C=4em{J^n\left(\left(B^{\fS_p}_r\right)^{\fS_q}_s\right)\ar[r]^-{\kappa^{n,q}_{B^{\fS_p}}}\ar[d]_-{J^n\left(\mu^{p,q}_B\right)} & \left(J^n\left(B^{\fS_p}_r\right)\right)^{\fS_q}_s\ar[r]^-{\left(\kappa^{n,p}_B\right)^{\fS_q}} & \left((J^nB)^{\fS_p}_r\right)^{\fS_q}_s\ar[d]^-{\mu^{p,q}_{J^nB}} \\
	J^n\left(B^{\fS_{p+q}}_{r+s}\right)\ar[rr]^-{\kappa^{n,p+q}_B} & & (J^nB)^{\fS_{p+q}}_{r+s} \\}\]
\end{lem}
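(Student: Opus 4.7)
I would prove this by induction on $n$, with the $n=1$ case being the core content and the inductive step a formal manipulation.

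\textbf{Base case $n=0$.} Here $\kappa^{0,?}$ is the identity by definition, so both composites collapse to $\mu^{p,q}_B$.

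\textbf{Base case $n=1$.} The plan is to identify both sides as the first component of a strong morphism of extensions $\scrU_{(B^{\fS_p}_r)^{\fS_q}_s} \to (\scrU_B)^{\fS_{p+q}}_{r+s}$ extending $\mu^{p,q}_B$, and then invoke the uniqueness part of Proposition \ref{lem:classexists}. For the right-hand side, $\kappa^{1,p+q}_B$ is by definition the classifying map of $(\scrU_B)^{\fS_{p+q}}_{r+s}$, so Proposition \ref{lem:classcommute} (applied to the strong morphism $\scrU_{(B^{\fS_p}_r)^{\fS_q}_s} \to \scrU_{B^{\fS_{p+q}}_{r+s}}$ induced functorially by $\mu^{p,q}_B$) gives that $\kappa^{1,p+q}_B\circ J(\mu^{p,q}_B)$ is the classifying map with respect to $\mu^{p,q}_B$. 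For the left-hand side, I would build a composite strong morphism through the intermediate extensions
\[\scrU_{(B^{\fS_p}_r)^{\fS_q}_s}\longrightarrow \left(\scrU_{B^{\fS_p}_r}\right)^{\fS_q}_s\longrightarrow \left((\scrU_B)^{\fS_p}_r\right)^{\fS_q}_s\longrightarrow (\scrU_B)^{\fS_{p+q}}_{r+s},\]
whose first components are $\kappa^{1,q}_{B^{\fS_p}}$, $(\kappa^{1,p}_B)^{\fS_q}$, and $\mu^{p,q}_{JB}$ respectively (the last arrow being induced by the naturality statement (1) in Section \ref{sec:multi}). The last components are $\id$, $\id$, and $\mu^{p,q}_B$. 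The second and third arrows are obtained by applying $(?)^{\fS_q}_s$ and $\mu^{p,q}_{(-)}$ to strong morphisms; checking that they remain strong amounts to verifying compatibility with the splittings, which reduces via Lemma \ref{lem:simppair} to naturality of $\mu$ in $\ell$-linear maps (the splittings come from applying $(?)\otimes\Z^{(K,L)}_\bul$ to the unit $\sigma:FB\to FTB$).

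\textbf{Inductive step.} Assume the statement for $n$ and prove it for $n+1$. Using the recursive definition $\kappa^{n+1,m}_B=\kappa^{1,m}_{J^nB}\circ J(\kappa^{n,m}_B)$, expand each $\kappa^{n+1,?}$ appearing in the LHS. Then apply the naturality of $\kappa^{1,q}$ (which follows from Proposition \ref{lem:classcommute}) to the morphism $\kappa^{n,p}_B: J^n(B^{\fS_p}_r)\to (J^nB)^{\fS_p}_r$ in order to commute $(J\kappa^{n,p}_B)^{\fS_q}$ past $\kappa^{1,q}_{J^n(B^{\fS_p})}$. This rearranges the LHS as
\[\kappa^{1,p+q}_{J^nB}\circ J\!\left(\mu^{p,q}_{J^nB}\circ (\kappa^{n,p}_B)^{\fS_q}\circ \kappa^{n,q}_{B^{\fS_p}}\right)\circ J^{n+1}(\mu^{p,q}_B),\]
after pulling out the $J$ and using the $n=1$ case applied to $J^nB$. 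Now the inductive hypothesis identifies the inner bracket with $\kappa^{n,p+q}_B\circ J^n(\mu^{p,q}_B)$, and one more application of the recursion for $\kappa^{n+1,p+q}$ delivers the RHS.

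\textbf{Main obstacle.} The substantive step is the $n=1$ case, specifically verifying that all three arrows in the intermediate chain of extensions are \emph{strong} morphisms with respect to the specified splittings, so that Proposition \ref{lem:classcommute} gives equality rather than merely homotopy. Once strong compatibility is secured, the rest is a bookkeeping exercise in the naturality of classifying maps and the recursive definition of $\kappa^{n,m}$.
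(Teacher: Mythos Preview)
Your proof is correct and follows the same strategy as the paper: the $n=1$ case is obtained from the chain of strong extension morphisms $\left(\scrU_{B^{\fS_p}_r}\right)^{\fS_q}_s\to\left((\scrU_B)^{\fS_p}_r\right)^{\fS_q}_s\to(\scrU_B)^{\fS_{p+q}}_{r+s}$ together with Proposition~\ref{lem:classcommute}, and the general case by induction. The only cosmetic difference is in the inductive step: you split $\kappa^{n+1}=\kappa^{1}_{J^n(?)}\circ J(\kappa^{n})$ (the defining recursion) and invoke the $n=1$ case at $J^nB$ plus the hypothesis at $B$, whereas the paper uses the mirror decomposition $\kappa^{n+1}=\kappa^{n}_{J(?)}\circ J^n(\kappa^{1})$ from Lemma~\ref{lem:kappapq} and invokes the hypothesis at $JB$ plus the $n=1$ case at $B$.
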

\begin{proof}
We proceed by induction on $n$. The case $n=1$ follows from Proposition \ref{lem:classcommute} applied to the following strong morphism of extensions:
\[\xymatrix@C=3em{\left(\scrU_{B^{\fS_p}_r}\right)^{\fS_q}_s\ar[d] & \left(J\left(B^{\fS_p}_r\right)\right)^{\fS_q}_s\ar[r]\ar[d]_-{\left(\kappa^{1,p}_B\right)^{\fS_q}} & \left(T\left(B^{\fS_p}_r\right)\right)^{\fS_q}_s\ar[r]\ar[d] & \left(B^{\fS_p}_r\right)^{\fS_q}_s\ar[d]^-{\id} \\
	\left((\scrU_B)^{\fS_p}_r\right)^{\fS_q}_s\ar[d] & \left((JB)^{\fS_p}_r\right)^{\fS_q}_s\ar[r]\ar[d]_-{\mu^{p,q}_{JB}} & \left((TB)^{\fS_p}_r\right)^{\fS_q}_s\ar[r]\ar[d]^-{\mu^{p,q}_{TB}} & \left(B^{\fS_p}_r\right)^{\fS_q}_s\ar[d]^-{\mu^{p,q}_{B}} \\
	(\scrU_B)^{\fS_{p+q}}_{r+s} & (JB)^{\fS_{p+q}}_{r+s}\ar[r] & (TB)^{\fS_{p+q}}_{r+s}\ar[r] & B^{\fS_{p+q}}_{r+s} \\}\]
Now suppose that the diagram commutes for $n$; we will show it also commutes for $n+1$. The following diagram commutes by inductive hypothesis and naturality of $\kappa^{n,1}_?$; we omit the subindices $r$ and $s$ to alleviate notation:
\[\xymatrix@C=4em{J^{n+1}\left( \left(B^{\fS_p}\right)^{\fS_q}\right)\ar[rrd]^-{J^{n+1}\left(\mu^{p,q}_B\right)}\ar[d]_-{J^n\left(\kappa^{1,q}_{B^{\fS_p}}\right)} & & \\
	J^n\left(\left(J\left(B^{\fS_p}\right)\right)^{\fS_q}\right)\ar[d]_-{\kappa^{n,q}_{J\left(B^{\fS_p}\right)}}\ar@/_/[dr]^-{J^n\left(\left(\kappa^{1,p}_B\right)^{\fS_q}\right)} & & J^{n+1}\left(B^{\fS_{p+q}}\right)\ar[d]^-{J^n\left(\kappa^{1,p+q}_B\right)} \\
	\left(J^{n+1}\left(B^{\fS_p}\right)\right)^{\fS_q}\ar@/_1pc/[dr]_-{\left(J^n\left(\kappa^{1,p}_B\right)\right)^{\fS_q}} & J^n\left(\left((JB)^{\fS_p}\right)^{\fS_q}\right)\ar[d]_-{\kappa^{n,q}_{(JB)^{\fS_p}}}\ar[r]^-{J^n\left(\mu^{p,q}_{JB}\right)} & J^n\left((JB)^{\fS_{p+q}}\right)\ar[dd]^-{\kappa^{n,p+q}_{JB}} \\
	& \left(J^n\left((JB)^{\fS_p}\right)\right)^{\fS_q}\ar[d]_-{\left(\kappa^{n,p}_{JB}\right)^{\fS_q}} & \\
	& \left((J^{n+1}B)^{\fS_p}\right)^{\fS_q}\ar[r]^-{\mu^{p,q}_{J^{n+1}B}} & (J^{n+1}B)^{\fS_{p+q}} \\}\]
Moreover, the following equalities hold by Lemma \ref{lem:kappapq}, proving the result:
\[\kappa^{n,p+q}_{JB}\circ J^n\left(\kappa^{1,p+q}_B\right)=\kappa^{n+1,p+q}_B\]
\[\left(\kappa^{n,p}_{JB}\right)^{\fS_q}\circ \left(J^n\left(\kappa^{1,p}_B\right)\right)^{\fS_q}=\left(\kappa^{n+1,p}_B\right)^{\fS_q}\]
\[\kappa^{n,q}_{J\left(B^{\fS_p}\right)}\circ J^n\left(\kappa^{1,q}_{B^{\fS_p}}\right)=\kappa^{n+1,q}_{B^{\fS_p}}\qedhere\]
\end{proof}

\subsection[Extending constructions to the ind-homotopy category]{Extending constructions to the ind-homotopy category}\label{subsec:extending}

Let $I$ be a filtering poset and let $F:\Algl\to\Algl^I$ be a functor. Then $F$ induces $F^\ind:\Algl^\ind\to(\Algl^I)^\ind$;  composing this with the functor $c_I$ of Lemma \ref{lem:indI} we get a functor $\Algl^\ind\to \Algl^\ind$ that we still denote $F$. This happens, for example, in the following cases:
\begin{enumerate}[label=(\roman*)]
	\item\label{item:functora} $I=\{*\}$ and $F=J:\Algl\to \Algl$;
	\item $I=\{*\}$ and $F=(?)^X:\Algl\to\Algl$ for any $X\in \S$;
	\item $I=\Zo$ and $F=(?)^{(K,L)}_\bul:\Algl\to \Algl^\Zo$ for any simplicial pair $(K,L)$;
	\item\label{item:functorz} $I$ any poset and $F=?\otimes C_\bul:\Algl\to \Algl^I$, with $C_\bul\in (\Alg_\Z)^I$.
\end{enumerate}
In all these examples, $F$ has the aditional property of being \emph{homotopy invariant}: if $f$ and $g$ are two homotopic $\ell$-algebra homomorphisms then, for all $i\in I$, $F(f)_i$ and $F(g)_i$ are homotopic. Because of this, $F$ induces a functor $F:[\Algl]\to [\Algl]^I$ and thus a functor $F:[\Algl]^\ind\to [\Algl]^\ind$; here we are using Lemma \ref{lem:indI} once more. It is easily seen that the following diagram commutes:
\[\xymatrix{\Algl^\ind \ar[r]^-{F}\ar[d] & \Algl^\ind\ar[d] \\
	[\Algl]^\ind\ar[r]^-{F} & [\Algl]^\ind \\}\]
Thus, we can apply functors \ref{item:functora}-\ref{item:functorz} to objects and morphisms in $[\Algl]^\ind$ and, if we start at $\Algl^\ind$, we may apply these functors and take homotopy classes in any order we like.

By the previous discussion, we can regard $((?)^{\fS_m}_\bul)^{\fS_n}_\bul$ and $(?)^{\fS_{m+n}}_\bul$ as endofunctors of $[\Algl]^\ind$; we would like to consider $\mu^{m,n}_?$ as a natural transformation between them. We now proceed to explain how certain morphisms from $F:\Algl\to\Algl^I$ to $G:\Algl\to\Algl^J$ induce natural transformations between the associated endofunctors of $[\Algl]^\ind$ ---here, $I$ and $J$ may be different filtering posets, and $F$ and $G$ are homotopy invariant functors.

Let $F:\Algl\to\Algl^I$ and $G:\Algl\to\Algl^J$ be two homotopy invariant functors. Consider a pair $(\nu,\theta)$ where $\theta:I\to J$ is a functor and $\nu:F\to \theta^*G$ is a natural transformation of functors $\Algl\to \Algl^I$. This means that:
\begin{enumerate}[label=(\alph*)]
	\item For each $A\in\Algl$ we have $\nu_A:F(A)\to G(A)\circ\theta\in\Algl^I$;
	\item For each morphism $f:A\to A'$ in $\Algl$, the following diagram in $\Algl^I$ commutes:
	\[\xymatrix{F(A)\ar[r]^-{\nu_A}\ar[d]_-{F(f)} & G(A)\circ\theta\ar[d]^-{(\theta^*G)(f)} \\
		F(A')\ar[r]^-{\nu_{A'}} & G(A')\circ\theta}\]
\end{enumerate}
Let $(C,K)\in[\Algl]^\ind$. We will define $\nu_{C_\bul}\in[F(C_\bul)_\bul, G(C_\bul)_\bul]$. For each pair $(i,k)\in I\times K$, let $\left(\nu_{C_\bul}\right)_{(i,k)}$ be the class of the morphism $\left(\nu_{C_k}\right)_i:F(C_k)_i\to G(C_k)_{\theta(i)}$ in $\left[F(C_k)_i,G(C_\bul)_\bul\right]$.
It is easily verified that the $\left(\nu_{C_\bul}\right)_{(i,k)}$ are compatible and assemble into a morphism:
\[\nu_{C_\bul}=\left\{\left(\nu_{C_\bul}\right)_{(i,k)}\right\}\in \lim_{(i,k)}\left[F(C_k)_i,G(C_\bul)_\bul\right]=\left[F(C_\bul)_\bul, G(C_\bul)_\bul\right]\]

\begin{lem}\label{lem:extendingnat}
	The construction above determines a natural transformation $\nu:F\to G$ of functors $[\Algl]^\ind\to[\Algl]^\ind$. That is, for every morphism $f\in [C_\bul, D_\bul]$, the following diagram in $[\Algl]^\ind$ commutes:
	\[\xymatrix{F(C_\bul)_\bul\ar[d]_-{F(f)}\ar[r]^-{\nu_{C_\bul}} & G(C_\bul)_\bul\ar[d]^-{G(f)} \\
		F(D_\bul)_\bul\ar[r]^-{\nu_{D_\bul}} & G(D_\bul)_\bul}\]
\end{lem}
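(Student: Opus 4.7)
The plan is to reduce the naturality square in $[\Algl]^\ind$ to the pointwise naturality of $\nu:F\to\theta^*G$ as a natural transformation of functors $\Algl\to\Algl^I$, by exploiting the limit-colimit structure of the hom-sets. Let $L$ denote the filtering poset indexing $D_\bul$. Both composites $G(f)\circ\nu_{C_\bul}$ and $\nu_{D_\bul}\circ F(f)$ lie in the set
\[\Hom_{[\Algl]^\ind}(F(C_\bul)_\bul, G(D_\bul)_\bul)=\lim_{(i,k)\in I\times K}\colim_{(j,l)\in J\times L}[F(C_k)_i, G(D_l)_j],\]
so to verify that they coincide it will suffice to check, for each $(i,k)\in I\times K$, that their projections to $\colim_{(j,l)}[F(C_k)_i, G(D_l)_j]$ agree.

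Fix such a pair $(i,k)$. The $k$-th component of $f\in\Hom_{[\Algl]^\ind}(C_\bul, D_\bul)$ lies in the filtered colimit $\colim_l[C_k, D_l]$, so it admits a representative $\tilde f_k:C_k\to D_{l_k}$ in $\Algl$ for some $l_k\in L$. Unfolding the definition of $F$ and $G$ as endofunctors of $[\Algl]^\ind$ ---that is, how the functor $c_I$ of Lemma \ref{lem:indI} acts on morphisms--- I will read off that the two projections at $(i,k)$ are represented respectively by the $\ell$-algebra homomorphisms
\[(\nu_{D_{l_k}})_i\circ F(\tilde f_k)_i \quad\text{and}\quad G(\tilde f_k)_{\theta(i)}\circ(\nu_{C_k})_i.\]
These two morphisms are equal on the nose by the naturality of $\nu$ applied to $\tilde f_k$ in $\Algl$ and then evaluated at $i\in I$. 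The only minor subtlety is to confirm that a different choice of representative $\tilde f_k$ produces the same class in the filtered colimit $\colim_l[F(C_k)_i, G(D_l)_{\theta(i)}]$, which is immediate from the definition of the filtered colimit.

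Before carrying out the naturality square itself, one should verify that $\nu_{C_\bul}$ is a well-defined element of the above limit, i.e.\ that the classes $(\nu_{C_\bul})_{(i,k)}$ are compatible with the transitions in $I\times K$. Writing a transition $(i,k)\leq(i',k')$ as a composite $(i,k)\leq(i',k)\leq(i',k')$ splits this into two checks, both of which follow from naturality of $\nu$: in the $I$-direction, at fixed $k$, from $\nu_{C_k}$ being a natural transformation of $I$-diagrams; in the $K$-direction, at fixed $i'$, from the naturality square of $\nu:F\to\theta^*G$ applied to the transition $C_k\to C_{k'}$ in $\Algl$ and evaluated at $i'\in I$. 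I do not expect a serious obstacle here: the entire statement is essentially bookkeeping around the formalism of Lemma \ref{lem:indI}, and everything reduces to what being a natural transformation of $I$-indexed diagrams already provides.
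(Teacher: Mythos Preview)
Your proposal is correct and is precisely the ``straightforward verification'' that the paper alludes to without detail; the paper's own proof consists of the single sentence ``It is a straightforward verification.'' Your unpacking of the limit-colimit description of the hom-sets, the reduction to pointwise naturality of $\nu:F\to\theta^*G$ at a chosen representative $\tilde f_k$, and the compatibility check for $\nu_{C_\bul}$ are exactly what that verification entails.
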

\begin{proof}
	It is a straightforward verification.
\end{proof}

\begin{exa}
	Regard $\kappa^{n,m}_?:J^n((?)^{\fS_m}_\bul)\to (J^n(?))^{\fS_m}_\bul$ as a natural transformation between (homotopy invariant) functors $\Algl\to\Algl^\Zo$. By Lemma \ref{lem:extendingnat}, we can also regard $\kappa^{n,m}_?$ a natural transformation:
	\[\xymatrix{\kappa^{n,m}_?:J^n((?)^{\fS_m}_\bul)\ar[r] & (J^n(?))^{\fS_m}_\bul:[\Algl]^\ind\ar[r] & [\Algl]^\ind}\]
\end{exa}

\begin{exa}
	Consider the (homotopy invariant) functors $F:\Algl\to \Algl^{\Zo\times\Zo}$, $F(B)=(B^{\fS_m}_\bul)^{\fS_n}_\bul$, and $G:\Algl\to\Algl^\Zo$, $G(B)=B^{\fS_{m+n}}_\bul$. Define $\theta:\Zo\times\Zo\to\Zo$ by $\theta(r,s)=r+s$. Then $\theta$ is a functor and $\mu^{m,n}_?:F\to \theta^*G$ is a natural transformation. By Lemma \ref{lem:extendingnat}, the pair $(\mu^{m,n}_?,\theta)$ induces a natural transformation:
	\[\xymatrix{\mu^{m,n}_?:((?)^{\fS_m}_\bul)^{\fS_n}_\bul\ar[r] & (?)^{\fS_{m+n}}_\bul:[\Algl]^\ind\ar[r] & [\Algl]^\ind}\]
\end{exa}

\begin{rem}\label{rem:extendingtoind}
	We have just seen we can regard $J$ and $(?)^{\fS_n}_\bul$ as endofunctors of $[\Algl]^\ind$. Moreover, we can consider $\kappa^{n,m}_?$ and $\mu^{n,m}_?$ as natural transformations between endofunctors of  $[\Algl]^\ind$. In the sequel, we shall do this without further mention.
\end{rem}

\subsection{Group structure}

Let $n\geq 1$. For $A,B\in\Algl$, the set $[A,B^{\fS_n}_\bul]$ has a natural group structure, that is abelian if $n\geq 2$; see Remark \ref{rem:groupstructure}. We claim that this assertion remains true if we replace $A$ and $B$ by arbitrary ind-objects in $[\Algl]$. Indeed, for $(A,I),(B,J)\in[\Algl]^\ind$, we have:
\begin{equation}\label{eq:groupstr}[A_\bul, (B_\bul)^{\fS_n}_\bul]\cong \lim_i\colim_j\, [A_i,(B_j)^{\fS_n}_\bul]\end{equation}
Since limits and filtered colimits of groups are computed respectively as limits and filtered colimits of sets, the right hand side of \eqref{eq:groupstr} has a natural group structure, that is abelian if $n\geq 2$. This can be summarized as follows.

\begin{lem}\label{lem:groupstr}
	Let $B_\bul\in[\Algl]^\ind$ and let $n\geq 1$. Then $(B_\bul)^{\fS_n}_\bul$ is a group object in $[\Algl]^\ind$, which is abelian if $n\geq 2$. Moreover, a morphism $g\in[B_\bul, B'_\bul]$ induces a morphism of group objects $g_*\in[(B_\bul)^{\fS_n}_\bul, (B'_\bul)^{\fS_n}_\bul]$.
\end{lem}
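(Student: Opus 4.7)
The plan is to upgrade the set-level isomorphism
\[[A_\bul, (B_\bul)^{\fS_n}_\bul]\cong \lim_i\colim_j\, [A_i,(B_j)^{\fS_n}_\bul]\]
to a group-valued isomorphism and then invoke Yoneda. First I would recall from Remark \ref{rem:groupstructure} that for each pair of algebras $A_i$ and $B_j$ the set $[A_i,(B_j)^{\fS_n}_\bul]$ has a natural group structure (abelian if $n\geq 2$), and that this group structure is contravariantly functorial in $A_i$ and covariantly functorial in $B_j$ via group homomorphisms. Consequently the transition morphisms $B_j\to B_{j'}$ in $B_\bul$ induce group homomorphisms on $[A_i,(B_\bul)^{\fS_n}_\bul]$ after taking classes, and the restriction maps indexed by $i\leq i'$ are likewise group homomorphisms.

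Next I would use the fact that filtered colimits and arbitrary limits of groups are computed on the underlying sets (i.e.\ the forgetful functor from (abelian) groups to sets creates these limits and colimits). This endows $\lim_i\colim_j[A_i,(B_j)^{\fS_n}_\bul]$ with a canonical group structure, abelian when $n\geq 2$, and under the isomorphism above transports it to a natural group structure on $[A_\bul,(B_\bul)^{\fS_n}_\bul]$, natural in $A_\bul\in[\Algl]^\ind$.

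Now I would apply a standard Yoneda-type argument: a representable presheaf on $[\Algl]^\ind$ that lifts functorially through the forgetful functor from (abelian) groups to sets endows its representing object with the structure of a group object (abelian when $n\geq 2$). Applying this to the functor $A_\bul\mapsto [A_\bul,(B_\bul)^{\fS_n}_\bul]$ produces the required (abelian for $n\geq 2$) group object structure on $(B_\bul)^{\fS_n}_\bul$ in $[\Algl]^\ind$.

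Finally, for the naturality statement concerning $g\in[B_\bul,B'_\bul]$, I would observe that postcomposition with $g_*$ induces, for each fixed $A_\bul$, a group homomorphism $[A_\bul,(B_\bul)^{\fS_n}_\bul]\to [A_\bul,(B'_\bul)^{\fS_n}_\bul]$: indeed this reduces, via the limit–colimit description, to the fact that each individual $(g_j)_*:[A_i,(B_j)^{\fS_n}_\bul]\to[A_i,(B'_{j'})^{\fS_n}_\bul]$ is a group homomorphism (again by Remark \ref{rem:groupstructure}), and that limits and filtered colimits of group homomorphisms are group homomorphisms. Representability of these functors, via Yoneda, then forces $g_*$ to be a morphism of group objects in $[\Algl]^\ind$. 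The only mildly delicate step is checking that the passage from natural group structures on hom-sets to group-object structure on the representing object goes through in the ind-homotopy category rather than in a category with literal products; this is handled by observing that products in $[\Algl]^\ind$ of $(B_\bul)^{\fS_n}_\bul$ with itself are computed objectwise via the isomorphism $[\Algl]^\ind$-hom with the corresponding limit of $[A_i,(B_j)^{\fS_n}_\bul\times(B_j)^{\fS_n}_\bul]$, so the multiplication, unit and inverse natural transformations automatically come from morphisms in $[\Algl]^\ind$ satisfying the group-object axioms.
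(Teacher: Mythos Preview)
Your proposal is correct and follows essentially the same approach as the paper. The paper's argument is really the paragraph immediately preceding the lemma: it records the isomorphism \eqref{eq:groupstr}, observes that limits and filtered colimits of (abelian) groups are computed on underlying sets, and then states the lemma as a summary of this, leaving the Yoneda step implicit. You have spelled out that Yoneda step and the attendant product issue more carefully than the paper does, but the underlying idea is identical.
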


We conclude the section with a proposition that relates the group structure in $[A_\bul, (B_\bul)^{\fS_n}_\bul]$ with the loop functors $J$ and $(?)^{\fS_m}$. We postpone its proof to the Appendix \ref{sec:app}.

\begin{prop}\label{prop:bundle}
Let $A,B\in\Algl$, let $K$ be a filtering poset, let $C_\bul\in(\Alg_\Z)^K$ and let $m,n\geq 1$. Then the following composite functions are group homomorphisms:
\begin{enumerate}[label=(\roman*)]
	\item\label{lem:Jkappa} $\xymatrix@C=3em{[A,B^{\fS_m}_\bul] \ar[r]^-{J^n} & [J^nA, J^n(B^{\fS_m}_\bul)]\ar[r]^-{\left(\kappa^{n,m}_{B}\right)_*} & [J^nA, (J^nB)^{\fS_m}_\bul]}$
	\item\label{lem:tensorgrouphomo} $\xymatrix@C=4em{[A,B^{\fS_m}_\bul]\ar[r]^-{?\otimes C_\bul} & [A\otimes C_\bul,B^{\fS_m}_\bul\otimes C_\bul]\cong[A\otimes C_\bul,(B\otimes C_\bul)^{\fS_m}_\bul]}$
	\item\label{lem:mugrouphomo} $\xymatrix@C=4em{[A,(B^{\fS_m}_\bul)^{\fS_n}_\bul]\ar[r]^-{\left(\mu^{m,n}_{B}\right)_*} & [A,B^{\fS_{m+n}}_\bul]}$
	\item\label{lem:mu2grouphomo} $\xymatrix@C=3em{[A,B^{\fS_m}_\bul]\ar[r]^-{(?)^{\fS_n}} & [A^{\fS_n}_\bul,(B^{\fS_m}_\bul)^{\fS_n}_\bul]\ar[r]^-{\left(\mu^{m,n}_{B}\right)_*} & [A^{\fS_n}_\bul,B^{\fS_{m+n}}_\bul]}$
\end{enumerate}
In \ref{lem:tensorgrouphomo}, the bijection on the right is induced by the obvious isomorphism of $K\times\Zo$-diagrams $B^{\fS_m}_\bul\otimes C_\bul\cong(B\otimes C_\bul)^{\fS_m}_\bul$.
\end{prop}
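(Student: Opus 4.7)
By Theorem \ref{thm:bij} and Remark \ref{rem:groupstructure}, the group on $[X, Y^{\fS_m}_\bul]$ is, by definition, transported from the usual one on $\pi_m \Hom_\Algl(X, Y^\Delta)$, and this extends to ind-targets via \eqref{eq:groupstr}. My strategy for all four items is to realise the function in question as $\pi_m$ of a morphism of pointed simplicial sets of the form $\Hom_\Algl(X, Y^\Delta) \to \Hom_\Algl(X', (Y')^\Delta)$; since $\pi_m$ of any pointed simplicial map is a group homomorphism, this forces the desired conclusion in each case.

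Item \ref{lem:tensorgrouphomo} is the most direct: the identification $B^{\Delta^p} \otimes C_\bul \cong (B \otimes C_\bul)^{\Delta^p}$ holds in each simplicial degree $p$ (because $B^{\Delta^p} = \Z^{\Delta^p} \otimes B$ and $\otimes$ commutes with filtered colimits), so that $? \otimes C_\bul$ already comes from a simplicial map. For \ref{lem:Jkappa}, I would invoke Remark \ref{rem:analogtokappapq} to obtain morphisms $\kappa^{n, \Delta^p}_B : J^n(B^{\Delta^p}) \to (J^nB)^{\Delta^p}$ natural in $p$; these assemble into a morphism of simplicial $\ell$-algebras $J^n(B^\Delta) \to (J^nB)^\Delta$, which combined with the degreewise action of $J^n$ gives the desired simplicial realisation. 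For \ref{lem:mugrouphomo}, $\mu^{m,n}_B$ descends from the multiplication in the simplicial ring $\Z^\Delta$, and the outer $\fS_n$-loops of $(B^{\fS_m}_\bul)^{\fS_n}_\bul$ are sent by it to the last $n$ coordinates of $\fS_{m+n}$, along which the concatenation description of the group structure (Example \ref{exa:concat}) is preserved. Case \ref{lem:mu2grouphomo} factors as $(?)^{\fS_n}$ followed by $(\mu^{m,n}_B)_*$; the first factor also comes from a simplicial construction because $(?)^{\fS_n}_\bul$ is among the homotopy-invariant functors extended to $[\Algl]^\ind$ in Section \ref{subsec:extending}, and the second factor is \ref{lem:mugrouphomo}.

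The most delicate step will be verifying, especially for \ref{lem:Jkappa}, that these simplicial-level constructions induce through the bijection of Theorem \ref{thm:bij} exactly the functions written in the statement. An element of $[A, B^{\fS_m}_\bul]$ is represented by an algebra map $A \to B^{\fS_m}_r$ with boundary vanishing conditions, and after applying $J^n$ and composing with the simplicial-level $\kappa^{n, \Delta^\bul}$ one must recognise the result as the inductive $\kappa^{n, m}_B$ applied to $J^nf$. This should follow from the naturality of classifying maps (Proposition \ref{lem:classcommute}) along the inclusion of simplicial pairs $\fS_m \hookrightarrow \Delta^m$ and from the inductive definition of $\kappa^{n,m}_B$. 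This bookkeeping is the technical heart of the argument and should closely parallel the proof of \cite{htpysimp}*{Theorem 3.10} referenced in the introduction.
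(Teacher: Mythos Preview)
Your overall strategy---realising each function as $\pi_m$ of a pointed simplicial map and invoking Theorem~\ref{thm:bij}---is exactly the paper's approach, and your treatments of \ref{lem:Jkappa} and \ref{lem:tensorgrouphomo} match the paper's proof closely (the paper does \ref{lem:Jkappa} by induction on $n$, with the $n=1$ case being precisely your classifying-map construction $\kappa^{1,\Delta^p}_B$).

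There is, however, a genuine gap in your handling of \ref{lem:mu2grouphomo}. You propose to factor it as $(?)^{\fS_n}$ followed by $(\mu^{m,n}_B)_*$ and then invoke \ref{lem:mugrouphomo} for the second factor. But \ref{lem:mugrouphomo} says that $(\mu^{m,n}_B)_*$ is a group homomorphism for the \emph{outer} $\fS_n$-structure on $[A^{\fS_n}_\bul,(B^{\fS_m}_\bul)^{\fS_n}_\bul]$, whereas the source $[A,B^{\fS_m}_\bul]$ carries the $\fS_m$-structure. Your claim that ``the first factor also comes from a simplicial construction'' would at best show that $(?)^{\fS_n}$ respects an $\fS_m$-type structure on the target, not the $\fS_n$-one that \ref{lem:mugrouphomo} requires; so the two halves of your factorisation do not compose as group homomorphisms without further argument. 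The paper sidesteps this by a different route: it uses \ref{lem:tensorgrouphomo} with $C_\bul=\Z^{\fS_n}_\bul$ to land in $[A^{\fS_n}_\bul,(B^{\fS_n}_\bul)^{\fS_m}_\bul]$ (note the swapped order, so the outer slot is $\fS_m$), then applies \ref{lem:mugrouphomo} with the roles of $m$ and $n$ interchanged, and finally corrects by the commutativity isomorphism $c^*$, which is multiplication by $(-1)^{mn}$ (Example~\ref{exa:cmn}).

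Your sketch for \ref{lem:mugrouphomo} is also too loose: appealing to the concatenation description (Example~\ref{exa:concat}) is not enough, and the paper explicitly says it will not use that description. What the paper actually does is more delicate: it builds, for each fixed $r$, a simplicial map
\[
\Theta_r:\Ex^\infty\Hom_\Algl\bigl(A,(B^{\fS_m}_r)^\Delta\bigr)\longrightarrow \Omega^m\Ex^\infty\Hom_\Algl(A,B^\Delta)
\]
out of the maps $\mu_B^{\fS_m,\Delta^p}$, and then identifies $(\mu^{m,n}_B)_*\circ\iota_r$ with $\pi_n\Theta_r$ under Theorem~\ref{thm:bij}. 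The point is that the target already carries an $\Omega^m$ factor, so taking $\pi_n$ lands directly in $\pi_{m+n}$; this is what makes the group structures line up.
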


\section{The loop-stable homotopy category}\label{sec:defik}

Let $f:A\to B^{\fS_n}_r$ be an $\ell$-algebra homomorphism. By Proposition \ref{lem:classexists}, there exists a unique strong morphism of extensions $\scrU_A\to\scrP_{n,B}$ extending $f$:
	\[\xymatrix@C=3em@R=2em{\scrU_A\ar@{..>}[d]_-{\exists !} & JA\ar[r]\ar@{..>}[d]_-{\Lambda^n(f)} & TA\ar@{..>}[d]\ar[r] & A\ar[d]^-{f} \\
		\scrP_{n,B} & B^{\fS_{n+1}}_r\ar[r] & P(n,B)_r\ar[r] & B^{\fS_n}_r \\}\]
	We will write $\Lambda^n(f)$ for the classifying map of $f$ with respect to $\scrP_{n,B}$.

	\begin{rem}\label{rem:Lambda-J}
		We have $\Lambda^n(f)=\Lambda^n(\id_{B^{\fS_n}})\circ J(f)$. Indeed, this follows from the uniqueness statement in Proposition \ref{lem:classexists} and the fact that the following diagram exhibits a strong morphism of extensions $\scrU_A\to \scrP_{n,B}$ that extends $f$:
		\[\xymatrix@C=3em@R=2em{\scrU_A\ar[d] & JA\ar[d]_-{J(f)}\ar[r] & TA\ar[r]\ar[d]^-{T(f)} & A\ar[d]^-{f} \\
			\scrU_{B^{\fS_n}_r}\ar[d] & J(B^{\fS_n}_r)\ar[d]_-{\Lambda^n\left(\id_{B^{\fS_n}}\right)}\ar[r] & T(B^{\fS_n}_r)\ar[r]\ar[d] & B^{\fS_n}_r\ar[d]^-{\id} \\
			\scrP_{n,B} & B^{\fS_{n+1}}_r\ar[r] & P(n,B)_r\ar[r] & B^{\fS_n}_r \\}\]
	\end{rem}

	\begin{rem}\label{rem:Lambda-fS}
		We have $\Lambda^n(f)=\mu^{n,1}_B\circ f^{\fS_1}_0\circ \lambda_A$. Indeed, this follows from the uniqueness statement in Proposition \ref{lem:classexists} and the fact that the following diagram exhibits a strong morphism of extensions $\scrU_A\to \scrP_{n,B}$ that extends $f$:
		\[\xymatrix@C=3em@R=2em{\scrU_A\ar[d] & JA\ar[d]_-{\lambda_{A}}\ar[r] & TA\ar[r]\ar[d] & A\ar[d]^-{\id} \\
			\scrP_{0,A}\ar[d] & A^{\fS_1}_0\ar[d]_-{f^{\fS_1}_0}\ar[r] & P(0,A)_0\ar[r]\ar[d]^-{P(0,f)_0} & A\ar[d]^-{f} \\
			\scrP_{0,B^{\fS_n}_r}\ar[d]|-{\mbox{\scriptsize Example \ref{exa:muSm}}} & (B^{\fS_n}_r)^{\fS_1}_0\ar[d]_-{\mu^{n,1}}\ar[r] & P(0,B^{\fS_n}_r)_0\ar[r]\ar[d] & B^{\fS_n}_r\ar[d]^-{\id} \\
			\scrP_{n,B} & B^{\fS_{n+1}}_r\ar[r] & P(n,B)_r\ar[r] & B^{\fS_n}_r \\}\]
	\end{rem}

	If $f,g:A\to B^{\fS_n}_r$ are homotopic morphisms, then $\Lambda^n(f)$ and $\Lambda^n(g)$ are homotopic too. Thus, we can regard $\Lambda^n$ as a function $\Lambda^n_{A,B}:[A, B^{\fS_n}_r]\to [JA,B^{\fS_{n+1}}_r]$. For ind-algebras $(A,I)$ and $(B,J)$, define $\Lambda^{n}_{A_\bul,B_\bul}:[A_\bul,(B_\bul)_\bul^{\fS_n}]\to [J(A_\bul),(B_\bul)_\bul^{\fS_{n+1}}]$ as the function:
	\[\xymatrix@C=2em{\displaystyle\lim\colim \Lambda^n_{A_i,B_j}:\lim_i\colim_{(r,j)}\,[A_i, (B_j)^{\fS_n}_r]\ar[r] & \displaystyle\lim_i\colim_{(r,j)}\,[JA_i, (B_j)^{\fS_{n+1}}_r]}\]

	\begin{lem}\label{lem:Lambda}
		Let $A_\bul, B_\bul\in\Algl^\ind$ and let $n\geq 1$. Then the functions
		\[\xymatrix{\Lambda^n_{A_\bul, B_\bul}:[A_\bul,(B_\bul)_\bul^{\fS_n}]\ar[r] & [J(A_\bul),(B_\bul)_\bul^{\fS_{n+1}}]}\]
		are group homomorphisms.
	\end{lem}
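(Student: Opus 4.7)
The plan is to apply the factorization from Remark \ref{rem:Lambda-fS}, which gives $\Lambda^n(f) = \mu^{n,1}_B \circ f^{\fS_1}_0 \circ \lambda_A$ for any $f : A \to B^{\fS_n}_r$ in $\Algl$. Interpreting this formula at the level of $[\Algl]^\ind$ via the framework of Section \ref{subsec:extending}, for single algebras $A, B \in \Algl$ the map $\Lambda^n_{A, B}$ factors as the composite
\[
\xymatrix@C=1.5em{[A, B^{\fS_n}_\bul] \ar[r]^-{(?)^{\fS_1}} & [A^{\fS_1}_\bul, (B^{\fS_n}_\bul)^{\fS_1}_\bul] \ar[r]^-{(\mu^{n,1}_B)_*} & [A^{\fS_1}_\bul, B^{\fS_{n+1}}_\bul] \ar[r]^-{\lambda_A^*} & [JA, B^{\fS_{n+1}}_\bul].}
\]
The composite of the first two arrows is exactly the group homomorphism of Proposition \ref{prop:bundle}\ref{lem:mu2grouphomo}, where the roles of $m$ and $n$ there are played by our $n$ and $1$. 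The third arrow is pre-composition by $\lambda_A$, and is a group homomorphism because $B^{\fS_{n+1}}_\bul$ is a group object in $[\Algl]^\ind$ by Lemma \ref{lem:groupstr}, so the contravariant functor $[-, B^{\fS_{n+1}}_\bul]$ takes values in groups and group homomorphisms.

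For arbitrary ind-algebras $(A, I)$ and $(B, J)$, the map $\Lambda^n_{A_\bul, B_\bul}$ is by definition the map on $\lim$--$\colim$ induced by the algebra-level maps $\Lambda^n_{A_i, B_j}$. Since each of these is a group homomorphism by the single-algebra case above, and since limits and filtered colimits in the category of (abelian) groups are computed on underlying sets, the induced map $\Lambda^n_{A_\bul, B_\bul}$ is a group homomorphism as well.

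The only real subtlety I anticipate is verifying that the displayed factorization genuinely represents $\Lambda^n_{A,B}$ when regarded in the ind-homotopy category rather than at the pointwise level; this is a naturality check handled by the formalism of Lemma \ref{lem:extendingnat}, which already realizes $\mu^{n,1}$, $(?)^{\fS_1}_\bul$, and $\lambda$ as natural transformations and endofunctors of $[\Algl]^\ind$.
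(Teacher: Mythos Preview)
Your proof is correct but takes a different route from the paper's. You invoke Remark \ref{rem:Lambda-fS} to factor $\Lambda^n_{A,B}$ as $\lambda_A^*\circ(\mu^{n,1}_B)_*\circ(?)^{\fS_1}$, and then appeal to Proposition \ref{prop:bundle}\ref{lem:mu2grouphomo} together with the fact that precomposition into a group object is a group homomorphism. The paper instead uses Remark \ref{rem:Lambda-J}, writing $\Lambda^n(f)=\Lambda^n(\id_{B^{\fS_n}})\circ J(f)$, and then establishes the identity $\Lambda^n(\id_{B^{\fS_n}})=c^*\circ\mu^{1,n}_B\circ(\lambda_B)^{\fS_n}\circ\kappa^{1,n}_B$ via a chain of strong morphisms of extensions; the factors are then handled by Proposition \ref{prop:bundle}\ref{lem:Jkappa}, \ref{lem:mugrouphomo} and Example \ref{exa:cmn}. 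Your argument is shorter and avoids deriving this auxiliary identity, but note that the paper's identity (labelled \eqref{eq:Lambdaid}) is reused in the proof of Lemma \ref{prop:Lambda1}, so from the paper's point of view the extra work here pays off later.
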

	\begin{proof}
		It suffices to prove the case $A_\bul=A\in\Algl$ and $B_\bul=B\in\Algl$. Consider the following strong morphism of extensions $\scrU_{B^{\fS_n}_r}\to \scrP_{n,B}$ extending the identity:
		\[\xymatrix@R=2em{\scrU_{B^{\fS_n}_r}\ar[d] & J(B^{\fS_n}_r)\ar[d]_-{\kappa^{1,n}}\ar[r] & T(B^{\fS_n}_r)\ar[d]\ar[r] & B^{\fS_n}_r\ar[d]^-{\id} \\
			(\scrU_B)^{\fS_n}\ar[d] & (JB)^{\fS_n}_r\ar[d]_-{(\lambda_B)^{\fS_n}}\ar[r] & (TB)^{\fS_n}_r\ar[d]\ar[r] & B^{\fS_n}_r\ar[d]^-{\id} \\
			(\scrP_{0,B})^{\fS_n}\ar[d] & (B^{\fS_1}_0)^{\fS_n}_r\ar[d]_-{\mu^{1,n}}\ar[r] & (PB)^{\fS_n}_r\ar[d]\ar[r] & B^{\fS_n}_r\ar[d]^-{\id} \\
			\widetilde{\scrP}_{n,B}\ar[d]|-{\mbox{\scriptsize Example \ref{exa:pathmonho}}} & B^{\fS_{1+n}}_r\ar[d]_-{c^*}^-{\cong}\ar[r] & \widetilde{P}(n,B)_r\ar[d]_-{c^*}^-{\cong}\ar[r] & B^{\fS_n}_r\ar[d]^-{\id} \\
			\scrP_{n,B} & B^{\fS_{n+1}}_r\ar[r] & P(n,B)_r\ar[r] & B^{\fS_n}_r }\]
		By the uniqueness statement in Proposition \ref{lem:classexists}, we have:
		\begin{equation}\label{eq:Lambdaid}\Lambda^n(\id_{B^{\fS_n}})=c^*\circ\mu^{1,n}_B\circ (\lambda_B)^{\fS_n}\circ \kappa^{1,n}_B\end{equation}
		Then, by Remark \ref{rem:Lambda-J}, $\Lambda^n_{A,B}$ equals the composite:
		\[\xymatrix@C=2.5em{[A,B^{\fS_n}_\bul]\ar[r]^-{\kappa^{1,n}_B\circ J(?)} & [JA, (JB)^{\fS_n}_\bul]\ar[r]^-{(\lambda_B)_*} & [JA, (B^{\fS_1}_0)^{\fS_n}_\bul]\ar[r]^-{(\mu^{1,n}_B)_*} & [JA, B^{\fS_{1+n}}_\bul]\ar[r]^-{c^*} & [JA, B^{\fS_{n+1}}_\bul] }\]
		The result now follows from Proposition \ref{prop:bundle} \ref{lem:Jkappa} and \ref{lem:mugrouphomo} and Example \ref{exa:cmn}.
	\end{proof}

	\begin{defi}[cf. \cite{ralf}*{Section 6.3}]\label{defi:kkc} We proceed to define a category $\fk$, that we will call the \emph{loop-stable homotopy category}. The objects of $\fk$ are the pairs $(A,m)$ where $A$ is an object of $\Algl$ and $m\in\Z$. For two objects $(A,m)$ and $(B,n)$,  put:
		\[\fk\left((A,m),(B.n)\right):=\colim_v\,[J^{m+v}A,B^{\fS_{n+v}}_\bul]\]
		Here, the colimit is taken over the morphisms $\Lambda^{n+v}$ of Lemma \ref{lem:Lambda} and $v$ runs over the integers such that both $m+v\geq 0$ and $n+v\geq 0$. We often write $\langle f\rangle$ for the element of $\fk\left((A,m),(B.n)\right)$ represented by $f\in[J^{m+v}A,B^{\fS_{n+v}}_\bul]$. The composition in $\fk$ is defined as follows. Represent elements of $\fk\left((A,m),(B,n)\right)$ and $\fk\left((B,n),(C,k)\right)$ by $f\in[J^{m+v}A,B^{\fS_{n+v}}_\bul]$ and $g\in[J^{n+w}B,C^{\fS_{k+w}}_\bul]$ respectively. To simplify notation, write:
		\[N_1:=m+v,\hspace{1em}N_2:=n+v,\hspace{1em}N_3:=n+w\hspace{1em}\mbox{and}\hspace{1em}N_4:=k+w.\]
		Let $g\star f$ be the following composite in $[\Algl]^\ind$:
		\[\xymatrix@C=3.7em{J^{N_1+N_3}A \ar[r]^-{J^{N_3}(f)} &
			J^{N_3}(B^{\fS_{N_2}}_\bul)\ar[r]^-{(-1)^{N_2N_3}\kappa^{N_3,N_2}} &			(J^{N_3}B)^{\fS_{N_2}}_\bul\ar[r]^-{g^{\fS_{N_2}}} & (C^{\fS_{N_4}}_\bul)^{\fS_{N_2}}_\bul\ar[r]^-{\mu^{N_4,N_2}} & C_\bul^{\fS_{N_4+N_2}} \\}\]
		Then define $\langle g\rangle\circ\langle f\rangle:=\langle g\star f\rangle\in \fk((A,m),(C,k))$. We will show in Lemma \ref{lem:compwelldef} that $\langle g\star f\rangle$ does not depend upon the choice of the representatives $f$ and $g$. Then, in Theorem \ref{thm:compowd}, we will prove that the composition just defined makes $\fk$ into a category.
	\end{defi}

\section{Well-definedness of the composition}\label{sec:well}

We closely follow \cite{ralf}*{Section 6.3}, making appropiate changes to translate the proof into the algebraic setting. The following two lemmas are straightforward verifications.

\begin{lem}\label{lem:pegote}
	Let $A_\bul, B_\bul\in[\Algl]^\ind$ and let $g\in[A_\bul, (B_\bul)^{\fS_n}_\bul]$.
	\begin{enumerate}[label=(\roman*)]
		\item\label{lem:Lambda-J} If $f\in[A'_\bul, A_\bul]$, then $\Lambda^n(g\circ f)=\Lambda^n(g)\circ J(f)\in[J(A'_\bul), (B_\bul)^{\fS_{n+1}}_\bul]$.
		\item\label{lem:Lambda-fS} If $h\in[B_\bul, B'_\bul]$, then $\Lambda^n(h^{\fS_n}\circ g)=h^{\fS_{n+1}}\circ \Lambda^n(g)\in[J(A_\bul), (B'_\bul)^{\fS_{n+1}}_\bul]$.
	\end{enumerate}
\end{lem}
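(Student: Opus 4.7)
The plan is to reduce both identities to the algebra level, where they become strict equalities. The ind-algebra versions then follow for free, since $\Lambda^n$ at the ind-level is built componentwise from the algebra-level classifying maps via a $\lim\colim$. Once at the algebra level, I would exploit the explicit formulas for $\Lambda^n$ given in Remarks \ref{rem:Lambda-J} and \ref{rem:Lambda-fS}.

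For part \ref{lem:Lambda-J}, I would apply Remark \ref{rem:Lambda-J} twice, once to $g\circ f$ and once to $g$, and use the functoriality of $J$ in between:
\[\Lambda^n(g\circ f) = \Lambda^n(\id_{B^{\fS_n}})\circ J(g\circ f) = \Lambda^n(\id_{B^{\fS_n}})\circ J(g)\circ J(f) = \Lambda^n(g)\circ J(f).\]

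For part \ref{lem:Lambda-fS}, I would apply Remark \ref{rem:Lambda-fS} to obtain
\[\Lambda^n(h^{\fS_n}\circ g) = \mu^{n,1}_{B'}\circ (h^{\fS_n}\circ g)^{\fS_1}_0\circ \lambda_A = \mu^{n,1}_{B'}\circ (h^{\fS_n})^{\fS_1}_0\circ g^{\fS_1}_0\circ \lambda_A,\]
and then invoke the naturality of $\mu^{n,1}$ in the target algebra (property (1) of Section \ref{sec:multi}), which gives $\mu^{n,1}_{B'}\circ (h^{\fS_n})^{\fS_1}_0 = h^{\fS_{n+1}}\circ \mu^{n,1}_B$. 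Substituting yields $h^{\fS_{n+1}}\circ \mu^{n,1}_B\circ g^{\fS_1}_0\circ \lambda_A = h^{\fS_{n+1}}\circ \Lambda^n(g)$, again by Remark \ref{rem:Lambda-fS}.

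There is no real obstacle, as both identities are formal consequences of the definitions. The only mild bookkeeping is the passage from algebra homomorphisms to ind-morphisms in $[\Algl]^\ind$, but this is routine since $\Lambda^n_{A_\bul,B_\bul}$ is defined as $\lim\colim$ of the algebra-level $\Lambda^n_{A_i,B_j}$. An alternative, slightly more conceptual proof would observe instead that in each case both sides of the claimed equality extend the same algebra homomorphism to a strong morphism of extensions $\scrU_{?}\to \scrP_{n,?}$, so that the equality is forced by the uniqueness statement in Proposition \ref{lem:classexists}.
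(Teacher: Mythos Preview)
Your proof is correct and aligns with the paper's treatment: the paper simply declares this lemma a ``straightforward verification'' without further detail, and your argument via Remarks~\ref{rem:Lambda-J} and~\ref{rem:Lambda-fS} (plus naturality of $\mu$ for part~\ref{lem:Lambda-fS}) is exactly such a verification. The reduction from ind-algebras to algebras is handled correctly by the $\lim\colim$ definition of $\Lambda^n_{A_\bul,B_\bul}$, and your alternative suggestion via the uniqueness in Proposition~\ref{lem:classexists} would work equally well.
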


\begin{lem}\label{lem:Lambda-mu}
	Let $A_\bul, B_\bul\in[\Algl]^\ind$ and let $f\in[A_\bul, ((B_\bul)^{\fS_n}_\bul)^{\fS_m}_\bul]$. Then:
	\[\Lambda^{n+m}\left(\mu^{n,m}_{B_\bul}\circ f\right)=\mu^{n,m+1}_{B_\bul}\circ \Lambda^m(f)\in[J(A_\bul), (B_\bul)^{\fS_{n+m+1}}_\bul]\]
\end{lem}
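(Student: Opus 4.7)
The plan is to reduce the identity to a single application of Proposition \ref{lem:classcommute} to the strong morphism of extensions $\scrP_{m,B^{\fS_n}_s}\to \scrP_{n+m,B}$ supplied by Example \ref{exa:muSm}. Since both $\Lambda$ and $\mu$ are defined on $[\Algl]^\ind$ by taking limits and colimits of the corresponding operations on algebras (Subsection \ref{subsec:extending} and Lemma \ref{lem:Lambda}), it will suffice to verify the equality for a representative $f:A\to (B^{\fS_n}_s)^{\fS_m}_r$ with $A,B\in\Algl$. Write $\widetilde B:=B^{\fS_n}_s$ to shorten notation.

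First I would strip $f$ off both sides by means of Remark \ref{rem:Lambda-J}. Applying it to the composite $\mu^{n,m}_B\circ f:A\to B^{\fS_{n+m}}_{s+r}$ and, separately, to $f:A\to \widetilde B^{\fS_m}_r$, yields
\begin{align*}
\Lambda^{n+m}(\mu^{n,m}_B\circ f) &= \Lambda^{n+m}(\id_{B^{\fS_{n+m}}})\circ J(\mu^{n,m}_B)\circ J(f),\\
\Lambda^m(f) &= \Lambda^m(\id_{\widetilde B^{\fS_m}})\circ J(f).
\end{align*}
Consequently the claim reduces to the identity
\[\Lambda^{n+m}(\id_{B^{\fS_{n+m}}})\circ J(\mu^{n,m}_B) \;=\; \mu^{n,m+1}_B\circ \Lambda^m(\id_{\widetilde B^{\fS_m}}),\]
after which post-composition with $J(f)$ delivers the stated equality.

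For this reduced identity I would observe that, by definition, $\Lambda^m(\id_{\widetilde B^{\fS_m}})$ is the classifying map of $\scrP_{m,\widetilde B}$ and $\Lambda^{n+m}(\id_{B^{\fS_{n+m}}})$ is the classifying map of $\scrP_{n+m,B}$. Then I would invoke Proposition \ref{lem:classcommute} for the strong morphism of extensions $\scrP_{m,\widetilde B}\to \scrP_{n+m,B}$ constructed in Example \ref{exa:muSm}, whose kernel and cokernel components are $\mu^{n,m+1}_B$ and $\mu^{n,m}_B$ respectively. The square produced by Proposition \ref{lem:classcommute} is exactly the one required.

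I do not expect any serious obstacle here: the extensive preliminaries in Section 2 (in particular Example \ref{exa:muSm}, Remark \ref{rem:Lambda-J} and Proposition \ref{lem:classcommute}) have been set up precisely to make such compatibility statements immediate. The only technical point is passing between algebras and ind-algebras, which is handled by Lemma \ref{lem:extendingnat} together with the definition of $\Lambda^n_{A_\bul,B_\bul}$ as a limit--colimit of the maps $\Lambda^n_{A_i,B_j}$.
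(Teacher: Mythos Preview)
Your proposal is correct and is precisely the kind of argument the paper has in mind when it declares this lemma a ``straightforward verification.'' The key ingredient is indeed the strong morphism of extensions $\scrP_{m,B^{\fS_n}_s}\to\scrP_{n+m,B}$ from Example \ref{exa:muSm}, and your reduction via Remark \ref{rem:Lambda-J} followed by Proposition \ref{lem:classcommute} works exactly as stated. A marginally more direct variant is to skip the stripping step and observe that the composite $\scrU_A\to\scrP_{m,\widetilde B}\to\scrP_{n+m,B}$ is a strong morphism extending $\mu^{n,m}_B\circ f$, so by the uniqueness clause of Proposition \ref{lem:classexists} its left component $\mu^{n,m+1}_B\circ\Lambda^m(f)$ must equal $\Lambda^{n+m}(\mu^{n,m}_B\circ f)$; but this is only a cosmetic difference from what you wrote.
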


\begin{lem}[cf. \cite{ralf}*{Lemma 6.30}]\label{lem:clascon}
	Let $B\in\Algl$. Then the following diagram in $[\Algl]^\ind$ commutes:
	\begin{equation}\label{eq:clascon}\begin{gathered}\xymatrix@C=3em{J^2B\ar[r]^-{J(\lambda_B)}\ar@/_/[dr]_-{(\lambda_{JB})^{-1}} & J(B^{\fS_1}_\bul)\ar[d]^-{\kappa^{1,1}_B} \\
		& (JB)^{\fS_1}_\bul \\}\end{gathered}\end{equation}
	Here, $(\lambda_{JB})^{-1}$ is the inverse of $\lambda_{JB}$ in the group $[J^2B,(JB)^{\fS_1}_\bul]$.
\end{lem}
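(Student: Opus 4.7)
The plan is to compute $\Lambda^{1}(\lambda_B):J^2B\to B^{\fS_2}_\bul$ in two different ways using the tools developed for $\Lambda^n$, and then extract the sign identity from the resulting equation. First, by Remark \ref{rem:Lambda-J} together with formula \eqref{eq:Lambdaid} (derived in the proof of Lemma \ref{lem:Lambda}),
\[
\Lambda^{1}(\lambda_B)=\Lambda^{1}(\id_{B^{\fS_1}})\circ J(\lambda_B)=c^*\circ\mu^{1,1}_B\circ(\lambda_B)^{\fS_1}\circ\kappa^{1,1}_B\circ J(\lambda_B).
\]
On the other hand, by Remark \ref{rem:Lambda-fS} applied to $f=\lambda_B$ with $n=1$ and $A=JB$,
\[
\Lambda^{1}(\lambda_B)=\mu^{1,1}_B\circ(\lambda_B)^{\fS_1}\circ\lambda_{JB}.
\]

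Equating both expressions produces an identity in $[J^2B,B^{\fS_2}_\bul]$. Since $c^*$ acts as multiplication by $-1$ on this abelian group (Example \ref{exa:cmn}) and post-composition with $\mu^{1,1}_B\circ(\lambda_B)^{\fS_1}$ is a group homomorphism (Proposition \ref{prop:bundle}\ref{lem:mugrouphomo} combined with Lemma \ref{lem:groupstr}), the identity rewrites as
\[
\bigl(\mu^{1,1}_B\circ(\lambda_B)^{\fS_1}\bigr)_{*}\bigl(\lambda_{JB}+\kappa^{1,1}_B\circ J(\lambda_B)\bigr)=0
\]
in $[J^2B,B^{\fS_2}_\bul]$. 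The desired conclusion $\kappa^{1,1}_B\circ J(\lambda_B)=(\lambda_{JB})^{-1}$ is, by Example \ref{exa:groupinverse}, equivalent to the vanishing of the same sum already in $[J^2B,(JB)^{\fS_1}_\bul]$.

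The hard part will be passing from this vanishing after post-composition with $\mu^{1,1}_B\circ(\lambda_B)^{\fS_1}$ to the vanishing in $[J^2B,(JB)^{\fS_1}_\bul]$, since there is no a priori injectivity of the induced map at the ind-homotopy level. The clean strategy is a direct comparison of extensions: I would construct a morphism of extensions $\scrP_{0,JB}\to(\scrU_B)^{\fS_1}$ whose last slot is $\lambda_B:JB\to B^{\fS_1}$ and whose first slot is the swap automorphism $\omega$ of $(JB)^{\fS_1}_\bul$; applying Proposition \ref{lem:classcommute} would then yield $\omega\circ\lambda_{JB}=\kappa^{1,1}_B\circ J(\lambda_B)$ on the nose, which combined with Example \ref{exa:groupinverse} is exactly the claim. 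The technical heart is building the middle component $P(JB)\to(TB)^{\fS_1}$ compatible with the splittings $b\mapsto b(1-t)$ of $\scrP_{0,JB}$ and $\sigma_B^{\fS_1}$ of $(\scrU_B)^{\fS_1}$, so that both squares of the morphism of extensions commute.
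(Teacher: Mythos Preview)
Your first computation with $\Lambda^1(\lambda_B)$ is correct and gives the identity you state in $[J^2B,B^{\fS_2}_\bul]$, but, as you yourself note, there is no way to cancel $(\mu^{1,1}_B\circ(\lambda_B)^{\fS_1})_*$ at this stage of the paper: the fact that $\lambda_B$ and $\mu^{1,1}_B$ induce bijections on homotopy classes is only proved \emph{later} (Lemmas~\ref{lem:lambdaiso} and~\ref{lem:mukkequiv}), and those proofs rely on excision, which in turn is developed after the present lemma has been used to set up the composition law. So this route is circular.

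Your ``clean strategy'' has a concrete obstruction: the morphism of extensions $\scrP_{0,JB}\to(\scrU_B)^{\fS_1}$ you propose does not exist. In the explicit description of Example~\ref{exa:subdi1}, the middle component would have to be an $\ell$-algebra homomorphism $(t-1)(JB)[t]\to(t^2-t)(TB)[t]$ restricting to $\omega$ on loops. A path $p\in(t-1)(JB)[t]$ has $p(1)=0$ but $p(0)\in JB$ is generally nonzero in $TB$, so it is not a loop in $TB$; any candidate formula that corrects the endpoint (e.g.\ $p(t)\mapsto p(1-t)-tp(0)$) fails to be multiplicative. This is precisely why the paper's proof is longer: it introduces the auxiliary extensions $(\scrE,s)$ with $I\subset t(TB)[t]$ and $(\scrE',s')$ with $E\subset t(TB)[t]\times t(JB)[t]$, computes the classifying map of the intermediate $\chi:JB\to I$ with respect to $(\scrE',s')$ as $\omega\circ\lambda_{JB}$, and then relates this to $\kappa^{1,1}_B$ via a (non-strong) morphism $\scrE'\to(\scrU_B)^{\fS_1}$ at subdivision level~$1$. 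The passage through $E$ is exactly what lets one turn a path in $JB$ into a loop in $TB$ while keeping track of the extra data needed to recover $\lambda_B$ up to homotopy.
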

\begin{proof}
	Recall from Example \ref{exa:subdi1} that, for $A\in\Algl$, the extension $\scrP_{0,A}$ is isomorphic to:
	\[\begin{array}{cr}\xymatrix@C=3em{(t^2-t)A[t]\ar[r]^-{\inc} & (t-1)A[t]\ar[r]^-{\ev_0} & A} & \text{($r=0$)}\\
	\xymatrix@C=3em{tA[t]\tensor[_{\ev_1}]{\times}{_{\ev_1}} tA[t]\ar[r]^-{\inc} & A[t]\tensor[_{\ev_1}]{\times}{_{\ev_1}}tA[t]\ar[r]^-{\ev_0\circ\pr_1} & A} & \text{($r=1$)}
	\end{array}\]
	In the rest of the proof we will make these identifications without further mention.

Define
\[I:=\ker\left(\xymatrix{t(TB)[t]\ar[r]^-{\ev_1} & TB\ar[r]^-{\eta_B} & B\\}\right)\]
and let $s:B\to t(TB)[t]$ be given by $s(b)=\sigma_B(b)t$ ---recall that $\sigma_B$ is the section of the universal extension $\scrU_B$.	We have an extension:
	\[(\scrE,s):\xymatrix@C=4em{I\ar[r]^-{\inc} & t(TB)[t]\ar[r]^-{\eta_B\circ \ev_1} & B}\]
Now put
\[E:=\left\{(p,q)\in t(TB)[t]\times t(JB)[t]:p(1)=q(1)\right\}\]
and let $s':I\to E$ be given  by $s'(p)=(p,p(1)t)$ ---here we use that $\ev_1:I\to TB$ factors through $JB$. It is easily seen that we have an extension:
\[\xymatrix@C=4em@R=-0.3em{(\scrE',s'): (t^2-t)(JB)[t]\ar[r]^-{(0,\inc)} & E\ar[r]^-{\pr_1} & I\\}\]

Let $\chi:JB\to I$ be the classifying map of $(\scrE,s)$. The following diagram exhibits a strong morphism of extensions $\scrU_B\to\scrU_B$ extending $\id_B$:
	\[\xymatrix@C=4em@R=2em{\scrU_B\ar[d] & JB\ar[r]\ar[d]_-{\chi} & TB\ar[r]\ar[d] & B\ar[d]^-{\id} \\
		(\scrE,s)\ar[d] & I\ar[r]^-{\inc}\ar[d]_-{\ev_1} & t(TB)[t]\ar[r]^-{\eta_B\circ \ev_1}\ar[d]^-{\ev_1} & B\ar[d]^-{\id} \\
		\scrU_B & JB\ar[r]^-{\inc} & TB\ar[r]^-{\eta_B} & B \\}\]
	It follows that $\ev_1\circ\chi=\id_{JB}$. Now let $\omega$ be the automorphism of $(JB)[t]$ given by $\omega(t)=1-t$ and consider the following strong morphism of extensions $\scrU_{JB}\to\scrP_{0,JB}$ extending $\id_{JB}$:
	\[\xymatrix@C=4em@R=2em{\scrU_{JB}\ar[d] & J^2B\ar[r]\ar[d] & T(JB)\ar[r]\ar[d] & JB\ar[d]^-{\chi}\ar@/^2pc/[dd]^-{\id} \\
		(\scrE',s')\ar[d] & (t^2-t)(JB)[t]\ar[r]^-{(0,\inc)}\ar[d]_-{\id} & E\ar[r]^-{\pr_1}\ar[d]^-{\pr_2} & I\ar[d]^-{\ev_1} \\
		\scrE''\ar[d] & (t^2-t)(JB)[t]\ar[r]^-{\inc}\ar[d]_-{\omega} & t(JB)[t]\ar[r]^-{\ev_1}\ar[d]^-{\omega} & JB\ar[d]^-{\id} \\
		\scrP_{0,JB} & (t^2-t)(JB)[t]\ar[r]^-{\inc} & (t-1)(JB)[t]\ar[r]^-{\ev_0} & JB \\}\]
	Since $\omega^{-1}=\omega$, we get that the classifying map of $\chi$ with respect to $(\scrE',s')$ equals the composite:
	\[\xymatrix@C=3em{J^2B\ar[r]^-{\lambda_{JB}} & (t^2-t)(JB)[t]\ar[r]^-{\omega} & (t^2-t)(JB)[t] }\]
Define $\theta:E\to (TB)^{\fS_1}_1=t(TB)[t]\tensor[_{\ev_1}]{\times}{_{\ev_1}}t(TB)[t]$ by $\theta(p,q)=(q,p)$ and consider the following morphisms of extensions:
	\begin{equation}\label{eq:post}\begin{gathered}\xymatrix@C=4em@R=2em{\scrU_{JB}\ar[d] & J^2B\ar[r]\ar[d]_-{\omega\circ\lambda_{JB}} & T(JB)\ar[r]\ar[d] & JB\ar[d]^-{\chi} \\
		\scrE'\ar[d] & (JB)^{\fS_1}_0\ar[r]^-{(0,\inc)}\ar[d]_-{(\inc,0)} & E\ar[r]^-{\pr_1}\ar[d]^-{\theta} & I\ar[d]^-{(0,\eta_B)} \\
		(\scrU_B)^{\fS_1} & (JB)^{\fS_1}_1\ar[r]^-{(\inc)^{\fS_1}} & (TB)^{\fS_1}_1\ar[r]^{(\eta_B)^{\fS_1}} & B^{\fS_1}_1 \\}\end{gathered}\end{equation}
	Note that the morphism $\scrE'\to (\scrU_B)^{\fS_1}$ is not strong. Put $\psi:=(0,\eta_B)\circ\chi:JB\to B^{\fS_1}_1$. By Proposition \ref{lem:classcommute} applied to \eqref{eq:post}, the following diagram commutes in $[\Algl]$:
	\[\xymatrix@C=2em@R=2em{J^2B\ar[d]_-{\id}\ar[rrr]^{J(\psi)} & & & J(B^{\fS_1}_1)\ar[d]^-{\kappa^{1,1}_B} \\
		J^2B\ar[r]^-{\lambda_{JB}} & (JB)^{\fS_1}_0\ar[r]^-{\omega} & (JB)^{\fS_1}_0\ar[r]^-{\gamma^*} & (JB)^{\fS_1}_1 \\}\]
	Here $\gamma^*=(\inc,0):(t^2-t)(JB)[t]\to t(JB)[t]\tensor[_{\ev_1}]{\times}{_{\ev_1}}t(JB)[t]$ is the morphism induced by the last vertex map; see Example \ref{exa:subdi1}. The proof will be finished if we show that $J(\psi)$ equals $J(\lambda_B)$ in $[\Algl]$. By Lemma \ref{lem:Jhtp} it suffices to show that $\psi$ equals $\lambda_B$ in $[\Algl]$. Define $\beta:t(TB)[t]\to (PB)_1$ by $\beta(p)=(\eta_B(p)(1-t),0)$. The following diagram exhibits a strong morphism of extensions $\scrU_B\to \scrP_{0,B}$ extending $\id_B$:
	\[\xymatrix@C=4em@R=2em{\scrU_B\ar[d] & JB\ar[d]_-{\chi}\ar[r] & TB\ar[d]\ar[r] & B\ar[d]^-{\id} \\
		(\scrE,s)\ar[d] & I\ar[d]_-{\beta}\ar[r]^-{\inc} & t(TB)[t]\ar[d]^-{\beta}\ar[r]^-{\eta_B\circ \ev_1} & B\ar[d]^-{\id} \\
		\scrP_{0,B} & B^{\fS_1}_1\ar[r]_-{\inc} & (PB)_1\ar[r]_-{\ev_0\circ\pr_1} & B}\]
	It follows that $\lambda_B$ equals the composite $JB\overset{\chi}\to I\overset{\beta}\to B^{\fS_1}_1$,	which is easily seen to be homotopic to $\psi$.
\end{proof}

\begin{lem}\label{lem:precompo2}
	Let $B\in\Algl$ and let $\varepsilon_n=(-1)^n$. Then the following diagram in $[\Algl]^\ind$ commutes:
	\[\xymatrix@C=3em{J^{n+1}B\ar[r]^-{J^n(\lambda_B)}\ar@/_/[dr]_-{(\lambda_{J^nB})^{\varepsilon_n}} & J^n(B^{\fS_1}_\bul)\ar[d]^-{\kappa^{n,1}_B} \\
		& (J^nB)^{\fS_1}_\bul \\}\]
\end{lem}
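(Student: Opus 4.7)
The plan is to prove this by induction on $n$, using Lemma \ref{lem:clascon} as the key base step and Lemma \ref{lem:kappapq} to unwind $\kappa^{n+1,1}$.

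The cases $n=0$ (trivial, since both $\kappa^{0,1}_B$ and $J^0$ are identities and $\varepsilon_0=1$) and $n=1$ (which is exactly Lemma \ref{lem:clascon}) handle the base. Assume the assertion holds for some $n\geq 1$; I will deduce it for $n+1$. By Lemma \ref{lem:kappapq} applied with $p=n$, $q=1$, we have $\kappa^{n+1,1}_B=\kappa^{1,1}_{J^nB}\circ J(\kappa^{n,1}_B)$, so
\[
\kappa^{n+1,1}_B\circ J^{n+1}(\lambda_B)
  = \kappa^{1,1}_{J^nB}\circ J\bigl(\kappa^{n,1}_B\circ J^n(\lambda_B)\bigr).
\]
By the inductive hypothesis, inside the outer $J$ we may replace $\kappa^{n,1}_B\circ J^n(\lambda_B)$ by $(\lambda_{J^nB})^{\varepsilon_n}$ in $[\Algl]^\ind$ (this uses that $J$ is well-defined on the ind-homotopy category by Lemma \ref{lem:Jhtp} and the discussion in Section \ref{subsec:extending}). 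Thus the composite equals
\[
\kappa^{1,1}_{J^nB}\circ J\bigl((\lambda_{J^nB})^{\varepsilon_n}\bigr).
\]

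The crucial observation is now that Proposition \ref{prop:bundle}\ref{lem:Jkappa}, applied with $n=1$, $m=1$, and the algebra $B$ there replaced by $J^nB$, asserts precisely that the operation $\kappa^{1,1}_{J^nB}\circ J(?)$ is a group homomorphism from $[J^{n+1}B,(J^nB)^{\fS_1}_\bul]$ to $[J^{n+2}B,(J^{n+1}B)^{\fS_1}_\bul]$. Therefore it commutes with the $\varepsilon_n$-th power, giving
\[
\kappa^{1,1}_{J^nB}\circ J\bigl((\lambda_{J^nB})^{\varepsilon_n}\bigr)
  = \bigl(\kappa^{1,1}_{J^nB}\circ J(\lambda_{J^nB})\bigr)^{\varepsilon_n}.
\]
Finally, Lemma \ref{lem:clascon} applied with $B$ replaced by $J^nB$ identifies $\kappa^{1,1}_{J^nB}\circ J(\lambda_{J^nB})$ with $(\lambda_{J^{n+1}B})^{-1}$. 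Combining the signs yields $(\lambda_{J^{n+1}B})^{-\varepsilon_n}=(\lambda_{J^{n+1}B})^{\varepsilon_{n+1}}$, closing the induction.

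The only subtle step is recognising that the outer $J$ can be pulled past the group power $(-)^{\varepsilon_n}$; without Proposition \ref{prop:bundle}\ref{lem:Jkappa} one would not a priori know that $J$ respects the group structure on loop-valued homotopy classes. Everything else is formal bookkeeping.
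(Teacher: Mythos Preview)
Your proof is correct and follows essentially the same route as the paper's: induction on $n$ with Lemma \ref{lem:clascon} as the base case, Lemma \ref{lem:kappapq} to decompose $\kappa^{n+1,1}_B$, the inductive hypothesis inside the outer $J$, and Proposition \ref{prop:bundle}\ref{lem:Jkappa} to pull the sign past $\kappa^{1,1}_{J^nB}\circ J(?)$. The only cosmetic difference is that you spell out the trivial $n=0$ case and the justification for applying $J$ at the ind-homotopy level, which the paper leaves implicit.
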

\begin{proof}
	We prove the result by induction on $n$. The case $n=1$ is Lemma \ref{lem:clascon}. Suppose that the result holds for $n\geq 1$. We have:
	\begin{align*}\kappa^{n+1,1}_B\circ J^{n+1}(\lambda_B)&=\kappa^{1,1}_{J^nB}\circ J(\kappa^{n,1}_B)\circ J^{n+1}(\lambda_B) && \text{(by Lemma \ref{lem:kappapq})}\\
	&=\kappa^{1,1}_{J^nB}\circ J(\kappa^{n,1}_B\circ J^n(\lambda_B)) \\
	&=\kappa^{1,1}_{J^nB}\circ J\left((\lambda_{J^nB})^{\varepsilon_n}\right) && \text{(by hypothesis)} \\
	&=\left[\kappa^{1,1}_{J^nB}\circ J(\lambda_{J^nB})\right]^{\varepsilon_n} && \text{(by Proposition \ref{prop:bundle} \ref{lem:Jkappa})}\\
	&=\left[(\lambda_{J^{n+1}B})^{-1}\right]^{\varepsilon_n}= (\lambda_{J^{n+1}B})^{\varepsilon_{n+1}} && \text{(by the case $n=1$)}\end{align*}
	Then the result holds for $n+1$.
\end{proof}

\begin{lem}[cf. \cite{ralf}*{Lemma 6.29}]\label{prop:Lambda1} Let $B\in\Algl$ and let $n\geq 0$. Then the following diagram in $[\Algl]^\ind$ commutes:
	\[\xymatrix@C=2em@R=2em{J(B^{\fS_n}_\bul)\ar[d]_-{(-1)^n\Lambda^n(\id_{B^{\fS_n}})}\ar[r]^-{\kappa^{1,n}_B} & (JB)^{\fS_n}_\bul \ar[d]^-{(\lambda_B)^{\fS_n}} \\
		B^{\fS_{n+1}}_\bul & (B^{\fS_1}_\bul)^{\fS_n}_\bul\ar[l]^-{\mu^{1,n}} \\}\]
\end{lem}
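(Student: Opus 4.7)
The plan is to recognize the lemma as essentially a restatement of equation \eqref{eq:Lambdaid}, which was already established inside the proof of Lemma \ref{lem:Lambda}:
\[\Lambda^n(\id_{B^{\fS_n}})=c^*\circ\mu^{1,n}_B\circ (\lambda_B)^{\fS_n}\circ \kappa^{1,n}_B.\]
Here $c:I^n\times I\to I\times I^n$ is the commutativity isomorphism, inducing $c^*:B^{\fS_{1+n}}_\bul\to B^{\fS_{n+1}}_\bul$. The only discrepancy between this identity and the lemma is the extra factor $c^*$, and the entire task is to account for it via the sign $(-1)^n$.

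I would first dispose of the case $n=0$: here $\fS_0=\Delta^0$ is the monoidal unit for $\square$, so $c$ is trivial, both $\kappa^{1,0}$ and $\mu^{1,0}$ reduce to identities in $[\Algl]^\ind$ (using point (4) of Section \ref{sec:multi}), and $(\lambda_B)^{\fS_0}=\lambda_B$. Thus \eqref{eq:Lambdaid} collapses to $\Lambda^0(\id_B)=\lambda_B$ (which is also just Example \ref{exa:pathb1}); combined with $(-1)^0=1$, this settles the case.

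For $n\geq 1$, I would invoke Example \ref{exa:cmn} with its $m$ equal to $1$: post-composition with $c^*$ acts on $[J(B^{\fS_n}_\bul),B^{\fS_{n+1}}_\bul]$ as multiplication by $(-1)^n$ in the group structure furnished by Lemma \ref{lem:groupstr}. Substituting into \eqref{eq:Lambdaid} then gives
\[\Lambda^n(\id_{B^{\fS_n}})=(-1)^n\bigl[\mu^{1,n}_B\circ (\lambda_B)^{\fS_n}\circ \kappa^{1,n}_B\bigr],\]
and multiplying both sides by $(-1)^n$ yields the claimed identity.

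The argument is essentially bookkeeping and I do not expect any serious obstacle. The one subtle point is that $[J(B^{\fS_n}_\bul),B^{\fS_{n+1}}_\bul]$ need not be abelian when $n=1$, but this is harmless: ``multiplication by $(-1)$'' is to be read as group inversion, and the two factors of $(-1)^n$ still cancel regardless.
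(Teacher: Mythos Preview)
Your proposal is correct and follows essentially the same route as the paper: both reduce the lemma to equation \eqref{eq:Lambdaid} and then invoke Example \ref{exa:cmn} to convert the $c^*$ into the sign $(-1)^n$. The paper's proof adds only one clarifying sentence, namely that since $[J(B^{\fS_n}_\bul),B^{\fS_{n+1}}_\bul]=\lim_r[J(B^{\fS_n}_r),B^{\fS_{n+1}}_\bul]$ one may check the identity at each fixed level $r$, which is where \eqref{eq:Lambdaid} was actually established and where Example \ref{exa:cmn} (stated for $A\in\Algl$) applies directly.
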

\begin{proof}
	We have to prove the equality of two morphisms in $[\Algl]^\ind$; since $[J(B^{\fS_n}_\bul),B^{\fS_{n+1}}_\bul]=\lim_r[J(B^{\fS_n}_r),B^{\fS_{n+1}}_\bul]$, it will be enough to show that both morphisms are equal when projected to $[J(B^{\fS_n}_r),B^{\fS_{n+1}}_\bul]$, for every $r$. The result now follows from \eqref{eq:Lambdaid}; the appearance of the sign $(-1)^n$ is explained in Example \ref{exa:cmn}.
\end{proof}

\begin{lem}\label{lem:compo2}
	Let $B\in\Algl$ and let $m,n\geq 0$. Then, we have:
	\[(-1)^n\Lambda^m(\kappa^{n,m}_B)=\kappa^{n,m+1}_B\circ J^n\Lambda^m(\id_{B^{\fS_m}})\in [J^{n+1}(B^{\fS_m}_\bul),(J^nB)^{\fS_{m+1}}_\bul]\]
\end{lem}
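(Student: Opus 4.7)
The plan is to reduce both sides of the claimed identity to the common normal form
\[\alpha := (-1)^{n+m}\,\mu^{1,m}_{J^nB}\circ(\lambda_{J^nB})^{\fS_m}\circ\kappa^{n+1,m}_B\in[J^{n+1}(B^{\fS_m}_\bul),(J^nB)^{\fS_{m+1}}_\bul],\]
by repeatedly invoking Lemma \ref{prop:Lambda1}, Lemma \ref{lem:kappapq}, Lemma \ref{lem:penta}, Lemma \ref{lem:precompo2}, and the group-homomorphism properties collected in Proposition \ref{prop:bundle}.

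For the left-hand side $(-1)^n\Lambda^m(\kappa^{n,m}_B)$, I would first use Remark \ref{rem:Lambda-J} to write $\Lambda^m(\kappa^{n,m}_B)=\Lambda^m(\id_{(J^nB)^{\fS_m}})\circ J(\kappa^{n,m}_B)$; then substitute $\Lambda^m(\id_{(J^nB)^{\fS_m}})=(-1)^m\mu^{1,m}_{J^nB}\circ(\lambda_{J^nB})^{\fS_m}\circ\kappa^{1,m}_{J^nB}$ from Lemma \ref{prop:Lambda1} applied to the algebra $J^nB$ with index $m$; and finally collapse $\kappa^{1,m}_{J^nB}\circ J(\kappa^{n,m}_B)=\kappa^{n+1,m}_B$ via Lemma \ref{lem:kappapq} with $p=n$, $q=1$. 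Multiplying by the prefactor $(-1)^n$ yields $\alpha$ at once.

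For the right-hand side $\kappa^{n,m+1}_B\circ J^n\Lambda^m(\id_{B^{\fS_m}})$, I would again begin with Lemma \ref{prop:Lambda1} to split off the sign, writing $\Lambda^m(\id_{B^{\fS_m}})=(-1)^m\mu^{1,m}_B\circ(\lambda_B)^{\fS_m}\circ\kappa^{1,m}_B$. To pull the factor $(-1)^m$ outside the expression $\kappa^{n,m+1}_B\circ J^n(-)$, I invoke Proposition \ref{prop:bundle}\ref{lem:Jkappa}, which asserts precisely that $\kappa^{n,m+1}_B\circ J^n(?)$ is a group homomorphism on the relevant Hom-set. Next I rewrite $\kappa^{n,m+1}_B\circ J^n(\mu^{1,m}_B)$ as $\mu^{1,m}_{J^nB}\circ(\kappa^{n,1}_B)^{\fS_m}\circ\kappa^{n,m}_{B^{\fS_1}}$ using Lemma \ref{lem:penta} with $p=1$, $q=m$; move $\kappa^{n,m}_{B^{\fS_1}}$ past $J^n((\lambda_B)^{\fS_m})$ by naturality of $\kappa^{n,m}_?$, converting it into $(J^n\lambda_B)^{\fS_m}\circ\kappa^{n,m}_{JB}$; and collapse $\kappa^{n,m}_{JB}\circ J^n(\kappa^{1,m}_B)=\kappa^{n+1,m}_B$ once more by Lemma \ref{lem:kappapq}. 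At this stage the right-hand side reads $(-1)^m\mu^{1,m}_{J^nB}\circ(\kappa^{n,1}_B\circ J^n\lambda_B)^{\fS_m}\circ\kappa^{n+1,m}_B$, and Lemma \ref{lem:precompo2} replaces the inner $\kappa^{n,1}_B\circ J^n\lambda_B$ by $(\lambda_{J^nB})^{(-1)^n}$.

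The last step is to extract the exponent $(-1)^n$ out of the $\fS_m$-power. For this I observe that the assignment $h\mapsto\mu^{1,m}_{J^nB}\circ h^{\fS_m}\circ\kappa^{n+1,m}_B$ from $[J^{n+1}B,(J^nB)^{\fS_1}_\bul]$ to $[J^{n+1}(B^{\fS_m}_\bul),(J^nB)^{\fS_{m+1}}_\bul]$ factors as $(?)^{\fS_m}$ followed by post-composition with $\mu^{1,m}_{J^nB}$ ---a group homomorphism by Proposition \ref{prop:bundle}\ref{lem:mu2grouphomo}--- and then pre-composition with $\kappa^{n+1,m}_B$, which is automatically a group homomorphism because the target $(J^nB)^{\fS_{m+1}}_\bul$ is a group object by Lemma \ref{lem:groupstr}. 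Thus the inner $(-1)^n$ can be pulled outside, producing $\alpha$ and matching the left-hand side. The main obstacle is not any single computation but the careful bookkeeping required to migrate the three independent signs ---from Lemma \ref{prop:Lambda1}, from Lemma \ref{lem:precompo2}, and from the statement itself--- through the correct group-homomorphism assertions in Proposition \ref{prop:bundle}.
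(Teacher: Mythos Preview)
Your argument is correct, but it takes a genuinely different route from the paper's own proof. The paper decomposes both sides via Remark~\ref{rem:Lambda-fS}, writing $\Lambda^m(\id_{B^{\fS_m}})=\mu^{m,1}_B\circ\lambda_{B^{\fS_m}}$ and $\Lambda^m(\kappa^{n,m}_B)=\mu^{m,1}_{J^nB}\circ(\kappa^{n,m}_B)^{\fS_1}\circ\lambda_{J^n(B^{\fS_m})}$; it then checks, in a single commutative diagram built from Lemma~\ref{lem:penta} (with $p=m$, $q=1$) and Lemma~\ref{lem:precompo2}, that $\kappa^{n,m+1}_B\circ J^n(\mu^{m,1}_B)\circ J^n(\lambda_{B^{\fS_m}})$ coincides with $\mu^{m,1}_{J^nB}\circ(\kappa^{n,m}_B)^{\fS_1}\circ(\lambda_{J^n(B^{\fS_m})})^{\varepsilon_n}$. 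Only one sign---the $(-1)^n$ coming from Lemma~\ref{lem:precompo2}---has to be moved, and a single appeal to Proposition~\ref{prop:bundle}~\ref{lem:mugrouphomo} suffices. By contrast, you decompose via Lemma~\ref{prop:Lambda1} (the $\mu^{1,m}$ side), reduce each expression separately to the common normal form $\alpha$, and therefore need to shuttle three signs around using Proposition~\ref{prop:bundle}~\ref{lem:Jkappa} and~\ref{lem:mu2grouphomo} in addition to~\ref{lem:mugrouphomo}. Your approach has the virtue of being mechanical---both sides are driven to an explicit target---while the paper's is shorter and keeps the sign bookkeeping to a minimum. The underlying toolbox (Lemmas~\ref{lem:kappapq}, \ref{lem:penta}, \ref{lem:precompo2}, \ref{prop:Lambda1}, Proposition~\ref{prop:bundle}) is the same.
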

\begin{proof}
On one hand, the diagram below commutes in $[\Algl]^\ind$ by Lemmas \ref{lem:penta} and \ref{lem:precompo2}:
	\[\xymatrix@C=5em{J^{n+1}(B^{\fS_m}_\bul)\ar[r]^-{J^n\left(\lambda_{B^{\fS_m}}\right)}\ar@/_1pc/[dr]_-{\left(\lambda_{J^n(B^{\fS_m})}\right)^{\varepsilon_n}} & J^n\left((B^{\fS_m}_\bul\right)^{\fS_1}_\bul)\ar[d]^-{\kappa^{n,1}_{B^{\fS_m}}}\ar[r]^-{J^n\left(\mu^{m,1}_B\right)} & J^n(B^{\fS_{m+1}}_\bul)\ar[dd]^-{\kappa^{n,m+1}_B} \\
		& (J^n(B^{\fS_m}_\bul))^{\fS_1}_\bul\ar[d]_-{(\kappa^{n,m}_B)^{\fS_1}} &  \\
		& ((J^nB)^{\fS_m}_\bul)^{\fS_1}_\bul\ar[r]^-{\mu^{m,1}_{J^nB}} & (J^nB)^{\fS_{m+1}}_\bul}\]
	On the other hand, by Remark \ref{rem:Lambda-fS}, we have:
	\[\kappa^{n,m+1}_B\circ J^n(\mu^{m,1}_B)\circ J^n(\lambda_{B^{\fS_m}})=\kappa^{n,m+1}_B\circ J^n\Lambda^m(\id_{B^{\fS_m}})\]
	\[\mu^{m,1}_{J^nB}\circ (\kappa^{n,m}_B)^{\fS_1}\circ (\lambda_{J^n(B^{\fS_m})})^{\varepsilon_n}=(-1)^n\Lambda^m(\kappa^{n,m}_B)\]
	Note that we use Proposition \ref{prop:bundle} \ref{lem:mugrouphomo} to handle the sign $(-1)^n$ in the latter equation.
\end{proof}

\begin{lem}[cf. \cite{ralf}*{Lemma 6.32}]\label{lem:compwelldef}
	Let $f\in [J^{N_1}A,B^{\fS_{N_2}}_\bul]$ and $g\in[J^{N_3}B,C^{\fS_{N_4}}_\bul]$. Then:
	\[\Lambda^{N_4}(g)\star f=\Lambda^{N_2+N_4}(g\star f)=g\star\Lambda^{N_2}(f)\in[J^{N_1+N_3+1}A,C^{\fS_{N_2+N_4+1}}_\bul]\]
\end{lem}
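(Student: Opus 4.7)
My plan is to compute each of the three morphisms explicitly in $[\Algl]^\ind$ and check they all reduce to the same normal form
\[(-1)^{N_2N_3+N_3}\,\mu^{N_4,N_2+1}_C\circ g^{\fS_{N_2+1}}\circ \kappa^{N_3,N_2+1}_B\circ J^{N_3}\Lambda^{N_2}(\id_{B^{\fS_{N_2}}})\circ J^{N_3+1}(f).\]
The tools I would use are Lemmas \ref{lem:pegote}, \ref{lem:Lambda-mu}, \ref{lem:compo2}, \ref{lem:kappapq} and \ref{prop:Lambda1}, Remarks \ref{rem:Lambda-J} and \ref{rem:Lambda-fS}, and the associativity and naturality of $\mu$ from Section \ref{sec:multi}. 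Throughout, signs of the form $(-1)^k$ can be pulled freely through the manipulations because $\Lambda^n$ is a group homomorphism (Lemma \ref{lem:Lambda}) and so are $J^k(?)$, $(?)^{\fS_m}$, $\kappa^{n,m}_?$ and $\mu^{n,m}_?$ (Proposition \ref{prop:bundle}).

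For $\Lambda^{N_2+N_4}(g\star f)$: starting from the definition of $\star$, Lemma \ref{lem:pegote}\ref{lem:Lambda-J} peels $J^{N_3}(f)$ off into the outside factor $J^{N_3+1}(f)$; Lemma \ref{lem:Lambda-mu} commutes $\Lambda^{N_2+N_4}$ past $\mu^{N_4,N_2}$, producing $\mu^{N_4,N_2+1}\circ \Lambda^{N_2}$; Lemma \ref{lem:pegote}\ref{lem:Lambda-fS} pulls $\Lambda^{N_2}$ past $g^{\fS_{N_2}}$; and Lemma \ref{lem:compo2} swaps $\Lambda^{N_2}$ past $\kappa^{N_3,N_2}_B$ at the cost of a sign $(-1)^{N_3}$ and an extra factor $J^{N_3}\Lambda^{N_2}(\id_{B^{\fS_{N_2}}})$. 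The accumulated sign is $(-1)^{N_2N_3+N_3}$, yielding the normal form. For $g\star \Lambda^{N_2}(f)$: substituting $\Lambda^{N_2}(f)$ for $f$ in the definition of $\star$ produces the sign $(-1)^{(N_2+1)N_3}=(-1)^{N_2N_3+N_3}$, and Remark \ref{rem:Lambda-J} splits $J^{N_3}\Lambda^{N_2}(f)=J^{N_3}\Lambda^{N_2}(\id_{B^{\fS_{N_2}}})\circ J^{N_3+1}(f)$, so we immediately land in the same normal form.

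The main obstacle is the third morphism $\Lambda^{N_4}(g)\star f$, where the index $N_4$ rather than $N_2$ is being bumped and the path to the normal form is genuinely asymmetric. My plan is to invoke Remark \ref{rem:Lambda-fS} to rewrite $\Lambda^{N_4}(g)=\mu^{N_4,1}_C\circ g^{\fS_1}\circ \lambda_{J^{N_3}B}$, then use associativity of $\mu$ (property (3) of Section \ref{sec:multi}) to turn $\mu^{N_4+1,N_2}\circ (\mu^{N_4,1}_C)^{\fS_{N_2}}$ into $\mu^{N_4,N_2+1}_C\circ \mu^{1,N_2}_{C^{\fS_{N_4}}}$, naturality of $\mu^{1,N_2}_?$ to pull $g^{\fS_{N_2+1}}$ to the outside, and Lemma \ref{lem:kappapq} to factor $\kappa^{N_3+1,N_2}_B=\kappa^{1,N_2}_{J^{N_3}B}\circ J(\kappa^{N_3,N_2}_B)$. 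The crucial step is to identify
\[\mu^{1,N_2}_{J^{N_3}B}\circ(\lambda_{J^{N_3}B})^{\fS_{N_2}}\circ \kappa^{1,N_2}_{J^{N_3}B}=(-1)^{N_2}\Lambda^{N_2}(\id_{(J^{N_3}B)^{\fS_{N_2}}})\]
via Lemma \ref{prop:Lambda1}, after which a final application of Remark \ref{rem:Lambda-J} and Lemma \ref{lem:compo2} lands in exactly the same normal form. The sign contributions $(-1)^{N_2(N_3+1)}$ (from the definition of $\star$), $(-1)^{N_2}$ (from Lemma \ref{prop:Lambda1}) and $(-1)^{N_3}$ (from Lemma \ref{lem:compo2}) combine modulo $2$ to $(-1)^{N_2N_3+N_3}$, so the signs agree. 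The trickiest parts are the correct invocation of Lemma \ref{prop:Lambda1} (which is precisely where Lemmas \ref{lem:clascon} and \ref{lem:precompo2} pay off) and the careful bookkeeping of the several contributing signs.
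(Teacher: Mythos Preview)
Your proposal is correct and follows essentially the same route as the paper: both arguments expand each of the three expressions using Lemmas \ref{lem:pegote}, \ref{lem:Lambda-mu}, \ref{lem:compo2}, \ref{lem:kappapq}, \ref{prop:Lambda1} and Remarks \ref{rem:Lambda-J}, \ref{rem:Lambda-fS} together with the associativity and naturality of $\mu$, and compare them via the pivot $\Lambda^{N_2}(\kappa^{N_3,N_2}_B)$. The only cosmetic difference is that you push one step further (via Lemma \ref{lem:compo2}) to a common fully-expanded normal form, whereas the paper stops at $\Lambda^{N_2}(\kappa^{N_3,N_2}_B)$ and matches the three expressions there.
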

\begin{proof}
	First, we have:
	\begin{align*}(-1)^{N_2N_3}\Lambda^{N_2+N_4}(g\star f)&=\Lambda^{N_2+N_4}\left(\mu^{N_4,N_2}_C\circ g^{\fS_{N_2}}\circ\kappa^{N_3,N_2}_B\circ J^{N_3}(f)\right)\\
	&=\Lambda^{N_2+N_4}\left(\mu^{N_4,N_2}_C\circ g^{\fS_{N_2}}\circ\kappa^{N_3,N_2}_B\right)\circ J^{N_3+1}(f) \\
		&=\mu^{N_4,N_2+1}_C\circ \Lambda^{N_2}\left(g^{\fS_{N_2}}\circ\kappa^{N_3,N_2}_B\right)\circ J^{N_3+1}(f)\\
		&=\mu^{N_4,N_2+1}_C\circ g^{\fS_{N_2+1}}\circ\Lambda^{N_2}\left(\kappa^{N_3,N_2}_B\right)\circ J^{N_3+1}(f)
	\end{align*}
	Here the equalities follow from the definition of $g\star f$, Lemma \ref{lem:pegote} \ref{lem:Lambda-J}, Lemma \ref{lem:Lambda-mu} and Lemma \ref{lem:pegote} \ref{lem:Lambda-fS} ---in that order. We are also using Proposition \ref{prop:bundle} \ref{lem:mu2grouphomo} and Lemma \ref{lem:Lambda} to handle the sign $(-1)^{N_2N_3}$.

	Secondly, we have:
	\begin{align*}(-&1)^{N_2N_3}\Lambda^{N_4}(g)\star f=\\
	&=(-1)^{N_2}\mu_C^{N_4+1,N_2}\circ \left(\mu_C^{N_4,1}\circ g^{\fS_1}\circ \lambda_{J^{N_3}B}\right)^{\fS_{N_2}} \circ \kappa^{N_3+1,N_2}_B\circ J^{N_3+1}(f)\\
	&=(-1)^{N_2}\mu_C^{N_4+1,N_2}\circ \left(\mu_C^{N_4,1}\right)^{\fS_{N_2}}\circ \left(g^{\fS_1}\right)^{\fS_{N_2}}\circ \left(\lambda_{J^{N_3}B}\right)^{\fS_{N_2}} \circ \kappa^{N_3+1,N_2}_B\circ J^{N_3+1}(f)\\
	&=(-1)^{N_2}\mu_C^{N_4,1+N_2}\circ \mu_{C^{\fS_{N_4}}}^{1,N_2}\circ \left(g^{\fS_1}\right)^{\fS_{N_2}}\circ \left(\lambda_{J^{N_3}B}\right)^{\fS_{N_2}} \circ \kappa^{N_3+1,N_2}_B\circ J^{N_3+1}(f)\\
	&=(-1)^{N_2}\mu^{N_4,1+N_2}_C\circ g^{\fS_{1+N_2}}\circ \mu^{1,N_2}_{J^{N_3}B}\circ \left(\lambda_{J^{N_3}B}\right)^{\fS_{N_2}}\circ \kappa^{N_3+1,N_2}_B\circ J^{N_3+1}(f)\end{align*}
	Here the equalities follow from the definition of $\star$ and Remark \ref{rem:Lambda-fS}, the functoriality of $(?)^{\fS_{N_2}}$, the associativity of $\mu$ and the naturality of $\mu$ ---in that order.

	Finally, we have:
	\begin{align*}(-&1)^{N_2N_3}g\star\Lambda^{N_2}(f)=\\
	&=(-1)^{N_3}\mu^{N_4,N_2+1}_C\circ g^{\fS_{N_2+1}}\circ\kappa^{N_3,N_2+1}_B\circ J^{N_3}\Lambda^{N_2}(f) \\
	&=(-1)^{N_3}\mu^{N_4,N_2+1}_C\circ g^{\fS_{N_2+1}}\circ\kappa^{N_3,N_2+1}_B\circ J^{N_3}\left(\Lambda^{N_2}(\id_{B^{\fS_{N_2}}})\circ J(f)\right) \\
	&=(-1)^{N_3}\mu^{N_4,N_2+1}_C\circ g^{\fS_{N_2+1}}\circ\kappa^{N_3,N_2+1}_B\circ J^{N_3}\Lambda^{N_2}(\id_{B^{\fS_{N_2}}})\circ J^{N_3+1}(f)\end{align*}
	Here the equalities follow from the definition of $\star$, Remark \ref{rem:Lambda-J} and the functoriality of $J^{N_3}$ ---in that order.

	Thus, $\Lambda^{N_2+N_4}(g\star f)=g\star \Lambda^{N_2}(f)$ by Lemma \ref{lem:compo2} and to prove that $\Lambda^{N_2+N_4}(g\star f)=\Lambda^{N_4}(g)\star f$ it is enough to show that:
	\[\Lambda^{N_2}\left(\kappa^{N_3,N_2}_B\right)=(-1)^{N_2}\mu^{1,N_2}_{J^{N_3}B}\circ \left(\lambda_{J^{N_3}B}\right)^{\fS_{N_2}}\circ \kappa^{N_3+1,N_2}_B\]
	We have:
	\begin{align*}
	\Lambda^{N_2}\left(\kappa^{N_3,N_2}_B\right)&=\Lambda^{N_2}\left(1_{(J^{N_3}B)^{\fS_{N_2}}}\circ\kappa^{N_3,N_2}_B\right)\\
	&=\Lambda^{N_2}\left(1_{(J^{N_3}B)^{\fS_{N_2}}}\right)\circ J\left(\kappa^{N_3,N_2}_B\right) \\
	&=(-1)^{N_2}\mu^{1,N_2}_{J^{N_3}B}\circ \left(\lambda_{J^{N_3}B}\right)^{\fS_{N_2}}\circ \kappa^{1,N_2}_{J^{N_3}B}\circ J\left(\kappa^{N_3,N_2}_B\right) \\
	&=(-1)^{N_2}\mu^{1,N_2}_{J^{N_3}B}\circ \left(\lambda_{J^{N_3}B}\right)^{\fS_{N_2}}\circ \kappa^{N_3+1,N_2}_B
	\end{align*}
	The first equality is trivial and the others follow from Lemma \ref{lem:pegote} \ref{lem:Lambda-J}, Lemma \ref{prop:Lambda1} and Lemma \ref{lem:kappapq} ---in that order.
\end{proof}

\begin{thm}\label{thm:compowd}
	The composition described in Definition \ref{defi:kkc} is well-defined and makes $\fk$ into a category.
\end{thm}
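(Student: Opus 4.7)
The plan has three steps, to be carried out in this order: first, I would verify that $\langle g\star f\rangle$ is independent of the choice of representatives; second, I would construct two-sided identity morphisms; and third, I would verify associativity.

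For well-definedness, I would observe that $g\star f$ is by construction a composition of morphisms in $[\Algl]^\ind$, so it only depends on $f$ and $g$ as elements of $[\Algl]^\ind$. What remains is stage-invariance in the colimits: moving $f$ to $\Lambda^{N_2}(f)$ or $g$ to $\Lambda^{N_4}(g)$ should not alter the class $\langle g\star f\rangle$. But this is exactly the content of Lemma \ref{lem:compwelldef}, which asserts
\[\Lambda^{N_4}(g)\star f=\Lambda^{N_2+N_4}(g\star f)=g\star\Lambda^{N_2}(f),\]
so all three represent the same colimit class in $\fk((A,m),(C,k))$.

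For the identity on $(A,m)$, I would define recursively $e_{A,0}:=\id_A$ and $e_{A,n+1}:=\Lambda^n(e_{A,n})\in[J^{n+1}A,A^{\fS_{n+1}}_\bul]$, and then set $\id_{(A,m)}:=\langle e_{A,m+v}\rangle$ for any $v$ with $m+v\geq 0$; these are compatible across the colimit transitions by construction. To verify that $\id_{(A,m)}$ is a two-sided unit, I would unfold $\star$ at a convenient stage; the degeneration of $\kappa^{0,n}$ and $\kappa^{n,0}$, together with the behavior of $\mu$ at the monoidal unit $\Delta^0$ of $\square$ (property (4) in Section \ref{sec:multi}), reduces both $\langle e\rangle\star\langle f\rangle$ and $\langle g\rangle\star\langle e\rangle$ to the original morphism, up to finitely many $\Lambda$'s that are absorbed in passing to the colimit class.

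For associativity, I would fix representatives $f,g,h$ with parameters $(m+v,n+v)$, $(n+w,k+w)$ and $(k+u,\ell+u)$, expand both $(h\star g)\star f$ and $h\star(g\star f)$ by applying the definition of $\star$ twice, and compare. The accumulated signs on both sides collect to $(-1)^{(n+v)(n+w)+(n+v)(k+u)+(k+w)(k+u)}$, so they agree. The equality of the remaining maps is then a diagram chase in $[\Algl]^\ind$ that reorganizes the $J$'s, $(\,\cdot\,)^{\fS}$'s, $\kappa$'s and $\mu$'s using the naturality of $\kappa^{n,m}_?$ and $\mu^{m,n}_?$ (Remark \ref{rem:extendingtoind}), the iteration law for $\kappa$ (Lemma \ref{lem:kappapq}), the compatibility of $\kappa$ with $\mu$ (Lemma \ref{lem:penta}), and the associativity of $\mu$.

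The main obstacle will be the associativity check, since the formula for $\star$ threads $J^{?}$, $\kappa^{?,?}$, $(\,\cdot\,)^{\fS_?}$ and $\mu^{?,?}$ around each other, and rewriting $(h\star g)\star f$ as $h\star(g\star f)$ requires moving $\kappa$'s and $\mu$'s past several layers of these functors while tracking signs. All the needed naturality and compatibility statements are already in hand, so this reduces to a careful but essentially mechanical diagram chase.
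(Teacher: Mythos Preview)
Your proposal is correct and follows the same approach as the paper, which simply cites Lemma~\ref{lem:compwelldef} for well-definedness and declares associativity a ``straightforward but lengthy verification.'' Your outline supplies precisely the details the paper omits: the identity represented by $\id_A$ at the lowest stage (your recursive $e_{A,n}$ just names its images under the transition maps), the sign bookkeeping for associativity (your exponent $N_2N_3+N_2N_5+N_4N_5$ is correct), and the right toolkit for the diagram chase (naturality, Lemma~\ref{lem:kappapq}, Lemma~\ref{lem:penta}, associativity of $\mu$).
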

\begin{proof}
	The composition is well-defined by Lemma \ref{lem:compwelldef}. The associativity is a straightforward but lengthy verification.
\end{proof}

\begin{exa}\label{exa:j}
	There is a functor $j:\Algl\to\fk$ defined by $A\mapsto(A,0)$ on objects, that sends a morphism $f:A\to B$ to its class in $\fk((A,0),(B,0))$. This functor clearly factors through $\Algl\to[\Algl]$. We often write $A$ and $f$ instead of $j(A)$ and $j(f)$.
\end{exa}

\section{Additivity}\label{sec:add}

The hom-sets in $\fk$ are abelian groups; indeed, they are defined as the filtered colimit of a diagram of abelian groups. Next we show that the composition is bilinear.

\begin{lem}\label{lem:bilinearcomp}
	The composition in $\fk$ is bilinear.
\end{lem}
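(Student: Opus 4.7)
The plan is to decompose the operation $\star$ into a sequence of maps, each of which is a group homomorphism in the appropriate variable by either Proposition \ref{prop:bundle} or Lemma \ref{lem:groupstr}. Once $g \star f$ is bilinear at the level of representatives, passing to the filtered colimit defining $\fk((A,m),(C,k))$ preserves bilinearity, since the transition maps $\Lambda^{n+v}$ are group homomorphisms by Lemma \ref{lem:Lambda}. Thus I would fix representatives $f \in [J^{N_1}A, B^{\fS_{N_2}}_\bul]$ and $g \in [J^{N_3}B, C^{\fS_{N_4}}_\bul]$, passing to a cofinal subdiagram of the colimit if necessary so that $N_2, N_4 \geq 2$ and all relevant hom-sets are abelian groups.

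For linearity in $f$ with $g$ fixed, I would read
\[g \star f = (-1)^{N_2 N_3}\, \mu^{N_4, N_2}_C \circ g^{\fS_{N_2}} \circ \kappa^{N_3, N_2}_B \circ J^{N_3}(f)\]
as the composite of four assignments in $f$, each of which preserves the group structure. The assignment $f \mapsto \kappa^{N_3,N_2}_B \circ J^{N_3}(f)$ is a group homomorphism by Proposition \ref{prop:bundle} (i). Post-composition with the fixed morphism $g^{\fS_{N_2}}$ is a group homomorphism by Lemma \ref{lem:groupstr}, since $(C^{\fS_{N_4}}_\bul)^{\fS_{N_2}}_\bul$ is a group object in $[\Algl]^\ind$. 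Post-composition with $\mu^{N_4, N_2}_C$ is a group homomorphism by Proposition \ref{prop:bundle} (iii). Multiplication by the sign $(-1)^{N_2 N_3}$ is trivially a homomorphism, yielding left additivity.

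For linearity in $g$ with $f$ fixed, I would regroup the same composite as
\[g \star f = (-1)^{N_2 N_3} \bigl( \mu^{N_4, N_2}_C \circ g^{\fS_{N_2}} \bigr) \circ \bigl( \kappa^{N_3, N_2}_B \circ J^{N_3}(f) \bigr).\]
Now the dependence on $g$ goes through $g \mapsto \mu^{N_4,N_2}_C \circ g^{\fS_{N_2}}$, which is a group homomorphism by Proposition \ref{prop:bundle} (iv). The subsequent pre-composition with the fixed morphism $\kappa^{N_3,N_2}_B \circ J^{N_3}(f)$ preserves the group structure because the target $C^{\fS_{N_2 + N_4}}_\bul$ is a group object in $[\Algl]^\ind$ by Lemma \ref{lem:groupstr}, so that $[-, C^{\fS_{N_2+N_4}}_\bul]$ is a contravariant functor into abelian groups.

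I do not anticipate a genuine obstacle: the proof is essentially a repackaging of the technical work already encoded in Proposition \ref{prop:bundle}, since $\star$ was engineered from precisely the four operations that proposition handles. The mildest subtlety is the compatibility with the filtered colimit defining the hom-sets of $\fk$, but this is immediate from Lemma \ref{lem:Lambda}.
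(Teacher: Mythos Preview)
Your proposal is correct and follows essentially the same route as the paper: both decompose $g\star f$ into the operations governed by Proposition \ref{prop:bundle} (i), (iii), (iv) together with a post- or pre-composition by a fixed morphism (handled via Lemma \ref{lem:groupstr}/Remark \ref{rem:groupstructure}), after arranging $N_2,N_4\geq 2$ so that the relevant groups are abelian. The only cosmetic difference is that the paper packages the middle step as ``$(-1)^{N_2N_3}g_*$'' rather than separating out the sign and the post-composition with $g^{\fS_{N_2}}$, but this is the same map.
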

\begin{proof}
	Let $g\in[J^{n+w}B,C^{\fS_{k+w}}_\bul]$ represent an element $\beta\in\fk((B,n),(C,k))$. Let us show that $\beta_*:\fk((A,m),(B,n))\to \fk((A,m),(C,k))$ is a group homomorphism. Represent elements $\alpha,\alpha'\in\fk((A,m),(B,n))$ by $f,f'\in[J^{m+v}A,B^{\fS_{n+v}}_\bul]$ ---we may assume that $n+v\geq 2$ by choosing $v$ large enough. To alleviate notation, write
	\[N_1:=m+v,\hspace{1em}N_2:=n+v,\hspace{1em}N_3:=n+w\hspace{1em}\mbox{and}\hspace{1em}N_4:=k+w.\]
	By definition of the composition in $\fk$, the following diagram of sets commutes:
	\[\xymatrix@R=2em{[J^{N_1}A,B^{\fS_{N_2}}_\bul]\ar[r]^-{g\star ?}\ar[d] & [J^{N_1+N_3}A,C^{\fS_{N_2+N_4}}_\bul]\ar[d] \\
		\fk((A,m),(B,n))\ar[r]^-{\beta_*} & \fk((A,m),(C,k)) \\}\]
	Here, the vertical arrows are structural morphisms into the colimits ---hence they are group homomorphisms. Since $\beta_*(\alpha+\alpha')=\langle g\star(f+f')\rangle$, to prove that $\beta_*(\alpha+\alpha')=\beta_*(\alpha)+\beta_*(\alpha')$ it suffices to show that $g\star ?$ is a group homomorphism. But $g\star ?$ can be factored as the following composite of group homomorphisms, as we proceed to explain:
\[\xymatrix@C=1.4em{[J^{N_1}A,B^{\fS_{N_2}}_\bul]\ar[r] & [J^{N_1+N_3}A,(J^{N_3}B)^{\fS_{N_2}}_\bul]\ar[r] &
[J^{N_1+N_3}A,(C^{\fS_{N_4}}_\bul)^{\fS_{N_2}}_\bul]\ar[r] &
[J^{N_1+N_3}A,C^{\fS_{N_2+N_4}}_\bul]}\]
The morphism on the left is the one in Proposition \ref{prop:bundle} \ref{lem:Jkappa}. The morphism in the middle is $(-1)^{N_2N_3}g_*$. The morphism on the right is the one in Proposition \ref{prop:bundle} \ref{lem:mugrouphomo}.

Now let $f\in[J^{m+v}A,B^{\fS_{n+v}}_\bul]$ represent an element $\alpha\in\fk((A,m),(B,n))$. Let us show that $\alpha^*:\fk((B,n),(C,k))\to \fk((A,m),(C,k))$ is a group homomorphism. Represent elements $\beta,\beta'\in\fk((B,n),(C,k))$ by $g,g'\in[J^{n+w}B,C^{\fS_{k+w}}_\bul]$ ---we may assume that $k+w\geq 2$ by choosing $w$ large enough. As before, write
\[N_1:=m+v,\hspace{1em}N_2:=n+v,\hspace{1em}N_3:=n+w\hspace{1em}\mbox{and}\hspace{1em}N_4:=k+w.\]
By definition of the composition in $\fk$, the following diagram of sets commutes:
\[\xymatrix@R=2em{[J^{N_3}B,C^{\fS_{N_4}}_\bul]\ar[r]^-{?\star f}\ar[d] & [J^{N_1+N_3}A,C^{\fS_{N_2+N_4}}_\bul]\ar[d] \\
		\fk((B,n),(C,k))\ar[r]^-{\alpha^*} & \fk((A,m),(C,k))}\]
Since $\alpha^*(\beta+\beta')=\langle (g+g')\star f\rangle$, to prove that $\alpha^*(\beta+\beta')=\alpha^*(\beta)+\alpha^*(\beta')$ it suffices to show that $?\star f$ is a group homomorphism. But $?\star f$ and the following composite of group homomorphisms differ only by the sign $(-1)^{N_2N_3}$:
\[\xymatrix@C=6em{[J^{N_3}B,C^{\fS_{N_4}}_\bul]\ar[r]^-{\mbox{\scriptsize Proposition \ref{prop:bundle} \ref{lem:mu2grouphomo}}} & [(J^{N_3}B)^{\fS_{N_2}},C^{\fS_{N_4+N_2}}_\bul]\ar[r]^-{\left(\kappa^{N_3,N_2}_B\circ J^{N_3}f\right)^*} &
[J^{N_1+N_3}A,C^{\fS_{N_4+N_2}}_\bul]}\qedhere\]
\end{proof}

\begin{prop}
	The category $\fk$ is additive.
\end{prop}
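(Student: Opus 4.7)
The category $\fk$ is preadditive: its hom-sets are abelian groups, being filtered colimits of the groups $[J^{m+v}A, B^{\fS_{n+v}}_\bul]$ for $v$ large enough, and composition is bilinear by Lemma \ref{lem:bilinearcomp}. To complete the proof of additivity, it remains to exhibit a zero object and to construct binary biproducts.

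The zero object is $(0,0)$, where $0$ is the trivial $\ell$-algebra. Since $\tilT$ preserves terminal objects and $F(0)=0$, one has $T(0)=0$, hence inductively $J^n(0)=0$ for all $n\geq 0$. By Lemma \ref{lem:simppair}, $0^{(K,L)}_r\cong 0\otimes\Z^{(K,L)}_r = 0$. Therefore both $\fk((0,0),-)$ and $\fk(-,(0,0))$ are trivial.

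For binary biproducts at a common shift $m$, the candidate is $(A,m)\oplus(B,m) := (A\oplus B, m)$, where $A\oplus B$ is the algebra direct sum equipped with its canonical inclusions $i_A,i_B$ and projections $p_A,p_B$ coming from $\Algl$. The external matrix identities $p_Ai_A=\id$, $p_Bi_B=\id$, $p_Ai_B=0$ and $p_Bi_A=0$ hold already in $\Algl$ and hence pass to $\fk$. The main obstacle is the remaining biproduct axiom $\langle i_Ap_A\rangle + \langle i_Bp_B\rangle = \langle \id_{A\oplus B}\rangle$ in $\fk((A\oplus B,m),(A\oplus B,m))$: the equality $i_Ap_A+i_Bp_B=\id$ holds in $\Algl$ only as a decomposition of $\ell$-module maps, whereas the sum on the left in $\fk$ is induced by the loop-concatenation group structure on $(A\oplus B)^{\fS_v}_\bul$ (Lemma \ref{lem:groupstr}). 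I would resolve this using Proposition \ref{prop:bundle}\ref{lem:tensorgrouphomo}: the natural decomposition $\Z^{\fS_v}_\bul\otimes(A\oplus B)\cong (\Z^{\fS_v}_\bul\otimes A)\oplus(\Z^{\fS_v}_\bul\otimes B)$, combined with the fact that $?\otimes\Z^{\fS_v}_\bul$ is a group homomorphism, allows the $\fk$-level sum of the classifying maps representing $i_Ap_A$ and $i_Bp_B$ to be traced back to the module-level identity. This is the technically most delicate step. An equivalent manner of packaging the same argument is to verify that for any group object $Y_\bul\in[\Algl]^\ind$ the restriction map $[A\oplus B, Y_\bul]\to [A, Y_\bul]\times [B, Y_\bul]$ is an isomorphism of groups, with inverse $(f,g)\mapsto f\circ p_A + g\circ p_B$.

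For pairs of objects at different shifts $(A,m)$ and $(B,n)$ with $m\neq n$, one exploits the natural reindexing identification $\fk((A,m),(C,k))\cong\fk((A,m-1),(C,k-1))$, immediate from shifting the colimit index, which exhibits $(A,m)\mapsto(A,m+1)$ as an auto-equivalence of $\fk$. Combined with the same-shift biproduct construction, this gives biproducts for arbitrary pairs of objects and completes the proof.
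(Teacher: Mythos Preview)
Your approach diverges from the paper's in a way that creates unnecessary work and leaves two genuine gaps.

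First, the paper does not verify the biproduct identity $\langle i_Ap_A\rangle + \langle i_Bp_B\rangle = \langle \id_{A\oplus B}\rangle$ at all. Instead it shows directly that $(B\times C,n)$ is a \emph{product} of $(B,n)$ and $(C,n)$ in $\fk$, via the chain of bijections
\[
\fk((A,m),(B\times C,n))\cong\colim_{v,r}[J^{m+v}A,(B\times C)^{\fS_{n+v}}_r]\cong\fk((A,m),(B,n))\times\fk((A,m),(C,n)),
\]
using only that $(?)^{\fS_N}_r$, the functor $\Algl\to[\Algl]$, and filtered colimits all commute with finite products. In a preadditive category, finite products are automatically biproducts, so the identity you call ``technically most delicate'' comes for free. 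Your sketch for that identity via Proposition~\ref{prop:bundle}\ref{lem:tensorgrouphomo} is vague: it is not clear how knowing that $?\otimes\Z^{\fS_v}_\bul$ is a group homomorphism lets you ``trace back'' the loop-concatenation sum to the module-level decomposition, and your alternative packaging (that $[A\oplus B,Y_\bul]\to[A,Y_\bul]\times[B,Y_\bul]$ is a group isomorphism) requires precisely the identity $h=h i_Ap_A+h i_Bp_B$ for the inverse, which is the point at issue.

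Second, your reduction for objects at different shifts does not work as stated. Observing that $L:(A,m)\mapsto(A,m+1)$ is an auto-equivalence only tells you that $L$ preserves products; it does not produce an isomorphism between $(A,m)$ and an object of the form $(?,n)$ when $m\neq n$. The paper handles this by invoking the isomorphism $(B,1+m)\cong(JB,m)$ of Lemma~\ref{lem:lambdaidinverses1} (which depends only on Lemma~\ref{lem:clascon} and the definition of $\star$), so that any pair $(A,m),(B,n)$ can be replaced by a pair with equal second coordinate. Your argument needs this ingredient, not merely the auto-equivalence statement.
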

\begin{proof}
	By Lemma \ref{lem:bilinearcomp}, it suffices to show that $\fk$ has finite products.

	Let $B,C\in\Algl$ and let $n\in\Z$. Let us first show that $(B\times C,n)$ is a product of $(B,n)$ and $(C,n)$ in $\fk$. For any $(A,m)\in\fk$, we have:
	\begin{align*}
	\fk((A,m),(B\times C,n))&=\colim_{v,r}\,[J^{m+v}A, (B\times C)^{\fS_{n+v}}_r] \\
	&\cong\colim_{v,r}\,[J^{m+v}A, B^{\fS_{n+v}}_r\times C^{\fS_{n+v}}_r] \\
	&\cong\colim_{v,r}\left\{[J^{m+v}A, B^{\fS_{n+v}}_r]\times[J^{m+v}A, C^{\fS_{n+v}}_r]\right\} \\
	&\cong\colim_{v,r}\,[J^{m+v}A, B^{\fS_{n+v}}_r]\times\colim_{v,r}\,[J^{m+v}A, C^{\fS_{n+v}}_r] \\
	&=\fk((A,m), (B,n))\times \fk((A,m), (C,n))
	\end{align*}
	Here we use that the functors $(?)^{\fS_N}_r:\Algl\to\Algl$ and $\Algl\to[\Algl]$ commute with finite products, and that filtered colimits of sets commute with finite products.

To prove that $\fk$ has finite products, we reduce to the special case above. We will show in Lemma \ref{lem:lambdaidinverses1} that, for any $(B,n)\in\fk$ and any $p\geq 1$, we have an isomorphism $(B,n)\cong (J^pB, n-p)$. Using this, any pair of objects of $\fk$ can be replaced by a new pair of objects ---each of them isomorphic to one of the original ones--- with equal second coordinate. The proof of Lemma \ref{lem:lambdaidinverses1} relies only on Lemma \ref{lem:clascon} and the definition of $\star$.
\end{proof}

\section{Excision}\label{sec:exci}

In this section we closely follow \cite{cortho}*{Section 6.3}. Let $f:A\to B$ be a morphism in $\Algl$. The \emph{mapping path} $(P_f)_\bul$ is the $\Zo$-diagram in $\Algl$ defined by the pullbacks:
\begin{equation}\label{eq:mappath}\begin{gathered}\xymatrix@C=2em@R=2em{(P_f)_r\ar[d]\ar[r]^-{\pi_f} & A\ar[d]^-{f} \\
	(PB)_r\ar[r]^-{d_1} & B \\}\end{gathered}\end{equation}
Note that $\pi_f$ is a split surjection in $\Mod_\ell$, since so is $d_1$. Define $\iota_f$ as the unique morphism that makes the following diagram commute:
\begin{equation}\label{eq:defiiotaf}\begin{gathered}\xymatrix{B^{\fS_1}_r\ar@/_/[ddr]_-{\mbox{\scriptsize inc}}\ar@/^/[drr]^-{0}\ar[dr]|-{\iota_f} & & \\
 & (P_f)_r\ar[d]\ar[r]^-{\pi_f} & A\ar[d]^-{f} \\
 & (PB)_r\ar[r]^-{d_1} & B }\end{gathered}\end{equation}

\begin{lem}[cf. \cite{cortho}*{Lemma 6.3.1}]\label{lem:pfexact}Let $f:A\to B$ be a morphism in $\Algl$ and let $C\in\Algl$. Then the following sequence is exact:
	\begin{equation}\label{eq:exc}\xymatrix@C=4em{\displaystyle\colim_{s} \fk(C,(P_f)_s)\ar[r]^-{(\pi_f)_*} & \fk(C,A)\ar[r]^-{f_*} & \fk(C,B)}\end{equation}
\end{lem}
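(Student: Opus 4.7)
The plan is to adapt the argument of \cite{cortho}*{Lemma 6.3.1} to the ind-algebra setting developed earlier in the paper.

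First I would verify the inclusion $\operatorname{im}(\pi_f)_* \subseteq \ker f_*$. From the pullback \eqref{eq:mappath}, the morphism $f\circ \pi_f:(P_f)_s\to B$ factors through the projection onto $(PB)_s$. The contraction constructed in the proof of Lemma \ref{lem:pathcontr} applies verbatim when $n=0$, so $(PB)_s$ is contractible; hence $f\circ\pi_f$ is polynomially null-homotopic and represents zero in $\fk$, giving $f_*\circ(\pi_f)_*=0$.

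For the reverse inclusion, let $\alpha\in\fk(C,A)$ satisfy $f_*(\alpha)=0$ and pick a representative $g\in[J^vC, A^{\fS_v}_\bul]$. Remark \ref{rem:Lambda-fS} together with Lemma \ref{lem:pegote}\ref{lem:Lambda-fS} shows that $\Lambda$ commutes with post-composition by $f$, so $f_*$ intertwines the transition morphisms in the colimits computing $\fk(C,A)$ and $\fk(C,B)$. Consequently $f_*(\alpha)=0$ permits us, after enlarging $v$ and replacing $g$ by its $\Lambda$-iterate, to assume that $f^{\fS_v}_\bul\circ g=0$ already in $[J^vC, B^{\fS_v}_\bul]$. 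A witness for this vanishing is a null-homotopy, which, in terms of the loop extension $\scrP_{0,B^{\fS_v}_\bul}$, amounts to a morphism $H$ from $J^vC$ into the middle term of this extension whose projection onto $B^{\fS_v}_\bul$ equals $f^{\fS_v}_\bul\circ g$. The pullback defining the mapping path of $f^{\fS_v}_\bul$ then assembles $(g,H)$ into a lift $\tilde g$ of $g$ through the corresponding mapping-path projection.

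To convert $\tilde g$ into an element of $\colim_s\fk(C,(P_f)_s)$, I would compare the mapping path of $f^{\fS_v}_\bul$ with the $\fS_v$-twisting of $(P_f)_\bul$. Since the defining pullback \eqref{eq:mappath} splits in $\Mod_\ell$, the functor $(?)^{\fS_v}_\bul$ (equivalently, tensoring with $\Z^{\fS_v}_\bul$, by Lemma \ref{lem:simppair}) preserves it, and the multiplication morphisms $\mu$ of Section \ref{sec:multi} provide the desired comparison compatibly with $\pi_f$. At some finite level this yields a morphism $J^vC\to((P_f)_s)^{\fS_v}_\bul$ whose class $\tilde\alpha\in\colim_s\fk(C,(P_f)_s)$ will satisfy $(\pi_f)_*(\tilde\alpha)=\alpha$. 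The main technical obstacle I anticipate is precisely this last step: tracking the $\mu$-morphisms, $\Lambda$-shifts and finite-level representatives carefully enough to verify that $\tilde\alpha$ is sent to $\alpha$ itself---not to a $\Lambda$-shifted or sign-twisted variant of it---under $(\pi_f)_*$.
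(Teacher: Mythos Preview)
Your proposal is correct and follows essentially the same strategy as the paper's proof. The one place where you overcomplicate things is the comparison step: you invoke the multiplication morphisms $\mu$ to relate the mapping path of $f^{\fS_v}$ with $((P_f)_\bul)^{\fS_v}$, and then worry about tracking $\mu$, $\Lambda$-shifts and signs. The paper avoids this entirely. It fixes a finite level $s$ for the representative $g:J^mC\to A^{\fS_m}_s$, observes that $(?)^{\fS_m}_s$ preserves the pullback \eqref{eq:mappath} (since it commutes with finite limits), and then uses the \emph{strict} isomorphism $(B^{(I,\{1\})}_r)^{\fS_m}_s\cong (B^{\fS_m}_s)^{(I,\{1\})}_r$ coming from commutativity of the tensor product (Lemma~\ref{lem:simppair}), not $\mu$. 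With that isomorphism in hand, the null-homotopy square and the pullback square have literally the same corners, so the lift $J^mC\to((P_f)_r)^{\fS_m}_s$ drops out immediately and manifestly maps to $\alpha$ under $(\pi_f)_*$. No $\mu$, no $\Lambda$-bookkeeping, no sign issues.
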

\begin{proof}Let $s\geq 0$ and note that $\fk(C,(PB)_s)=0$ since $(PB)_s$ is contractible. Then the following composite is zero, because it factors through $\fk(C,(PB)_s)$:
	\[\xymatrix@C=4em{\fk(C,(P_f)_s)\ar[r]^-{(\pi_f)_*} & \fk(C,A)\ar[r]^-{f_*} & \fk(C,B)}\]
	This shows that the composite in \eqref{eq:exc} is zero.

	Now let $g:J^mC\to A^{\fS_m}_s$ be a morphism in $\Algl$ that represents an element $\alpha\in \fk(C,A)$ such that $f_*(\alpha)=0\in \fk(C,B)$. Increasing $m$ and $s$ if necessary, we may assume that the following composite is nullhomotopic:
	\[\xymatrix{J^mC\ar[r]^-{g} & A^{\fS_m}_s\ar[r]^-{f^{\fS_m}_s} & B^{\fS_m}_s}\]
This implies the existence of a commutative square of algebras:
	\begin{equation}\label{eq:exc2}\begin{gathered}\xymatrix@C=2em@R=2em{J^mC\ar[r]^-{g}\ar[d] & A^{\fS_m}_s\ar[d]^-{f^{\fS_m}_s} \\
		(B^{\fS_m}_s)^{(I,\{1\})}_r\ar[r]^-{d_1} & B^{\fS_m}_s \\}\end{gathered}\end{equation}
	Since $(?)^{\fS_m}_s:\Algl\to\Algl$ commutes with finite limits, we get the following pullback upon applying this functor to \eqref{eq:mappath}:
	\[\xymatrix@C=2em@R=2em{((P_f)_r)^{\fS_m}_s\ar[r]\ar[d] & A^{\fS_m}_s\ar[d]^-{f^{\fS_m}_s} \\
		(B^{(I,\{1\})}_r)^{\fS_m}_s\ar[r]^-{d_1} & B^{\fS_m}_s }\]
	Note that $(B^{(I,\{1\})}_r)^{\fS_m}_s\cong (B^{\fS_m}_s)^{(I,\{1\})}_r$. Then the commutativity of \eqref{eq:exc2} determines a morphism $J^mC\longrightarrow ((P_f)_r)^{\fS_m}_s$, that in turn gives an element $\beta\in \fk(C, (P_f)_r)$ mapping to $\alpha$.
\end{proof}

\begin{defi}
	Let $f:A\to B$ be a morphism in $\Algl$. We call $f$ a \emph{$\fk$-equivalence} if it becomes invertible upon applying $j:\Algl\to\fk$.
\end{defi}

\begin{lem}[c.f. \cite{cortho}*{Lemma 6.3.2}]\label{lem:kerkkequiv}
	Let $f$ be a morphism in $\Algl$ that is a split surjection in $\Mod_\ell$. Then the natural inclusions $\ker f\to (P_f)_r$ are $\fk$-equivalences for all $r$.
\end{lem}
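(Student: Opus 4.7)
The plan is to use the Yoneda principle in $\fk$: I would show that for every $D\in\Algl$ the induced map
\[(\iota_{\ker f})_*\colon \fk(D,\ker f)\longrightarrow \fk(D,(P_f)_r)\]
is bijective. The starting observation is that, since $f$ is a split surjection in $\Mod_\ell$, so is the pullback projection $p_B\colon(P_f)_r\to(PB)_r$ of \eqref{eq:mappath} (being the base change of $f$ along $d_1$), and its kernel is precisely $\iota_{\ker f}(\ker f)$. This yields a split extension
\begin{equation}\label{eq:kerfext}
\xymatrix{\ker f\ar[r]^-{\iota_{\ker f}} & (P_f)_r\ar[r]^-{p_B} & (PB)_r,}
\end{equation}
and one checks that $(PB)_r$ is contractible via the same $\vartheta$ construction as in Lemma \ref{lem:pathcontr}, which works equally well for $n=0$; hence $\fk(D,(PB)_r)=0$.

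For surjectivity, given $\beta\in\fk(D,(P_f)_r)$ I would use that $(p_B)_*\beta\in\fk(D,(PB)_r)=0$ and invoke Lemma \ref{lem:pfexact} applied to $p_B$ to produce $\tilde\beta\in\colim_s\fk(D,(P_{p_B})_s)$ with $(\pi_{p_B})_*\tilde\beta=\beta$. The natural algebra morphism $\tilde\iota_s\colon\ker f\to(P_{p_B})_s$ given by $x\mapsto(0,(0,x))$ satisfies $\pi_{p_B}\circ\tilde\iota_s=\iota_{\ker f}$, so it is enough to show that $\tilde\beta$ lies in the image of $(\tilde\iota_s)_*$. The plan is to iterate Lemma \ref{lem:pfexact} applied to $\pi_{p_B}$ (whose kernel is $((PB)_r)^{\fS_1}_s$) and exploit that post-composition with $\mu^{(I,\{1\}),\fS_1}$ lands in a contractible ind-algebra; after a few such iterations the question reduces to the trivial case.

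For injectivity, if $\alpha\in\ker(\iota_{\ker f})_*$ is represented by $g\colon J^vD\to(\ker f)^{\fS_v}_s$, then some iterate $\Lambda^{v+N,v}(\iota_{\ker f}^{\fS_v}\circ g)$ is null-homotopic, and I would apply the classifying-map construction of Proposition \ref{lem:classexists} to the $\ell$-linear retraction of \eqref{eq:kerfext} to turn this null-homotopy into a null-homotopy of $\Lambda^{v+N,v}(g)$ itself, showing $\alpha=0$.

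The main obstacle I expect is turning the $\ell$-linear splittings of \eqref{eq:kerfext} into honest algebra morphisms: the retraction $(P_f)_r\to\ker f$ is only $\ell$-linear, not multiplicative. Proposition \ref{lem:classexists} together with the functor $J$ is designed precisely to circumvent this, at the cost of passing to loop-shifts; the delicate step is organising the resulting diagrams so that the iterated classifying maps land in the correct shifted hom-set to compare with the explicit colimit defining $\fk$.
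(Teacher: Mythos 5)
Your setup is sound: the split extension $\ker f\to(P_f)_r\overset{p_B}\to(PB)_r$ with contractible cokernel, and Lemma \ref{lem:pfexact} applied to $p_B$, are the right ingredients, and the reduction ``it is enough to show $\tilde\beta$ lies in the image of $(\tilde\iota_s)_*$'' is correct. But at exactly that point the argument becomes circular: $\tilde\iota_s\colon\ker p_B\to(P_{p_B})_s$ is the natural kernel inclusion into the mapping path of the split surjection $p_B$ --- i.e.\ it is precisely the statement of the lemma, applied to $p_B$ rather than $f$. Iterating Lemma \ref{lem:pfexact} reproduces this pattern at every stage: each application trades the question for the analogous one about the kernel inclusion of a yet higher iterated mapping path, and no base case or termination mechanism is identified. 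The sentence ``after a few such iterations the question reduces to the trivial case'' is carrying the entire weight of the surjectivity half and is not justified; note that having a contractible codomain is already the situation for $p_B$ on the very first step and did not collapse the problem.

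The injectivity sketch has a different gap. Proposition \ref{lem:classexists} takes a split extension together with a morphism to the \emph{quotient} and returns a strong morphism of extensions, in particular a map $J(\mathrm{quotient})\to\mathrm{kernel}$; it provides no mechanism for turning the $\ell$-linear \emph{retraction} $(P_f)_r\to\ker f$ into anything multiplicative, nor for converting a nullhomotopy of $\iota_{\ker f}\circ g$ into a nullhomotopy of $g$, and you do not spell out how this would go. You flag this yourself as ``the delicate step,'' but that step is the content of the lemma. As written, this is a plan that records the natural first reductions but does not contain the argument that breaks the circularity; the paper's own proof simply defers to \cite{cortho}*{Lemma 6.3.2} for that argument.
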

\begin{proof}
	The proof is like that of \cite{cortho}*{Lemma 6.3.2}.
\end{proof}

Let $f:A\to B$ be a morphism in $\Algl$ that is a split surjection in $\Mod_\ell$. As explained in the discussion following \cite{cortho}*{Lemma 6.3.2}, Lemma \ref{lem:kerkkequiv} implies that the morphisms $(P_f)_r\to (P_f)_{r+1}$ are $\fk$-equivalences for all $r\geq 0$. Indeed, this follows from the `two out of three' property of $\fk$-equivalences. Combining this fact with Lemma \ref{lem:pfexact} we get:

\begin{coro}\label{coro:exap0}
	Let $f:A\to B$ be a morphism in $\Algl$ that is a split surjection in $\Mod_\ell$ and let $C\in\Algl$. The following sequence is exact:
	\[\xymatrix@C=4em{\fk(C,(P_f)_0)\ar[r]^-{(\pi_f)_*} & \fk(C,A)\ar[r]^-{f_*} & \fk(C,B)}\]
\end{coro}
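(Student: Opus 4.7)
The plan is to derive this corollary directly from Lemma \ref{lem:pfexact} by showing that the natural structural morphism $\fk(C,(P_f)_0) \to \colim_s \fk(C,(P_f)_s)$ is an isomorphism; once this is established, substituting the initial term of the directed system into the exact sequence of Lemma \ref{lem:pfexact} yields the desired conclusion.

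To achieve this, I would first show that every transition morphism $(P_f)_r \to (P_f)_{r+1}$ of the directed diagram $(P_f)_\bul$ is a $\fk$-equivalence. Lemma \ref{lem:kerkkequiv} provides, for each $r \geq 0$, a $\fk$-equivalence $\iota_r : \ker f \to (P_f)_r$. These inclusions are compatible with the last-vertex-map-induced transitions in the obvious way: the composite of $\iota_r$ with the transition $(P_f)_r \to (P_f)_{r+1}$ agrees with $\iota_{r+1}$, since $\ker f$ sits inside each $(P_f)_r$ as the (constant) subalgebra pulled back along the appropriate inclusions and is preserved by the transition. From this commutative triangle and the two-out-of-three property of $\fk$-equivalences, each transition $(P_f)_r \to (P_f)_{r+1}$ is itself a $\fk$-equivalence.

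Applying the additive functor $\fk(C,-)$ to $(P_f)_\bul$ then produces a directed diagram of abelian groups whose every transition is an isomorphism; consequently the structural morphism $\fk(C,(P_f)_0) \to \colim_s \fk(C,(P_f)_s)$ is an isomorphism. Combining this identification with the exact sequence from Lemma \ref{lem:pfexact} gives the stated exact sequence.

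The argument is essentially formal once Lemmas \ref{lem:pfexact} and \ref{lem:kerkkequiv} are in hand; the only point that needs explicit verification is the compatibility of the inclusions $\iota_r$ with the transitions of $(P_f)_\bul$, which is the main (but minor) obstacle and reduces to an unwinding of the pullback definition \eqref{eq:mappath} together with the naturality of the last vertex map.
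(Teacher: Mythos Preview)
Your proposal is correct and follows essentially the same approach as the paper: the paper derives the corollary from Lemma \ref{lem:pfexact} and Lemma \ref{lem:kerkkequiv} by noting that the transitions $(P_f)_r\to(P_f)_{r+1}$ are $\fk$-equivalences via two-out-of-three, exactly as you do. The compatibility of the inclusions $\ker f\to(P_f)_r$ with the transitions is indeed the only point needing verification, and your indication of how to check it is adequate.
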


\begin{coro}[\cite{cortho}*{Corollary 6.3.3}]\label{coro:phi}
	Let $f:A\to B$ be a morphism in $\Algl$. Recall the definitions of $\pi_f$ and $\iota_f$ from \eqref{eq:mappath} and \eqref{eq:defiiotaf} respectively. Let $\phi_f:B^{\fS_1}_0\to (P_{\pi_f})_0$ be the unique morphism that makes the following diagram commute:
	\[\xymatrix{B^{\fS_1}_0\ar@/_/[ddr]_-{0}\ar@/^/[drr]^-{\iota_f}\ar[dr]|-{\phi_f} & & \\
		& (P_{\pi_f})_0\ar[d]\ar[r]^-{\pi_{\pi_f}} & (P_f)_0\ar[d]^-{\pi_f} \\
		& (PA)_0\ar[r]^-{d_1} & A \\}\]
	Then $\phi_f$ is a $\fk$-equivalence.
\end{coro}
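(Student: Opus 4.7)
The plan is to identify $\phi_f$ with the natural inclusion of $\ker(\pi_f)$ into the mapping path $(P_{\pi_f})_0$, and then invoke Lemma \ref{lem:kerkkequiv}.

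First, I would verify that $\pi_f : (P_f)_0 \to A$ is a split surjection in $\Mod_\ell$. This is immediate from the pullback square \eqref{eq:mappath}: since $d_1 : (PB)_0 \to B$ admits an $\ell$-linear splitting (namely $b \mapsto b(1-t)$ from Example \ref{exa:subdi1}), so does its pullback $\pi_f$. Next I would identify $\ker(\pi_f)$ with $B^{\fS_1}_0$: indeed, the pullback description of $(P_f)_0$ shows that $\ker(\pi_f) \cong \ker(d_1 : (PB)_0 \to B) = B^{\fS_1}_0$, and under this identification the inclusion $\ker(\pi_f) \hookrightarrow (P_f)_0$ is precisely the morphism $\iota_f$ determined by diagram \eqref{eq:defiiotaf}.

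Next I would apply Lemma \ref{lem:kerkkequiv} to the morphism $\pi_f$ (which, as just observed, is a split surjection in $\Mod_\ell$): this gives that the natural inclusion
\[
\iota_{\pi_f} : \ker(\pi_f) \longrightarrow (P_{\pi_f})_0
\]
is a $\fk$-equivalence. It remains to check that $\iota_{\pi_f}$ coincides with $\phi_f$ under the identification $\ker(\pi_f) \cong B^{\fS_1}_0$. By the universal property of the pullback defining $(P_{\pi_f})_0$, the morphism $\iota_{\pi_f}$ is characterized by the fact that its composite with $\pi_{\pi_f}$ is the inclusion $\ker(\pi_f) \hookrightarrow (P_f)_0$ (which equals $\iota_f$) and its composite with the projection to $(PA)_0$ is $0$. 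These are exactly the two properties characterizing $\phi_f$ in the diagram of the statement, so $\iota_{\pi_f} = \phi_f$.

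Therefore $\phi_f$ is a $\fk$-equivalence. There is no real obstacle here; the argument is essentially a bookkeeping exercise of matching two pullback constructions, and all the substantive work has already been done in Lemma \ref{lem:kerkkequiv}.
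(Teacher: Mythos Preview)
Your argument is correct and follows exactly the same approach as the paper: verify that $\pi_f$ is a split surjection in $\Mod_\ell$, identify $\iota_f$ as the kernel of $\pi_f$, and apply Lemma~\ref{lem:kerkkequiv}. One notational caveat: you write $\iota_{\pi_f}$ for the natural inclusion $\ker(\pi_f)\hookrightarrow (P_{\pi_f})_0$, but in the paper's conventions $\iota_{\pi_f}$ denotes the map $A^{\fS_1}_0\to (P_{\pi_f})_0$ from \eqref{eq:defiiotaf} applied to $\pi_f$ (see Lemma~\ref{lem:tr2}), so you should pick a different symbol for that inclusion.
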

\begin{proof}
	The morphism $\pi_f$ is a split surjection in $\Mod_\ell$. The result then follows from Lemma \ref{lem:kerkkequiv} if we show that $\iota_f:B^{\fS_1}_0\to (P_f)_0$ is a kernel of $\pi_f$, and the latter is easily verified.
\end{proof}

\begin{coro}[\cite{cortho}*{Corollary 6.3.4}]\label{coro:kk-split-exact} Let $D\in\Algl$. Then the functor $\fk(D,?)$ sends split short exact sequences in $\Algl$ to short exact sequences of abelian groups.\end{coro}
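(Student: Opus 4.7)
The plan is to show, for a split short exact sequence $A\xrightarrow{f}B\xrightarrow{g}C$ in $\Algl$ with $\ell$-algebra section $s:C\to B$, that the induced sequence
\[
0 \to \fk(D, A) \xrightarrow{f_*} \fk(D, B) \xrightarrow{g_*} \fk(D, C) \to 0
\]
is short exact. Since $s$ is in particular an $\ell$-module splitting, Corollary~\ref{coro:exap0} gives the exactness of $\fk(D,(P_g)_0)\to\fk(D,B)\to\fk(D,C)$. Lemma~\ref{lem:kerkkequiv} provides a $\fk$-equivalence $\alpha:A\to(P_g)_0$ satisfying $\pi_g\circ\alpha=f$, which lets us replace the leftmost term by $\fk(D,A)$ and yields middle exactness. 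Surjectivity of $g_*$ is immediate from $g_*s_*=(gs)_*=\id$.

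The work is in proving injectivity of $f_*$. I would extend the sequence one step further to the left by applying Corollary~\ref{coro:exap0} to the $\ell$-module split surjection $\pi_g:(P_g)_0\to B$ and using the $\fk$-equivalence $\phi_g:C^{\fS_1}_0\to(P_{\pi_g})_0$ from Corollary~\ref{coro:phi}, together with $\alpha$ once more. This gives an exact sequence
\[
\fk(D, C^{\fS_1}_0) \xrightarrow{\delta} \fk(D, A) \xrightarrow{f_*} \fk(D, B)
\]
where $\delta$ is induced by $\alpha^{-1}\circ\pi_{\pi_g}\circ\phi_g$. A direct unwinding shows $\pi_{\pi_g}\circ\phi_g=\iota_g$, the algebra homomorphism $\gamma\mapsto(\gamma,0)$. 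Thus injectivity of $f_*$ reduces to showing $\iota_g$ is zero in $\fk$.

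Here is where the algebra section enters. Define $\tilde\sigma:(PC)_0\to(P_g)_0$ by $\gamma\mapsto(\gamma, s(\gamma(0)))$. Well-definedness requires $d_1(\gamma)=g(s(\gamma(0)))$, which holds because $gs=\id_C$, and multiplicativity holds precisely because $s$ is an algebra homomorphism. By construction $\iota_g$ factors as $C^{\fS_1}_0\hookrightarrow(PC)_0\xrightarrow{\tilde\sigma}(P_g)_0$. Since $(PC)_0$ is contractible in $\Algl$ (the argument of Lemma~\ref{lem:pathcontr} extends to $n=0$; alternatively, $p(t)\mapsto p(t+(1-t)u)$ is a direct polynomial homotopy from $\id_{(PC)_0}$ to $0$), $\tilde\sigma$ is nullhomotopic, hence so is $\iota_g$, and therefore $\delta=0$.

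The main obstacle is identifying the correct way to exploit the algebra section. A direct algebra retraction $B\to A$ does not generally exist, and naive interpolations like $\gamma\mapsto(\gamma,s\circ\gamma)$ fail multiplicativity. The key observation is that $\tilde\sigma$ evaluates $s$ only at the single endpoint $t=0$ of each path, so multiplicativity of $s$ is enough, and the image lands in the contractible path algebra $(PC)_0$ where everything trivializes.
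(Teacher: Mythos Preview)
Your proof is correct and supplies exactly the details that the paper omits by citing \cite{cortho}*{Corollary 6.3.4}; the approach---middle exactness from Corollary~\ref{coro:exap0} plus Lemma~\ref{lem:kerkkequiv}, surjectivity from the algebra section, and injectivity by showing $\iota_g$ factors through the contractible $(PC)_0$---is the intended one.
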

\begin{proof}
	The proof of \cite{cortho}*{Corollary 6.3.4} carries over verbatim in this setting.
\end{proof}

\begin{lem}\label{lem:tr2}
	Let $f:A\to B$ be a morphism in $\Algl$ and let $\phi_f$ be the morphism defined in Corollary \ref{coro:phi}. Then the following diagram in $[\Algl]$ commutes:
	\[\xymatrix{A^{\fS_1}_0\ar[d]_-{\id}\ar[r]^-{(f^{\fS_1})^{-1}} & B^{\fS_1}_0\ar[d]^-{\phi_f}\ar[r]^-{\iota_f} & (P_f)_0\ar[d]^-{\id}\ar[r]^-{\pi_f} & A\ar[d]^-{\id} \\
		A^{\fS_1}_0\ar[r]_-{\iota_{\pi_f}} & (P_{\pi_f})_0\ar[r]_-{\pi_{\pi_f}} & (P_f)_0\ar[r]_-{\pi_f} & A}\]
\end{lem}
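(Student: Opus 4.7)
The plan is to split the diagram into its three squares and dispose of two of them at once. The right-hand square $\pi_f \circ \id = \id \circ \pi_f$ commutes tautologically, and the middle square commutes on the nose because $\pi_{\pi_f} \circ \phi_f = \iota_f$ is part of the very definition of $\phi_f$ in Corollary \ref{coro:phi}. What remains is the leftmost square, which amounts to the single equality
\[\phi_f \circ (f^{\fS_1})^{-1} = \iota_{\pi_f}\qquad\text{in}\qquad [\Algl](A^{\fS_1}_0,(P_{\pi_f})_0).\]

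To attack this, I first translate the group-theoretic notation. By Example \ref{exa:groupinverse}, the class $(f^{\fS_1})^{-1}$ in $[A^{\fS_1}_0,B^{\fS_1}_\bullet]$ is represented by $\omega\circ f^{\fS_1}_0$, where $\omega$ is the automorphism of $B^{\fS_1}_0$ induced by $t\mapsto 1-t$. Combining this with the explicit description of Example \ref{exa:subdi1}, one identifies
\[(P_{\pi_f})_0 \cong \{(q,p)\in A[t]\times B[t] : q(1)=p(1)=0,\ p(0)=f(q(0))\},\]
and the two morphisms in question become the concrete formulas $\iota_{\pi_f}(q)=(q,0)$ and $\phi_f\circ\omega\circ f^{\fS_1}_0(q)=(0,f_*(q)(1-t))$, where $f_*$ denotes coefficientwise application of $f$.

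The heart of the argument is then to exhibit an explicit elementary polynomial homotopy $H\colon A^{\fS_1}_0 \to (P_{\pi_f})_0[u]$ between these two morphisms. I propose the formula
\[H(q)(t,u) := \bigl(q(1-(1-u)(1-t)),\ f_*(q)(u(1-t))\bigr).\]
Three things need to be verified: (i) that $H(q)$ actually lies in $(P_{\pi_f})_0[u]$, which reduces to checking the three defining identities and uses only that $q(0)=q(1)=0$ together with the tautology $f_*(q)(u)=f(q(u))$; (ii) that $H$ is an $\ell$-algebra homomorphism, which is automatic because both coordinates are obtained purely by polynomial substitution into the single variable of $q$ and $f_*(q)$ with coefficients independent of $q$; and (iii) that $H|_{u=0}=\iota_{\pi_f}$ and $H|_{u=1}=\phi_f\circ\omega\circ f^{\fS_1}_0$, a direct evaluation.

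The point where one must be careful is \emph{multiplicativity}. A naive linear interpolation such as $\bigl(q(t)(1-u),\, u\cdot f_*(q)(1-t)\bigr)$ would fail to be a ring homomorphism, because multiplying two elements squares the factor $1-u$. The homotopy $H$ above sidesteps this by reparametrizing the argument of $q$ rather than scaling its output, which is the same trick used to build the polynomial contraction in the proof of Lemma \ref{lem:pathcontr}.
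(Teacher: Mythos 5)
Your proof is correct and matches the paper's argument: after reducing to the leftmost square and using the explicit polynomial models from Example \ref{exa:subdi1}, the paper exhibits the elementary homotopy $H(q(t))=(f(q((1-t)u)),\,q(1-(1-t)(1-u)))$, which is (up to the reordering of the two factors of $(P_{\pi_f})_0$) precisely the formula you propose. Your added remark explaining why reparametrization rather than linear interpolation is needed to stay multiplicative is a good observation, though not spelled out in the paper.
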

\begin{proof}
	The square in the middle commutes by definition of $\phi_f$; let us show that the square on the left commutes. Throughout the proof, we will omit the subscript $0$ and write $A^{\fS_1}$ instead of $A^{\fS_1}_0$, $PB$ instead of $(PB)_0$, $P_f$ instead of $(P_f)_0$, etc. We will identify the extensions \eqref{eq:pathb1} and \eqref{eq:pathcop} as explained in Example \ref{exa:subdi1} without further mention. We have:
	\[PB=(t-1)B[t]\]
	\[B^{\fS_1}=(t^2-t)B[t]\]
	\[P_f=\left\{(p(t),a)\in PB\times A: p(0)=f(a)\right\}\]
	The map $\pi_f:P_f\to A$ is given by $(p(t),a)\mapsto a$. The map $\iota_f:B^{\fS_1}\to P_f$ is given by $p(t)\mapsto (p(t),0)$. We also have:
	\[PA=(t-1)A[t]\]
	\[A^{\fS_1}=(t^2-t)A[t]\]
	\begin{align*}P_{\pi_f}&=\left\{(p(t),a,q(t))\in PB\times A\times PA: p(0)=f(a), a=q(0)\right\}\\
	&=\left\{(p(t),q(t))\in PB\times PA: p(0)=f(q(0))\right\}\end{align*}
	The map $\phi_f:B^{\fS_1}\to P_{\pi_f}$ is given by $p(t)\mapsto (p(t),0)$. The map $\iota_{\pi_f}:A^{\fS_1}\to P_{\pi_f}$ is given by $q(t)\mapsto (0,q(t))$. To prove the result, it suffices to show that the following morphisms $A^{\fS_1}\to P_{\pi_f}$ are homotopic:
	\begin{align}\label{elemh}\begin{split}\phi_f\circ \left(f^{\fS_1}\right)^{-1}&: q(t)\mapsto (f(q(1-t)),0)\\
	\iota_{\pi_f}&: q(t)\mapsto (0,q(t))\end{split}\end{align}
	We have:
	\[P_{\pi_f}[u]=\left\{(p(t,u),q(t,u))\in (t-1)B[t,u]\times (t-1)A[t,u]: p(0,u)=f(q(0,u))\right\}\]
	Let $H:A^{\fS_1}\to P_{\pi_f}[u]$ be the homotopy defined by:
	\[H(q(t))=(f(q((1-t)u)), q(1-(1-t)(1-u)))\]
	Then $H$ is an elementary homotopy between the morphisms in \eqref{elemh}.
\end{proof}

\begin{thm}[\cite{cortho}*{Theorem 6.3.6}]\label{thm:excision}
	Let $A\overset{f}\to B\overset{g}\to C$ be an extension in $\Algl$. Then, for any $D\in \Algl$, the following sequence is exact:
	\[\xymatrix@C=1.7em{\fk(D,B^{\fS_1}_0)\ar[r]^-{(g^{\fS_1})_*} & \fk(D,C^{\fS_1}_0)\ar[r]^-{\partial} & \fk(D,A)\ar[r]^-{f_*} & \fk(D,B)\ar[r]^-{g_*} & \fk(D,C)}\]
	Here, the morphism $\partial$ is the composite:
	\[\xymatrix@C=5em{\fk(D,C^{\fS_1}_0)\ar[r]^-{(\iota_g)_*} & \fk(D,(P_g)_0) & \fk(D,A)\ar[l]_-{\mbox{\scriptsize Lemma \ref{lem:kerkkequiv}}}^-{\cong}}\]
\end{thm}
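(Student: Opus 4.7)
The strategy is to closely follow the proof of \cite{cortho}*{Theorem 6.3.6}: iterate the mapping path construction and translate the middle exactness of Corollary \ref{coro:exap0} through the $\fk$-equivalences of Lemma \ref{lem:kerkkequiv} and Corollary \ref{coro:phi}. Writing $g:B\to C$ for the surjection in the extension, I would handle the three exactness statements one after the other.

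\emph{Exactness at $\fk(D,B)$.} Since $g$ is a split surjection in $\Mod_\ell$, Corollary \ref{coro:exap0} gives exactness of $\fk(D,(P_g)_0)\xrightarrow{(\pi_g)_*}\fk(D,B)\xrightarrow{g_*}\fk(D,C)$. By Lemma \ref{lem:kerkkequiv} the natural inclusion $A=\ker g\to (P_g)_0$ is a $\fk$-equivalence, and its composite with $\pi_g$ is $f$; this transports the exactness to $\fk(D,A)\xrightarrow{f_*}\fk(D,B)\xrightarrow{g_*}\fk(D,C)$.

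\emph{Exactness at $\fk(D,A)$.} I apply Corollary \ref{coro:exap0} to $\pi_g$ (which is $\Mod_\ell$-split by construction), obtaining exactness of $\fk(D,(P_{\pi_g})_0)\xrightarrow{(\pi_{\pi_g})_*}\fk(D,(P_g)_0)\xrightarrow{(\pi_g)_*}\fk(D,B)$. Via the $\fk$-equivalence $\phi_g:C^{\fS_1}_0\to (P_{\pi_g})_0$ of Corollary \ref{coro:phi} and the $\fk$-equivalence $A\to (P_g)_0$ of Lemma \ref{lem:kerkkequiv}, this rewrites as $\fk(D,C^{\fS_1}_0)\xrightarrow{\partial'}\fk(D,A)\xrightarrow{f_*}\fk(D,B)$. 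The defining diagram of $\phi_g$ says $\pi_{\pi_g}\circ\phi_g=\iota_g$, so $\partial'$ equals the map $\partial$ of the statement.

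\emph{Exactness at $\fk(D,C^{\fS_1}_0)$.} Iterate one more time: Corollary \ref{coro:exap0} applied to $\pi_{\pi_g}$ gives exactness of
\[
\fk(D,(P_{\pi_{\pi_g}})_0)\xrightarrow{(\pi_{\pi_{\pi_g}})_*}\fk(D,(P_{\pi_g})_0)\xrightarrow{(\pi_{\pi_g})_*}\fk(D,(P_g)_0).
\]
Lemma \ref{lem:kerkkequiv} applied to $\pi_{\pi_g}$ identifies its kernel $B^{\fS_1}_0$ with $(P_{\pi_{\pi_g}})_0$ in $\fk$, the composite inclusion being $\iota_{\pi_g}$. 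Combining this with the $\fk$-equivalences $\phi_g$ and $A\simeq (P_g)_0$ from the previous steps, the sequence becomes
\[
\fk(D,B^{\fS_1}_0)\xrightarrow{\psi_*}\fk(D,C^{\fS_1}_0)\xrightarrow{\partial}\fk(D,A),
\]
where $\psi\in\fk$ is represented by $\phi_g^{-1}\circ\iota_{\pi_g}$. Now Lemma \ref{lem:tr2}, applied to $g$, gives the identity $\iota_{\pi_g}=\phi_g\circ(g^{\fS_1})^{-1}$ in $[\Algl]$, hence $\psi=(g^{\fS_1})^{-1}$ in $\fk$; since negating a group homomorphism does not change its image, $\im\psi_*=\im(g^{\fS_1})_*$, completing the exactness.

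The only nontrivial identification is in the last step: recognizing the iterated connecting map $\phi_g^{-1}\circ\iota_{\pi_g}$ as $(g^{\fS_1})^{-1}$, which is precisely the content of Lemma \ref{lem:tr2}. Everything else is formal bookkeeping with $\fk$-equivalences already at hand.
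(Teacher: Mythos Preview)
Your proof is correct and follows essentially the same approach as the paper's: iterate the mapping path construction, apply Corollary \ref{coro:exap0} at each stage, and transport the resulting exact sequences through the $\fk$-equivalences of Lemma \ref{lem:kerkkequiv} and Corollary \ref{coro:phi}, with Lemma \ref{lem:tr2} supplying the identification $\phi_g^{-1}\circ\iota_{\pi_g}=(g^{\fS_1})^{-1}$ at the end. The paper packages the third step into a single commutative ladder (diagram \eqref{eq:diagram-6.3.6}) and cites Corollary \ref{coro:phi} for $\phi_{\pi_g}$ where you cite Lemma \ref{lem:kerkkequiv}, but these are cosmetic differences.
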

\begin{proof}
	Both $g$ and $\pi_g$ are split surjections in $\Mod_\ell$; then, the following sequence is exact by Corollary \ref{coro:exap0}:
	\begin{equation}\label{eq:secex}\xymatrix{\fk(D,(P_{\pi_g})_0)\ar[r]^-{(\pi_{\pi_g})_*} & \fk(D,(P_g)_0)\ar[r]^-{(\pi_g)_*} & \fk(D,B)\ar[r]^-{g_*} & \fk(D,C)}\end{equation}
	We have a commutative diagram:
	\[\xymatrix{A\ar@/_/[ddr]_-{0}\ar@/^/[drr]^-{f}\ar[dr] & & \\
		& (P_g)_0\ar[r]^-{\pi_g}\ar[d] & B\ar[d]^-{g} \\
		& (PC)_0\ar[r] & C }\]
	The morphism $A\to (P_g)_0$ is a $\fk$-equivalence by Lemma \ref{lem:kerkkequiv}. Thus, we can replace $(\pi_g)_*$ in \eqref{eq:secex} by the composite
	\[\xymatrix{\fk(D,(P_g)_0) & \fk(D,A)\ar[l]_-{\cong}\ar[r]^-{f_*} & \fk(D,B)}\]
	and we get an exact sequence:
	\begin{equation}\label{eq:corophitermine}\xymatrix{\fk(D,(P_{\pi_g})_0)\ar[r] & \fk(D,A)\ar[r]^-{f_*} & \fk(D,B)\ar[r]^-{g_*} & \fk(D,C)}\end{equation}
	By Corollary \ref{coro:phi}, we can identify $(\phi_g)_*:\fk(D,C^{\fS_1}_0)\overset{\cong}\to \fk(D,(P_{\pi_g})_0)$ and \eqref{eq:corophitermine} becomes:
	\begin{equation}\label{eq:secex2}\xymatrix{\fk(D,C^{\fS_1}_0)\ar[r] & \fk(D,A)\ar[r]^-{f_*} & \fk(D,B)\ar[r]^-{g_*} & \fk(D,C)}\end{equation}
	It is easily verified that $\partial$ is the leftmost morphism in \eqref{eq:secex2}; indeed, this follows from the equality $\pi_{\pi_g}\circ\phi_g=\iota_g$.

	By Lemma \ref{lem:tr2}, we have a commutative diagram as follows:
	\begin{equation}\label{eq:diagram-6.3.6}\begin{gathered}
	\xymatrix{\fk(D,B^{\fS_1}_0)\ar[d]_-{\id}\ar[r]^-{-(g^{\fS_1})_*} & \fk(D,C^{\fS_1}_0)\ar[r]^-{(\iota_g)_*}\ar[d]^-{(\phi_g)_*} & \fk(D,(P_g)_0)\ar[d]^-{\id} \\
		\fk(D,B^{\fS_1}_0)\ar[d]_-{(\phi_{\pi_g})_*}\ar[r]^-{(\iota_{\pi_g})_*} & \fk(D,(P_{\pi_g})_0)\ar[d]^-{\id}\ar[r]^-{(\pi_{\pi_g})_*} & \fk(D,(P_g)_0)\ar[d]^-{\id} \\
		\fk(D,(P_{\pi_{\pi_g}})_0)\ar[r]^-{(\pi_{\pi_{\pi_g}})_*} & \fk(D,(P_{\pi_g})_0)\ar[r]^-{(\pi_{\pi_g})_*} & \fk(D,(P_g)_0) \\}\end{gathered}
	\end{equation}
	Notice that $\ker\partial=\ker\left((\iota_g)_*:\fk(D,C^{\fS_1}_0)\to\fk(D,(P_g)_0)\right)$. Thus, to finish the proof, it suffices to show that the top row in \eqref{eq:diagram-6.3.6} is exact. Since $(\phi_g)_*$ and $(\phi_{\pi_g})_*$ are isomorphisms by Corollary \ref{coro:phi}, the top row in \eqref{eq:diagram-6.3.6} is exact if and only if the bottom one is. But $\pi_{\pi_g}$ is a split surjection in $\Mod_\ell$, and so the bottom row in \eqref{eq:diagram-6.3.6} is exact by Corollary \ref{coro:exap0}.
\end{proof}

\section{The translation functor}\label{sec:trans} Define a functor $L:\fk\to\fk$ as follows. For $(A,m)\in\fk$, put $L(A,m):=(A,m+1)$. Let $L$ act as the identity on morphisms. It is clear that $L$ is an automorphism of $\fk$.

Recall the definition of $\lambda_B:JB\to B^{\fS_1}_r$ from Example \ref{exa:pathb1}. For $m\in\Z$ we can consider:
\begin{align}\label{eq:trans1}\begin{split}\langle\lambda_B\rangle &\in\fk((JB,m),(B,1+m))\\
\langle\id_{JB}\rangle &\in\fk((B,1+m),(JB,m))\end{split}\end{align}
These two morphisms are mutually inverses in $\fk$, as we prove below. From now on, each time we identify $(B,1+m)\cong(JB,m)$ it will be through these isomorphisms. Using this identification $n$ times we get an isomorphism $(B,n+m)\overset{\cong}\to(J^nB,m)$. It is easily verified that the latter is represented by $\id_{J^nB}$.

\begin{lem}\label{lem:lambdaidinverses1}
	The morphisms in \eqref{eq:trans1} are mutually inverses.
\end{lem}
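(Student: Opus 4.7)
The plan is to compute the two composites $\langle\id_{JB}\rangle\circ\langle\lambda_B\rangle$ and $\langle\lambda_B\rangle\circ\langle\id_{JB}\rangle$ directly from Definition \ref{defi:kkc}, and to identify each with the identity of the relevant object. Since the morphisms involved sit in $\fk((JB,m),(B,1{+}m))$ and $\fk((B,1{+}m),(JB,m))$, the cleanest choice of representatives is to take $v=-m$ (resp.\ $w=-m$) so that the Hom-group indices become the smallest non-negative integers: then $\lambda_B:JB\to B^{\fS_1}_0$ represents $\langle\lambda_B\rangle$ (with $N_1=m+v=0$, $N_2=(1{+}m)+v=1$) and $\id_{JB}:JB\to(JB)^{\fS_0}_\bul$ represents $\langle\id_{JB}\rangle$ (with $N_3=0$, $N_4=0$ or, dually, $N_3=1$, $N_4=0$).

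For the composite $\langle\id_{JB}\rangle\circ\langle\lambda_B\rangle$, I would take $f=\lambda_B$ and $g=\id_{JB}:J(JB)\to(JB)^{\fS_0}_\bul$ (so $N_1=0$, $N_2=1$, $N_3=1$, $N_4=0$). Unravelling $g\star f$ as in Definition \ref{defi:kkc} collapses most factors to identities or transition morphisms, leaving essentially
\[g\star f \;=\; -\,\mu^{0,1}_{JB}\circ\kappa^{1,1}_B\circ J(\lambda_B).\]
The crucial input is Lemma \ref{lem:clascon}, which says $\kappa^{1,1}_B\circ J(\lambda_B)=(\lambda_{JB})^{-1}$ in the abelian group $[J^2B,(JB)^{\fS_1}_\bul]$; the sign $(-1)^{N_2N_3}=-1$ therefore inverts this to $\lambda_{JB}$, and we obtain $g\star f=\mu^{0,1}_{JB}\circ\lambda_{JB}$. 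On the other hand, Remark \ref{rem:Lambda-fS} gives $\Lambda^0(\id_{JB})=\mu^{0,1}_{JB}\circ\lambda_{JB}$, so $\langle g\star f\rangle=\langle\Lambda^0(\id_{JB})\rangle=\langle\id_{JB}\rangle=\id_{(JB,m)}$.

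For the opposite composite $\langle\lambda_B\rangle\circ\langle\id_{JB}\rangle$, I would represent the identity of $JB$ in the other direction by $\id_{JB}:JB\to(JB)^{\fS_0}_\bul$ with $N_1=1$, $N_2=0$, and take $g=\lambda_B$ with $N_3=0$, $N_4=1$. Now the sign $(-1)^{N_2N_3}$ is trivial, $\kappa^{0,0}$ is the identity, and $g\star f=\mu^{1,0}_B\circ\lambda_B$. Since $\mu^{1,0}_B$ is by property (4) of Section \ref{sec:multi} a transition morphism of $B^{\fS_1}_\bul$, this composite represents the same class as $\lambda_B$ in $\colim_r[JB,B^{\fS_1}_r]$. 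Applying Remark \ref{rem:Lambda-fS} again, $\Lambda^0(\id_B)=\mu^{0,1}_B\circ\lambda_B$ also represents $\lambda_B$ in that colimit, so $\langle g\star f\rangle=\langle\id_B\rangle=\id_{(B,1+m)}$.

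The only non-formal step is the appeal to Lemma \ref{lem:clascon} to dispose of the sign in the first composite; once that is in place, everything else is bookkeeping with the defining formula for $\star$, the formulas in Remark \ref{rem:Lambda-fS}, and the fact that $\mu^{0,1}$ and $\mu^{1,0}$ are transition morphisms (property (4) of Section \ref{sec:multi}). I expect the main source of care to be the choice of indices $v,w$ and the correct identification of $\Lambda^0(\id_?)$ with the representative of $\id_{(?,?)}$ in the colimit; but no additional deep input is needed beyond Lemma \ref{lem:clascon}.
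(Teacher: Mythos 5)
Your proof is correct and follows essentially the same route as the paper: you unwind $\id_{JB}\star\lambda_B$ to $-\kappa^{1,1}_B\circ J(\lambda_B)$, invoke Lemma \ref{lem:clascon} to identify it with $\lambda_{JB}$, and note that $\lambda_B\star\id_{JB}$ collapses to $\lambda_B$; the paper's proof is exactly this, just more terse (it leaves the identifications $\langle\lambda_{JB}\rangle=\id_{(JB,m)}$ and $\langle\lambda_B\rangle=\id_{(B,1+m)}$ implicit, which you helpfully spell out via $\Lambda^0$ and Remark \ref{rem:Lambda-fS}). One small slip: in the first composite you wrote "$g=\id_{JB}:J(JB)\to(JB)^{\fS_0}_\bul$"; the domain of $g$ itself should be $JB$ (since $N_3=1$ and the middle algebra is $B$, so $J^{N_3}B=JB$), not $J(JB)$ — you appear to have conflated the domain of $g$ with the domain of the whole composite $g\star f$. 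This does not affect the argument.
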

\begin{proof}
	We have that $\langle\id_{JB}\rangle\circ\langle\lambda_B\rangle=\langle \id_{JB}\star\lambda_B\rangle$, where $\id_{JB}\star\lambda_B$ equals the following composite in $[\Algl]^\ind$:
	\[\xymatrix@C=3em{J(JB)\ar[r]^-{J(\lambda_B)} & J(B^{\fS_1}_\bul)\ar[r]^-{-\kappa^{1,1}_B} & (JB)^{\fS_1}_\bul}\]
	By Lemma \ref{lem:clascon}, $\id_{JB}\star\lambda_B=\lambda_{JB}$ and thus $\langle\id_{JB}\rangle\circ\langle\lambda_B\rangle=\langle\lambda_{JB}\rangle=\id_{(JB,m)}$.

	We have that $\langle\lambda_B\rangle\circ\langle\id_{JB}\rangle=\langle\lambda_B\star\id_{JB}\rangle$. It is easily verified that $\lambda_B\star\id_{JB}=\lambda_B$ and thus $\langle\lambda_B\rangle\circ\langle\id_{JB}\rangle=\langle\lambda_B\rangle=\id_{(B,1+m)}$.
\end{proof}

We can also consider:
\begin{align}\label{eq:trans2}\begin{split}\langle\lambda_B\rangle&\in\fk((B,1+m),(B^{\fS_1}_0,m))\\
\langle\id_{B^{\fS_1}}\rangle &\in\fk((B^{\fS_1}_0,m),(B,1+m))\end{split}\end{align}
We will show below that these morphisms are mutually inverses. Each time we identify $(B,1+m)\cong(B^{\fS_1}_0,m)$ it will be through these isomorphisms.

\begin{lem}[\cite{cortho}*{Lemma 6.3.10}]\label{lem:lambdaiso}
	The morphism $\lambda_B:JB\to B^{\fS_1}_0$ is a $\fk$-equivalence.
\end{lem}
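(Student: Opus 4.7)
My plan is to compare the excision sequences provided by Theorem \ref{thm:excision} for the universal extension $\scrU_B\colon JB\to TB\to B$ and the loop extension $\scrP_{0,B}\colon B^{\fS_1}_0\to (PB)_0\to B$, and then to conclude by Yoneda.

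First, I would show that $TA$ is polynomially contractible for every $A\in\Algl$. The $\ell$-algebra homomorphism $h\colon TA\to TA[t]$ determined on the canonical generators $a\in FA$ by $h(a)=ta$ (so that a product of $n$ generators gets multiplied by $t^n$) is a polynomial homotopy from $h|_{t=0}=0$ to $h|_{t=1}=\id_{TA}$. Via the isomorphism $(TA)^{\fS_1}_0\cong TA\otimes\Z^{\fS_1}_0$ of Lemma \ref{lem:simppair}, the same rescaling also contracts $(TA)^{\fS_1}_0$. Combined with Lemma \ref{lem:pathcontr} and the analogous tensor-product trick, this yields
\[\fk(D,TB)=\fk(D,(TB)^{\fS_1}_0)=\fk(D,(PB)_0)=\fk(D,(PB)_0^{\fS_1}_0)=0\]
for every $D\in\Algl$.

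By Proposition \ref{lem:classexists} applied to $\id_B\colon B\to B$ and $\scrP_{0,B}$, the morphism $\lambda_B$ is the leftmost component of a strong morphism of extensions $\scrU_B\to\scrP_{0,B}$ with rightmost component $\id_B$. Applying Theorem \ref{thm:excision} to both extensions, the vanishings above collapse the two five-term sequences to isomorphisms
\[\partial^{\mathrm{univ}}\colon\fk(D,B^{\fS_1}_0)\xrightarrow{\cong}\fk(D,JB)\quad\text{and}\quad\partial^{\mathrm{loop}}\colon\fk(D,B^{\fS_1}_0)\xrightarrow{\cong}\fk(D,B^{\fS_1}_0).\]
The strong morphism $\scrU_B\to\scrP_{0,B}$ induces a compatible map of mapping paths that intertwines the inclusions $\ker g_i\to (P_{g_i})_0$ of Lemma \ref{lem:kerkkequiv} and the maps $\iota_{g_i}$ appearing in the proof of Theorem \ref{thm:excision}; unwinding the definition of $\partial$ then yields a commutative square
\[\xymatrix@R=2em{\fk(D,B^{\fS_1}_0)\ar[r]^-{\partial^{\mathrm{univ}}}\ar@{=}[d] & \fk(D,JB)\ar[d]^-{(\lambda_B)_*} \\ \fk(D,B^{\fS_1}_0)\ar[r]^-{\partial^{\mathrm{loop}}} & \fk(D,B^{\fS_1}_0)}\]
Since the horizontal arrows are isomorphisms, so is $(\lambda_B)_*\colon\fk(D,JB)\to\fk(D,B^{\fS_1}_0)$ for every $D\in\Algl$. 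By the Yoneda lemma applied to the natural transformation of representable functors $\fk(-,JB)\to\fk(-,B^{\fS_1}_0)$, the class $\langle\lambda_B\rangle$ is invertible in $\fk$, i.e.\ $\lambda_B$ is a $\fk$-equivalence.

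The main obstacle is the naturality check hidden in the last step: one must verify that each ingredient used to define $\partial$ in the proof of Theorem \ref{thm:excision} (the mapping path $(P_f)_0$, the morphism $\iota_f$, and the inclusion of $\ker f$) transforms functorially under the strong morphism $\scrU_B\to\scrP_{0,B}$, so that the square above commutes. This is a routine but careful diagram chase; everything else is either the short polynomial-homotopy computation for the contractibility of $TA$ or an appeal to results already established in the paper.
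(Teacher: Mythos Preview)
Your proposal is correct and is precisely the approach the paper intends: it defers to \cite{cortho}*{Lemma 6.3.10}, whose proof compares the five-term exact sequences of Theorem \ref{thm:excision} for $\scrU_B$ and $\scrP_{0,B}$ via the strong morphism of extensions given by $\lambda_B$, exactly as you do. Your Yoneda step is fine since both $JB$ and $B^{\fS_1}_0$ lie in the image of $j$, so testing $(\lambda_B)_*$ on $D\in\Algl$ already exhibits a two-sided inverse.
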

\begin{proof}
	The proof of \cite{cortho}*{Lemma 6.3.10} works verbatim, using Theorem \ref{thm:excision}.
\end{proof}

\begin{lem}\label{lem:lambdaidinverses2}
	The morphisms in \eqref{eq:trans2} are mutually inverses.
\end{lem}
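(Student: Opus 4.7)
The strategy is to compute one composition directly and to deduce the other from Lemma \ref{lem:lambdaiso}. The translation functor $L:\fk\to\fk$ of Section \ref{sec:trans} is an autoequivalence that acts as the identity on morphisms, and it carries $\langle\lambda_B\rangle$ and $\langle\id_{B^{\fS_1}}\rangle$ to the analogous morphisms between $(B,2+m)$ and $(B^{\fS_1}_0,m+1)$. Being inverse to one another is preserved by an autoequivalence, so it suffices to treat the case $m=0$.

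For the composition $\langle\id_{B^{\fS_1}}\rangle\circ\langle\lambda_B\rangle=\langle\id_{B^{\fS_1}}\star\lambda_B\rangle$, I represent $\lambda_B$ at level $v=0$ and $\id_{B^{\fS_1}}$ at level $w=0$; the parameters in Definition \ref{defi:kkc} become $N_1=1$, $N_2=0$, $N_3=0$, $N_4=1$. Because $N_2=N_3=0$, the factor $\kappa^{N_3,N_2}$ and the final morphism $\mu^{N_4,N_2}$ both reduce to (essentially) the identity in $[\Algl]^\ind$, and unwinding the formula for $\star$ gives $\id_{B^{\fS_1}}\star\lambda_B=\lambda_B\in[JB,B^{\fS_1}_\bul]$. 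Now $\id_{(B,1)}$ is represented at level $v=-1$ by $\id_B$, and its image under the transition morphism $\Lambda^0$ is $\Lambda^0(\id_B)=\lambda_B$ by the very definition of $\Lambda^0$ as the classifying map of $\scrP_{0,B}$; hence $\lambda_B$ also represents $\id_{(B,1)}$ at level $v=0$, so $\langle\id_{B^{\fS_1}}\rangle\circ\langle\lambda_B\rangle=\id_{(B,1)}$.

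For the other composition, I invoke Lemma \ref{lem:lambdaiso}, which states that $\lambda_B$ is a $\fk$-equivalence, i.e.\ that $j(\lambda_B):(JB,0)\to(B^{\fS_1}_0,0)$ is an isomorphism in $\fk$. A short $\star$-computation, entirely analogous to the one at the end of the proof of Lemma \ref{lem:lambdaidinverses1}, yields $\langle\lambda_B\rangle=j(\lambda_B)\circ\langle\id_{JB}\rangle$; since $\langle\id_{JB}\rangle:(B,1)\to(JB,0)$ is an isomorphism by Lemma \ref{lem:lambdaidinverses1}, so is $\langle\lambda_B\rangle:(B,1)\to(B^{\fS_1}_0,0)$. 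A left inverse of an isomorphism is automatically its two-sided inverse, so $\langle\lambda_B\rangle\circ\langle\id_{B^{\fS_1}}\rangle=\id_{(B^{\fS_1}_0,0)}$. The chief technical subtlety that this indirect route sidesteps is the direct computation of $\lambda_B\star\id_{B^{\fS_1}}$, which produces an expression of the form $\pm(\lambda_B)^{\fS_1}\circ\kappa^{1,1}_B\in[J(B^{\fS_1}_0),(B^{\fS_1}_0)^{\fS_1}_\bul]$; identifying this in the colimit with $\lambda_{B^{\fS_1}_0}=\Lambda^0(\id_{B^{\fS_1}_0})$ would demand an analog of Lemma \ref{lem:clascon} for $B^{\fS_1}_0$ together with careful sign bookkeeping, which we happily avoid.
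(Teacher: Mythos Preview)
Your proof is correct and follows essentially the same strategy as the paper: show that $\langle\id_{B^{\fS_1}}\rangle\circ\langle\lambda_B\rangle$ is the identity by a direct $\star$-computation, then show that $\langle\lambda_B\rangle$ is an isomorphism by factoring it as $j(\lambda_B)\circ\langle\id_{JB}\rangle$ and invoking Lemmas~\ref{lem:lambdaidinverses1} and~\ref{lem:lambdaiso}. The paper does this at general $m$ and says the first computation ``follows immediately from the definitions''; your reduction to $m=0$ and your explicit unwinding of the $\star$-formula (including the identification $\Lambda^0(\id_B)=\lambda_B$) are welcome added detail but do not change the argument.
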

\begin{proof}
We claim that $\langle\lambda_B\rangle$ is an isomorphism. To see this, notice that it equals the following composite:
\[\xymatrix@C=5em{(B,1+m)\ar[r]^-{\langle\id_{JB}\rangle} & (JB,m)\ar[r]^-{L^m(\lambda_B)} & (B^{\fS_1}_0,m)}\]
Here, $\langle\id_{JB}\rangle$ is an isomorphism by Lemma \ref{lem:lambdaidinverses1} and $L^m(\lambda_B)$ is an isomorphism by Lemma \ref{lem:lambdaiso}. This proves the claim. To prove the lemma it suffices to show that $\langle\id_{B^{\fS_1}}\rangle\circ \langle\lambda_B\rangle=\id_{(B,1+m)}$. This follows immediately from the definitions.
\end{proof}

\begin{rem}\label{rem:fS1L}
	By Lemma \ref{lem:lambdaidinverses2}, there is a natural isomorphism $j\circ(?)^{\fS_1}_0\cong L\circ j$ of functors $\Algl\to\fk$. Hence, a morphism $f$ in $\Algl$ is a $\fk$-equivalence if and only if $f^{\fS_1}_0$ is.
\end{rem}

\begin{lem}\label{lem:lastvertexkkequiv}
	The morphisms $B^{\fS_n}_r\to B^{\fS_n}_{r+1}$ are $\fk$-equivalences.
\end{lem}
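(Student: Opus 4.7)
The plan is to induct on $n \geq 0$. The base case $n = 0$ is immediate: since $\fS_0 = \Delta^0$ and the subdivision functor fixes $\Delta^0$, each transition $B^{\fS_0}_r \to B^{\fS_0}_{r+1}$ is just $\id_B$.

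For the inductive step I fix $n \geq 1$, assume the result for $n-1$, and let $D \in \Algl$ be arbitrary. The last vertex map induces a morphism of path extensions $\scrP_{n-1,B,r} \to \scrP_{n-1,B,r+1}$ whose three components are precisely the transitions in question. Applying $\fk(D,-)$ and Theorem \ref{thm:excision} to both rows produces two five-term exact sequences, joined by vertical transition maps. By Lemma \ref{lem:pathcontr}, $P(n-1,B)_r$ is contractible when $n \geq 2$, and the very same argument using $\vartheta$ works unchanged in the case $n = 1$ to show $(PB)_r$ is contractible. Since $(?)^{\fS_1}_0: \Algl \to \Algl$ is homotopy invariant (Section \ref{subsec:extending}), the contractibility is inherited by $(P(n-1,B)_r)^{\fS_1}_0$. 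Hence the terms $\fk(D,P(n-1,B)_r)$ and $\fk(D,(P(n-1,B)_r)^{\fS_1}_0)$ in the excision sequence both vanish, forcing $\partial$ to be an isomorphism at each of the levels $r$ and $r+1$. Naturality of $\partial$ in the extension then yields a commutative square
\[\xymatrix@C=3em{\fk(D,(B^{\fS_{n-1}}_r)^{\fS_1}_0) \ar[d] \ar[r]^-{\partial} & \fk(D,B^{\fS_n}_r) \ar[d] \\ \fk(D,(B^{\fS_{n-1}}_{r+1})^{\fS_1}_0) \ar[r]^-{\partial} & \fk(D,B^{\fS_n}_{r+1}) }\]
in which both horizontal arrows are isomorphisms, and whose left vertical arrow is induced by applying $(?)^{\fS_1}_0$ to the transition $B^{\fS_{n-1}}_r \to B^{\fS_{n-1}}_{r+1}$. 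By the inductive hypothesis and Remark \ref{rem:fS1L}, this map becomes a $\fk$-equivalence after applying $(?)^{\fS_1}_0$, so the left vertical is an isomorphism; consequently so is the right.

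To promote this pointwise statement (valid for every $D \in \Algl$) to an actual $\fk$-equivalence, I would specialize $D$. Writing $f$ for the transition $B^{\fS_n}_r \to B^{\fS_n}_{r+1}$ and taking $D = B^{\fS_n}_{r+1}$ yields a $g \in \fk(B^{\fS_n}_{r+1},B^{\fS_n}_r)$ with $fg = \id$; then taking $D = B^{\fS_n}_r$ and using injectivity of $f_*$ applied to the identity $f(gf) = (fg)f = f = f\cdot\id$ forces $gf = \id$. The main obstacle is checking that $(P(n-1,B)_r)^{\fS_1}_0$ really vanishes in $\fk$; this comes down to homotopy invariance of $(?)^{\fS_1}_0$ transporting the nullhomotopy witnessing contractibility of $P(n-1,B)_r$, and everything else is routine naturality of the excision sequence.
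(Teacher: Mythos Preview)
Your proof is correct and follows essentially the same approach as the paper's: induction on $n$, with the inductive step using the morphism of path extensions $\scrP_{n-1,B}$ at levels $r$ and $r+1$, contractibility of the middle terms, and Theorem~\ref{thm:excision}. The paper's write-up is terser---it simply asserts that the left vertical map is a $\fk$-equivalence ``by Theorem~\ref{thm:excision}'' once the middle and right verticals are known to be---whereas you spell out the vanishing of $\fk(D,(P(n-1,B)_r)^{\fS_1}_0)$, the resulting isomorphism $\partial$, the use of Remark~\ref{rem:fS1L} on the inductive hypothesis, and the Yoneda-style specialization of $D$; but these are exactly the details implicit in the paper's one-line appeal.
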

\begin{proof}
	We proceed by induction on $n$. The result holds for $n=0$ as in this case $B^{\fS_0}_r\to B^{\fS_0}_{r+1}$ is the identity of $B$. For the inductive step, consider the morphism of extensions:
	\begin{equation}\label{eq:lastvertexkkequiv}\begin{gathered}\xymatrix@R=2em{B^{\fS_{n+1}}_r\ar[d]\ar[r] & P(n,B)_r\ar[d]\ar[r] & B^{\fS_n}_r\ar[d] \\
		B^{\fS_{n+1}}_{r+1}\ar[r] & P(n,B)_{r+1}\ar[r] & B^{\fS_n}_{r+1}}\end{gathered}\end{equation}
	The middle vertical morphism is a $\fk$-equivalence since both its source and target are contractible by Lemma \ref{lem:pathcontr}. The right vertical morphism is a $\fk$-equivalence by induction hypothesis. Then the left vertical morphism is a $\fk$-equivalence too by Theorem \ref{thm:excision}.
\end{proof}

\begin{lem}\label{lem:mukkequiv}
	The morphisms $\mu^{m,n}_B:(B^{\fS_m}_r)^{\fS_n}_s\to B^{\fS_{m+n}}_{r+s}$ are $\fk$-equivalences for all $m$ and $n$.
\end{lem}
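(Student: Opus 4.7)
The plan is to induct on the second index $n$, treating the first index $m$ and the subscripts $r,s$ and the algebra $B$ as uniform parameters.

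For the base case $n=0$, property (4) in Section \ref{sec:multi} identifies $\mu^{m,0}_B$ with the iterated transition map $B^{\fS_m}_r\to B^{\fS_m}_{r+s}$, i.e.\ the $s$-fold composition of last-vertex maps. By Lemma \ref{lem:lastvertexkkequiv} each last-vertex map is a $\fk$-equivalence, so their composite is too.

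For the inductive step, I would invoke Example \ref{exa:muSm} with the roles of its indices $(m_E,n_E,r_E,s_E)$ played by our $(n,m,s,r)$. This yields a strong morphism of path extensions
\[\xymatrix@C=2.2em{(B^{\fS_m}_r)^{\fS_{n+1}}_s\ar[d]_-{\mu^{m,n+1}_B}\ar[r] & P(n,B^{\fS_m}_r)_s\ar[d]\ar[r] & (B^{\fS_m}_r)^{\fS_n}_s\ar[d]^-{\mu^{m,n}_B} \\ B^{\fS_{m+n+1}}_{r+s}\ar[r] & P(m+n,B)_{r+s}\ar[r] & B^{\fS_{m+n}}_{r+s}}\]
The right vertical is a $\fk$-equivalence by the inductive hypothesis, while the middle vertical is a map between two contractible algebras (by Lemma \ref{lem:pathcontr}) and hence a $\fk$-equivalence because both its source and target are zero objects in $\fk$. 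Applying Theorem \ref{thm:excision} to each row and running the same five-lemma-style argument used in the proof of Lemma \ref{lem:lastvertexkkequiv} (the two-out-of-three property for $\fk$-equivalences in morphisms of extensions), I conclude that the left vertical $\mu^{m,n+1}_B$ is also a $\fk$-equivalence, closing the induction.

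The only delicate point, and the one I expect to be the main obstacle, is the boundary step $n=0\rightsquigarrow n=1$: it requires contractibility of $P(0,B^{\fS_m}_r)_s=(B^{\fS_m}_r)^{(I,\{1\})}_s$, which is at the edge of the range stated in Lemma \ref{lem:pathcontr}. This is easily circumvented: the contraction in the proof of Lemma \ref{lem:pathcontr} is constructed via the morphism of pairs $\vartheta:(I,\{1\})\square I\to (I,\{1\})$ independently of the $\fS_n$ factor, so the same polynomial homotopy with $\fS_0=\Delta^0$ shows that $(PA)_s$ is contractible for every $A$ and every $s\geq 0$, which is all that is needed.
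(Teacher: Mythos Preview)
Your proof is correct and uses the same core idea as the paper---the morphism of path extensions from Example \ref{exa:muSm} together with Theorem \ref{thm:excision}---but organizes the induction a bit differently. The paper first proves the case $n=1$ via exactly the morphism of extensions you write down (this is your step $0\rightsquigarrow 1$), and then for the inductive step it switches to an associativity argument: it factors $\mu^{m,n+1}_B$ through $\mu^{n,1}_{B^{\fS_m}_r}$, $\mu^{m+n,1}_B$, and $(\mu^{m,n}_B)^{\fS_1}_0$, invoking the already-proved $n=1$ case together with Remark \ref{rem:fS1L} (that $(?)^{\fS_1}_0$ preserves $\fk$-equivalences). You instead reuse the same extension-morphism argument at every step of the induction. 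Your version is slightly more economical, since it avoids the associativity square and the appeal to Remark \ref{rem:fS1L}; the paper's version has the minor advantage that after the $n=1$ case it never needs contractibility of $P(n,-)$ for $n\geq 2$. Both are fine, and your remark about the $n=0$ edge case in Lemma \ref{lem:pathcontr} is well taken and correctly handled.
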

\begin{proof}Let us start with the case $n=1$. Consider the following morphism of extensions:
	\begin{equation}\label{eq:mukkequiv}\begin{gathered}\xymatrix@C=4em@R=2em{(B^{\fS_m}_r)^{\fS_1}_s\ar[d]_{\mu^{m,1}}\ar[r] & P(B^{\fS_m}_r)_s\ar[d]^-{\mu^{\fS_m,(I,\{1\})}}\ar[r] & B^{\fS_m}_r\ar[d]^-{(\gamma^s)^*} \\
		B^{\fS_{m+1}}_{r+s}\ar[r] & P(m,B)_{r+s}\ar[r] & B^{\fS_m}_{r+s}}\end{gathered}\end{equation}
	The result follows from Theorem \ref{thm:excision} since $P(B^{\fS_m}_r)_s$ and $P(m,B)_{r+s}$ are contractible by Lemma \ref{lem:pathcontr} and $(\gamma^s)^*$ is a $\fk$-equivalence by Lemma \ref{lem:lastvertexkkequiv}.

	We will prove the general case by induction on $n$. Suppose that $\mu^{m,n}_B$ is a $\fk$-equivalence for all $m$. The following square commutes by the associativity of $\mu$:
	\[\xymatrix@C=4em{\left(\left(B^{\fS_m}_r\right)^{\fS_n}_s\right)^{\fS_1}_0\ar[r]^-{\mu^{n,1}_{B^{\fS_m}_r}}\ar[d]_-{\left(\mu^{m,n}_B\right)^{\fS_1}_0} & \left(B^{\fS_m}_r\right)^{\fS_{n+1}}_{s}\ar[d]^-{\mu^{m,n+1}_B} \\
		\left(B^{\fS_{m+n}}_{r+s}\right)^{\fS_1}_0\ar[r]^-{\mu^{m+n,1}_B} & B^{\fS_{m+n+1}}_{r+s}}\]
The horizontal morphisms are $\fk$-equivalences by the case $n=1$ and the left vertical morphism is a $\fk$-equivalence since $\mu^{m,n}_B$ is; see Remark \ref{rem:fS1L}. Then $\mu^{m,n+1}_B$ is a $\fk$-equivalence.
\end{proof}

\begin{lem}\label{lem:idBfSkkequiv}
The identity of $B^{\fS_n}_r$ induces an isomorphism $\langle\id_{B^{\fS_n}_r}\rangle\in\fk((B^{\fS_n}_r,m), (B,n+m))$.
\end{lem}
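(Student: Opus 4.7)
The plan is a straightforward double induction: first reduce to the case $r=0$, then induct on $n$.

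For the reduction to $r=0$, I would use the last-vertex morphism $\gamma\colon B^{\fS_n}_0\to B^{\fS_n}_r$, which is a $\fk$-equivalence by (iterated application of) Lemma \ref{lem:lastvertexkkequiv}. Unravelling Definition \ref{defi:kkc}, one checks directly that $\langle\id_{B^{\fS_n}_r}\rangle\circ\langle\gamma\rangle=\langle\id_{B^{\fS_n}_0}\rangle$ in $\fk((B^{\fS_n}_0,m),(B,n+m))$: both sides are represented at the minimal level $v=-m$ by $\gamma\colon B^{\fS_n}_0\to B^{\fS_n}_r$, which agrees with the canonical inclusion $B^{\fS_n}_0\hookrightarrow B^{\fS_n}_\bul$ in the colimit. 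So it suffices to prove that $\langle\id_{B^{\fS_n}_0}\rangle$ is an isomorphism.

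The induction on $n$ has trivial base $n=0$ (since $B^{\fS_0}_0\cong B$, so $\langle\id_B\rangle$ is literally $\id_{(B,m)}$) and base $n=1$ given by Lemma \ref{lem:lambdaidinverses2}. For the inductive step from $n-1$ to $n$, I would combine three isomorphisms: the $\fk$-equivalence $\langle\mu^{n-1,1}_B\rangle\colon((B^{\fS_{n-1}}_0)^{\fS_1}_0,m)\to(B^{\fS_n}_0,m)$ coming from Lemma \ref{lem:mukkequiv}; the $n=1$ case applied to the algebra $B^{\fS_{n-1}}_0$, giving $\langle\id_{(B^{\fS_{n-1}}_0)^{\fS_1}_0}\rangle\colon((B^{\fS_{n-1}}_0)^{\fS_1}_0,m)\overset{\cong}{\to}(B^{\fS_{n-1}}_0,1+m)$; and the inductive hypothesis, $\langle\id_{B^{\fS_{n-1}}_0}\rangle\colon(B^{\fS_{n-1}}_0,1+m)\overset{\cong}{\to}(B,n+m)$.

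The key computation is that both $\langle\id_{B^{\fS_n}_0}\rangle\circ\langle\mu^{n-1,1}_B\rangle$ and $\langle\id_{B^{\fS_{n-1}}_0}\rangle\circ\langle\id_{(B^{\fS_{n-1}}_0)^{\fS_1}_0}\rangle$ coincide with the class $\langle\mu^{n-1,1}_B\rangle\in\fk(((B^{\fS_{n-1}}_0)^{\fS_1}_0,m),(B,n+m))$. This is obtained by choosing the indices $v$ and $w$ in the definition of $\star$ to be minimal (e.g. $v=-m$ throughout, and $w=-m$ or $w=-m-1$ as appropriate): then the relevant $N_i$ all degenerate to $0$ or $1$, so $J^{N_3}$, the morphism $\kappa^{N_3,N_2}$ and all but one of the $\mu$'s collapse to identities, and both $\star$-products reduce to $\mu^{n-1,1}_B$ viewed as a morphism into the ind-algebra $B^{\fS_n}_\bul$. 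Since $\langle\mu^{n-1,1}_B\rangle$ is invertible, one concludes $\langle\id_{B^{\fS_n}_0}\rangle=\langle\id_{B^{\fS_{n-1}}_0}\rangle\circ\langle\id_{(B^{\fS_{n-1}}_0)^{\fS_1}_0}\rangle\circ\langle\mu^{n-1,1}_B\rangle^{-1}$, a composition of isomorphisms.

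The main obstacle is purely bookkeeping: making the two composite $\star$-products collapse to the same simple morphism requires carefully matching the indices $(N_1,N_2,N_3,N_4)$ and the subdivision levels in two different representatives of a single colimit class. The underlying calculation is routine, but one has to verify that the chosen $v$ and $w$ really yield the common representative $\mu^{n-1,1}_B$ in $[(B^{\fS_{n-1}}_0)^{\fS_1}_0,\,B^{\fS_n}_\bul]$.
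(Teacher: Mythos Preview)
Your proof is correct and follows essentially the same route as the paper: reduce to $r=0$ via Lemma \ref{lem:lastvertexkkequiv}, then induct on $n$ using the square built from $\mu^{n-1,1}_B$ (the paper's $\mu^{n,1}_B$, with the step written as $n\to n+1$), Lemma \ref{lem:lambdaidinverses2}, and Lemma \ref{lem:mukkequiv}. The explicit $\star$-computation you carry out is exactly what the paper compresses into ``it is easily verified that the following diagram in $\fk$ commutes''.
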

\begin{proof}
	By Lemma \ref{lem:lastvertexkkequiv} we may assume that $r=0$. We will proceed by induction on $n$. The case $n=1$ holds by Lemma \ref{lem:lambdaidinverses2}. Suppose now that the result holds for $n\geq 1$. It is easily verified that the following diagram in $\fk$ commutes:
	\[\xymatrix@C=5em{((B^{\fS_n}_0)^{\fS_1}_0,m)\ar[d]_-{L^m\left(\mu^{n,1}_B\right)}\ar[r] & (B^{\fS_n}_0,1+m)\ar[d] \\
		(B^{\fS_{n+1}}_0,m)\ar[r]^-{\langle\id_{B^{\fS_{n+1}}}\rangle} & (B,n+1+m)}\]
	The top horizontal morphism is an isomorphism by Lemma \ref{lem:lambdaidinverses2}, the right vertical one is an isomorphism by induction hypothesis, and $L^m(\mu^{n,1}_B)$ is an isomorphism by Lemma \ref{lem:mukkequiv}.
\end{proof}

\begin{rem}\label{rem:natisos}
	By Lemma \ref{lem:idBfSkkequiv} we have natural isomorphisms $j\circ (?)^{\fS_n}_0\cong L^n\circ j$ and $j\circ ((?)^{\fS_n}_0)^{\fS_1}_0\cong L^{n+1}\circ j$. Indeed, it is easily seen that the following squares commute for every morphism $f:A\to B$ in $\Algl$:
	\[\xymatrix@C=4em{(A^{\fS_n}_0,0)\ar[d]_-{f}\ar[r]^-{\langle\id_{A^{\fS_n}}\rangle}_-{\cong} & (A,n)\ar[d]^-{L^n(f)} \\
		(B^{\fS_n}_0,0)\ar[r]^-{\langle\id_{B^{\fS_n}}\rangle}_-{\cong} & (B,n) }\hspace{1em}\xymatrix{((A^{\fS_n}_0)^{\fS_1}_0,0)\ar[d]_-{f}\ar[r]^-{\langle\mu^{n,1}_A\rangle}_-{\cong} & (A,n+1)\ar[d]^-{L^{n+1}(f)} \\
		((B^{\fS_n}_0)^{\fS_1}_0,0)\ar[r]^-{\langle\mu^{n,1}_B\rangle}_-{\cong} & (B,n+1)}\]
\end{rem}

\begin{lem}\label{lem:arrowiso}
	Let $\alpha\in\fk((A,m),(B,n))$ be represented by $f:J^{m+u}A\to B^{\fS_{n+u}}_r$. Then the following diagram commutes:
	\[\xymatrix@C=4em{(A,m)\ar[d]^-{\cong}_-{\langle\id_{J^{m+u}A}\rangle}\ar[r]^-{\alpha} & (B,n) \\
		(J^{m+u}A,-u)\ar[r]^-{L^{-u}(f)} & (B^{\fS_{n+u}}_r,-u)\ar[u]^-{\cong}_-{\langle\id_{B^{\fS_{n+u}}}\rangle}}\]
\end{lem}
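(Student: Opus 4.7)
The plan is to represent each of the three arrows on the right-hand side at the level $v = u$ of the filtered colimit defining its hom-set and to verify that the composition formula (the $\star$-operation of Definition \ref{defi:kkc}) reduces, after these choices, to $f$ itself. Concretely, $\langle\id_{J^{m+u}A}\rangle$ is represented by the identity in $[J^{m+u}A, J^{m+u}A] = [J^{m+u}A, (J^{m+u}A)^{\fS_0}_\bullet]$; the bottom arrow $L^{-u}(f)$ by $f$ itself, viewed in $[J^{m+u}A, (B^{\fS_{n+u}}_r)^{\fS_0}_\bullet]$; and $\langle\id_{B^{\fS_{n+u}}_r}\rangle$ by the identity in $[B^{\fS_{n+u}}_r, B^{\fS_{n+u}}_\bullet]$. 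That $\langle\id_{J^{m+u}A}\rangle$ really is represented by the identity at this level follows from the description given after Lemma \ref{lem:lambdaidinverses1} as the $(m+u)$-fold iteration of $\langle\id_{J(-)}\rangle$, while the corresponding statement for $\langle\id_{B^{\fS_{n+u}}_r}\rangle$ follows from Lemma \ref{lem:idBfSkkequiv}.

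First, I would compute the composition $L^{-u}(f)\circ\langle\id_{J^{m+u}A}\rangle$. With the representatives above, the indices appearing in the $\star$ formula are $N_1 = m+u$ and $N_2 = N_3 = N_4 = 0$, so every structural ingredient becomes trivial: $J^{N_3} = J^0$ is the identity, $\kappa^{0,0}$ is the identity by definition, the sign $(-1)^{N_2N_3}$ equals $1$, $g^{\fS_0}$ is just $g = f$, and $\mu^{0,0}$ is the identity. Hence $L^{-u}(f)\circ\langle\id_{J^{m+u}A}\rangle$ is represented by $f$ at level $v = u$ of $\fk((A,m), (B^{\fS_{n+u}}_r, -u))$.

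Next, I would compose on the left with $\langle\id_{B^{\fS_{n+u}}_r}\rangle$ using the same recipe. Now $N_1 = m+u$, $N_2 = 0$, $N_3 = 0$, $N_4 = n+u$. Again $J^0$, $\kappa^{0,0}$, the sign, and $g^{\fS_0}$ are all trivial; the only non-trivial piece is $\mu^{n+u,0}$, but property (4) of Section \ref{sec:multi} identifies this with a transition morphism of the ind-algebra $B^{\fS_{n+u}}_\bullet$, which induces the identity on the filtered colimit $[J^{m+u}A, B^{\fS_{n+u}}_\bullet] = \colim_r [J^{m+u}A, B^{\fS_{n+u}}_r]$. Consequently the composite along the right-hand side of the diagram is represented by $f$ at level $v = u$ of $\fk((A,m),(B,n))$, which by Definition \ref{defi:kkc} is precisely $\alpha$.

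The argument presents no genuine obstacle: it is pure bookkeeping in the $\star$ formula, exploiting the fact that when one of the loop-exponents in $\kappa^{n,m}$ or $\mu^{m,n}$ vanishes, the map collapses to the identity or to a transition map of the ind-diagram. The only point that one must be careful about is choosing the common representative level $v = u$ so that all three arrows can be written on the nose (rather than up to a shift), which is exactly what makes the formula collapse in the way described.
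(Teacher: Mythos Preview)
Your proposal is correct and is precisely the ``straightforward computation'' the paper alludes to but does not write out: you pick representatives at level $v=u$ for all three legs and observe that in each application of the $\star$-formula at least one of $N_2,N_3$ vanishes, so $J^{N_3}$, $\kappa^{N_3,N_2}$, the sign, and $\mu^{N_4,N_2}$ all collapse to identities or transition maps, leaving $f$ itself. There is nothing to add; your bookkeeping is accurate and your invocation of property~(4) of Section~\ref{sec:multi} for $\mu^{n+u,0}$ is the right way to dispose of the last step.
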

\begin{proof}
	It is a straightforward computation.
\end{proof}

\section{Long exact sequences associated to extensions}\label{sec:long}

\begin{lem}\label{lem:longexact}
	Let $\scrE: A\overset{f}\to B\overset{g}\to C$ be an extension. Then, for every $\ell$-algebra $D$, there is a natural long exact sequence of abelian groups:
	\[\xymatrix@C=1.3em{\cdots\ar[r] & \fk(D,(A,n))\ar[r]^-{f_*} & \fk(D,(B,n))\ar[r]^-{g_*} & \fk(D,(C,n))\ar[r]^-{\partial} & \fk(D,(A,n-1))\ar[r] & \cdots}\]
\end{lem}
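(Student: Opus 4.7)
The plan is to iterate Theorem \ref{thm:excision}. For each integer $p \geq 0$, the sequence $\scrE^{\fS_p}_0 : A^{\fS_p}_0 \to B^{\fS_p}_0 \to C^{\fS_p}_0$ is again an extension, since tensoring $\scrE$ with the free $\ell$-module $\Z^{\fS_p}_0$ preserves exactness (cf. Lemma \ref{lem:simppair}). Applying Theorem \ref{thm:excision} to it and invoking the natural isomorphisms $j \circ (?)^{\fS_p}_0 \cong L^p \circ j$ and $j \circ ((?)^{\fS_p}_0)^{\fS_1}_0 \cong L^{p+1} \circ j$ from Remark \ref{rem:natisos}, I obtain an exact sequence
\[
\fk(D, (B, p+1)) \to \fk(D, (C, p+1)) \to \fk(D, (A, p)) \to \fk(D, (B, p)) \to \fk(D, (C, p)).
\]

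Next, I would splice these five-term sequences for $p = 0, 1, 2, \ldots$ into a single long exact sequence. The splicing requires the last three terms and maps of the sequence at level $p+1$ to coincide with the first three of the sequence at level $p$; this is a naturality check, resting on the fact that the $\fk$-equivalences $\mu^{p,1}_?$ of Lemma \ref{lem:mukkequiv} intertwine $f^{\fS_1}_0$ with $f^{\fS_{p+1}}_0$ (and similarly for $g$), by naturality of $\mu^{p,1}$ in its algebra argument. This yields the portion of the long exact sequence with $n \geq 0$, terminating on the right at $\fk(D, (C, 0))$.

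To extend to $n < 0$, I would apply the $n \geq 0$ sequence with $D$ replaced by $J^k D$ for each $k \geq 0$. Combining the natural isomorphism $(J^k D, 0) \cong (D, k)$, obtained by iterating Lemma \ref{lem:lambdaidinverses1}, with the autoequivalence $L^{-k}$ of $\fk$ from Section \ref{sec:trans} yields a natural isomorphism $\fk(J^k D, (X, n)) \cong \fk(D, (X, n - k))$. Applying this identification to the $n \geq 0$ sequence for $J^k D$ shifts indices by $-k$ and produces the long exact sequence down to level $n = -k$. Different values of $k$ give sequences that agree on their overlap by naturality, so letting $k$ grow produces the full bi-infinite sequence. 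The main technical obstacle is the splicing compatibility check: verifying that the identifications from Remark \ref{rem:natisos} transport the maps in the five-term sequence at level $p$ to the correct ones at level $p+1$, which reduces to diagram chases involving the naturality of $\mu^{p,1}$, the classifying-map construction of Proposition \ref{lem:classcommute}, and the last-vertex map.
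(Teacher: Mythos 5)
Your proof is correct and follows essentially the same route as the paper's: splice the five-term sequences from Theorem~\ref{thm:excision} applied to $\scrE^{\fS_p}_0$ for $p \geq 0$ (identifying via Remark~\ref{rem:natisos}), then shift the source algebra to reach negative degrees. The only cosmetic difference is in the negative-degree extension: the paper replaces $D$ by $D^{\fS_n}_0$ and invokes $\fk(D^{\fS_n}_0,?)\cong\fk(D,L^{-n}(?))$ via Lemma~\ref{lem:idBfSkkequiv}, whereas you replace $D$ by $J^kD$ and invoke $(J^kD,0)\cong(D,k)$ via Lemma~\ref{lem:lambdaidinverses1}; since both $D^{\fS_k}_0$ and $J^kD$ are isomorphic to $(D,k)$ in $\fk$, these are interchangeable.
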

\begin{proof}
	By Theorem \ref{thm:excision} applied to the extension $\scrE^{\fS_n}_0$ we have an exact sequence:
	\[\xymatrix@C=1.5em@R=1.5em{\fk(D,(B^{\fS_n}_0)^{\fS_1}_0)\ar[r]^-{g_*} & \fk(D,(C^{\fS_n}_0)^{\fS_1}_0)\ar[r]^-{\partial} & \fk(D,A^{\fS_n}_0)\ar[r]^-{f_*} & \fk(D,B^{\fS_n}_0)\ar[r]^-{g_*} & \fk(D,C^{\fS_n}_0)}\]
	Under the identifications described in Remark \ref{rem:natisos}, the latter becomes:
	\[\xymatrix@C=1.5em@R=1.5em{\fk(D,(B,n+1))\ar[r]^-{g_*} & \fk(D,(C,n+1))\ar[r]^-{\partial} & \fk(D,(A,n))\ar[r]^-{f_*} & \fk(D,(B,n))\ar[r]^-{g_*} & \fk(D,(C,n))}\]
	For varying $n\geq 0$, these sequences assemble into an exact sequence, infinite to the left, ending in $\fk(D,(C,0))$. It remains to show how to extend this sequence to the right. Upon applying $\fk(D^{\fS_n}_0,?)$ to $\scrE$, we get an exact sequence:
	\[\xymatrix@C=1.5em@R=1.5em{\fk(D^{\fS_n}_0,(B,1))\ar[r]^-{g_*} & \fk(D^{\fS_n}_0,(C,1))\ar[r]^-{\partial} & \fk(D^{\fS_n}_0,A)\ar[r]^-{f_*} & \fk(D^{\fS_n}_0,B)\ar[r]^-{g_*} & \fk(D^{\fS_n}_0,C)}\]
	After identifying $\fk(D^{\fS_n}_0,?)\cong\fk((D,n),?)\cong\fk(D,L^{-n}(?))$, this sequence becomes:
	\[\xymatrix@C=1em@R=1.5em{\fk(D,(B,1-n))\ar[r]^-{g_*} & \fk(D,(C,1-n))\ar[r]^-{\partial} & \fk(D,(A,-n))\ar[r]^-{f_*} & \fk(D,(B,-n))\ar[r]^-{g_*} & \fk(D,(C,-n))}\]
	Now glue these for varying $n\geq 0$ to extend the exact sequence to the right.
\end{proof}

\begin{lem}\label{lem:mayerviet}
	Let $D\in\Algl$. For every pullback square in $\Algl$	\[\xymatrix{B'\ar[d]\ar[r] & C'\ar[d] \\ B\ar[r]^-{g} & C}\]
	where $g$ is a split surjection in $\Mod_\ell$, there is a long exact Mayer-Vietoris sequence:
	\[\xymatrix@C=1em{\cdots\fk(D, (B',n))\ar[r] & \fk(D,(B,n))\oplus\fk(D,(C',n))\ar[r] & \fk(D,(C,n))\ar[r]^-{\partial} & \fk(D,(B',n-1))\cdots}\]
\end{lem}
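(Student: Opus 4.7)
The strategy is to assemble the Mayer-Vietoris sequence from two applications of Lemma \ref{lem:longexact} related by a morphism of extensions. Write $b:B'\to B$ and $g':B'\to C'$ for the projections of the pullback square and put $A:=\ker g$. Since $g$ is a split surjection in $\Mod_\ell$ and pullbacks preserve split $\ell$-module surjections, $g'$ is a split surjection in $\Mod_\ell$ as well; moreover, the canonical map $A\to B'$ induced by $\iota:A\to B$ and $0:A\to C'$ identifies $A$ with $\ker g'$. This yields a commutative diagram of extensions whose leftmost vertical arrow is the identity of $A$:
$$\xymatrix@R=2em{A\ar[r]^-{\iota'}\ar[d]_-{\id_A} & B'\ar[r]^-{g'}\ar[d]_-{b} & C'\ar[d]^-{c} \\ A\ar[r]^-{\iota} & B\ar[r]^-{g} & C}$$

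Applying Lemma \ref{lem:longexact} to both rows and using the naturality of the connecting morphism with respect to morphisms of extensions, I obtain a commutative ladder of long exact sequences. The essential consequence of the left column being the identity is the identity $\partial\circ c_*=\partial'$ between the two connecting maps $\fk(D,(C',n))\to \fk(D,(A,n-1))$.

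I then define the Mayer-Vietoris sequence by
\begin{align*}
\alpha_n &:=(b_*,g'_*):\fk(D,(B',n))\to \fk(D,(B,n))\oplus\fk(D,(C',n)), \\
\beta_n &:=g_*-c_*:\fk(D,(B,n))\oplus\fk(D,(C',n))\to \fk(D,(C,n)), \\
\partial^{\mathrm{MV}}_n &:=\iota'_*\circ\partial:\fk(D,(C,n))\to \fk(D,(B',n-1)).
\end{align*}
The vanishing of $\beta_n\alpha_n$, $\partial^{\mathrm{MV}}_n\beta_n$ and $\alpha_{n-1}\partial^{\mathrm{MV}}_n$ follows from $gb=cg'$, $\iota'\circ\partial'=0$, and $\iota\circ\partial=g'\circ\iota'=0$, respectively. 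Exactness at each position is a standard diagram chase in the ladder. For instance, at $\fk(D,(C,n))$: if $\iota'_*(\partial w)=0$ then by top-row exactness $\partial w=\partial' z$ for some $z$; using $\partial\circ c_*=\partial'$, this gives $\partial(w-c_*z)=0$, so bottom-row exactness produces $y$ with $g_*y=w-c_*z$, whence $\beta_n(y,-z)=w$. The remaining two exactness verifications proceed analogously, in each case lifting through one row and correcting by an element produced from the other row via the identification $\partial\circ c_*=\partial'$.

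The only delicate point I anticipate is the naturality identity $\partial\circ c_*=\partial'$, which amounts to checking that the description of $\partial$ in Theorem \ref{thm:excision} via the mapping path construction $P_g$ is natural with respect to morphisms of extensions. This is inherited from the manifest functoriality of $f\mapsto (P_f)_r$ and the naturality of the $\fk$-equivalences $\ker f\to (P_f)_r$ established in Lemma \ref{lem:kerkkequiv}, so no new ingredient beyond what is already in the paper is required.
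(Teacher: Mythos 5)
Your proof is correct and follows essentially the same route as the paper: the paper's proof simply cites Lemma \ref{lem:longexact} together with the argument of Cuntz--Meyer--Rosenberg, Theorem~2.41, which is exactly the comparison of the two long exact sequences coming from the extensions $A\to B'\to C'$ and $A\to B\to C$ (with identical kernel term) followed by the standard diagram chase you carry out. The naturality identity $\partial\circ c_*=\partial'$ that you flag as a delicate point is already asserted as part of the statement of Lemma~\ref{lem:longexact} (``natural long exact sequence''), so no extra verification is needed beyond what you note.
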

\begin{proof}
	It follows from Lemma \ref{lem:longexact} and  the argument explained in \cite{ralf}*{Theorem 2.41}.
\end{proof}

\begin{coro}
	Let $f$ be any morphism in $\Algl$. Then the morphisms $(P_f)_r\to (P_f)_{r+1}$ are $\fk$-equivalences for all $r$.
\end{coro}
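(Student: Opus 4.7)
The plan is to exhibit $(P_f)_r \to (P_f)_{r+1}$ as the middle vertical arrow in a morphism of extensions whose outer vertical arrows are already known to be $\fk$-equivalences, and then invoke Lemma \ref{lem:longexact} together with the five lemma.

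First, I would observe that for every morphism $f:A\to B$ and every $r\geq 0$ the map $\pi_f:(P_f)_r\to A$ is a split surjection in $\Mod_\ell$ (split by $a\mapsto (a,s(f(a)))$, where $s:B\to (PB)_r$ is the $\Mod_\ell$-section of $d_1$), and its kernel is naturally $B^{\fS_1}_r$ via $\iota_f$; see \eqref{eq:defiiotaf}. Hence we have an extension
\[\scrE_r:\xymatrix{B^{\fS_1}_r\ar[r]^-{\iota_f} & (P_f)_r\ar[r]^-{\pi_f} & A.}\]

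Next, the universal property of the pullback \eqref{eq:mappath} defining $(P_f)_{r+1}$, applied to the last vertex map $(PB)_r\to (PB)_{r+1}$ together with $\id_A$, produces a canonical morphism $(P_f)_r\to (P_f)_{r+1}$. Restricting to kernels of $\pi_f$ gives back the last vertex map $B^{\fS_1}_r\to B^{\fS_1}_{r+1}$, so we obtain a morphism of extensions
\[\xymatrix@R=1.7em{B^{\fS_1}_r \ar[d]\ar[r]^-{\iota_f} & (P_f)_r \ar[d]\ar[r]^-{\pi_f} & A \ar[d]^-{\id} \\ B^{\fS_1}_{r+1}\ar[r]^-{\iota_f} & (P_f)_{r+1}\ar[r]^-{\pi_f} & A.}\]
The left vertical arrow is a $\fk$-equivalence by Lemma \ref{lem:lastvertexkkequiv}, and the right one trivially is.

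Finally, applying Lemma \ref{lem:longexact} to both rows with an arbitrary $D\in\Algl$ yields a commutative ladder of long exact sequences of abelian groups in which every vertical arrow except those coming from $(P_f)_r\to (P_f)_{r+1}$ is an isomorphism. The five lemma then forces $\fk(D,((P_f)_r,n))\to \fk(D,((P_f)_{r+1},n))$ to be an isomorphism for all $D$ and all $n$, so (by the Yoneda argument used throughout Section \ref{sec:exci}) $(P_f)_r\to (P_f)_{r+1}$ is a $\fk$-equivalence. The only non-formal verification is the commutativity of the displayed morphism of extensions, but this is immediate from the naturality of the pullback defining $(P_f)_?$ in $(PB)_r\to (PB)_{r+1}$.
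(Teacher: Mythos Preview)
Your proof is correct and very close in spirit to the paper's: both compare $(P_f)_r$ and $(P_f)_{r+1}$ via a long exact sequence, invoke the five lemma, and conclude by Yoneda. The only difference is the source of the long exact sequence. The paper applies the Mayer--Vietoris sequence of Lemma \ref{lem:mayerviet} to the pullback square \eqref{eq:mappath} (the bottom map $d_1:(PB)_r\to B$ is the required $\Mod_\ell$-split surjection), and then uses contractibility of $(PB)_r$ to collapse the sequence to one involving only $\fk(D,(A,n))$, $\fk(D,(B,n))$ and $\fk(D,((P_f)_r,n))$. You instead observe directly that $B^{\fS_1}_r\to (P_f)_r\to A$ is an extension and feed it into Lemma \ref{lem:longexact}. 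Your route is slightly more economical, since it avoids the detour through Mayer--Vietoris; the paper's route makes the dependence on the pullback defining $(P_f)_r$ more explicit and justifies placing the corollary right after Lemma \ref{lem:mayerviet}. In the end the two long exact sequences coincide (Mayer--Vietoris with a contractible corner collapses to the extension sequence for the kernel), so the arguments are really the same.
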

\begin{proof}
	Pulling back the path extension $\scrP_{0,B}$ along $f:A\to B$, we get a long exact Mayer-Vietoris sequence that takes the following form, since $(PB)_r$ is contractible:
	\[\label{eq:mayer1}\xymatrix@C=1.2em{\cdots\ar[r] & \fk(D,(A,1))\ar[r] & \fk(D,(B,1))\ar[r]^-{\partial} & \fk(D,(P_f)_r))\ar[r] & \fk(D,A)\ar[r] & \fk(D,B)\ar[r] & \cdots}\]
	As this sequence is natural in $r$, the result follows from the five lemma and Yoneda.
\end{proof}

\section{Triangulated structure}\label{sec:triangu} We will use the definition of triangulated category given in \cite{neeman}. As explained in \cite{cortho}*{Section 6.5}, the definition of triangulated category is self dual, and we will actually prove that $\fk^\op$ is a triangulated category, since this is more natural in our context. Recall the definitions of $\pi_f$ and $\iota_f$ from \eqref{eq:mappath} and \eqref{eq:defiiotaf}.

\begin{defi}\label{defi:trianglesinkk}
	We call \emph{mapping path triangle} to a diagram in $\fk$ of the form
	\[\triangle_{f,n}:\xymatrix@C=4em{L(B,n)\ar[r]^-{\partial_{f,n}} & ((P_f)_0,n)\ar[r]^-{L^n(\pi_f)} & (A,n)\ar[r]^-{L^n(f)} & (B,n) },\]
	where $f:A\to B$ is a morphism in $\Algl$, $n\in\Z$ and $\partial_{f,n}$ equals the composite:
	\[\xymatrix@C=6em{(B,n+1) & (B^{\fS_1}_0,n)\ar[l]_-{\langle\id_{B^{\fS_1}}\rangle}^-{\cong}\ar[r]^-{(-1)^{n+1}L^n(\iota_f)} & ((P_f)_0,n)}\]
	A \emph{distinguished triangle} in $\fk$ is a triangle isomorphic (as a triangle) to some $\triangle_{f,n}$.
\end{defi}

We are ready to verify that $\fk$ satisfies the axioms of a triangulated category with the translation functor $L$ and the distinguished triangles defined above.

\begin{ax}[TR0]
	Any triangle which is isomorphic to a distinguished triangle is itself distinguished. For any $B\in\Algl$ and any $n\in \Z$, the following triangle is distinguished:
	\[\xymatrix{L(B,n)\ar[r] & 0\ar[r] & (B,n)\ar[r]^-{\id} & (B,n)}\]
\end{ax}
\begin{proof}
	It follows from the fact that the mapping path $(P_{\id_B})_0\cong (PB)_0$ is contractible.
\end{proof}

\begin{ax}[TR1]
	Every morphism $\alpha$ in $\fk$ fits into a distinguished triangle of the form:
	\[\xymatrix{L(Y)\ar[r] & Z\ar[r] & X\ar[r]^-{\alpha} & Y}\]
\end{ax}
\begin{proof}
	By Lemma \ref{lem:arrowiso} we can assume that $X=(C,k)$, $Y=(B,k)$ and $\alpha=L^{k}(f)$ with $f:C\to D$ a morphism in $\Algl$. In this case $\alpha$ fits into the mapping path triangle $\triangle_{f,k}$.
\end{proof}

\begin{defi}\label{defi:rotatedtriangle}Consider a triangle $\triangle$ in $\fk$:
	\begin{equation}\label{eq:triangle}\triangle: \xymatrix{L(Z)\ar[r]^-{\alpha} & X\ar[r]^-{\beta} & Y\ar[r]^-{\gamma} & Z}\end{equation}
	Define the \emph{rotated triangle} $R(\triangle)$ by:
	\[R(\triangle): \xymatrix{L(Y)\ar[r]^-{-L\gamma} & L(Z)\ar[r]^-{-\alpha} & X\ar[r]^-{-\beta} & Y}\]
\end{defi}

\begin{rem} As explained in \cite{ralf}*{Definition 6.51}, we have an isomorphism:
	\[R(\triangle)\cong(\xymatrix{L(Y)\ar[r]^-{-L\gamma} & L(Z)\ar[r]^-{\alpha} & X\ar[r]^-{\beta} & Y})\]
\end{rem}

\begin{ax}[TR2]A triangle $\triangle$ is distinguished if and only if $R(\triangle)$ is.
\end{ax}
\begin{proof}
	Let us first show that if $\triangle$ is distinguished, then $R(\triangle)$ is distinguished as well. It suffices to prove that the rotation of a mapping path triangle is distinguished. Let $f:A\to B$ be a morphism in $\Algl$ and consider the following mapping path triangles:
	\[\triangle_{f,n}:\xymatrix@C=4em{L(B,n)\ar[r]^-{\partial_{f,n}} & ((P_f)_0,n)\ar[r]^-{L^n(\pi_f)} & (A,n)\ar[r]^-{L^n(f)} & (B,n) }\]
	\[\triangle_{\pi_f,n}:\xymatrix@C=3.5em{L(A,n)\ar[r]^-{\partial_{{\pi_f},n}} & ((P_{\pi_f})_0,n)\ar[r]^-{L^n(\pi_{\pi_f})} & ((P_f)_0,n)\ar[r]^-{L^n(\pi_f)} & (A,n) }\]
	Let $\varepsilon:L(B,n)\to ((P_{\pi_f})_0,n)$ be the following composite, where $\phi_f$ is the morphism defined in Corollary \ref{coro:phi}:
	\[\xymatrix@C=6em{(B,n+1) & (B^{\fS_1}_0,n)\ar[l]_-{\langle\id_{B^{\fS_1}}\rangle}^-{\cong}\ar[r]^-{(-1)^{n+1}L^n(\phi_f)} & ((P_{\pi_f})_0,n)}\]
	Notice that $\varepsilon$ is an isomorphism by Corollary \ref{coro:phi}. It follows from Lemma \ref{lem:tr2} that we have an isomorphism $R(\triangle_{f,n})\cong\triangle_{\pi_f,n}$ as follows:
	\[\xymatrix@C=3em{R(\triangle_{f,n})\ar[d]^-{\cong} & L(A,n)\ar[r]^-{-L^{n+1}(f)}\ar[d]^-{\id} & L(B,n)\ar[r]^-{-\partial_{f,n}}\ar[d]^-{\varepsilon} & ((P_f)_0,n)\ar[d]^-{-\id}\ar[r]^-{-L^n(\pi_f)} & (A,n)\ar[d]^-{\id} \\
		\triangle_{\pi_f,n} & L(A,n)\ar[r]^-{\partial_{{\pi_f},n}} & ((P_{\pi_f})_0,n)\ar[r]^-{L^n(\pi_{\pi_f})} & ((P_f)_0,n)\ar[r]^-{L^n(\pi_f)} & (A,n)}\]
	This shows that the rotation of a mapping path triangle is distinguished.

	We still have to prove that if $R(\triangle)$ is distinguished, then $\triangle$ is distinguished. We claim that if $R^3(\triangle)$ is distinguished, then $\triangle$ is distinguished; suppose for a moment that this claim is proved. If $R(\triangle)$ is distinguished then $R^3(\triangle)$ is distinguished ---because $R$ preserves distinguished triangles--- and so $\triangle$ is distinguished ---by the claim. Thus, the proof will be finished if we prove the claim. Let $\triangle$ be the triangle in \eqref{eq:triangle}. Then:
	\[R^3(\triangle)\cong (\xymatrix{L^2(Z)\ar[r]^-{-L\alpha} & L(X)\ar[r]^-{L\beta} & L(Y)\ar[r]^-{L\gamma} & L(Z)})\]
	Suppose that $R^3(\triangle)$ is distinguished. Then there exists a morphism $f:A\to B$ in $\Algl$ that fits into an isomorphism of triangles as follows:
	\begin{equation}\label{eq:isotria}\begin{gathered}\xymatrix@C=4em{L^2(Z)\ar[r]^-{-L\alpha}\ar[d]^-{\cong} & L(X)\ar[d]^-{\cong}\ar[r]^-{L\beta} & L(Y)\ar[d]^-{\cong}\ar[r]^-{L\gamma} & L(Z)\ar[d]^-{\cong} \\
		L(B,n)\ar[r]^-{\partial_{f,n}} & ((P_f)_0,n)\ar[r]^-{L^n(\pi_f)} & (A,n)\ar[r]^-{L^n(f)} & (B,n)}\end{gathered}\end{equation}
	Upon applying $L^{-1}$ to \eqref{eq:isotria} we get a commutative diagram as follows:
	\[\xymatrix@C=4em{L(Z)\ar[r]^-{-\alpha}\ar[d]^-{\cong} & X\ar[d]^-{\cong}\ar[r]^-{\beta} & Y\ar[d]^-{\cong}\ar[r]^-{\gamma} & Z\ar[d]^-{\cong} \\
		(B,n)\ar[r]^-{-\partial_{f,n-1}} & ((P_f)_0,n-1)\ar[r]^-{L^{n-1}(\pi_f)} & (A,n-1)\ar[r]^-{L^{n-1}(f)} & (B,n-1)}\]
	Thus, the vertical morphisms in the latter diagram assemble into an isomorphism of triangles $\triangle\cong \triangle_{f,n-1}$. Then $\triangle$ is distinguished.
\end{proof}

\begin{lem}\label{lem:triaisoJ}
	Let $f:A\to B$ be a morphism in $\Algl$, let $k\in \Z$ and let $n\geq 0$. Then there is a morphism of triangles:
	\[\xymatrix{\triangle_{f,k+n}\ar[d] & L(B,k+n)\ar[r]\ar[d]^-{\cong} & ((P_f)_0,k+n)\ar[r]\ar[d] & (A,k+n)\ar[r]\ar[d]_-{\langle\id_{J^nA}\rangle}^-{\cong} & (B,k+n)\ar[d]_-{\langle\id_{J^nB}\rangle}^-{\cong} \\
		\triangle_{J^n(f),k} & L(J^nB,k)\ar[r] & ((P_{J^n(f)})_0,k)\ar[r] & (J^nA,k)\ar[r] & (J^nB,k)}\]
\end{lem}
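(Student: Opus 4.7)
The plan is to reduce to the case $n=1$ by induction on $n$, construct the middle morphism of triangles using the universal property of the mapping path pullback, and then verify the three squares using naturality and Lemma \ref{lem:clascon}.

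For the inductive step, assuming the result for $n$, I will apply it with $k$ replaced by $k+1$ to obtain a morphism of triangles $\triangle_{f, k+1+n} \to \triangle_{J^n(f), k+1}$, then apply the case $n=1$ to $J^n(f)$ with shift $k$ to obtain $\triangle_{J^n(f), k+1} \to \triangle_{J^{n+1}(f), k}$, and compose these. By construction the rightmost vertical morphism of the resulting composite triangle map is $\langle \id_{J^{n+1}A}\rangle$, as required.

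For the base case $n=1$, I will first define $\psi_f \colon J((P_f)_0) \to (P_{Jf})_0$ in $\Algl$ via the universal property of the pullback $(P_{Jf})_0 = JA \times_{JB} (PJB)_0$, with components $J(\pi_f)$ and $\kappa^{1,(I,\{1\})}_B \circ J(\mathrm{proj})$, where $\kappa^{1,(I,\{1\})}_B$ is the analog of $\kappa^{1,m}_B$ for the simplicial pair $(I,\{1\})$ (cf. Remark \ref{rem:analogtokappapq}). The required compatibility $d_1 \circ \kappa^{1,(I,\{1\})}_B = J(d_1)$ will follow from Proposition \ref{lem:classcommute} applied to the morphism of extensions $\scrU_B^{(I,\{1\})} \to \scrU_B$ induced by restriction along the pair map $\{0\} \hookrightarrow (I,\{1\})$. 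The middle arrow of the morphism of triangles is then $L^k(\psi_f) \circ \langle \id_{J((P_f)_0)}\rangle \colon ((P_f)_0, k+1) \to ((P_{Jf})_0, k)$. Commutativity of the rightmost and middle squares will then follow from the naturality in $A$ of the isomorphism $\langle \id_{JA}\rangle \colon (A, m+1) \to (JA, m)$---which is itself a consequence of the naturality of $\lambda_A$ (as the classifying map of $\scrP_{0,A}$) and Lemma \ref{lem:lambdaidinverses1}---together with the identity $\pi_{Jf} \circ \psi_f = J(\pi_f)$ holding on the nose by the construction of $\psi_f$.

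The hard part is the leftmost square, involving the boundary maps $\partial_{f, k+1}$ and $\partial_{Jf, k}$. The crucial intermediate identity is
\[\psi_f \circ J(\iota_f) = \iota_{Jf} \circ \kappa^{1, \fS_1}_B,\]
which I will prove by a second application of Proposition \ref{lem:classcommute}---this time to the morphism of extensions $\scrU_B^{\fS_1} \to \scrU_B^{(I,\{1\})}$ induced by the pair inclusion $(I,\{1\}) \hookrightarrow \fS_1 = (I, \partial I)$---together with the uniqueness statement from the universal property of $(P_{Jf})_0$. Combining this with Lemma \ref{lem:clascon}, which yields $\kappa^{1,1}_B \circ J(\lambda_B) = -\lambda_{JB}$ in the abelian group $[J^2 B, (JB)^{\fS_1}_\bul]$, precisely accounts for the discrepancy between the signs $(-1)^{k+2}$ appearing in $\partial_{f, k+1}$ and $(-1)^{k+1}$ appearing in $\partial_{Jf, k}$. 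Unwinding the identifications $(B, k+2) \cong (B^{\fS_1}_0, k+1)$ via $\langle \lambda_B\rangle$ and $(JB, k+1) \cong ((JB)^{\fS_1}_0, k)$ via $\langle \lambda_{JB}\rangle$ then shows that the left square commutes, completing the proof.
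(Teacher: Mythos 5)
Your proposal is correct and mirrors the paper's own proof: both reduce to $n=1$ by induction, define the middle arrow from the universal property of the pullback $(P_{Jf})_0$ with exactly the components you describe (the paper calls it $c$ rather than $\psi_f$), verify the middle and right squares via naturality, and isolate the left square as the delicate one, using Lemma~\ref{lem:clascon} to produce the compensating sign $-1$ between $(-1)^{k+2}$ and $(-1)^{k+1}$. The paper expresses the left-square check as a small commuting diagram where $\kappa^{1,1}_B$ intertwines the $\langle\id\rangle$-isomorphisms with a visible $-\langle\id_{(JB)^{\fS_1}}\rangle$, but the content is the same as your intermediate identity $\psi_f\circ J(\iota_f)=\iota_{Jf}\circ\kappa^{1,1}_B$ together with Lemma~\ref{lem:clascon}.
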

\begin{proof}
	It is enough to construct the morphism for $n=1$ and then consider the composite:
	\[\xymatrix@R=2em{\triangle_{f,k+n}\ar[r] & \triangle_{J(f),k+n-1}\ar[r] & \triangle_{J^2(f),k+n-2}\ar[r] & \cdots\ar[r] & \triangle_{J^n(f),k}}\]
	Let $c:J(P_f)_0\to (P_{J(f)})_0$ be the morphism defined by the following diagram in $\Algl$:
	\[\xymatrix{J(P_f)_0\ar@{..>}[dr]|-{\exists !c}\ar@/_1pc/[dddr]\ar@/^1pc/[drr]^-{J(\pi_f)} & & \\
		& (P_{J(f)})_0\ar[r]^-{\pi_{J(f)}}\ar[d] & JA\ar[d]^-{J(f)} \\
		& P(JB)_0\ar[r]^-{d_1} & JB \\
		& J(PB)_0\ar[u]\ar@/_/[ur]_-{J(d_1)} & }\]
	It is easily verified that the following diagram commutes, where the unlabelled vertical morphisms are induced by the natural isomorphism $j\circ J\cong L\circ j:\Algl\to\fk$:
	\[\xymatrix@C=4.5em@R=2em{(B^{\fS_1}_0,k+1)\ar[d]^-{\cong}\ar[r]^-{L^{k+1}(\iota_f)} & ((P_f)_0,k+1)\ar[d]^-{\cong}\ar[r]^-{L^{k+1}(\pi_f)} & (A,k+1)\ar[d]^-{\cong}\ar[r]^-{L^{k+1}(f)} & (B,k+1)\ar[d]^-{\cong} \\
		(J(B^{\fS_1}_0),k)\ar[r]^-{L^k(J(\iota_f))}\ar[d]^-{L^k(\kappa^{1,1}_B)} & (J(P_f)_0,k)\ar[d]^-{L^k(c)}\ar[r]^-{L^k(J(\pi_f))} & (JA,k)\ar[d]^-{\id}\ar[r]^-{L^k(J(f))} & (JB,k)\ar[d]^-{\id} \\
		((JB)^{\fS_1}_0,k)\ar[r]^-{L^k(\iota_{J(f)})} & ((P_{J(f)})_0,k)\ar[r]^-{L^k(\pi_{J(f)})} & (JA,k)\ar[r]^-{L^k(J(f))} & (JB,k) }\]
	Another straightforward computation shows that the following square also commutes:
	\[\xymatrix@C=5em@R=2em{(B,k+2)\ar[dd]_-{\langle\id_{JB}\rangle}^-{\cong} & (B^{\fS_1}_0,k+1)\ar[l]_-{\langle\id_{B^{\fS_1}}\rangle}^-{\cong}\ar[d]^-{\langle\id_{J(B^{\fS_1})}\rangle}_-{\cong} \\
		& (J(B^{\fS_1}_0),k)\ar[d]^-{L^k(\kappa^{1,1}_B)} \\
		(JB,k+1) & ((JB)^{\fS_1}_0,k)\ar[l]_-{-\langle\id_{(JB)^{\fS_1}}\rangle}^-{\cong}}\]
	To get the desired morphism of triangles $\triangle_{f,k+1}\to\triangle_{J(f),k}$, join both diagrams above.
\end{proof}

\begin{lem}\label{lem:triaisofS}
	Let $f:A\to B$ be a morphism in $\Algl$, let $k\in \Z$ and let $n\geq 0$. Then there is an isomorphism of triangles:
	\[\xymatrix@R=2em{\triangle_{f^{\fS_n},k}\ar[d]^-{\cong} & L(B^{\fS_n}_r,k)\ar[r]\ar[d]^-{\cong} & ((P_{f^{\fS_n}})_0,k)\ar[r]\ar[d]^-{\cong} & (A^{\fS_n}_r,k)\ar[r]\ar[d]^-{\cong}_-{\langle\id_{A^{\fS_n}}\rangle} & (B^{\fS_n}_r,k)\ar[d]^-{\cong}_-{\langle\id_{B^{\fS_n}}\rangle} \\
		\triangle_{f,n+k} & L(B,n+k)\ar[r] & ((P_f)_0,n+k)\ar[r] & (A,n+k)\ar[r] & (B,n+k)}\]
\end{lem}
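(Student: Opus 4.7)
The plan is to proceed by induction on $n\geq 0$. The case $n=0$ is immediate, since $B^{\fS_0}_r=B$ canonically and the two triangles literally coincide under $\langle\id\rangle$.

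For the inductive step I would use the decomposition $\fS_{n+1}=\fS_n\square\fS_1$ to reduce to the case $n=1$. The morphism $\mu^{n,1}_X:(X^{\fS_n}_r)^{\fS_1}_0\to X^{\fS_{n+1}}_r$ is a $\fk$-equivalence by Lemma \ref{lem:mukkequiv}, and its naturality in $X$ applied to $f:A\to B$ yields a commutative square with vertical $\fk$-equivalences. Combined with a five-lemma argument applied to the long exact sequences from Lemma \ref{lem:longexact}, this produces an isomorphism of triangles $\triangle_{f^{\fS_{n+1}}_r,k}\cong \triangle_{(f^{\fS_n}_r)^{\fS_1}_0,k}$. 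Chaining this with the case $n=1$ applied to $f^{\fS_n}_r$ and the inductive hypothesis gives
\[\triangle_{f^{\fS_{n+1}}_r,k}\ \cong\ \triangle_{(f^{\fS_n}_r)^{\fS_1}_0,k}\ \cong\ \triangle_{f^{\fS_n}_r,k+1}\ \cong\ \triangle_{f,n+1+k}.\]
Compatibility of the composite with $\langle\id_{A^{\fS_{n+1}}}\rangle$ and $\langle\id_{B^{\fS_{n+1}}}\rangle$ on the rightmost square follows from the naturality recorded in Remark \ref{rem:natisos}.

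For the case $n=1$, I would follow the template of Lemma \ref{lem:triaisoJ}, replacing $J$ by $(?)^{\fS_1}_0$. First, reduce to $r=0$ using Lemma \ref{lem:lastvertexkkequiv}. Since $(?)^{\fS_1}_0$ is naturally isomorphic to $?\otimes\Z^{\fS_1}_0$ with $\Z^{\fS_1}_0$ a free abelian group (Lemma \ref{lem:simppair}), it preserves finite limits. Applying it to the pullback defining $(P_f)_0$ yields
\[((P_f)_0)^{\fS_1}_0\ \cong\ A^{\fS_1}_0\times_{B^{\fS_1}_0}((PB)_0)^{\fS_1}_0.\]
To compare this with $(P_{f^{\fS_1}_0})_0=A^{\fS_1}_0\times_{B^{\fS_1}_0}(P(B^{\fS_1}_0))_0$, I would use the zig-zag
\[((PB)_0)^{\fS_1}_0\ \overset{\mu}{\longrightarrow}\ B^{(I,\{1\})\square\fS_1}_0\ \overset{c^*}{\longleftarrow}\ B^{\fS_1\square(I,\{1\})}_0\ \overset{\mu}{\longleftarrow}\ (B^{\fS_1}_0)^{(I,\{1\})}_0,\]
both $\mu$-arrows being $\fk$-equivalences by an argument analogous to Lemma \ref{lem:mukkequiv}, specialising the diagram \eqref{eq:mukkequiv} to the relevant box products and using contractibility of the path spaces from Lemma \ref{lem:pathcontr}. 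This produces an isomorphism in $\fk$ between the two mapping paths, which, combined with the identification $((P_f)_0,k+1)\cong(((P_f)_0)^{\fS_1}_0,k)$ from Lemma \ref{lem:idBfSkkequiv}, supplies the middle vertical arrow of the morphism of triangles. Commutativity of the remaining two squares is a direct computation from the explicit descriptions of $\iota_f$ in \eqref{eq:defiiotaf} and $\partial_{f,n}$ in Definition \ref{defi:trianglesinkk}, together with the naturality of $\mu$.

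The main obstacle, as in Lemma \ref{lem:triaisoJ}, is the construction of the comparison between the mapping paths. The failure of the exponential law (Remark \ref{rem:simpliberreta}) prevents a direct algebra morphism $((P_f)_0)^{\fS_1}_0\to(P_{f^{\fS_1}_0})_0$ from the universal property of the pullback, since $((PB)_0)^{\fS_1}_0\not\cong(P(B^{\fS_1}_0))_0$ in $\Algl$. One must instead pass through the common object $B^{\fS_1\square(I,\{1\})}_0$ via the $\mu$-morphisms, which forces all verifications of commutativity, as well as bookkeeping of the sign conventions in $\partial_{f,n}$, to take place in $\fk$ rather than in $\Algl$.
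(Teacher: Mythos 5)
Your proposal identifies a false obstacle and consequently routes around a problem that does not exist. You claim that ``$((PB)_0)^{\fS_1}_0\not\cong(P(B^{\fS_1}_0))_0$ in $\Algl$'' due to the failure of the exponential law, and build an elaborate zig-zag through $\mu$-morphisms and $\fk$-equivalences to compensate. But the failure of the exponential law recorded in Remark \ref{rem:simpliberreta} concerns the comparison $(B^{(K,L)}_r)^{(K',L')}_s$ versus $B^{(K,L)\square(K',L')}_{r+s}$; it does \emph{not} obstruct the \emph{symmetry} $(B^{(K,L)}_r)^{(K',L')}_s\cong(B^{(K',L')}_s)^{(K,L)}_r$. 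Indeed, Lemma \ref{lem:simppair} gives natural algebra isomorphisms $B^{(K,L)}_r\cong B\otimes\Z^{(K,L)}_r$, so
\[
((PB)_0)^{\fS_n}_r\cong B\otimes\Z^{(I,\{1\})}_0\otimes\Z^{\fS_n}_r\cong B\otimes\Z^{\fS_n}_r\otimes\Z^{(I,\{1\})}_0\cong (P(B^{\fS_n}_r))_0
\]
as $\ell$-algebras, via the swap of tensor factors, compatibly with the maps $d_1$ down to $B^{\fS_n}_r$. Since $(?)^{\fS_n}_r$ preserves pullbacks, this gives the honest algebra isomorphism $((P_f)_0)^{\fS_n}_r\cong(P_{f^{\fS_n}})_0$ that is the starting point of the paper's proof. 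You were pattern-matching from Lemma \ref{lem:triaisoJ}, where the analogous comparison $J(P_f)_0\to(P_{J(f)})_0$ is genuinely only a $\fk$-equivalence because $J$ does not preserve pullbacks; but $(?)^{\fS_n}_r$ does, and that is precisely what makes this lemma simpler than the one before it.

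With that algebra isomorphism in hand, the paper's proof is direct and needs no induction: one writes down the morphism of triangles whose vertical arrows come from the natural isomorphism $j\circ(?)^{\fS_n}_r\cong L^n\circ j$ (Remark \ref{rem:natisos}) together with this identification of mapping paths, and checks the two commutativity squares by straightforward computation. Your induction and your zig-zag are not wrong so much as they are symptoms of the missing observation: even if one could push the $\fk$-equivalence chain through the pullback (which would require a Mayer--Vietoris argument and careful sign bookkeeping for $\partial_{f,n}$), it buys nothing, and it makes the verification of the left square — the one involving $\partial$ and the $(-1)^n$ sign — considerably harder than the explicit formulas allow. Replace your ``main obstacle'' paragraph with the tensor-swap identification and the proof collapses to the paper's one-step argument.
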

\begin{proof}
	It is easily verified that there is an isomorphism  $((P_f)_0)^{\fS_n}_r\cong (P_{f^{\fS_n}})_0$.
	We have a commutative diagram as follows, where the vertical morphisms from the second row to the first one are induced by the natural isomorphism $j\circ (?)^{\fS_n}_r\cong L^n\circ j:\Algl\to\fk$:
	\[\xymatrix@C=4.5em{(B^{\fS_1}_0,n+k)\ar[r]^-{L^{n+k}(\iota_f)} & ((P_f)_0,n+k)\ar[r]^-{L^{n+k}(\pi_f)} & (A,n+k)\ar[r]^-{L^{n+k}(f)} & (B,n+k) \\
		((B^{\fS_1}_0)^{\fS_n}_r,k)\ar[u]_-{\cong}\ar[r]^-{L^k\left(\iota_f^{\fS_n}\right)}\ar[d]^-{\cong} & (((P_f)_0)^{\fS_n}_r,k)\ar[d]^-{\cong}\ar[r]^-{L^k\left(\pi_f^{\fS_n}\right)}\ar[u]_-{\cong} & (A^{\fS_n}_r,k)\ar[u]_-{\cong}\ar[d]^-{\id}\ar[r]^-{L^k\left(f^{\fS_n}\right)} & (B^{\fS_n}_r,k)\ar[d]^-{\id}\ar[u]_-{\cong} \\
		((B^{\fS_n}_r)^{\fS_1}_0,k)\ar[r]^-{L^k\left(\iota_{f^{\fS_n}}\right)} & ((P_{f^{\fS_n}})_0,k)\ar[r]^-{L^k\left(\pi_{f^{\fS_n}}\right)} & (A^{\fS_n}_r,k)\ar[r]^-{L^k\left(f^{\fS_n}\right)} & (B^{\fS_n}_r,k) }\]
	A straightforward computation shows that the following square commutes:
	\[\xymatrix@C=7em@R=2em{(B,n+k+1) & (B^{\fS_1}_0,n+k)\ar[l]_-{\langle\id_{B^{\fS_1}}\rangle}^-{\cong} \\
		& ((B^{\fS_1}_0)^{\fS_n}_r,k)\ar[u]_-{\langle\id_{(B^{\fS_1})^{\fS_n}}\rangle}^-{\cong}\ar[d]^-{\cong} \\
		(B^{\fS_n}_r,k+1)\ar[uu]_-{\cong}^-{\langle\id_{B^{\fS_n}}\rangle} & ((B^{\fS_n}_r)^{\fS_1}_0,k)\ar[l]^-{\cong}_-{(-1)^n\langle\id_{(B^{\fS_n})^{\fS_1}}\rangle}}\]
	To get the desired isomorphism of triangles $\triangle_{f^{\fS_n},k}\cong\triangle_{f,n+k}$, join both diagrams above.
\end{proof}

\begin{ax}[TR3]
	For every diagram of solid arrows as follows, in which the rows are distinguished triangles, there exists a dotted arrow that makes the whole diagram commute.
	\begin{equation}\label{eq:tr3}\begin{gathered}\xymatrix@C=4em{L(Z')\ar[r]\ar[d] & X'\ar@{..>}[d]\ar[r] & Y'\ar[d]\ar[r] & Z'\ar[d] \\
		L(Z)\ar[r] & X\ar[r] & Y\ar[r] & Z}\end{gathered}\end{equation}
\end{ax}
\begin{proof}
	We follow \cite{ralf}*{Axiom 6.53}, making appropiate changes. Let us begin with a special case. Consider a commutative square in $[\Algl]$:
	\[\xymatrix{A'\ar[d]_-{a}\ar[r]^-{f'} & B'\ar[d]^-{b} \\ A\ar[r]^-{f} & B}\]
	Suppose that \eqref{eq:tr3} takes the following form, where the rows are mapping path triangles:
	\begin{equation}\label{eq:tr31}\begin{gathered}\xymatrix@C=4em{L(B',n)\ar[r]^-{\partial_{f',n}}\ar[d]_-{L^{n+1}(b)} & ((P_{f'})_0,n)\ar@{..>}[d]\ar[r]^-{L^n(\pi_{f'})} & (A',n)\ar[d]_-{L^n(a)}\ar[r]^-{L^n(f')} & (B',n)\ar[d]_-{L^n(b)} \\
		L(B,n)\ar[r]^-{\partial_{f,n}} & ((P_f)_0,n)\ar[r]^-{L^n(\pi_f)} & (A,n)\ar[r]^-{L^n(f)} & (B,n)}\end{gathered}\end{equation}
	We want to show that a dotted arrow exists in this case. Let $H:A'\to B^{\sd^rI}$ be a homotopy such that $d_1\circ H=f\circ a$ and $d_0\circ H=b\circ f'$; we may assume $r\geq 1$. We have:
	\[(P_{f'})_r=\left\{(x,y)\in A'\times (PB')_r\mid f'(x)=d_1(y)\right\}\]
	\[(P_f)_{r+1}=\left\{(x,y)\in A\times (PB)_{r+1}\mid f(x)=d_1(y)\right\}\]
	Define a morphism $c:(P_{f'})_r\to (P_f)_{r+1}$ by the formula:
	\[c(x,y)=\left(a(x),H(x)\bullet P(b)(y)\right)\]
	Here, $\bullet$ stands for \emph{concatenation} of paths; see Example \ref{exa:concat}. Note that this concatenation makes sense since $d_0(H(x))=b(f'(x))=b(d_1(y))=d_1(P(b)(y))$. Moreover, $c(x,y)$ is indeed an element of $(P_f)_{r+1}$ since we have:
	\[d_1(H(x)\bullet P(b)(y))=d_1(H(x))=f(a(x))\]
	Let $\chi:((P_{f'})_0,n)\to ((P_f)_0,n)$ be the composite:
	\[\xymatrix@C=4em{((P_{f'})_0,n)\ar[r]^-{\cong} & ((P_{f'})_r,n)\ar[r]^-{L^n(c)} & ((P_{f})_{r+1},n) & ((P_{f})_0,n)\ar[l]_-{\cong}}\]
	We claim that taking the dotted arrow in \eqref{eq:tr31} equal to $\chi$ makes the whole diagram commute. It is easily verified that the following square commutes in $\Algl$, and this implies that the middle square in \eqref{eq:tr31} commutes:
	\[\xymatrix{(P_{f'})_r\ar[r]^-{\pi_{f'}}\ar[d]_-{c} & A'\ar[d]^-{a} \\
		(P_f)_{r+1}\ar[r]^-{\pi_f} & A}\]
	Another straightforward computation shows that the following diagram commutes in $[\Algl]$, and this implies that the left square in \eqref{eq:tr31} commutes:
	\[\xymatrix{(B')^{\fS_1}_0\ar[r]\ar[d]_-{b^{\fS_1}} & (B')^{\fS_1}_r\ar[r]^-{\iota_{f'}} & (P_{f'})_r\ar[d]^-{c} \\
		B^{\fS_1}_0\ar[r] & B^{\fS_1}_{r+1}\ar[r]^-{\iota_f} & (P_f)_{r+1}}\]
	This finishes the proof of the axiom in this special case.

	In the general case, we may suppose that both triangles are mapping path triangles, so that \eqref{eq:tr3} equals the following diagram, for some morphisms $f:A\to B$ and $f':A'\to B'$:
	\begin{equation}\label{eq:tr33}\begin{gathered}\xymatrix{\triangle_{f',k'}\ar@{..>}[d] & L(B',k')\ar[r]\ar[d] & ((P_{f'})_0,k')\ar@{..>}[d]\ar[r] & (A',k')\ar[d]^-{\alpha}\ar[r] & (B',k')\ar[d]^-{\beta} \\
		\triangle_{f,k} & L(B,k)\ar[r] & ((P_f)_0,k)\ar[r] & (A,k)\ar[r] & (B,k)}\end{gathered}\end{equation}
	We may choose $l$ and $r$ large enough so that $\alpha$ is represented by $a:J^{k'+l}A'\to A^{\fS_{k+l}}_r$, $\beta$ is represented by $b:J^{k'+l}B'\to B^{\fS_{k+l}}_r$, and the following square in $[\Algl]$ commutes:
	\[\xymatrix@C=5em{J^{k'+l}A'\ar[d]_-{a}\ar[r]^-{J^{k'+l}(f')} & J^{k'+l}B'\ar[d]^-{b} \\ A^{\fS_{k+l}}_r\ar[r]^-{f^{\fS_{k+l}}} & B^{\fS_{k+l}}_r}\]
	By the special case we have already proven, we can extend $a$ and $b$ to a morphism of triangles $\triangle_{J^{k'+l}(f'),-l}\to \triangle_{f^{\fS_{k+l}},-l}$. Then the composite
	\[\xymatrix@C=6em{\triangle_{f',k'}\ar[r]^-{\mbox{\scriptsize Lemma \ref{lem:triaisoJ}}} & \triangle_{J^{k'+l}(f'),-l}\ar[r] & \triangle_{f^{\fS_{k+l}},-l}\ar[r]^-{\mbox{\scriptsize Lemma \ref{lem:triaisofS}}} & \triangle_{f,k}}\]
	is a morphism of triangles that extends the diagram of solid arrows in \eqref{eq:tr33}.
\end{proof}

\begin{ax}[TR4]\label{ax:tr4}
	Let $\alpha:X\to X'$ and $\pi':X'\to Y$ be composable morphisms in $\fk$ and put $\pi:=\pi'\circ \alpha$. Then there exist commutative diagrams as follow, where the rows and columns of the diagram on the left are distinguished triangles.
	\[\xymatrix{L^2Y\ar[d]\ar[r]^-{-L\iota'} & LZ'\ar[d]\ar[r]^-{L\varepsilon'} & LX'\ar[d]^-{\varphi}\ar[r]^-{L\pi'} & LY\ar[d] & & \\
		0\ar[d]\ar[r] & Z''\ar[d]^-{\psi}\ar[r]^-{\id} & Z''\ar[r]\ar[d] & 0\ar[d] & LX'\ar[d]^-{\varphi}\ar[r]^-{L\pi'} & LY\ar[d]^-{\iota} \\
		LY\ar[d]^-{\id}\ar[r]^-{\iota} & Z\ar[d]\ar[r]^-{\varepsilon} & X\ar[d]^-{\alpha}\ar[r]^-{\pi} & Y\ar[d]^-{\id} & Z''\ar[r]^-{\psi} & Z \\
		LY\ar[r]^-{\iota'} & Z'\ar[r]^-{\varepsilon'} & X'\ar[r]^-{\pi'} & Y & &}\]
\end{ax}
\begin{proof}
	Let $\alpha:X\to X'$ and $\pi':X'\to Y$ be composable morphisms and consider the following diagram in $\fk$:
	\begin{equation}\label{eq:diagramtr4}\xymatrix{X\ar[r]^-{\alpha} & X'\ar[r]^-{\pi'} & Y}\end{equation}
	We say that \eqref{eq:diagramtr4} satisfies (TR4) if the axiom holds for this particular pair of morphisms. The following remarks are straightforward:
	\begin{enumerate}[label=(\roman*)]
		\item\label{item:tr41} If two diagrams like \ref{eq:diagramtr4} are isomorphic, then one satisfies (TR4) if and only if the other does.
		\item\label{item:tr42} Any diagram like \eqref{eq:diagramtr4} is isomorphic to the diagram below, for some $n\in\Z$ and some morphisms $a:A\to B$ and $b:B\to C$ in $\Algl$:
		\[\xymatrix{(A,n)\ar[r]^-{L^n(a)} & (B,n)\ar[r]^-{L^n(b)} & (C,n)}\]
		(Apply Lemma \ref{lem:arrowiso} twice, once for $\alpha$ and  once for $\pi'$.)
		\item\label{item:tr43} The diagram \eqref{eq:diagramtr4} satisfies (TR4) if and only if the following diagram does:
		\[\xymatrix{LX\ar[r]^-{L\alpha} & LX'\ar[r]^-{L\pi'} & LY}\]
	\end{enumerate}
	By \ref{item:tr41}, \ref{item:tr42} and \ref{item:tr43}, we may assume that \eqref{eq:diagramtr4} is of the form
	\[\xymatrix{(A,0)\ar[r]^-{a} & (B,0)\ar[r]^-{b} & (C,0)}\]
	where $a:A\to B$ and $b:B\to C$ are morphisms in $\Algl$. We will proceed as explained in \cite{cortho}*{Axiom 6.5.7} but we will provide some more details. Put $c:=b\circ a:A\to C$. We will use the identifications in Lemma \ref{lem:tr2} so that we have, for example:
	\[(PC)_0=(t-1)C[t]\]
	\[C^{\fS_1}_0=(t^2-t)C[t]\]
	\[(P_b)_0=\{(p(t),y)\in C[t]\times B: p(0)=b(y)\mbox{ and }p(1)=0\}\]
	\[(P_c)_0=\{(q(t),z)\in C[t]\times A: q(0)=c(z)\mbox{ and }q(1)=0\}\]
	Recall from \eqref{eq:mappath} and \eqref{eq:defiiotaf} the definitions of $\pi_b:(P_b)_0\to C$ and $\iota_b:C^{\fS_1}_0\to (P_b)_0$. For example, the morphism $\pi_b:(P_b)_0\to B$ is defined by $\pi_b(p(t),y)=y$. The morphism $\eta:(P_c)_0\to (P_b)_0$, $\eta(q(t),z)=(q(t),a(z))$,  makes the following diagram in $\Algl$ commute:
	\[\xymatrix{C^{\fS_1}_0\ar[r]^-{\iota_c}\ar[d]^-{\id} & (P_c)_0\ar[d]^-{\eta}\ar[r]^-{\pi_c} & A\ar[d]^-{a}\ar[r]^-{c} & C\ar[d]^-{\id} \\
		C^{\fS_1}_0\ar[r]^-{\iota_b} & (P_b)_0\ar[r]^-{\pi_b} & B\ar[r]^-{b} & C}\]
	By functoriality of the mapping path construction, there is a morphism $\theta$ making the following diagram commute:
	\begin{equation}\label{eq:tr4cat}\begin{gathered}\xymatrix{& ((P_b)_0)_0^{\fS_1}\ar[r]^-{(\pi_b)^{\fS_1}}\ar[d]^-{\iota_\eta} & B^{\fS_1}_0\ar[d]^-{\iota_a} & \\
		& (P_\eta)_0\ar[d]^-{\pi_\eta}\ar[r]^-{\theta} & (P_a)_0\ar[d]^-{\pi_a} & \\
		C^{\fS_1}_0\ar[r]^-{\iota_c}\ar[d]^-{\id} & (P_c)_0\ar[d]^-{\eta}\ar[r]^-{\pi_c} & A\ar[d]^-{a}\ar[r]^-{c} & C\ar[d]^-{\id} \\
		C^{\fS_1}_0\ar[r]^-{\iota_b} & (P_b)_0\ar[r]^-{\pi_b} & B\ar[r]^-{b} & C}\end{gathered}\end{equation}
	We claim that $\theta$ is a $\fk$-equivalence; indeed, it is a split surjection with contractible kernel, as we proceed to explain. We have:
	\[(P(P_b)_0)_0=\left\{(p(t,s),y(s))\in C[t,s]\times B[s] : \begin{array}{c}p(0,s)=b(y)(s),\hspace{0.3em} p(1,s)=0,\\
	p(t,1)=0\mbox{ and }y(1)=0\end{array}\right\}\]
	\[(P_\eta)_0=\{(p(t,s),y(s),q(t),z)\in (P(P_b)_0)_0\times (P_c)_0: (p(t,0),y(0))=(q(t),a(z))\}\]
	In the description of $(P_\eta)_0$ above, $(p(t,s),y(s),q(t),z)$ satisfies $q(t)=p(t,0)$ so that we can get rid of $q$ as long as we keep $p$. Hence, we have:
	\[(P_\eta)_0=\left\{(p(t,s),y(s),z)\in C[t,s]\times B[s]\times A : \begin{array}{c}p(0,s)=b(y)(s),\\
	p(1,s)=0,\hspace{0.3em} p(t,1)=0,\\
	y(1)=0,\hspace{0.3em} p(0,0)=c(z)\\
	\mbox{and }y(0)=a(z)\end{array}\right\}\]
	It is easily seen that, using the latter description of $(P_\eta)_0$, the morphism $\theta:(P_\eta)_0\to (P_a)_0$ is given by $\theta(p(t,s),y(s),z)=(y(t),z)$. We have:
	\begin{align*}\ker\theta&=\{(p(t,s),0,0)\in(P_\eta)_0\}\\
	&\cong\{p(t,s)\in C[t,s]: p(0,s)=0, p(1,s)=0\mbox{ and }p(t,1)=0\}\end{align*}
	It is easily verified that $\ker\theta$ is contractible. Moreover, $\theta$ is a split surjection with section:
	\[(P_a)_0\ni(y(t),z)\mapsto (b(y)(1-(1-s)(1-t)),y(s),z)\in(P_\eta)_0\]

	Upon applying $j$ to \eqref{eq:tr4cat} and identifying $(B^{\fS_1}_0,0)\cong (B,1)$, we get the following diagram in $\fk$ whose rows and columns are mapping path triangles; the diagram clearly commutes, except maybe for the squares $*$ and $\star$:
	\[\xymatrix@C=4em{(C,2)\ar[d]\ar[r]^-{-\partial_{b,0}}\ar@{}[dr]|-{\displaystyle *} & ((P_b)_0,1)\ar[r]^-{L(\pi_b)}\ar[d]^-{\partial_{\eta,0}} & (B,1)\ar[d]|-{\theta^{-1}\circ \partial_{a,0}}\ar[r]^-{L(b)} & (C,1)\ar[d]\\
		0\ar[d]\ar[r] & ((P_\eta)_0,0)\ar[d]^-{\pi_\eta}\ar[r]^-{\id} & ((P_\eta)_0,0)\ar[d]|-{\pi_a\circ \theta}\ar[r]\ar@{}[dr]|-{\displaystyle\star} & 0\ar[d]\\
		(C,1)\ar[r]^-{\partial_{c,0}}\ar[d]^-{\id} & ((P_c)_0,0)\ar[d]^-{\eta}\ar[r]^-{\pi_c} & (A,0)\ar[d]^-{a}\ar[r]^-{c} & (C,0)\ar[d]^-{\id} \\
		(C,1)\ar[r]^-{\partial_{b,0}} & ((P_b)_0,0)\ar[r]^-{\pi_b} & (B,0)\ar[r]^-{b} & (C,0)}\]
	The composite $c\circ\pi_a:(P_a)_0\to C$ is easily seen to be nullhomotopic, so that the square $\star$ commutes. The composite
	\[\xymatrix{(C^{\fS_1}_0)^{\fS_1}_0\ar[r]^-{(\iota_b)^{\fS_1}} & ((P_b)_0)^{\fS_1}_0\ar[r]^-{\iota_\eta} & (P_\eta)_0}\]
	is easily seen to factor through $\ker\theta$, which is contractible; this implies that the square $*$ commutes too.

	We still have to show that the following diagram commutes:
	\[\xymatrix{(B,1)\ar[d]_-{\theta^{-1}\circ \partial_{a,0}}\ar[r]^-{L(b)} & (C,1)\ar[d]^-{\partial_{c,0}}\\
		((P_\eta)_0,0)\ar[r]^-{\pi_\eta} & ((P_c)_0,0)}\]
	It is easily seen that the commutativity of the square above is implied by the commutativity of the following diagram in $[\Algl]$:
	\begin{equation}\label{eq:tr4smallsqbis}\begin{gathered}\xymatrix@C=4em@R=1.2em{B^{\fS_1}_0\ar[d]_-{\iota_a}\ar[r]^-{b^{\fS_1}} & C^{\fS_1}_0\ar[dd]^-{\iota_c} \\
		(P_a)_0\ar@{}[ur]\ar[dr]^-{\xi} & \\
		(P_\eta)_0\ar[u]^-{\theta}\ar[r]_-{\pi_\eta} & (P_c)_0}\end{gathered}\end{equation}
	Here the morphism $\xi$ is given by $\xi(y(t),z)=(b(y)(t),z)$. The square in \eqref{eq:tr4smallsqbis} commutes on the nose. The triangle in \eqref{eq:tr4smallsqbis} commutes in $[\Algl]$, as we proceed to explain. Consider the following elementary homotopies $H_1,H_2:(P_\eta)_0\to (P_c)_0[u]$:
	\[H_1(p(t,s),y(s),z)=(p(tu,t),z)\]
	\[H_2(p(t,s),y(s),z)=(p(t,tu),z)\]
	Then $\ev_{u=0}\circ H_1=\xi\circ\theta$, $\ev_{u=1}\circ H_1=\ev_{u=1}\circ H_2$ and $\ev_{u=0}\circ H_2=\pi_\eta$, showing that $\xi\circ\theta=\pi_\eta$ in $[\Algl]$. This finishes the proof of (TR4).
\end{proof}

We showed that $\fk$ is a triangulated category with the distinguished triangles being those triangles isomorphic to mapping path triangles. As in the topological setting \cite{ralf}*{Section 6.6}, the distinguished triangles could also be defined using extension triangles; we proceed to give the details of this.

\begin{defi}\label{defi:extensiontriangles}
	Let $\scrE: A\overset{f}\to B\overset{g}\to C$ be an extension in $\Algl$ with classifying map $\xi:JC\to A$. Let $n\in\Z$ and let $\partial_{\scrE,n}$ be the composite:
	\[\xymatrix@C=5em{(C,n+1)\ar[r]^-{\langle\id_{JC}\rangle} & (JC,n)\ar[r]^-{(-1)^nL^n(\xi)} & (A,n)}\]
	We call \emph{extension triangle} to a diagram in $\fk$ of the form:
	\[\triangle_{\scrE,n}:\xymatrix@C=3em{L(C,n)\ar[r]^-{\partial_{\scrE,n}} & (A,n)\ar[r]^-{L^n(f)} & (B,n)\ar[r]^-{L^n(g)} & (C,n)}\]
\end{defi}

\begin{prop}[\cite{ralf}*{Section 6.6}]\label{prop:trianglesinkk}
	A triangle in $\fk$ is distinguished if and only if it is isomorphic to an extension triangle.
\end{prop}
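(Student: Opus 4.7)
My plan is to prove both implications by constructing explicit isomorphisms between extension triangles and mapping path triangles. For the ``if'' direction, given an extension $\scrE:A\xrightarrow{f}B\xrightarrow{g}C$, the natural inclusion $\alpha:A\to(P_g)_0$, $\alpha(a)=(f(a),0)$, is a $\fk$-equivalence by Lemma~\ref{lem:kerkkequiv}. I would then exhibit an isomorphism of triangles $\triangle_{\scrE,n}\to\triangle_{g,n}$ with components $(-\id_{L(C,n)},L^n(\alpha),\id_{(B,n)},\id_{(C,n)})$. The two rightmost squares commute tautologically because $\pi_g\alpha=f$. The leftmost square reduces, after unfolding the definitions of $\partial_{\scrE,n}$ and $\partial_{g,n}$ and reconciling the identifications in Lemmas~\ref{lem:lambdaidinverses1}, \ref{lem:lambdaidinverses2} and~\ref{lem:arrowiso}, to showing that $\alpha\circ\xi$ and $\iota_g\circ\lambda_C$ coincide (up to sign) as elements of $\fk((JC,0),((P_g)_0,0))$, where $\xi:JC\to A$ is the classifying map of $\scrE$. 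This identification is extracted from Proposition~\ref{lem:classcommute} applied to a comparison between $\scrU_C$, $\scrE$, and the pullback extension $\scrE^g:C^{\fS_1}_0\to(P_g)_0\xrightarrow{\pi_g}B$ of $\scrP_{0,C}$ along $g$, whose classifying map is $\lambda_C\circ J(g)$.

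For the ``only if'' direction, given a morphism $f:A\to B$, I consider the pullback of $\scrP_{0,B}$ along $f$:
\[\scrE^f:\ B^{\fS_1}_0\xrightarrow{\iota_f}(P_f)_0\xrightarrow{\pi_f}A,\]
whose classifying map is $\lambda_B\circ J(f)$ by Proposition~\ref{lem:classcommute}. After identifying $(B^{\fS_1}_0,n)\cong L(B,n)$ via Lemma~\ref{lem:lambdaidinverses2}, the extension triangle $\triangle_{\scrE^f,n}$ has the shape of the rotated mapping path triangle $R(\triangle_{f,n})$, and a sign-twisted isomorphism $R(\triangle_{f,n})\cong\triangle_{\scrE^f,n}$ is obtained by a computation analogous to the one above. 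By the rotation axiom (TR2) this yields $\triangle_{f,n}\cong R^{-1}(\triangle_{\scrE^f,n})$. To upgrade this into an isomorphism with an un-rotated extension triangle, I would verify that the class of triangles isomorphic to extension triangles is closed under rotation: applying the ``if'' direction argument to the pullback extension $\scrE^g$ arising from an arbitrary extension $\scrE:A\to B\xrightarrow{g}C$ yields $\triangle_{\scrE^g,n}\cong R(\triangle_{\scrE,n})$, which suffices.

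The principal obstacle is the delicate comparison of the two natural maps $\alpha\circ\xi$ and $\iota_g\circ\lambda_C$ from $JC$ to $(P_g)_0$: while they coincide (up to sign) in $\fk$, they need not be literally homotopic in $[\Algl]$, so the identification cannot be produced by a naive elementary homotopy and must instead be extracted from the naturality and uniqueness properties of classifying maps. Compounding this, the differing sign conventions in $\partial_{\scrE,n}$ (with $(-1)^n$) and $\partial_{f,n}$ (with $(-1)^{n+1}$), together with the identifications from Lemmas~\ref{lem:lambdaidinverses1}, \ref{lem:lambdaidinverses2}, and~\ref{lem:arrowiso}, introduce several further signs that must be carefully tracked through each square.
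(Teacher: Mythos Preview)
Your overall strategy for the ``if'' direction---using the $\fk$-equivalence $\alpha:A\to(P_g)_0$ and reducing to the identity $\alpha\circ\xi=\pm\,\iota_g\circ\lambda_C$---is exactly what the paper does. The gap is in how you propose to prove that identity. You suggest comparing $\scrU_C$, $\scrE$, and the pullback extension $\scrE^g:C^{\fS_1}_0\to(P_g)_0\xrightarrow{\pi_g}B$ via Proposition~\ref{lem:classcommute}. But $\scrE^g$ has quotient $B$, so its classifying map is a morphism $JB\to C^{\fS_1}_0$, whereas both $\alpha\circ\xi$ and $\iota_g\circ\lambda_C$ are morphisms $JC\to(P_g)_0$. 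There is no morphism of extensions between $\scrE$ and $\scrE^g$ over $\id_C$ (that would require a splitting $C\to B$), so Proposition~\ref{lem:classcommute} yields no direct comparison. What is needed is a single extension with kernel $(P_g)_0$ and quotient $C$ that simultaneously receives morphisms of extensions from $\scrE$ and from $\scrP_{0,C}$ over $\id_C$. The paper constructs exactly such an object: the \emph{mapping cylinder} extension
\[
\scrZ_g:\ (P_g)_0\longrightarrow Z_g\longrightarrow C,\qquad Z_g=\{(p,b)\in C[t]\times B:p(0)=g(b)\}.
\]
There are morphisms $\scrE\to\scrZ_g$ (via $b\mapsto(g(b),b)$) and $\scrP_{0,C}\to\scrZ_g$ (via $p(t)\mapsto(p(1-t),0)$), both over $\id_C$; applying Proposition~\ref{lem:classcommute} to each yields $\alpha\circ\xi=\iota_g\circ\omega\circ\lambda_C$ in $[\Algl]$, where $\omega$ is the involution $t\mapsto1-t$ accounting for the sign. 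This is the missing ingredient.

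For the ``only if'' direction your idea is workable but roundabout. You show $R(\triangle_{f,n})\cong\triangle_{\scrE^f,n}$ and then need that the class of extension triangles is closed under $R^{-1}$; your stated argument establishes closure under $R$, and bridging to $R^{-1}$ requires an extra step (e.g.\ via $R^3\cong L$ as in the TR2 verification). The paper avoids this entirely: the same mapping cylinder extension $\scrZ_g$ directly gives $\triangle_{g,0}\cong\triangle_{\scrZ_g,0}$, since $Z_g\to B$ is a homotopy equivalence. So the mapping cylinder handles both directions at once, and your proposal would benefit from introducing it.
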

\begin{proof}
	Let us show first that every mapping path triangle is isomorphic to an extension triangle. Let $g:B\to C$ be any morphism in $\Algl$. Consider the mapping cylinder:
	\[Z_g:=\{(p,b)\in C[t]\times B: p(0)=g(b)\}\]
	Using the identifications in Lemma \ref{lem:tr2}, we have:
	\[(P_g)_0:=\{(p,b)\in (t-1)C[t]\times B: p(0)=g(b)\}\]
	It is easily verified that the following diagram is an extension in $\Algl$:
	\[\scrZ_g:\xymatrix@R=-0.3em{(P_g)_0\ar[r]^-{\inc} & Z_g\ar[r]^-{\varepsilon} & C\\
		& (p,b)\ar@{|->}[r] & p(1)}\]
	Let $\pr:Z_g\to B$ be the natural projection; $\pr$ is easily seen to be a homotopy equivalence inverse to $b\mapsto (g(b),b)$. We claim that there is an isomorphism of triangles as follows:
	\[\xymatrix{\triangle_{\scrZ_g,0}\ar[d]_-{\cong} & (B,1)\ar[d]_-{\id}\ar[r]^-{\partial_{\scrZ_g,0}} & ((P_g)_0,0)\ar[d]_{\id}\ar[r]^-{\inc} & (Z_g,0)\ar[d]_-{\cong}^-{\pr}\ar[r]^-{\varepsilon} & (C,0)\ar[d]^-{\id} \\
		\triangle_{g,0} & (B,1)\ar[r]^-{\partial_{g,0}} & ((P_g)_0,0)\ar[r]^-{\pi_g} & (B,0)\ar[r]^-{g} & (C,0)}\]
	The middle and right squares clearly commute but we still have to show that $\partial_{\scrZ_g,0}=\partial_{g,0}$. Let $\omega:C^{\fS_1}_0\to C^{\fS_1}_0$ be the automorphism defined by $\omega(p(t))=p(1-t)$. Consider the following morphism of extensions, where the vertical map in the middle is defined by $p(t)\mapsto (p(1-t),0)$:
	\[\xymatrix{\scrP_{0,C}\ar[d] & C^{\fS_1}_0\ar[d]_-{\iota_g\circ \omega}\ar[r] & (PC)_0\ar[d]\ar[r] & C\ar[d]^-{\id} \\
		\scrZ_g &(P_g)_0\ar[r] & Z_g\ar[r] & C}\]
	By Proposition \ref{lem:classcommute}, the classifying map of $\scrZ_g$ equals $\iota_g\circ\omega\circ\lambda_C$; this is easily seen to imply that $\partial_{\scrZ_g,0}=\partial_{g,0}$.

	Let us now show that every extension triangle is isomorphic to a mapping path triangle. Let $\scrE:A\overset{f}\to B\overset{g}\to C$ be an extension in $\Algl$. Let $h:A\to (P_g)_0$ be the natural morphism, that is a $\fk$-equivalence by Lemma \ref{lem:kerkkequiv}. We claim that there is an isomorphism of triangles:
	\[\xymatrix{\triangle_{\scrE,0}\ar[d]_-{\cong} & (C,1)\ar[d]_-{\id}\ar[r]^-{\partial_{\scrE,0}} & (A,0)\ar[d]_-{\cong}^-{h}\ar[r]^-{f} & (B,0)\ar[d]^-{\id}\ar[r]^-{g} & (C,0)\ar[d]^-{\id} \\
		\triangle_{g,0} & (C,1)\ar[r]^-{\partial_{g,0}} & ((P_g)_0,0)\ar[r]^-{\pi_g} & (B,0)\ar[r]^-{g} & (C,0)}\]
	The middle and right squares clearly commute but we still have to show that $j(h)\circ\partial_{\scrE,0}=\partial_{g,0}$. Above we proved that the classifying map of $\scrZ_g$ equals $\iota_g\circ\omega\circ\lambda_C$ in $[\Algl]$. Now let $\xi:JC\to A$ be the classifying map of $\scrE$ and consider the following morphism of extensions, where the middle vertical map is $b\mapsto (g(b),b)$:
	\[\xymatrix{\scrE:\ar[d] & A\ar[d]_-{h}\ar[r] & B\ar[d]\ar[r] & C\ar[d]^-{\id} \\
		\scrZ_g: &(P_g)_0\ar[r]^-{\inc} & Z_g\ar[r]^-{\varepsilon} & C}\]
	By Proposition \ref{lem:classcommute}, the classifying map of $\scrZ_g$ equals $h\circ\xi$ in $[\Algl]$. Then $\iota_g\circ\omega\circ\lambda_C=h\circ\xi$ in $[\Algl]$, and this is easily seen to imply that $j(h)\circ\partial_{\scrE,0}=\partial_{g,0}$.
\end{proof}

\section{Universal property}\label{sec:uniprop} Recall from \cite{cortho}*{Section 6.6} the definition of an excisive homology theory with values in a triangulated category.

\begin{defi}
	Let $(\tria,L)$ be a triangulated category. An \emph{excisive homology theory} with values in $\tria$ consists of the following data:
	\begin{enumerate}[label=(\roman*)]
		\item a functor $X:\Algl\to \tria$;
		\item a morphism $\delta_\scrE\in \Hom_{\tria}(LX(C),X(A))$ for every extension $\scrE$ in $\Algl$:
		\begin{equation}\label{eq:extuni}\scrE:\xymatrix{A\ar[r]^-{f} & B\ar[r]^-{g} & C}\end{equation}
	\end{enumerate}
	These morphisms $\delta_\scrE$ are subject to the following conditions:
	\begin{enumerate}[label=(\alph*)]
		\item For every extension \eqref{eq:extuni}, the following triangle is distinguished:
		\[\triangle_\scrE:\xymatrix{LX(C)\ar[r]^-{\delta_\scrE} & X(A)\ar[r]^-{X(f)} & X(B)\ar[r]^-{X(g)} & X(C)}\]
		\item The triangles $\triangle_\scrE$ are natural with respect to morphisms of extensions.
	\end{enumerate}
\end{defi}

\begin{exa}\label{exa:kkuni}
	Let $\scrE: A\overset{f}\to B\overset{g}\to C$ be an extension in $\Algl$. Recall from Proposition \ref{prop:trianglesinkk} that we have a distinguished triangle in $\fk$:
	\[\triangle_{\scrE,0}:\xymatrix{(C,1)\ar[r]^-{\partial_{\scrE,0}} & (A,0)\ar[r]^-{j(f)} & (B,0)\ar[r]^-{j(g)} & (C,0)}\]
	Moreover, it follows from Proposition \ref{lem:classcommute} that $\partial_{\scrE,0}$ is natural with respect to morphisms of extensions. Then the functor $j:\Algl\to\fk$ together with the morphisms $\partial_{\scrE,0}$ is an excisive homology theory.
\end{exa}

A \emph{graded category} is a pair $(\gra,L)$ where $\gra$ is an additive category and $L$ is an automorphism of $\gra$. It $(\gra,L)$ is a graded category and $X$ is an object of $\gra$, we will often write $(X,n)$ instead of $L^n(X)$. A \emph{graded functor} $F:(\gra,L)\to (\gra',L')$ is an additive functor $F:\gra\to\gra'$ such that $F\circ L=L'\circ F$. Let $F,G:(\gra,L)\to (\gra',L')$ be graded functors. A \emph{graded natural transformation} $\nu:F\to G$ is a natural transformation $\nu$ such that $L'(\nu_X)=\nu_{L(X)}:L'F(X)\to L'G(X)$ for all $X\in\gra$.

\begin{exa}
	A triangulated category is a graded category.
\end{exa}

\begin{exa}\label{exa:gradedtriaI}
	Let $(\tria,L)$ be a triangulated category. Put $\gra:=\tria^I$ where $I=\{0\to 1\}$ is the interval category; then $\gra$ is an additive category. It is easily seen that $L$ induces a translation functor in $\gra$ that makes $\gra$ into a graded category.
\end{exa}

\begin{exa}\label{exa:GrAb}
	Let $\GrAb$ be the category whose objects are $\Z$-graded abelian groups and whose morphisms graded morphisms of degree zero. Then $\gra=\GrAb$ is a graded category with the translation functor $L$ defined by $L(M)_n=M_{n+1}$, $n\in\Z$, $M\in\GrAb$.
\end{exa}

\begin{defi}
	Let $(\gra,L)$ be a graded category. A \emph{$\delta$-functor} with values in $\gra$ consists of the following data:
	\begin{enumerate}[label=(\roman*)]
		\item a functor $X:\Algl\to \gra$ that preserves finite products;
		\item a morphism $\delta_\scrE\in \Hom_{\gra}(LX(C),X(A))$ for every extension $\scrE$ in $\Algl$:
		\[\scrE:\xymatrix{A\ar[r] & B\ar[r] & C}\]
	\end{enumerate}
	These morphisms $\delta_\scrE$ are subject to the following conditions:
	\begin{enumerate}[label=(\alph*)]
		\item $\delta_\scrE:LX(C)\to X(A)$ is an isomorphism if $X(B)=0$;
		\item The morphisms $\delta_\scrE$ are natural with respect to morphisms of extensions.
	\end{enumerate}
\end{defi}

\begin{exa}
	An excisive homology theory $X:\Algl\to\tria$ is a $\delta$-functor.
\end{exa}

\begin{exa}\label{exa:inducedprehomo}
	Let $X,Y:\Algl\to\tria$ be excisive homology theories and let $\nu:X\to Y$ be a natural transformation such that, for every extension \eqref{eq:extuni}, the following diagram commutes:
	\begin{equation}\label{eq:inducedprehomo}\begin{gathered}\xymatrix{LX(C)\ar[r]^-{\delta^X_\cE}\ar[d]_-{L(\nu_C)} & X(A)\ar[d]^-{\nu_A} \\
		LY(C)\ar[r]^-{\delta^Y_\cE} & Y(A)}\end{gathered}\end{equation}
	Let $\gra=\tria^I$ be the graded category of Example \ref{exa:gradedtriaI}. Then the natural transformation $\nu$ induces a $\delta$-functor $\Algl\to\gra$, that we still denote $\nu$. Explicitely, the functor $\nu:\Algl\to\gra$ is defined as follows:
	\[A\mapsto (\nu_A:X(A)\to Y(A))\]
	\[f\in\Hom_\Algl(A,B)\mapsto (X(f),Y(f))\in\Hom_\gra(\nu_A,\nu_B)\]
	For an extension \eqref{eq:extuni}, the morphism $\delta_\scrE\in\Hom_\gra(L(\nu_C),\nu_A)$ is defined by:
	\[\delta_\scrE:=(\delta^X_\scrE,\delta^Y_\scrE)\in \Hom_\gra(L(\nu_C),\nu_A)\]
\end{exa}

We will show that $j:\Algl\to\fk$ is the universal excisive and homotopy invariant homology theory, in the sense of \cite{cortho}*{Section 6.6}. In order to deal with natural transformations, we will work in the slightly more general setting of $\delta$-functors. From now on, fix a homotopy invariant $\delta$-functor $X$ with values in a graded category $(\gra,L)$. A morphism in $\Algl$ will be called an \emph{$X$-equivalence} if it becomes invertible upon applying $X$. For example, the following morphisms are $X$-equivalences:
\begin{enumerate}[label=\arabic*)]
	\item The morphisms $B^{\fS_n}_r\to B^{\fS_n}_{r+1}$ for any $B\in\Algl$, $n\in\Zo$ and $r\geq 0$. This follows by induction on $n$. For the inductive step, apply $X$ to the morphism of extensions \eqref{eq:lastvertexkkequiv} and use that $P(n,B)_r$ is contractible.
	\item The morphisms $\mu^{m,n}:(B^{\fS_m}_r)^{\fS_n}_s\to B^{\fS_{m+n}}_{r+s}$ for $B\in\Algl$ and $m,n,r,s\geq 0$. We will only use this fact in the special case $n=1$ and $r=s=0$. Proceed by induction on $m$. For the inductive step, apply $X$ to the morphism of extensions \eqref{eq:mukkequiv} and then use that the middle terms are contractible.
\end{enumerate}

Since $X$ is homotopy invariant, $X$ induces a functor $[\Algl]\to \gra$ that we still denote $X$. Let $[\Algl]^\ind_X$ be the full subcategory of $[\Algl]^\ind$ whose objects are ind-objects $(A,I)$ such that:
\begin{enumerate}[label=(\roman*)]
	\item $I$ has an initial object $i_0$;
	\item all the transition morphisms $A_i\to A_j$ are $X$-equivalences.
\end{enumerate}
Note that, for any $B\in\Algl$, the ind-object $B^{\fS_n}_\bul$ is in $[\Algl]^\ind_X$.

It is easily verified that $[\Algl]$ has finite products and that the functor $\Algl\to[\Algl]$ commutes with finite products. Then $[\Algl]^\ind$ has finite products too; explicitely, the product of $(A_\bul, I)$ and $(B_\bul, J)$ is the object $(A_\bul\times B_\bul , I\times J)$ with the obvious projections. Since $X$ is a $\delta$-functor, we have natural isomorphisms:
\[X(A_i\times B_j)\cong X(A_i)\oplus X(B_j)\]
Using this, it is easily seen that the product of two objects of $[\Algl]^\ind_X$ is again an object of $[\Algl]^\ind_X$. This shows that $[\Algl]^\ind_X$ has finite products.

Let $(A,I), (B,J)\in[\Algl]^\ind$. A morphism $f\in[A_\bul, B_\bul]$ is a collection $\{[f_i]\}_{i\in I}$ of homotopy classes of morphisms $f_i:A_i\to B_{\theta(i)}$ subject to certain compatibility relations.

\begin{lem}\label{lem:xind}
	There is a functor $\tilde{X}:[\Algl]^\ind_X\to \gra$ such that $\tilde{X}(A,I)=X(A_{i_0})$ and such that $\tilde{X}(f)$ is the composite
	\[\xymatrix{X(A_{i_0})\ar[r]^-{X(f_{i_0})} & X(B_{\theta(i_0)}) & X(B_{j_0})\ar[l]_-{\cong} }\]
	for any $f\in[A_\bul, B_\bul]$. Moreover, $\tilde{X}$ preserves finite products.
\end{lem}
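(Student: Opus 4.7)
The plan is to define $\tilde{X}$ on a morphism $f:(A,I)\to(B,J)$ in $[\Algl]^\ind_X$ by picking a representative of its $i_0$-component and applying $X$, then correcting by the canonical isomorphism on $X(B_\bul)$. Concretely, recall that $f$ lives in $\lim_i\colim_j[A_i,B_j]$; its projection $f_{i_0}\in\colim_j[A_{i_0},B_j]$ is represented by some $g:A_{i_0}\to B_{j}$. Since $J$ has initial object $j_0$, for each $j\in J$ the structural map $\iota_j:B_{j_0}\to B_j$ is a transition morphism, hence an $X$-equivalence by assumption on objects of $[\Algl]^\ind_X$. I define
\[
\tilde{X}(f):=X(\iota_{j})^{-1}\circ X(g):X(A_{i_0})\longrightarrow X(B_{j_0}).
\]

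The first task is well-definedness: if $g':A_{i_0}\to B_{j'}$ is another representative of the same class in $\colim_j[A_{i_0},B_j]$, there is $j''\geq j,j'$ in $J$ such that the two composites $A_{i_0}\to B_{j''}$ are homotopic, and applying $X$ together with the fact that $X(\iota_{j''})=X(B_{j}\to B_{j''})\circ X(\iota_j)=X(B_{j'}\to B_{j''})\circ X(\iota_{j'})$ gives $X(\iota_j)^{-1}\circ X(g)=X(\iota_{j'})^{-1}\circ X(g')$. The second task is functoriality. For composable $f:(A,I)\to(B,J)$ and $h:(B,J)\to(C,K)$, I pick a representative $g:A_{i_0}\to B_j$ of $f_{i_0}$; the crucial point is that the compatibility condition defining a morphism in $[\Algl]^\ind$ forces $h_{j_0}=h_j\circ\iota_j$ in $\colim_k[B_{j_0},C_k]$, so if $g':B_j\to C_k$ represents $h_j$ then $g'\circ\iota_j$ represents $h_{j_0}$. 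From this, a short computation yields
\[
\tilde{X}(h)\circ\tilde{X}(f)=X(\iota_k)^{-1}\circ X(g')\circ X(\iota_j)\circ X(\iota_j)^{-1}\circ X(g)=X(\iota_k)^{-1}\circ X(g'\circ g),
\]
which is precisely $\tilde{X}(h\circ f)$, since $(h\circ f)_{i_0}$ is represented by $g'\circ g$. Preservation of identities is immediate.

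For the final assertion, I use that the product $(A,I)\times(B,J)$ in $[\Algl]^\ind$ is $(A_\bul\times B_\bul,I\times J)$, which I verify directly from the description of Hom-sets in $[\Algl]^\ind$ and the commutation of filtered colimits of sets with finite products; its initial object is $(i_0,j_0)$. Hence $\tilde{X}((A,I)\times(B,J))=X(A_{i_0}\times B_{j_0})$, and the canonical isomorphism $X(A_{i_0}\times B_{j_0})\cong X(A_{i_0})\oplus X(B_{j_0})=\tilde{X}(A,I)\oplus\tilde{X}(B,J)$ coming from $X$ being a $\delta$-functor on $\Algl$ is easily checked to be induced by the two projections, so $\tilde{X}$ preserves binary products; it also sends the terminal object (the constant ind-diagram on $0$) to $X(0)=0$.

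The main obstacle is bookkeeping rather than conceptual: all the non-trivial content sits in the well-definedness and functoriality checks, where one must carefully juggle the compatibility relations in $\lim_i\colim_j$ against the fact that transition morphisms in the target diagram become invertible under $X$. No new ideas beyond the hypotheses on $[\Algl]^\ind_X$ are needed.
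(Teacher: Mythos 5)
Your proof is correct and carries out exactly the verifications the paper leaves implicit (the paper's proof consists only of the two sentences ``It is easily verified that $\tilde{X}$ is indeed a well-defined functor'' and ``The fact that $\tilde{X}$ preserves finite products follows from the fact that $X:[\Algl]\to\gra$ does''). Your well-definedness check using a common upper bound $j''$, your use of the compatibility relations in $\lim_j\colim_k$ to reduce $h_{j_0}$ to $g'\circ\iota_j$, and your product argument via the fact that $(i_0,j_0)$ is initial in $I\times J$, are precisely the bookkeeping the paper omits.
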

\begin{proof}
	It is easily verified that $\tilde{X}$ is indeed a well-defined functor. The fact that $\tilde{X}$ preserves finite products follows from the fact that $X:[\Algl]\to\gra$ does.\end{proof}

If $B\in\Algl$ and $n\geq 1$, then $B^{\fS_n}_\bul$ is a group object in $[\Algl]^\ind_X$; see Lemma \ref{lem:groupstr}. It follows from Lemma \ref{lem:xind} that $X(B^{\fS_n}_0)$ has a group object structure induced by that of $B^{\fS_n}_\bul$. Since $\gra$ is an additive category, every object of $\gra$ is naturally an abelian group object. Thus, $X(B^{\fS_n}_0)$ has two group object structures: the one coming from $B^{\fS_n}_\bul$ and the other from being an object of $\gra$. By the Eckmann-Hilton argument, both group structures coincide and the function
\[\xymatrix{\tilde{X}:[A_\bul,B^{\fS_n}_\bul]\ar[r] & \Hom_\gra(X(A_{i_0}),X(B^{\fS_n}_0))}\]
is a group homomorphism for every $(A,I)$ in $[\Algl]^\ind_X$.

Let $A\in\Algl$ and let $\scrU_A$ be the universal extension of $A$. Since $TA$ is contractible, there is an isomorphism:
\[\xymatrix{\delta_{\scrU_A}:(X(A),1)\ar[r]^-{\cong} & (X(JA),0)}\]
Put $i^{J,1}_A:=\delta_{\scrU_A}$ and define inductively $i^{J,n+1}_A$ as the composite:
\[\xymatrix@C=4em{(X(A),n+1)\ar[r]^-{L(i^{J,n}_A)} & (X(J^nA),1)\ar[r]^-{i^{J,1}_{J^nA}} & (X(J^{n+1}A),0)}\]
Let $i^{J,0}_A$ be the identity of $(X(A),0)$. It is easily verified by induction on $n=p+q$ that the following equality holds for $p,q\geq 0$:
\[i^{J,p+q}_A=i^{J,q}_{J^pA}\circ L^q(i^{J,p}_A)\]
The morphisms $i^{J,n}_?$ assemble into a natural isomorphism $L^n\circ X\cong X\circ J^n(?):\Algl\to\gra$.

Let $A\in \Algl$ and let $\scrP_{0,A}$ be the path extension of $A$. Since $(PA)_0$ is contractible, there is an isomorphism:
\[\xymatrix{\delta_{\scrP_A}:(X(A),1)\ar[r]^-{\cong} & (X(A^{\fS_1}_0),0)}\]
Put $i^{\fS,1}_A:=\delta_{\scrP_A}$ and define inductively $i^{\fS,n+1}_A$ as the composite:
\[\xymatrix@C=2.5em{(X(A),n+1)\ar[r]^-{L(i^{\fS,n}_A)} & (X(A^{\fS_n}_0),1)\ar[r]^-{i^{\fS,1}_{A^{\fS_n}_0}} & (X((A^{\fS_n}_0)^{\fS_1}_0),0)\ar[r]_-{\cong}^-{X(\mu^{n,1}_A)} & (X(A^{\fS_{n+1}}_0),0)}\]
Let $i^{\fS,0}_A$ be the identity of $(X(A),0)$. It is easily verified by induction on $n=p+q$ that the following equality holds for $p,q\geq 0$:
\[i^{\fS,p+q}_A=X(\mu^{p,q}_A)\circ i^{\fS,q}_{A^{\fS_p}_0}\circ L^q(i^{\fS,p}_A)\]
The morphisms $i^{\fS,n}_?$ assemble into a natural isomorphism $L^n\circ X\cong X\circ (?)^{\fS_n}_0:\Algl\to\gra$.

\begin{lem}\label{lem:signT}
	Let $N_2, N_3\geq 0$ and let $B\in\Algl$. Then the following diagram in $\gra$ commutes up to the sign $(-1)^{N_2N_3}$:
	\[\xymatrix@C=4em{(X(B),N_2+N_3)\ar[r]^-{L^{N_3}(i^{\fS,N_2}_B)}\ar[d]_-{L^{N_2}(i^{J,N_3}_B)} & (X(B^{\fS_{N_2}}_0),N_3)\ar[r]^-{i^{J,N_3}_{B^{\fS_{N_2}}}} & (X(J^{N_3}(B^{\fS_{N_2}}_0)),0)\ar[d]^-{X(\kappa^{N_3,N_2}_B)} \\
		(X(J^{N_3}B),N_2)\ar[rr]^-{i^{\fS,N_2}_{J^{N_3}B}} & & (X((J^{N_3}B)^{\fS_{N_2}}_0),0)}\]
\end{lem}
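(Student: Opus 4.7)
The plan is to establish the lemma by a double induction, reducing first to the case $N_3=1$ (all $N_2$) and then deducing the general case by induction on $N_3$. Several naturality identities will be used repeatedly. Naturality of $\delta$ applied to the morphism of universal extensions $\scrU_A\to\scrU_B$ induced by any $f\colon A\to B$ gives $X(J(f))\circ i^{J,1}_A=i^{J,1}_B\circ L(X(f))$; the analogous statement for path extensions gives $X(f^{\fS_1}_0)\circ i^{\fS,1}_A=i^{\fS,1}_B\circ L(X(f))$. Moreover, the morphism of extensions $\scrU_A\to\scrP_{0,A}$ whose left-hand component is $\lambda_A$ yields, via naturality of $\delta$, the identity $i^{\fS,1}_A=X(\lambda_A)\circ i^{J,1}_A$; in particular $X(\lambda_A)$ is always an isomorphism.

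The base case $N_2=N_3=1$ is where the sign is born. Using $i^{\fS,1}_B=X(\lambda_B)\circ i^{J,1}_B$ and then naturality of $i^{J,1}$ applied to $\lambda_B$, the top-and-right composite becomes $X(\kappa^{1,1}_B\circ J(\lambda_B))\circ i^{J,1}_{JB}\circ L(i^{J,1}_B)$. Lemma \ref{lem:clascon} says $\kappa^{1,1}_B\circ J(\lambda_B)=(\lambda_{JB})^{-1}$ in the group $[J^2B,(JB)^{\fS_1}_\bul]$, and by Lemma \ref{lem:xind} together with the Eckmann--Hilton identification of the two available group structures on $\Hom_\gra(X(J^2B),X((JB)^{\fS_1}_0))$, $\tilde X$ is a group homomorphism on this hom. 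Hence $X(\kappa^{1,1}_B)\circ X(J(\lambda_B))=-X(\lambda_{JB})$ in $\gra$, and applying $X(\lambda_{JB})\circ i^{J,1}_{JB}=i^{\fS,1}_{JB}$ produces the left-and-bottom composite with a single sign $(-1)$.

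Next, I would induct on $N_2$ keeping $N_3=1$. The case $N_2=0$ is immediate since $\kappa^{1,0}_B=\id$ and $i^{\fS,0}_B=\id$. For the step $N_2\to N_2+1$, apply Lemma \ref{lem:penta} with $n=1$, $p=N_2$, $q=1$ to rewrite $\kappa^{1,N_2+1}_B\circ J(\mu^{N_2,1}_B)=\mu^{N_2,1}_{JB}\circ(\kappa^{1,N_2}_B)^{\fS_1}\circ\kappa^{1,1}_{B^{\fS_{N_2}}_0}$; expand $i^{\fS,N_2+1}_B$ according to its inductive definition; and then invoke naturality of $i^{J,1}$ on $\mu^{N_2,1}_B$ and of $i^{\fS,1}$ on $\kappa^{1,N_2}_B$. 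This isolates the subexpression $X(\kappa^{1,1}_{B^{\fS_{N_2}}_0})\circ i^{J,1}_{(B^{\fS_{N_2}}_0)^{\fS_1}_0}\circ L(i^{\fS,1}_{B^{\fS_{N_2}}_0})$, to which the base case $N_2=N_3=1$ applies (with $B^{\fS_{N_2}}_0$ in place of $B$), contributing a sign $-1$; the inductive hypothesis then absorbs $X(\kappa^{1,N_2}_B)\circ i^{J,1}_{B^{\fS_{N_2}}_0}\circ L(i^{\fS,N_2}_B)$ as $(-1)^{N_2}i^{\fS,N_2}_{JB}\circ L^{N_2}(i^{J,1}_B)$. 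Reassembling (and using the inductive formula for $i^{\fS,N_2+1}_{JB}$) yields the overall sign $(-1)^{N_2+1}$.

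For the outer induction on $N_3$, the case $N_3=0$ is immediate. For $N_3\to N_3+1$, Lemma \ref{lem:kappapq} gives $\kappa^{N_3+1,N_2}_B=\kappa^{1,N_2}_{J^{N_3}B}\circ J(\kappa^{N_3,N_2}_B)$ and the definition gives $i^{J,N_3+1}_B=i^{J,1}_{J^{N_3}B}\circ L(i^{J,N_3}_B)$. Applying naturality of $i^{J,1}$ to $\kappa^{N_3,N_2}_B$ isolates the subexpression $X(\kappa^{N_3,N_2}_B)\circ i^{J,N_3}_{B^{\fS_{N_2}}_0}\circ L^{N_3}(i^{\fS,N_2}_B)$, which by inductive hypothesis equals $(-1)^{N_2N_3}i^{\fS,N_2}_{J^{N_3}B}\circ L^{N_2}(i^{J,N_3}_B)$, together with an instance of the case $N_3=1$ applied to $J^{N_3}B$ in place of $B$, contributing $(-1)^{N_2}$. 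The overall sign is $(-1)^{N_2N_3+N_2}=(-1)^{N_2(N_3+1)}$, as required. The main obstacle is the base case $N_2=N_3=1$: one must correctly transfer the group inversion from Lemma \ref{lem:clascon} in $[J^2B,(JB)^{\fS_1}_\bul]$ into the additive structure of $\gra$, which is why the Eckmann--Hilton argument identifying the two group structures is indispensable.
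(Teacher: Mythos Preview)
Your proposal is correct and follows essentially the same strategy as the paper: establish the base case $N_2=N_3=1$ by applying $\tilde{X}$ to Lemma \ref{lem:clascon} (using the Eckmann--Hilton argument to turn the group inverse into a sign), then induct first on $N_2$ with $N_3=1$ and finally on $N_3$. The paper's proof is terse and leaves the inductive steps as ``easy''; you have correctly identified that Lemma \ref{lem:penta} drives the $N_2$-induction and Lemma \ref{lem:kappapq} drives the $N_3$-induction, and your naturality identities for $i^{J,1}$ and $i^{\fS,1}$ are exactly what is needed to make those steps go through.
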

\begin{proof}
	If $N_2=0$ or $N_3=0$ there is nothing to prove. Upon applying the functor $\tilde{X}$ to the diagram \eqref{eq:clascon} we get that the followig diagram in $\gra$ commutes up to the sign $-1$; the case $N_2=N_3=1$ follows easily from this:
	\[\xymatrix@C=4em{X(J^2B)\ar[r]^-{X(J(\lambda_B))}\ar@/_1pc/[dr]_-{X(\lambda_{JB})} & X(J(B^{\fS_1}_0))\ar[d]^-{X(\kappa^{1,1}_B)} \\
		& X((JB)^{\fS_1}_0) }\]
	Once we know the case $N_2=N_3=1$, the case $N_3=1$ with arbitrary $N_2$ follows by an easy induction on $N_2$. Once we know the result for $N_3=1$ and arbitrary $N_2$, the general case follows by induction on $N_3$.
\end{proof}

\begin{thm}\label{thm:unigra}
	Let $(\gra,L)$ be a graded category and let $X:\Algl\to\gra$ be a homotopy invariant $\delta$-functor. Then there exists a unique graded functor $\bar{X}:\fk\to\gra$ such that $\bar{X}(\partial_{\scrE,0})=\delta_\scrE$ for every extension $\scrE$ and such that the following diagram commutes:
	\[\xymatrix@C=4em{\Algl\ar[r]^{j}\ar@/_/[dr]_-{X} & \fk\ar@{..>}[d]^-{\exists ! \bar{X}} \\
		& \gra}\]
\end{thm}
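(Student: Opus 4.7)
The plan is to define $\bar X$ on objects by $\bar X(A,m):=L^m X(A)$ and on morphisms by transporting a representative $f\in[J^{m+v}A,B^{\fS_{n+v}}_\bul]$ through the functor $\tilde X$ of Lemma \ref{lem:xind} and the two families of isomorphisms $i^{J,k}$ and $i^{\fS,k}$. Concretely, for $\langle f\rangle\in\fk((A,m),(B,n))$ represented by $f\in[J^{m+v}A,B^{\fS_{n+v}}_\bul]$, I would set $\bar X(\langle f\rangle)$ equal to the composite
\[
\xymatrix@C=1.2em{L^m X(A)\ar[r]^-{L^{-v}(i^{J,m+v}_A)} & L^{-v}X(J^{m+v}A)\ar[r]^-{L^{-v}\tilde X(f)} & L^{-v}X(B^{\fS_{n+v}}_0)\ar[r]^-{L^{-v}(i^{\fS,n+v}_B)^{-1}} & L^n X(B).}
\]
On objects and morphisms of $\Algl$ (the case $m=n=v=0$) this reduces to $X$, and by construction $\bar X\circ L=L\circ \bar X$. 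For uniqueness, observe that $\fk$ is generated as a graded category, under the identifications $(A,1)\cong (JA,0)$ and the inclusion $j$, by morphisms coming from $\Algl$ and by the connecting maps $\partial_{\scrU_A,0}$; hence any $\bar X$ extending $X$ and sending $\partial_{\scrE,0}$ to $\delta_\scrE$ is forced to agree with the formula above.

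The first verification is well-definedness: I must show that replacing $f$ by $\Lambda^{n+v}(f)$ yields the same morphism in $\gra$. Since $\Lambda^{n+v}$ is, up to the sign coming from $c^*$ as in \eqref{eq:Lambdaid} (Example \ref{exa:cmn}), built from $J$, $\kappa$, $\lambda$, and $\mu$, this reduces to showing compatibility of the isomorphisms $i^{J,k}$ and $i^{\fS,k}$ with these operations. Here $\delta_{\scrU_?}$ and $\delta_{\scrP_?}$ by definition give the bridge between $L$ and $J$, and between $L$ and $(?)^{\fS_1}$ respectively; naturality of these connecting morphisms (applied to the strong morphism of extensions exhibiting $\Lambda^{n+v}$ in Remark \ref{rem:Lambda-fS}) together with Lemma \ref{lem:signT} absorbs the sign. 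A similar direct check using the compatibility of $\tilde X$ with finite products (Lemma \ref{lem:xind}) and the Eckmann–Hilton remark that follows it shows additivity of $\bar X$ on hom-sets.

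The central obstacle is functoriality: given $f\in[J^{N_1}A,B^{\fS_{N_2}}_\bul]$ and $g\in[J^{N_3}B,C^{\fS_{N_4}}_\bul]$ with $N_1=m+v$, $N_2=n+v$, $N_3=n+w$, $N_4=k+w$, I must show that $\bar X(\langle g\rangle\circ\langle f\rangle)=\bar X(\langle g\rangle)\circ\bar X(\langle f\rangle)$. Unwinding the definition of $\star$ from Definition \ref{defi:kkc}, the left-hand side equals
\[
L^{-v-w}\bigl[X(\mu^{N_4,N_2}_C)\circ X(g^{\fS_{N_2}})\circ (-1)^{N_2N_3}X(\kappa^{N_3,N_2}_B)\circ X(J^{N_3}f)\bigr]
\]
composed on the left and right with the appropriate $i^{J,?}$ and $i^{\fS,?}$. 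The right-hand side is the composite of the analogous expressions for $f$ and $g$; rewriting it via the naturality of $i^{J,?}$ (to move $X$ of $J^{N_3}f$ past the $i^{J,N_3}$) and of $i^{\fS,?}$ (to handle $X(g^{\fS_{N_2}})$), and using the compatibility of $i^{\fS,k}$ with $\mu$ built into its inductive definition, the comparison reduces to Lemma \ref{lem:signT} applied to $B$ with indices $N_2,N_3$: this is exactly what produces the sign $(-1)^{N_2 N_3}$ appearing in $\star$. I expect this to be the only nontrivial computation; the remainder is a bookkeeping exercise.

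Finally, to identify $\bar X(\partial_{\scrE,0})$ with $\delta_\scrE$, I would apply the formula above to the classifying map $\xi:JC\to A$ of $\scrE$ (with $v=0$, $N_1=1$, $N_2=0$): by construction $\bar X(\langle\xi\rangle)=X(\xi)\circ (i^{J,1}_C)^{-1}=X(\xi)\circ \delta_{\scrU_C}^{-1}$. The definition of $\partial_{\scrE,0}$ in Definition \ref{defi:extensiontriangles} and naturality of $\delta$ applied to the strong morphism of extensions $\scrU_C\to\scrE$ classifying $\id_C$ then identify this with $\delta_\scrE$, with the sign $(-1)^{n}=+1$ at $n=0$ matching. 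Uniqueness of $\bar X$ with all these properties follows as indicated above, completing the proof.
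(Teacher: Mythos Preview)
Your proposal is correct and follows essentially the same route as the paper: define $\bar X$ on objects by $\bar X(A,m)=L^mX(A)$, on morphisms via $\tilde X$ conjugated by the isomorphisms $i^{J,\cdot}$ and $i^{\fS,\cdot}$, check compatibility with $\Lambda$, and reduce functoriality to Lemma~\ref{lem:signT}; the paper's uniqueness argument uses Lemma~\ref{lem:arrowiso} rather than your generation statement, but the content is the same.

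Two small slips worth fixing. First, in your final paragraph you write $\bar X(\langle\xi\rangle)=X(\xi)\circ (i^{J,1}_C)^{-1}$, but plugging $m=1$, $n=0$, $v=0$ into your own formula gives $X(\xi)\circ i^{J,1}_C=X(\xi)\circ\delta_{\scrU_C}$, which is what you then correctly identify with $\delta_\scrE$ via naturality; the inverse is a typo. Second, Lemma~\ref{lem:signT} is not needed for the well-definedness step (compatibility with $\Lambda^{n+v}$): that follows directly from naturality of $\delta$ applied to the morphisms of extensions in Remark~\ref{rem:Lambda-fS}, and no sign appears. Lemma~\ref{lem:signT} enters exactly where you also invoke it, namely in the functoriality check, to produce the $(-1)^{N_2N_3}$ matching the sign in $\star$.
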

\begin{proof}
	Define $\bar{X}$ on objects by $\bar{X}(A,m):=(X(A),m)$. To define $\bar{X}$ on morphisms we must define, for every pair of objects of $\fk$, a group homomorphism:
	\begin{equation}\label{eq:Xbarmor}\bar{X}_{(A,m),(B,n)}:\xymatrix{\fk((A,m),(B,n))\ar[r] & \Hom_\gra((X(A),m),(X(B),n))}\end{equation}
	Let $\bar{X}^v$ be the dotted composite:
	\[\xymatrix{[J^{m+v}A,B^{\fS_{n+v}}_\bul]\ar@{..>}@/_3pc/[ddr]_-{\bar{X}^v}\ar[r]^-{\tilde{X}} & \Hom_\gra((X(J^{m+v}A),0),(X(B^{\fS_{n+v}}_0),0))\ar[d]_-{\cong}^-{(i^{\fS,n+v}_B)^{-1}\circ (?)\circ i^{J,m+v}_A} \\
		& \Hom_\gra((X(A),m+v),(X(B),n+v))\ar[d]_-{\cong}^-{L^{-v}} \\
		& \Hom_\gra((X(A),m),(X(B),n))}\]
	The function $\bar{X}^v$ is a group homomorphism by the discussion following Lemma \ref{lem:xind}. Moreover, it is easily verified that this  diagram commutes:
	\[\xymatrix{[J^{m+v}A,B^{\fS_{n+v}}_\bul]\ar[r]^-{\bar{X}^v}\ar[d]_-{\Lambda^{n+v}} & \Hom_{\gra}((X(A),m),(X(B),n)) \\
		[J^{m+v+1}A,B^{\fS_{n+v+1}}_\bul]\ar@/_/[ur]_-{\bar{X}^{v+1}} &}\]
	Thus, the morphisms $\bar{X}^v$ induce the desired group homomorphism $\bar{X}_{(A,m),(B,n)}$ in \eqref{eq:Xbarmor}; this defines $\bar{X}$ on morphisms. It is straightforward but tedious to verify that the definitions above indeed give rise to an additive functor $\bar{X}:\fk\to\gra$. When verifying that $\tilde{X}$ preserves composition, Lemma \ref{lem:signT} is needed to show that the signs in Definition \ref{defi:kkc} work out. We clearly have $X=\bar{X}\circ j$ and $L\circ \bar{X}=\bar{X}\circ L$.

	Let us now show that $\bar{X}(\partial_{\scrE,0})=\delta_\scrE$ for every extension $\scrE$ in $\Algl$. Consider an extension as follows, with classifying map $\xi:JC\to A$:
	\[\scrE:\xymatrix{A\ar[r]^-{f} & B\ar[r]^-{g} & C}\]
	Recall from Definition \eqref{defi:extensiontriangles} that $\partial_{\scrE,0}$ equals the composite:
	\[\xymatrix@C=3em{(C,1)\ar[r]^-{\langle\id_{JC}\rangle} & (JC,0)\ar[r]^-{\xi} & (A,0)}\]
	Upon applying $\bar{X}$ we get:
	\[\xymatrix@C=3em{(X(C),1)\ar[r]^-{i^{J,1}_C} & (X(JC),0)\ar[r]^-{X(\xi)} & (X(A),0)}\]
	By naturality of $\delta$, we have:
	\[\bar{X}(\partial_{\scrE,0})=X(\xi)\circ i^{J,1}_C=X(\xi)\circ\delta_{\scrU_C}=\delta_{\scrE}\]

	It remains to check the uniqueness of $\bar{X}$. Let $\bar{X}:\fk\to \gra$ be any graded functor with the properties described in the statement of this theorem. Let $\alpha\in\fk((A,m),(B,n))$ be represented by $f:J^{m+v}A\to B^{\fS_{n+v}}_r$ and let $\gamma^r:B^{\fS_{n+v}}_0\to B^{\fS_{n+v}}_r$ be the morphism induced by the iterated last vertex map. By Lemma \ref{lem:arrowiso}, the following diagram in $\fk$ commutes:
	\[\xymatrix@C=5em{(A,m)\ar[d]_-{\cong}\ar[r]^-{\alpha} & (B,n) & \\
		(J^{m+v}A,-v)\ar[r]^-{L^{-v}(f)} & (B^{\fS_{n+v}}_r,-v)\ar[u]_-{\cong} & (B^{\fS_{n+v}}_0,-v)\ar@/_/[ul]_-{\cong}\ar[l]^-{\cong}_-{L^{-v}(\gamma^r)}}\]
	Upon applying $\bar{X}$ we get the following commutative diagram in $\gra$:
	\[\xymatrix@C=5em{(X(A),m)\ar[d]_-{L^{-v}(i^{J,m+v}_A)}^-{\cong}\ar[r]^-{X(\alpha)} & (X(B),n) & \\
		(X(J^{m+v}A),-v)\ar[r]^-{L^{-v}X(f)} & (X(B^{\fS_{n+v}}_r),-v)\ar[u]_-{\cong} & (X(B^{\fS_{n+v}}_0),-v)\ar@/_/[ul]_-{L^{-v}(i^{\fS,n+v}_B)}^-{\cong}\ar[l]^-{\cong}_-{L^{-v}X(\gamma^r)}}\]
	It follows that $\bar{X}$ is the functor defined above.
\end{proof}

As a corollary we get:

\begin{thm}[\cite{garku}*{Comparison Theorem B}]\label{thm:unitria}
	Let $(\tria,L)$ be a triangulated category and let $X:\Algl\to\tria$ be an excisive and homotopy invariant homology theory. Then there exists a unique triangulated functor $\bar{X}:\fk\to\tria$ such that $\bar{X}(\partial_\scrE)=\delta_\scrE$ for every extension $\scrE$, and such that the following diagram commutes:
	\[\xymatrix@C=4em{\Algl\ar[r]^{j}\ar@/_/[dr]_-{X} & \fk\ar@{..>}[d]^-{\exists ! \bar{X}} \\
		& \tria}\]
\end{thm}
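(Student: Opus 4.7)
The plan is to deduce this theorem as a corollary of Theorem \ref{thm:unigra} together with the characterisation of distinguished triangles in $\fk$ supplied by Proposition \ref{prop:trianglesinkk}. The first step is to check that the hypotheses of Theorem \ref{thm:unigra} are satisfied. A triangulated category $(\tria,L)$ is in particular a graded category, so the only real content is verifying that an excisive homology theory $X:\Algl\to\tria$ is a $\delta$-functor. Preservation of finite products follows by applying $X$ to the split extension $A\to A\oplus B\to B$: the associated distinguished triangle has a zero connecting morphism (it splits), forcing $X(A\oplus B)\cong X(A)\oplus X(B)$. The invertibility of $\delta_\scrE$ whenever $X(B)=0$ is immediate from the long exact sequence obtained by applying $\Hom_\tria(Z,?)$ to $\triangle_\scrE$. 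Hence Theorem \ref{thm:unigra} produces a unique graded additive functor $\bar X:\fk\to\tria$ with $X=\bar X\circ j$ and $\bar X(\partial_{\scrE,0})=\delta_\scrE$ for every extension $\scrE$.

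Next, I would verify that $\bar X$ is a triangulated functor. By Proposition \ref{prop:trianglesinkk} every distinguished triangle in $\fk$ is isomorphic to an extension triangle $\triangle_{\scrE,n}$, so it suffices to check that each $\bar X(\triangle_{\scrE,n})$ is distinguished in $\tria$. Unwinding Definition \ref{defi:extensiontriangles}, $\triangle_{\scrE,n}$ is obtained from $\triangle_{\scrE,0}$ by applying $L^n$ and multiplying the connecting morphism by the harmless sign $(-1)^n$. Since $\bar X$ commutes with $L$ and $\bar X(\partial_{\scrE,0})=\delta_\scrE$, the image $\bar X(\triangle_{\scrE,n})$ is, up to sign, the $n$-fold $L$-shift of the triangle
\[
\xymatrix@C=3em{LX(C)\ar[r]^-{\delta_\scrE} & X(A)\ar[r]^-{X(f)} & X(B)\ar[r]^-{X(g)} & X(C),}
\]
which is distinguished by the defining axioms of the excisive homology theory $X$. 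As translations and sign changes of connecting morphisms preserve distinguishedness in any triangulated category, this finishes the triangulatedness check.

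Finally, the uniqueness assertion is immediate: any triangulated functor with the stated properties is in particular a graded functor sending $\partial_{\scrE,0}$ to $\delta_\scrE$ and extending $X$, so uniqueness follows from Theorem \ref{thm:unigra}. The only step that is not purely formal is the triangulatedness verification, and even that reduces entirely to Proposition \ref{prop:trianglesinkk}; no further sign calculations or structural arguments are needed beyond those already embedded in the proof of Theorem \ref{thm:unigra}.
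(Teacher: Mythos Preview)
Your proposal is correct and follows essentially the same approach as the paper: invoke Theorem \ref{thm:unigra} to obtain the unique graded functor $\bar{X}$, then use Proposition \ref{prop:trianglesinkk} to reduce the triangulatedness check to extension triangles, where it follows from $\bar{X}(\partial_{\scrE,0})=\delta_\scrE$. You supply a bit more detail than the paper does---explicitly verifying that an excisive homology theory is a $\delta$-functor, and treating $\triangle_{\scrE,n}$ for general $n$ rather than only $n=0$---but the strategy is the same.
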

\begin{proof}
	By Theorem \ref{thm:unigra}, there exists a unique graded functor $\bar{X}$ making the diagram commute and such that $\bar{X}(\partial_{\scrE,0})=\delta_\scrE$ for every extension $\scrE$ in $\Algl$. It remains to check that $\bar{X}$ sends distinguished triangles in $\fk$ to distinguished triangles in $\tria$. By Proposition \ref{prop:trianglesinkk} it suffices to show that $\bar{X}$ sends extension triangles $\triangle_{\scrE,0}$ to distinguished triangles in $\tria$, but this follows immediately from the fact that $\bar{X}(\partial_{\scrE,0})=\delta_\scrE$.
\end{proof}

\begin{rem}
	One way to summarize Theorem \ref{thm:unitria} is to say that $j:\Algl\to\fk$ is the universal excisive and homotopy invariant homology theory with values in a triangulated category. As explained in the introduction, such a theory was already constructed by Garkusha \cite{garkuuni}*{Theorem 2.6 (2)} using completely different methods. Both constructions are, of course, equivalent since they satisfy the same universal property.
\end{rem}

\begin{thm}\label{thm:deriving}
	Let $F:\Algl\to \Algl$ be a functor satisfying the following properties:
	\begin{enumerate}[label=\arabic*)]
		\item $F$ preserves homotopic morphisms;
		\item $F$ preserves extensions.
	\end{enumerate}
	Then there exists a unique triangulated functor $\bar{F}$ making the following diagram commute:
	\[\xymatrix{\Algl\ar[r]^{j}\ar[d]_-{F} & \fk\ar@{..>}[d]^-{\exists !\bar{F}} \\
		\Algl\ar[r]^-{j} & \fk}\]
	Let $F_1,F_2:\Algl\to\Algl$ be two functors with the properties above and let $\eta:F_1\to F_2$ be a natural transformation. Then there exists a unique (graded) natural transformation $\bar{\eta}:\bar{F}_1\to\bar{F}_2$ such that $\bar{\eta}_{j(A)}= j(\eta_A)$ for all $A\in\Algl$.
\end{thm}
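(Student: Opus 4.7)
The plan is to deduce both statements from the universal properties already established. For the first claim, consider the composite $X := j \circ F : \Algl \to \fk$. Since $F$ preserves homotopies and $j$ is homotopy invariant, $X$ is homotopy invariant. For every extension $\scrE : A \to B \to C$, the hypothesis gives an extension $F(\scrE) : FA \to FB \to FC$, and I would define $\delta^X_\scrE := \partial_{F(\scrE),0}$, which fits in the distinguished extension triangle $\triangle_{F(\scrE),0}$ of Proposition \ref{prop:trianglesinkk}. Naturality of $\partial_{?,0}$ with respect to morphisms of extensions (via Proposition \ref{lem:classcommute} and Definition \ref{defi:extensiontriangles}) exhibits $X$ as an excisive homology theory. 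Theorem \ref{thm:unitria} then produces the unique triangulated functor $\bar F : \fk \to \fk$ with $\bar F \circ j = j \circ F$.

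For the second claim, I would work in the graded category $\gra := \fk^I$ of Example \ref{exa:gradedtriaI}, where $I = \{0 \to 1\}$, and apply Theorem \ref{thm:unigra}. A natural transformation $\eta : F_1 \to F_2$ yields a natural transformation $j\eta : jF_1 \to jF_2$ between the two excisive homology theories constructed above. To promote $j\eta$ to a $\delta$-functor $\Algl \to \gra$ via Example \ref{exa:inducedprehomo}, one must verify that for every extension $\scrE$ the square
\[
\xymatrix@C=4em{
L j F_1(C) \ar[r]^-{\partial_{F_1(\scrE),0}} \ar[d]_-{L j(\eta_C)} & j F_1(A) \ar[d]^-{j(\eta_A)} \\
L j F_2(C) \ar[r]^-{\partial_{F_2(\scrE),0}} & j F_2(A)
}
\]
commutes in $\fk$. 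Granted this, Example \ref{exa:inducedprehomo} upgrades $j\eta$ to a homotopy invariant $\delta$-functor into $\gra$, and Theorem \ref{thm:unigra} produces a unique graded functor $\overline{j\eta} : \fk \to \gra$ extending it. Unwinding $\gra = \fk^I$, the graded functor $\overline{j\eta}$ is precisely a graded natural transformation $\bar\eta : \bar F_1 \to \bar F_2$ with $\bar\eta_{j(A)} = j(\eta_A)$, and the uniqueness statement in Theorem \ref{thm:unigra} gives uniqueness of $\bar\eta$.

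The main obstacle is the compatibility square above. By naturality of $\eta$, the triple $(\eta_A,\eta_B,\eta_C)$ is a morphism of extensions $F_1(\scrE) \to F_2(\scrE)$, so Proposition \ref{lem:classcommute} yields that the induced square on classifying maps commutes up to homotopy. Since $\partial_{\scrE,0}$ is built from the classifying map (up to the isomorphism $\langle \id_{JC}\rangle$ and a sign, per Definition \ref{defi:extensiontriangles}), this homotopy-commutative square passes to the required equality in $\fk$. The remaining verifications---that $\overline{j\eta}$ is compatible with the translation endofunctor and that the unfolding $\fk^I \leftrightarrow$ (graded natural transformations $\bar F_1 \to \bar F_2$) is faithful---are straightforward bookkeeping once the compatibility square is in hand.
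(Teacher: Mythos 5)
Your proof is correct and matches the paper's argument essentially step for step: both parts rely on recognizing $j\circ F_i$ as excisive homotopy-invariant homology theories and invoking Theorem \ref{thm:unitria}, then upgrading $j\eta$ to a $\delta$-functor into $\fk^{I}$ via Example \ref{exa:inducedprehomo} and applying Theorem \ref{thm:unigra}. The compatibility square you identify as the main obstacle is exactly the one the paper checks, and your verification via Proposition \ref{lem:classcommute} and the naturality of $\partial_{\scrE,0}$ is the same as the paper's (cf.\ Example \ref{exa:kkuni}).
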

\begin{proof}
	The existence and uniqueness of $\bar{F}$ follow from Theorem \ref{thm:unitria} once we notice that $j\circ F:\Algl\to\fk$ is an excisive and homotopy invariant homology theory. Let us show now the existence of $\bar{\eta}$. For every $A\in\Algl$ put:
	\[\nu_A:=j(\eta_A)\in\fk(F_1(A), F_2(A))\]
	The $\nu_A$ assemble into a natural transformation $\nu:j\circ F_1\to j\circ F_2:\Algl\to\fk$. Let $\gra:=(\fk)^I$ where $I$ is the interval category. Recall from Example \ref{exa:inducedprehomo} that $\nu$ induces a homotopy invariant $\delta$-functor $\Algl\to\gra$ if we show that the diagram \eqref{eq:inducedprehomo} commutes. Let
	\[\scrE:\xymatrix{A\ar[r]^-{f} & B\ar[r]^-{g} & C}\]
	be an extension in $\Algl$. Then we have a morphism of extensions in $\Algl$:
	\[\xymatrix{F_1(A)\ar[d]_-{\eta_A}\ar[r]^-{F_1(f)} & F_1(B)\ar[d]^-{\eta_B}\ar[r]^-{F_1(g)} & F_1(C)\ar[d]^-{\eta_C} \\
		F_2(A)\ar[r]^-{F_2(f)} & F_2(B)\ar[r]^-{F_2(g)} & F_2(C)}\]
	Since $j$ sends extensions to triangles in a natural way, the following diagram in $\fk$ commutes:
	\[\xymatrix{(F_1(C),1)\ar[d]_-{L(\nu_C)}\ar[r]^-{\delta^{j\circ F_1}} & (F_1(A),0)\ar[d]^-{\nu_A} \\
		(F_2(C),1)\ar[r]^-{\delta^{j\circ F_2}} & (F_2(A),0)}\]
	Thus, $\nu$ induces a homotopy invariant $\delta$-functor $\Algl\to\gra$, which in turn induces a graded functor $\bar{\nu}:\fk\to\gra$ by Theorem \ref{thm:unigra}. It is easily verified that this graded functor $\bar{\nu}$ corresponds to the desired natural transformation $\bar{\eta}:\bar{F}_1\to\bar{F}_2$.
\end{proof}

\begin{rem}\label{rem:derivingnat}
	Let $F,F',F'':\Algl\to\Algl$ be functors satisfying the hypothesis of Theorem \ref{thm:deriving} and let $\eta:F\to F'$ and $\eta':F'\to F''$ be natural transformations. Then $\overline{\eta'\circ\eta}=\bar{\eta'}\circ\bar{\eta}$.
\end{rem}

\appendix

\section{}\label{sec:app}

In this appendix we prove Proposition \ref{prop:bundle}.

\begin{prop}\label{prop:bundleapp}
Let $A,B\in\Algl$, let $K$ be a filtering poset, let $C_\bul\in(\Alg_\Z)^K$ and let $m,n\geq 1$. Then the following composite functions are group homomorphisms:
\begin{enumerate}[label=(\roman*)]
	\item\label{eq:Jkappa} $\xymatrix@C=3em{[A,B^{\fS_m}_\bul] \ar[r]^-{J^n} & [J^nA, J^n(B^{\fS_m}_\bul)]\ar[r]^-{\left(\kappa^{n,m}_{B}\right)_*} & [J^nA, (J^nB)^{\fS_m}_\bul]}$
	\item\label{eq:tensorgrouphomo} $\xymatrix@C=4em{[A,B^{\fS_m}_\bul]\ar[r]^-{?\otimes C_\bul} & [A\otimes C_\bul,B^{\fS_m}_\bul\otimes C_\bul]\cong[A\otimes C_\bul,(B\otimes C_\bul)^{\fS_m}_\bul]}$
	\item\label{eq:mugrouphomo} $\xymatrix@C=4em{[A,(B^{\fS_m}_\bul)^{\fS_n}_\bul]\ar[r]^-{\left(\mu^{m,n}_{B}\right)_*} & [A,B^{\fS_{m+n}}_\bul]}$
	\item\label{eq:mu2grouphomo} $\xymatrix@C=3em{[A,B^{\fS_m}_\bul]\ar[r]^-{(?)^{\fS_n}} & [A^{\fS_n}_\bul,(B^{\fS_m}_\bul)^{\fS_n}_\bul]\ar[r]^-{\left(\mu^{m,n}_{B}\right)_*} & [A^{\fS_n}_\bul,B^{\fS_{m+n}}_\bul]}$
\end{enumerate}
In \ref{eq:tensorgrouphomo}, the bijection on the right is induced by the obvious isomorphism of $K\times\Zo$-diagrams $B^{\fS_m}_\bul\otimes C_\bul\cong(B\otimes C_\bul)^{\fS_m}_\bul$.
\end{prop}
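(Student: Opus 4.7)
The unifying strategy is to realize each map in the statement as one induced by a morphism of $H$-group objects in $[\Algl]^\ind$, where the group operation on $B^{\fS_n}_\bul$ is realized concretely by \emph{concatenation}, as in Example \ref{exa:concat}. Because the isomorphism $B^{\fS_n}_\bul\cong (B^{\fS_{n-1}}_\bul)^{\fS_1}_\bul$ in $[\Algl]^\ind$ induced by $\mu^{n-1,1}$ intertwines the abstract group structure on the left with the concatenation structure on the outer $\fS_1$ on the right, one may always reduce to checking that a given construction preserves a pullback of the form $B^{\sd^{r+1}I}\cong B^{\sd^r I}\times_B B^{\sd^r I}$. I would isolate this reduction as a preliminary lemma: any functor $F$ (between the appropriate ind-categories) that preserves both homotopy and this pullback induces group homomorphisms on the sets $[-,(-)^{\fS_1}_\bul]$, and hence, by the identification above, on $[-,(-)^{\fS_n}_\bul]$.

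With this lemma in hand, parts \ref{eq:tensorgrouphomo}, \ref{eq:mugrouphomo} and \ref{eq:mu2grouphomo} reduce to direct verifications. For \ref{eq:tensorgrouphomo}, Lemma \ref{lem:simppair} gives $B^{\fS_m}_r\cong B\otimes\Z^{\fS_m}_r$ with $\Z^{\sd^{r+1}I}$ a free abelian group; since tensoring with a flat module is exact, $?\otimes C_\bul$ preserves the relevant pullback, and compatibility with the isomorphism $B^{\fS_m}_\bul\otimes C_\bul\cong (B\otimes C_\bul)^{\fS_m}_\bul$ is immediate. For \ref{eq:mugrouphomo}, I would check that $\mu^{m,n}_B$ intertwines the $\fS_n$-concatenation on $(B^{\fS_m}_\bul)^{\fS_n}_\bul$ with the concatenation in the last coordinate of $B^{\fS_{m+n}}_\bul$; this falls out of the associativity and naturality of $\mu$ listed in Section \ref{sec:multi}. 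Part \ref{eq:mu2grouphomo} is then just (iii) composed with the observation that $(?)^{\fS_n}_\bul:\Algl\to\Algl^\Zo$ commutes with finite products and hence with the concatenation pullback on the source.

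The main obstacle is part \ref{eq:Jkappa}, since $J$ does not preserve arbitrary pullbacks and my preliminary lemma does not directly apply. Here my plan is to use the extension-theoretic picture. By Lemma \ref{lem:kappapq}, $\kappa^{n,m}_B=\kappa^{1,m}_{J^{n-1}B}\circ J(\kappa^{n-1,m}_B)$, so by induction it suffices to treat $n=1$. By definition $\kappa^{1,m}_B:J(B^{\fS_m}_\bul)\to (JB)^{\fS_m}_\bul$ is the classifying map of $(\scrU_B)^{\fS_m}_\bul$, and combining this with $J$ gives, up to the identifications of Remarks \ref{rem:Lambda-J}--\ref{rem:Lambda-fS}, essentially the map $\Lambda^m$ of Lemma \ref{lem:Lambda}. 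I would then rerun the argument of Lemma \ref{lem:Lambda} at the level of two classes $[f],[g]\in[A,B^{\fS_m}_\bul]$, representing their sum by a concatenation $f\bullet g:A\to B^{\fS_m}_{r+1}$ and showing, via Proposition \ref{lem:classcommute} applied to the strong morphism of extensions comparing $\scrU_A$, $(\scrU_A)\oplus(\scrU_A)$ and the concatenation pullback, that the resulting classifying map agrees (up to homotopy) with the concatenation of $\kappa^{1,m}\circ J(f)$ and $\kappa^{1,m}\circ J(g)$. The hard part will be producing the explicit elementary homotopies that identify these two classifying maps; I expect the structure of this computation to closely parallel the proof of Lemma \ref{lem:clascon}, with the extra $\fS_m$-factor handled by naturality of $\mu$ and Lemma \ref{lem:penta}.
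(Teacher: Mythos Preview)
Your approach is genuinely different from the paper's, and for part \ref{eq:Jkappa} it has a real gap.

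The paper never touches concatenation. Instead it exploits Theorem \ref{thm:bij}: since the group structure on $[A,B^{\fS_m}_\bul]$ is by definition the one transported from $\pi_m\Hom_\Algl(A,B^\Delta)$, it suffices to exhibit each composite as $\pi_m$ of an actual morphism of simplicial sets (or of $\Omega^m\Ex^\infty$ of one). For (i) this morphism is $f\mapsto$ (classifying map of $f$ with respect to $(\scrU_B)^{\Delta^p}$), level by level in $p$; for (ii) it is $f\mapsto f\otimes C$; for (iii) one builds $\Theta_r:\Ex^\infty\Hom(A,(B^{\fS_m}_r)^\Delta)\to\Omega^m\Ex^\infty\Hom(A,B^\Delta)$ out of $\mu^{\fS_m,\Delta^p}$; and (iv) is obtained from (ii) with $C_\bul=\Z^{\fS_n}_\bul$ together with (iii) and the sign from Example \ref{exa:cmn}. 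No explicit homotopies are ever needed, because a map of simplicial sets automatically induces a group homomorphism on $\pi_m$.

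Your concatenation route is workable for (ii)--(iv), provided you first prove what the paper deliberately avoids (Example \ref{exa:concat} says ``we will not use this fact''): that the group law on $[A,B^{\fS_1}_\bul]$ really is concatenation, and that via $\mu^{m-1,1}$ this matches the group law on $[A,B^{\fS_m}_\bul]$. That is extra work but not unreasonable.

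The problem is (i). You correctly note that $J$ does not preserve the pullback $B^{\sd^{r+1}I}\cong B^{\sd^rI}\times_B B^{\sd^rI}$, so your preliminary lemma fails, and you then propose to ``rerun the argument of Lemma \ref{lem:Lambda}''. But the proof of Lemma \ref{lem:Lambda} \emph{uses} Proposition \ref{prop:bundle} \ref{lem:Jkappa} and \ref{lem:mugrouphomo}; its structure is precisely to factor $\Lambda^n$ through $\kappa^{1,n}\circ J(?)$ and then invoke the present proposition. So you cannot mimic that argument here without circularity. Your fallback---comparing classifying maps of $f\bullet g$ and of $(f,g)$ via Proposition \ref{lem:classcommute}---is not fleshed out: $J(f\bullet g)$ lives in $J(B^{\fS_m}_{r+1})$, not in any pullback of $J(B^{\fS_m}_r)$'s, and you have given no morphism of extensions that would let Proposition \ref{lem:classcommute} identify $\kappa^{1,m}\circ J(f\bullet g)$ with a concatenation. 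The ``explicit elementary homotopies'' you anticipate are the whole content of (i), and nothing in Lemma \ref{lem:clascon} or Lemma \ref{lem:penta} supplies them. The clean way out is the paper's: build the simplicial map $\varphi$ and take $\pi_m$.
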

\begin{proof}
Write $\Phi^{n,m}_{A,B}$ for the composite function in \ref{eq:Jkappa}; let us prove it is a group homomorphism. We proceed by induction on $n$. In the case $n=1$, we claim that there is a morphism of simplicial sets $\varphi:\Hom_\Algl(A,B^\Delta)\to \Hom_\Algl(JA, (JB)^\Delta)$ inducing $\Phi^{1,m}_{A,B}$ under the identifications of Theorem \ref{thm:bij}. Indeed, for $f\in \Hom_\Algl(A,B^{\Delta^p})$, let $\varphi(f)\in\Hom_\Algl(JA,(JB)^{\Delta^p})$ be the classifying map of $f$ with respect to $(\scrU_B)^{\Delta^p}$. It is easily verified that the following diagram commutes, proving the case $n=1$.
\[\xymatrix@C=3em{\left(\Omega^m\Ex^\infty\Hom_\Algl(A,B^\Delta)\right)_0\ar[r]^-{\cong}\ar[d]_-{\left(\Omega^m\Ex^\infty\varphi\right)_0} & \Hom_{\Algl^\ind}(A, B^{\fS_m}_\bul)\ar[d]^-{\kappa^{1,m}_B\circ J(?)} \\
\left(\Omega^m\Ex^\infty\Hom_\Algl(JA,(JB)^\Delta)\right)_0\ar[r]^-{\cong} & \Hom_{\Algl^\ind}(A, (JB)^{\fS_m}_\bul) \\}\]
The inductive step is straightforward once we notice that $\Phi^{n+1,m}_{A,B}=\Phi^{1,m}_{J^nA, J^nB}\circ \Phi^{n,m}_{A,B}$.

Write $\Psi_{C_\bul}$ for the composite function in \ref{eq:tensorgrouphomo}; let us prove it is a group homomorphism. We can easily reduce to the case $C_\bul=C\in\Alg_\Z$. Again, we claim that there is a morphism of simplicial sets $\psi:\Hom_\Algl(A,B^\Delta)\to \Hom_\Algl(A\otimes C,(B\otimes C)^\Delta)$ inducing $\Psi_C$ upon taking $\pi_m$ and making the identifications of Theorem \ref{thm:bij}. For $p\geq 0$, let $\psi_p$ be the composite:
\[\xymatrix@C=3em{\Hom_\Algl(A,B^{\Delta^p})\ar[r]^-{?\otimes C} & \Hom_\Algl(A\otimes C,B^{\Delta^p}\otimes C)\cong\Hom_\Algl(A\otimes C,(B\otimes C)^{\Delta^p})}\]
It is straightforward to show that this defines a morphism $\psi$ with the desired properties.

Let us now prove that the function $(\mu^{m,n}_{B})_*$ in \ref{eq:mugrouphomo} is a group homomorphism. Note that
\[[A,(B^{\fS_m}_\bul)^{\fS_n}_\bul]=\colim_r\,[A,(B^{\fS_m}_r)^{\fS_n}_\bul]\]
and write $\iota_r:[A,(B^{\fS_m}_r)^{\fS_n}_\bul]\to[A,(B^{\fS_m}_\bul)^{\fS_n}_\bul]$ for the natural morphism into the colimit. It suffices to show that $(\mu^{m,n}_B)_*\circ\iota_r$ is a group homomorphism for all $r$. For fixed $r$, $s$ and $p$, consider the dotted function that makes the following diagram commute:
\[\xymatrix{\left(\Ex^s\Hom_\Algl(A,(B^{\fS_m}_r)^\Delta)\right)_p\ar[r]^-{\cong}\ar@{..>}[d] & \Hom_\Algl\left(A,(B^{\fS_m}_r)^{\Delta^p}_s\right)\ar[d]^-{\left(\mu_{B}^{\fS_m,\,\Delta^p}\right)_*}\\
\left(\Omega^m\Ex^{r+s}\Hom_\Algl(A,B^\Delta)\right)_p\ar[r]^-{\cong} & \Hom_\Algl\left(A,B^{\fS_n\square\Delta^p}_{r+s}\right)}\]
This dotted function is natural in $p$ and thus induces a morphism of simplicial sets:
\[\xymatrix{\Ex^s\Hom_\Algl\left(A,(B^{\fS_m}_r)^\Delta\right)\ar[r] & \Omega^m\Ex^{r+s}\Hom_\Algl\left(A,B^\Delta\right)}\] The latter is in turn natural in $s$ and thus induces a morphism $\Theta_r$ upon taking colimit:
\[\xymatrix{\Theta_r:\Ex^\infty\Hom_\Algl\left(A,(B^{\fS_m}_r)^\Delta\right)\ar[r] & \Omega^m\Ex^\infty\Hom_\Algl\left(A,B^\Delta\right)}\]
We claim that $(\mu^{m,n}_B)_*\circ\iota_r$ equals the group homomorphism
\[\xymatrix{\pi_n\Theta_r:\pi_n\Ex^\infty\Hom(A,(B^{\fS_m}_r)^\Delta)\ar[r] & \pi_n\Omega^m\Ex^\infty\Hom(A,B^\Delta)\cong \pi_{m+n}\Ex^\infty\Hom(A,B^\Delta)}\]
upon making the identifications of Theorem \ref{thm:bij}. Indeed, this claim follows from the commutativity of the diagram below, which ultimately reduces to the naturality of $\mu$.
\[\xymatrix@R=1em@C=3em{\Hom_\Algl\left(A,(B^{\fS_m}_r)^{\fS_n}_\bul\right)\ar[dd]_-{\cong}\ar[r]^-{\left(\mu^{m,n}_{B}\right)_*\circ\iota_r} & \Hom_\Algl\left(A,B^{\fS_{m+n}}_\bul\right)\ar[d]^-{\cong} \\
 & \left(\Omega^{m+n}\Ex^\infty\Hom_\Algl(A,B^\Delta)\right)_0\ar[d]^-{\cong} \\
\left(\Omega^n\Ex^\infty\Hom_\Algl(A,(B^{\fS_m}_r)^\Delta)\right)_0\ar[r]^-{\Omega^n(\Theta_r)} & \left(\Omega^n\Omega^m\Ex^\infty\Hom_\Algl(A,B^\Delta)\right)_0 }\]

It remains to show that the composite function in \ref{eq:mu2grouphomo} is a group homomorphism. The functors $?\otimes \Z^{\fS_n}_\bul$ and $(?)^{\fS_n}_\bul$ are naturally isomorphic. Then, by Proposition \ref{prop:bundleapp} \ref{eq:tensorgrouphomo} applied to $C_\bul=\Z^{\fS_n}_\bul$, we have a group homomorphism:
\[\xymatrix{\Psi_{\Z^{\fS_n}_\bul}:[A,B^{\fS_m}_\bul]\ar[r] & [A^{\fS_n}_\bul,(B^{\fS_n}_\bul)^{\fS_m}_\bul]}\]
Let $c:I^m\times I^n\overset{\cong}\to I^n\times I^m$ be the commutativity isomorphism. It is easily verified that the following diagram commutes:
\[\xymatrix{[A,B^{\fS_m}_\bul]\ar[r]^-{(?)^{\fS_n}}\ar[d]_-{\Psi_{\Z^{\fS_n}_\bul}} & [A^{\fS_n}_\bul,(B^{\fS_m}_\bul)^{\fS_n}_\bul]\ar[r]^-{\left(\mu^{m,n}_B\right)_*} & [A^{\fS_n}_\bul,B^{\fS_{m+n}}_\bul] \\
[A^{\fS_n}_\bul,(B^{\fS_n}_\bul)^{\fS_m}_\bul]\ar[rr]^-{\left(\mu_B^{n,m}\right)_*} & & [A^{\fS_n}_\bul,B^{\fS_{n+m}}_\bul]\ar[u]_-{c^*}}\]
The function $(\mu_B^{n,m})_*$ is a group homomorphism by Proposition \ref{prop:bundleapp} \ref{eq:mugrouphomo} and $c^*$ is multiplication by $(-1)^{mn}$. The result follows.
\end{proof}

\begin{biblist}

	\bib{cortho}{article}{
		author={Corti{\~n}as, Guillermo},
		author={Thom, Andreas},
		title={Bivariant algebraic $K$-theory},
		journal={J. Reine Angew. Math.},
		volume={610},
		date={2007},
		pages={71--123},
		issn={0075-4102},
		review={\MR{2359851 (2008i:19003)}},
		doi={10.1515/CRELLE.2007.068},
	}

	\bib{cuntz}{article}{
	   author={Cuntz, Joachim},
	   title={Bivariant $K$-theory and the Weyl algebra},
	   journal={$K$-Theory},
	   volume={35},
	   date={2005},
	   number={1-2},
	   pages={93--137},
	   issn={0920-3036},
	   review={\MR{2240217}},
	   doi={10.1007/s10977-005-3464-0},
	}

	\bib{ralf}{book}{
		author={Cuntz, Joachim},
		author={Meyer, Ralf},
		author={Rosenberg, Jonathan M.},
		title={Topological and bivariant $K$-theory},
		series={Oberwolfach Seminars},
		volume={36},
		publisher={Birkh\"auser Verlag, Basel},
		date={2007},
		pages={xii+262},
		isbn={978-3-7643-8398-5},
		review={\MR{2340673 (2008j:19001)}},
	}

	\bib{garkuuni}{article}{
		author={Garkusha, Grigory},
		title={Universal bivariant algebraic $K$-theories},
		journal={J. Homotopy Relat. Struct.},
		volume={8},
		date={2013},
		number={1},
		pages={67--116},
		issn={2193-8407},
		review={\MR{3031594}},
		doi={10.1007/s40062-012-0013-4},
	}

		\bib{garku}{article}{
		author={Garkusha, Grigory},
		title={Algebraic Kasparov $K$-theory. I},
		journal={Doc. Math.},
		volume={19},
		date={2014},
		pages={1207--1269},
		issn={1431-0635},
		review={\MR{3291646}},
	}

	\bib{garku2}{article}{
   author={Garkusha, Grigory},
   title={Algebraic Kasparov K-theory, II},
   journal={Ann. K-Theory},
   volume={1},
   date={2016},
   number={3},
   pages={275--316},
   issn={2379-1683},
   review={\MR{3529093}},
   doi={10.2140/akt.2016.1.275},
}

\bib{gersten}{article}{
	author={Gersten, S. M.},
	title={Homotopy theory of rings},
	journal={J. Algebra},
	volume={19},
	date={1971},
	pages={396--415},
	issn={0021-8693},
	review={\MR{0291253}},
}

	\bib{goja}{book}{
		author={Goerss, Paul G.},
		author={Jardine, John F.},
		title={Simplicial homotopy theory},
		series={Progress in Mathematics},
		volume={174},
		publisher={Birkh\"auser Verlag, Basel},
		date={1999},
		pages={xvi+510},
		isbn={3-7643-6064-X},
		review={\MR{1711612 (2001d:55012)}},
		doi={10.1007/978-3-0348-8707-6},
	}

	\bib{neeman}{book}{
	   author={Neeman, Amnon},
	   title={Triangulated categories},
	   series={Annals of Mathematics Studies},
	   volume={148},
	   publisher={Princeton University Press, Princeton, NJ},
	   date={2001},
	   pages={viii+449},
	   isbn={0-691-08685-0},
	   isbn={0-691-08686-9},
	   review={\MR{1812507}},
	   doi={10.1515/9781400837212},
	}

\bib{quillen}{book}{
	author={Quillen, Daniel G.},
	title={Homotopical algebra},
	series={Lecture Notes in Mathematics, No. 43},
	publisher={Springer-Verlag, Berlin-New York},
	date={1967},
	pages={iv+156 pp. (not consecutively paged)},
	review={\MR{0223432}},
}

	\bib{htpysimp}{article}{
	author={Rodr\'iguez Cirone, E.},
	title={The homotopy groups of the simplicial mapping space between algebras},
	eprint={https://arxiv.org/abs/1803.08087},
}

\end{biblist}

\end{document}